\newtheorem{theorem}{Theorem}[section]
\newtheorem{proposition}[theorem]{Proposition}
\newtheorem{lemma}[theorem]{Lemma}
\newtheorem{corollary}[theorem]{Corollary}
\newtheorem{definition}[theorem]{Definition}
\newtheorem{remark}[theorem]{Remark}
\newtheorem{example}[theorem]{Example}
\newenvironment{proof}[1][Proof]{\noindent\textbf{#1.} }{\ \rule{0.5em}{0.5em}}
\title{Infinities within Finitely Supported Structures}
\author{
Andrei Alexandru \\
Romanian Academy, Institute of Computer Science\\
 Ia\c si, Romania \\
  \texttt{andrei.alexandru@iit.academiaromana-is.ro} \\
   \And
Gabriel Ciobanu\\
Romanian Academy, Institute of Computer Science\\
and A.I.Cuza University of Ia\c si, Romania\\
  \texttt{gabriel@info.uaic.ro}}
\begin{document}

\maketitle

\begin{abstract}

The theory of finitely supported algebraic structures is related to Pitts 
theory of nominal sets (by equipping finitely supported sets with finitely 
supported internal algebraic laws). It represents a reformulation of 
Zermelo Fraenkel set theory obtained by requiring every set theoretical 
construction to be finitely supported according to a certain action of a 
group of permutations of some basic elements named atoms. Its main purpose 
is to let us characterize infinite algebraic structures, defined involving 
atoms, only by analyzing their finite supports. The first goal of this 
paper is to define and study different kinds of infinities and the notion 
of `cardinality' in the framework of finitely supported structures. We 
present several properties of infinite cardinalities. Some of these 
properties are extended from the non-atomic Zermelo Fraenkel set theory 
into the world of atomic objects with finite support, while other 
properties are specific to finitely supported structures. We also compare 
alternative definitions of `infinite finitely supported set', and we 
finally provide a characterization of finitely supported countable sets.

\end{abstract}

\section{Introduction}

The theory of finitely supported algebraic structures which is known 
under the name of `nominal sets' (when dealing with computer science 
applications) or `Finitely Supported Mathematics' (in some pure set 
theoretical papers related to the foundations of mathematics) represents 
an alternative framework for working with infinite structures 
hierarchically constructed by involving some basic elements (called 
atoms) by dealing only with a finite number of entities that form their 
supports. The theory of nominal sets is presented in a categorical 
manner as a Zermelo-Fraenkel (ZF) alternative to Fraenkel and Mostowski 
1930s permutation models of set theory with atoms \cite{pitts-2}. A 
nominal set is defined as a usual~ZF set endowed with a group action of 
the group of (finitary) permutations over a certain fixed countable ZF 
set~$A$ (also called the set of atoms by analogy with the Fraenkel and 
Mostowski framework) formed by elements whose internal structure is not 
taken into consideration (i.e. by elements that can be checked only for 
equality), satisfying a finite support requirement. This requirement 
states that for any element in a nominal set there should exist a finite 
set of atoms such that any permutation fixing pointwise this set of 
atoms also leaves the element invariant under the related group action. 
Nominal sets represents a categorical mathematical theory of names 
studying scope, binding, freshness and renaming in formal languages 
based upon symmetry. Inductively defined finitely supported sets (that 
are finitely supported elements in the powerset of a nominal set) 
involving the name-abstraction together with Cartesian product and 
disjoint union can encode syntax modulo renaming of bound variables. In 
this way, the standard theory of algebraic data types can be extended to 
include signatures involving binding operators. In particular, there is 
an associated notion of structural recursion for defining 
syntax-manipulating functions and a notion of proof by structural 
induction. Various generalizations of nominal were used in order to 
study automata, languages or Turing machines that operate over infinite 
alphabets; for this a relaxed notion of finiteness, called `orbit 
finiteness', was defined and means `having a finite number of orbits 
under a certain group action' \cite{boj}.

Finitely Supported Mathematics (FSM) is an alternative name for nominal 
algebraic structures, used in theoretical papers focused on the 
foundations of set theory (rather than on applications in computer 
science). In order to describe FSM as a theory of finitely supported 
algebraic structures (that is finitely supported sets \emph{together with 
finitely supported internal algebraic laws}), we use nominal sets (without 
the requirement that the set $A$ of atoms is countable) which by now on 
will be called invariant sets motivated by Tarski's approach regarding 
logicality (i.e. a logical notion is defined by Tarski as one that is 
invariant under the permutations of the universe of discourse). The 
cardinality of the set of atoms \emph{cannot} be internally compared with 
any other ZF cardinality, and so we just say that atoms form an infinite 
set without any specifications regarding its cardinality. In FSM we 
actually study the finitely supported subsets of invariant sets together 
with finitely supported relations (order relations, functions, algebraic 
laws etc), and so FSM becomes a theory of atomic algebraic structures 
constructed/defined according to the finite support requirement. The 
requirement of being finitely supported under a canonical action of the 
group of permutation of atoms (constructed under the rules in Proposition 
\ref{p1}) is actually an axiom adjoined to ZF, and so non-finitely 
supported structures are not allowed (they do not exist) in FSM.

FSM contains the family of `non-atomic' (ordinary) ZF sets (which are 
proved to be trivial FSM sets) and the family of `atomic' sets with 
finite supports (hierarchically constructed from the empty set and the 
fixed ZF set $A$). The main question now is whether a classical ZF 
result (obtained in ZF framework for non-atomic sets) can be adequately 
reformulated by replacing `non-atomic element/set' with `atomic finitely 
supported element/set' (according to the canonical actions of the group 
of one-to-one transformations of $A$ onto itself) in order to be valid 
also for atomic sets with finite supports. The (non-atomic) ZF results 
cannot be directly translated into the framework of atomic finitely 
supported sets, unless we are able to reprove their new formulations 
internally in FSM, i.e. by involving only \emph{finitely supported 
structures} even in the intermediate steps of the proof. This is because 
the family of finitely supported sets is not closed under subset 
constructions, and we cannot use something outside FSM in order to prove 
something in FSM.

The meta-theoretical techniques for the translation of a result from 
non-atomic structures to atomic structures are fully described in 
\cite{book} (or in \cite{pitts-2}, with the mention that, working on 
foundations of mathematics, and so we use a slightly different 
terminology for the same concept). They are based on a refinement of the 
finite support principle form \cite{pitts-2} called ``$S$-finite 
supports principle" claiming that for any finite set $S$ of atoms, 
anything that is definable in higher order logic from $S$-supported 
structures using $S$-supported constructions is also $S$-supported. The 
formal involvement of the $S$-finite support principles implies a 
constructive method for defining the support of a structure by employing 
the supports of the sub-structures of a related structure.

In this paper we introduce the notion of `cardinality' of a finitely 
supported set, and we prove several properties of this concept. Some 
properties are naturally extended from the non-atomic ZF into the world of 
atomic structures. In this sense we prove that Cantor-Schr{\"o}der-Bernstein 
theorem for cardinalities is still valid in FSM. Several other cardinality 
properties are preserved from ZF. 
However, although Cantor-Schr{\"o}der-Bernstein theorem can be 
successfully translated into FSM, its ZF dual is no longer valid in FSM. 
Other specific FSM properties of cardinalities (that do not have 
related~ZF correspondents) are also emphasized. We introduce various 
definition for infinity and we compare them, providing relevant examples 
of atomic sets verifying the conditions of each such definition. Finally, 
we introduce and study the concept of countability in FSM.

\section{Finitely Supported Sets} \label{FMset}

A ZF finite set is referred to a set for which there is a bijection with 
a finite ordinal; a ZF infinite set is a set that is not finite. Adjoin 
to ZF a special infinite set $A$ (called `the set of atoms'; despite 
classical set theory with atoms we do not need to modify the axiom of 
extensionality). Actually, atoms are entities whose internal structure 
is considered to be irrelevant which are considered as basic for a 
higher-order construction, i.e. their internal structure is not taken 
into consideration.

A \emph{transposition} is a function $(a\, b):A\to A$ given 
by~$(a\, b)(a)=b$, $(a\, b)(b)=a$ and $(a\, b)(n)=n$ for $n\neq a,b$. A 
\emph{(finitary) permutation} of $A$ in FSM is a one-to-one transformation 
of $A$ onto itself (a bijection of $A$) generated by composing finitely 
many transpositions. We denote by $S_{A}$ the set of all finitary 
permutations of $A$. According to Proposition 2.6 from \cite{book}, a 
function $f:A \to A$ is a bijection on~$A$ in FSM if and only if it leaves 
unchanged all but finitely many elements of~$A$. Thus, in FSM a function 
is a one-to-one transformation of $A$ onto itself if and only if it is a 
(finitary) permutation of $A$. Thus, the notions `permutation (bijection) 
of $A$' and `finitary permutation of $A$' coincide in FSM.

\begin{definition}\label{2.4} Let $X$ be a  ZF set. 
\begin{enumerate}
\item An \emph{$S_{A}$-action} on $X$ is a function $\cdot:S_{A}\times 
X\rightarrow X$ having the properties that $Id\cdot x=x$ and 
$\pi\cdot(\pi'\cdot x)=(\pi\circ\pi')\cdot x$ for all $\pi,\pi'\in S_{A}$ 
and $x\in X$, where $Id$ is the identity mapping on $A$. An \emph{$S_{A}$-set} is a pair $(X,\cdot)$ where $X$ is a 
ZF set, and $\cdot:S_{A}\times X\to X$ is an $S_{A}$-action on $X$.

\item Let $(X,\cdot)$ be an $S_{A}$-set. We say that \emph{$S\subset A$ 
supports $x$} whenever for each $\pi\in Fix(S)$ 
we have $\pi\cdot x=x$, where $Fix(S)=\{\pi\,|\,\pi(a)=a,\forall a\in S\}$. The least finite set  supporting $x$ (which exists according to Proposition \ref{p11}) is 
called \emph{the support of $x$} and is denoted by $supp(x)$. An empty supported element is called \emph{equivariant}; this means that $x \in X$ is equivariant if and only if $\pi \cdot x=x$, $\forall \pi \in S_{A}$.

\item Let $(X,\cdot)$ be an $S_{A}$-set. We say that $X$ is an 
\emph{invariant set} if for each $x\in X$ there exists a finite set 
\emph{$S_{x}\subset A$ }which supports $x$. 
\end{enumerate}
\end{definition}

\begin{proposition}\label{p11} \cite{book} 
Let $X$ be an $S_{A}$-set and let $x\in X$. If there exists a finite set 
supporting $x$ (particularly, if $X$ is an invariant set), then there 
exists a least finite set supporting $x$ which is constructed as the 
intersection of all finite sets supporting $x$. 
\end{proposition}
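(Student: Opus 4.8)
The plan is to reduce the statement to one closure property and then read off the least support from it. The property is: \emph{if $S$ and $T$ are finite sets supporting $x$, then $S\cap T$ supports $x$}. Granting this, I argue as follows. By hypothesis some finite set $S_{0}$ supports $x$. For any finite supporting set $T$, the closure property makes $T\cap S_{0}$ a supporting subset of $S_{0}$; since $S_{0}$ has only finitely many subsets, the family $\mathcal{F}$ of supporting subsets of $S_{0}$ is a finite nonempty collection closed under intersection, so $M:=\bigcap\mathcal{F}$ belongs to $\mathcal{F}$ and is its least element. I then check $M=\bigcap\{T : T\text{ finite and supports }x\}$: on one side $M$ is itself a finite supporting set, so the global intersection is contained in $M$; on the other side, for every finite supporting $T$ we have $M\subseteq T\cap S_{0}\subseteq T$, so $M$ is contained in the global intersection. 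Hence $M$ is a finite supporting set contained in every finite supporting set --- the least one --- and it coincides with the intersection named in the statement.

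The heart of the proof, and the step I expect to be the main obstacle, is the closure property. The difficulty is that a permutation $\pi\in Fix(S\cap T)$ may send an atom of $S\setminus T$ directly onto an atom of $T\setminus S$, so $\pi$ need not split into factors individually fixing $S$ or $T$; the remedy is to relocate the obstructing atoms to fresh names. Let $M$ be the finite set of atoms moved by $\pi$, so $M\cap(S\cap T)=\emptyset$, and set $B=M\cap(S\setminus T)$. Using that $A$ is infinite while $M\cup S\cup T$ is finite, I choose a set $C$ of fresh atoms with $|C|=|B|$ and $C$ disjoint from $M\cup S\cup T$, and I let $\sigma$ be the permutation swapping $B$ with $C$ along a fixed bijection and fixing all other atoms.

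Two stabilizer facts, established by direct bookkeeping, then finish the argument. First, $\sigma$ moves only atoms of $B\cup C$, all lying outside $T$, so $\sigma\in Fix(T)$ and hence $\sigma\cdot x=x$ and $\sigma^{-1}\cdot x=x$. Second, the conjugate $\sigma\pi\sigma^{-1}$ fixes every atom of $S$ --- this is seen by treating $s\in S\cap T$, $s\in B$, and $s\in(S\setminus T)\setminus B$ separately --- so $\sigma\pi\sigma^{-1}\in Fix(S)$ and therefore $(\sigma\pi\sigma^{-1})\cdot x=x$. Expanding the left side with the two $S_{A}$-action axioms of Definition \ref{2.4} gives $(\sigma\pi\sigma^{-1})\cdot x=\sigma\cdot(\pi\cdot(\sigma^{-1}\cdot x))=\sigma\cdot(\pi\cdot x)$, whence $\sigma\cdot(\pi\cdot x)=x$ and, applying $\sigma^{-1}$, $\pi\cdot x=\sigma^{-1}\cdot x=x$. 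This proves the closure property; the delicate point is exactly the case analysis showing $\sigma\pi\sigma^{-1}\in Fix(S)$. I would finally note that the whole construction remains internal to FSM: $S_{0}$, the finitely many subsets forming $\mathcal{F}$, the fresh set $C$, and the permutation $\sigma$ are all finite or finitely supported, and $M$ arises as a finite intersection, so no step leaves the finitely supported universe.
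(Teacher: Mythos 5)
Your proof is correct, so the only caveat concerns the comparison itself: the paper states Proposition \ref{p11} without proof, citing \cite{book}, so the reference point is the standard argument from that literature (the cited monograph, and Pitts' book \cite{pitts-2}). Your first paragraph follows that argument's skeleton exactly: reduce everything to the closure property (if $S$ and $T$ are finite sets supporting $x$, so is $S\cap T$), then note that the supporting subsets of one fixed finite support $S_{0}$ form a finite, nonempty, intersection-closed family whose least element coincides with the intersection of \emph{all} finite supports --- a finiteness step you handle correctly, and which is precisely what makes the possibly infinite global intersection tractable. Where you genuinely diverge is the proof of the closure property. The standard proof writes $\pi\in Fix(S\cap T)$ as a product of transpositions moving only atoms outside $S\cap T$ and disposes of each ``crossing'' transposition $(a\;b)$, with $a\in S\setminus T$ and $b\in T\setminus S$, via the identity $(a\;b)=(a\;c)(b\;c)(a\;c)$ for a fresh atom $c$ --- i.e., it conjugates transposition by transposition. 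You instead conjugate the whole permutation at once: $\sigma$ swaps the obstructing atoms $B$ with fresh ones, lies in $Fix(T)$, and $\sigma\pi\sigma^{-1}\in Fix(S)$; your three-case verification of the latter is sound (the cases $s\in S\cap T$, $s\in B$, $s\in(S\setminus T)\setminus B$ do partition $S$, and $\pi$ fixes the fresh atoms pointwise because $C$ was chosen disjoint from the set of atoms moved by $\pi$). The per-transposition version is more elementary at each step; your global conjugation avoids any appeal to a decomposition of $\pi$ and finishes with a single algebraic identity, which is arguably cleaner. One cosmetic flaw: you use $M$ both for $\bigcap\mathcal{F}$ in the first paragraph and for the set of atoms moved by $\pi$ afterwards; rename one of them.
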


\begin{proposition}\label{2.15} \cite{book}
Let $(X,\cdot)$ be an $S_{A}$-set, and~$\pi\in S_{A}$. If $x\in X$ is 
finitely supported, then $\pi\cdot x$ is finitely supported and 
$supp(\pi\cdot x)=\pi(supp(x))$. 
\end{proposition}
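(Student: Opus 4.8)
The plan is to prove both assertions simultaneously by showing that the set $\pi(supp(x))$ — which is finite, since $\pi$ is a bijection and $supp(x)$ is finite — both supports $\pi\cdot x$ and coincides with its support. Writing $S=supp(x)$, I would first establish the key claim that $\pi(S)$ supports $\pi\cdot x$; this immediately yields that $\pi\cdot x$ is finitely supported.

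For this claim I would take an arbitrary $\sigma\in Fix(\pi(S))$ and aim to show $\sigma\cdot(\pi\cdot x)=\pi\cdot x$. Using the action axiom from Definition \ref{2.4}, $\sigma\cdot(\pi\cdot x)=(\sigma\circ\pi)\cdot x$, so it suffices to prove $(\sigma\circ\pi)\cdot x=\pi\cdot x$, equivalently $(\pi^{-1}\circ\sigma\circ\pi)\cdot x=x$. The central step — and the real content of the argument — is the conjugation computation showing that $\pi^{-1}\circ\sigma\circ\pi\in Fix(S)$: for any $a\in S$ one has $\pi(a)\in\pi(S)$, hence $\sigma(\pi(a))=\pi(a)$ because $\sigma$ fixes $\pi(S)$ pointwise, and applying $\pi^{-1}$ gives $(\pi^{-1}\circ\sigma\circ\pi)(a)=a$. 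Since $S$ supports $x$, this forces $(\pi^{-1}\circ\sigma\circ\pi)\cdot x=x$, and reapplying $\pi$ together with the action axioms returns $(\sigma\circ\pi)\cdot x=\pi\cdot x$, as required.

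Having shown that $\pi(S)$ supports $\pi\cdot x$, the minimality of the support (Proposition \ref{p11}) gives the inclusion $supp(\pi\cdot x)\subseteq\pi(supp(x))$. For the reverse inclusion I would exploit the symmetry of the situation: applying the claim just proved to the permutation $\pi^{-1}$ and the element $y=\pi\cdot x$ shows that $\pi^{-1}(supp(y))$ supports $\pi^{-1}\cdot y=x$, so minimality yields $supp(x)\subseteq\pi^{-1}(supp(\pi\cdot x))$, and applying $\pi$ to both sides gives $\pi(supp(x))\subseteq supp(\pi\cdot x)$. Combining the two inclusions yields the desired equality.

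I expect the only delicate point to be the conjugation identity $\pi^{-1}\circ\sigma\circ\pi\in Fix(S)$; once that is in place everything else is a routine application of the action axioms and the minimality characterization of the support. Some care should be taken to note that $\pi^{-1}$ is itself a finitary permutation (being the inverse of a bijection that moves only finitely many atoms, by the characterization recalled before Definition \ref{2.4}), so that all intermediate objects remain within $S_{A}$ and the whole argument stays internal to FSM.
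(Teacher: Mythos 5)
Your proof is correct. Note that the paper itself does not prove this proposition --- it is stated with a citation to \cite{book} --- but your argument is exactly the standard one used there: the conjugation computation showing $\pi^{-1}\circ\sigma\circ\pi\in Fix(supp(x))$ for $\sigma\in Fix(\pi(supp(x)))$, which gives that $\pi(supp(x))$ supports $\pi\cdot x$, followed by minimality of the support and the symmetric application of the same claim to $\pi^{-1}$ and $\pi\cdot x$ to obtain the reverse inclusion. Both the key conjugation step and the two uses of Proposition \ref{p11} are sound, so there is nothing to correct.
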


\begin{example}\label{2.7} \ \ 
\begin{enumerate}
\item The set $A$ of atoms is an $S_{A}$-set with the $S_{A}$-action
$\cdot:S_{A}\times A\rightarrow A$ defined by $\pi\cdot a:=\pi(a)$
for all $\pi\in S_{A}$ and $a\in A$. $(A,\cdot)$ is an invariant set because
for each $a\in A$ we have that $\{a\}$ supports $a$. Furthermore, $supp(a)=\{a\}$
for each $a\in A$. 
\item The set $S_{A}$ is an $S_{A}$-set with the $S_{A}$-action $\cdot:S_{A}\times S_{A}\rightarrow S_{A}$
defined by $\pi\cdot\sigma:=\pi\circ\sigma\circ\pi^{-1}$ for all
$\pi,\sigma\in S_{A}$. $(S_{A},\cdot)$ is an invariant set because for
each $\sigma\in S_{A}$ we have that the finite set $\{a\in A\,|\,\sigma(a)\neq a\}$
supports~$\sigma$. Furthermore, $supp(\sigma)=\{a\in A\,|\,\sigma(a)\neq a\}$
for each $\sigma\in S_{A}$.

\item Any ordinary (non-atomic) ZF-set $X$ (such as $\mathbb{N},\mathbb{Z},\mathbb{Q}$
or $\mathbb{R}$ for example) is an invariant set with the single possible $S_{A}$-action
$\cdot:S_{A}\times X\rightarrow X$ defined by $\pi\cdot x:=x$ for
all $\pi \in S_{A}$ and $x\in X$. 
\end{enumerate}
\end{example}

\begin{proposition} \label{p1} 
 Let $(X,\cdot)$ and $(Y,\diamond)$ be $S_{A}$-sets.
\begin{enumerate}
\item The Cartesian 
product $X\times Y$ is also an $S_{A}$-set with the $S_{A}$-action 
$\otimes:S_{A}\times(X\times Y)\rightarrow(X\times Y)$ defined by 
$\pi\otimes(x,y)=(\pi\cdot x,\pi\diamond y)$ for all $\pi\in S_{A}$ and all 
$x\in X$, $y\in Y$. If $(X,\cdot)$ and $(Y,\diamond)$ are invariant sets, 
then $(X\times Y,\otimes)$ is also an invariant set.

\item The powerset $\wp(X)=\{Z\,|\, 
Z\subseteq X\}$ is also an $S_{A}$-set with the $S_{A}$-action $\star: 
S_{A}\times\wp(X) \rightarrow \wp(X)$ defined by $\pi\star Z:=\{\pi\cdot 
z\,|\, z\in Z\}$ for all $\pi \in S_{A}$, and all $Z \subseteq X$. For 
each invariant set $(X,\cdot)$, we denote by $\wp_{fs}(X)$ the set formed 
from those subsets of~$X$ which are finitely supported according to the 
action $\star$ .
$(\wp_{fs}(X),\star|_{\wp_{fs}(X)})$ is an invariant set, where 
$\star|_{\wp_{fs}(X)}$ represents the action $\star$ restricted to 
$\wp_{fs}(X)$.

\item The finite powerset of $X$ $\wp_{fin}(X)=\{Y \subseteq X\,|\, Y 
\text{finite}\}$ and the cofinite powerset of $X$ $\wp_{cofin}(X)=\{Y 
\subseteq X\,|\, X\setminus Y \text{finite}\}$ are $S_{A}$-sets with the 
$S_{A}$-action $\star$ defined as in item 2. If $X$ is an invariant set, 
then both $\wp_{fin}(X)$ and $\wp_{cofin}(X)$ are invariant sets.

\item Let $(X,\cdot)$ and $(Y,\diamond)$ be $S_{A}$-sets. We define the
disjoint union of $X$ and $Y$ by $X+Y=\{(0,x)\,|\, x\in X\}\cup\{(1,y)\,|\, y\in Y\}$.
$X+Y$ is an $S_{A}$-set with the $S_{A}$-action $\star:S_{A}\times(X+Y)\rightarrow(X+Y)$
defined by $\pi\star z=(0,\pi\cdot x)$ if $z=(0,x)$ and $\pi\star z=(1,\pi\diamond y)$
if $z=(1,y)$. If $(X,\cdot)$ and $(Y,\diamond)$ are invariant sets, then
$(X+Y,\star)$ is also an invariant set: each $z\in X+Y$ is either of the
form $(0,x)$ and supported by the finite set supporting $x$ in~$X$,
or of the form $(1,y)$ and supported by the finite set supporting
$y$ in~$Y$. 

\end{enumerate}
\end{proposition}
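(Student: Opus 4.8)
The plan is to treat all four parts with the same two-phase strategy: first verify that the proposed map is a genuine $S_A$-action by checking the identity and composition axioms of Definition \ref{2.4}, and then, under the hypothesis that the building blocks are invariant, exhibit for each element an explicit finite supporting set. The action axioms are in every case immediate consequences of the corresponding axioms for $\cdot$ and $\diamond$; for instance, in part 1 one computes $\pi \otimes (\pi' \otimes (x,y)) = (\pi \cdot (\pi' \cdot x), \pi \diamond (\pi' \diamond y)) = ((\pi \circ \pi') \cdot x, (\pi \circ \pi') \diamond y) = (\pi \circ \pi') \otimes (x,y)$ and $Id \otimes (x,y) = (x,y)$, with entirely analogous componentwise computations for the powerset action $\star$ and for the disjoint-union action. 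For the invariance claims in parts 1 and 4, I would build the supporting set componentwise: given $(x,y) \in X \times Y$ with $S_x$ supporting $x$ and $S_y$ supporting $y$, the finite set $S_x \cup S_y$ supports $(x,y)$, since any $\pi \in Fix(S_x \cup S_y)$ lies in both $Fix(S_x)$ and $Fix(S_y)$ and hence fixes both coordinates; the disjoint union is even simpler, as indicated in the statement, because the action touches only the second coordinate.

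The powerset part (part 2) is where the genuine content lies. The full powerset $\wp(X)$ carries the action $\star$ and the axioms hold as above, but $\wp(X)$ is generally \emph{not} invariant, which is precisely why one passes to $\wp_{fs}(X)$. Two things must be checked here: that $\star$ restricts to $\wp_{fs}(X)$, and that $\wp_{fs}(X)$ is then invariant. The first is exactly Proposition \ref{2.15}: if $Z$ is finitely supported then $\pi \star Z$ is finitely supported (indeed $supp(\pi \star Z) = \pi(supp(Z))$), so $\wp_{fs}(X)$ is closed under the action. The second is immediate by construction, since every member of $\wp_{fs}(X)$ is finitely supported by definition.

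For part 3 the strategy is similar but requires verifying closure of the two subfamilies under $\star$. Every permutation $\pi$ induces a bijection of $X$ (with inverse $\pi^{-1} \cdot (-)$), so $\pi \star Y$ has the same cardinality as $Y$; this settles closure and invariance for $\wp_{fin}(X)$, where a finite set $\{x_1, \dots, x_n\}$ is supported by $\bigcup_i S_{x_i}$. The cofinite case is the subtlest point: one must observe that $\pi \star X = X$ (again because $\pi \cdot (-)$ is a bijection on $X$), whence $\pi \star (X \setminus Y) = X \setminus (\pi \star Y)$, so cofiniteness is preserved; and one must note that if $S$ supports a set $Z$ then $S$ also supports its complement $X \setminus Z$, so a cofinite $Y$ is supported by a finite support of the finite set $X \setminus Y$.

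The main obstacle I anticipate is exactly this closure bookkeeping for the subfamilies in parts 2 and 3 — confirming that the restriction of $\star$ lands back inside $\wp_{fs}(X)$, $\wp_{fin}(X)$, or $\wp_{cofin}(X)$ — rather than the action axioms or the componentwise support constructions, which are routine. Once closure is established, invariance follows either by definition (part 2) or by producing the explicit finite supports described above, so the proof reduces to these verifications together with one appeal to Proposition \ref{2.15}.
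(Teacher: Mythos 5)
Your proposal is correct and follows exactly the approach the paper intends: the paper states this proposition without a written proof (it is treated as routine, imported from \cite{book}), and the only justification it records — the inline remark that each element of $X+Y$ is supported by the finite set supporting its underlying coordinate — is precisely your componentwise support argument. You also correctly isolate the one point with genuine content, namely that $\star$ restricts to $\wp_{fs}(X)$, $\wp_{fin}(X)$ and $\wp_{cofin}(X)$ (via Proposition \ref{2.15} and the fact that each $\pi \cdot (-)$ is a bijection of $X$), after which invariance of $\wp_{fs}(X)$ is immediate from the definition, so nothing is missing.
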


\begin{definition}\label{2.14}
\begin{enumerate}
\item Let $(X,\cdot)$ be an $S_{A}$-set. A subset~$Z$ of $X$ is called 
\emph{finitely supported} if and only if $Z\in\wp_{fs}(X)$ with the notations from Proposition \ref{p1}.  A subset $Z$ of $X$ is \emph{uniformly supported} if all the elements of $Z$ are supported by the same set $S$ (and so $Z$ is itself supported by $S$ as an element of $\wp_{fs}(X)$). Generally, an FSM set is a finitely supported subset (possibly equivariant) of an invariant set. 

\item Let $(X,\cdot)$ be a finitely supported subset of an $S_{A}$- set $(Y, \cdot)$. A subset~$Z$ of $Y$ is called 
\emph{finitely supported subset of $X$} (and we denote this by $Z \in \wp_{fs}(X)$) if and only if $Z\in\wp_{fs}(Y)$ and $Z \subseteq X$. Similarly, we say that a uniformly supported subset of $Y$ contained in $X$ is a \emph{uniformly supported subset of $X$}.
\end{enumerate}
\end{definition} 

From Definition \ref{2.4}, a subset~$Z$ of an invariant set $(X, \cdot)$ 
is finitely supported by a set $S \subseteq A$ if and only if $\pi \star 
Z \subseteq Z$ for all $\pi \in Fix(S)$.  This is because any permutation of atoms should have finite order.

\begin{proposition} \label{4.4-9}
\begin{enumerate}
\item  Let $X$ be a finite subset of an invariant set $(U, \cdot)$. Then $X$ is finitely supported and $supp(X)=\cup\{supp(x)\,|\, x\in X\}$.
\item Let $X$ be a uniformly supported subset of an invariant set $(U, \cdot)$.  Then $X$ is finitely supported and $supp(X)=\cup\{supp(x)\,|\, x\in X\}$.
\end{enumerate}
\end{proposition}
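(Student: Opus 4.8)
The plan is to prove both parts by establishing two inclusions for each: that the stated union supports $X$, and that any set supporting $X$ must contain each $supp(x)$ for $x \in X$. Since by Proposition \ref{p11} the support is the \emph{least} supporting set, proving these two facts pins down $supp(X)$ exactly as the union. I expect the first inclusion (that the union actually supports $X$) to be the routine direction, and the reverse (minimality) to require a little more care, since I must show the union cannot be shrunk.

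\begin{proof}
First I treat the claim that $S := \bigcup\{supp(x)\mid x\in X\}$ supports $X$ under the action $\star$ of Proposition \ref{p1}. For part 1 this union is finite because it is a finite union of finite sets (each $supp(x)$ is finite since $X\subseteq U$ and $U$ is invariant). For part 2, uniform support means all elements of $X$ are supported by a common finite set $S_0$, whence $supp(x)\subseteq S_0$ for every $x$, so $S\subseteq S_0$ is finite as well. In both cases I take an arbitrary $\pi\in Fix(S)$ and verify $\pi\star X = X$. Since $S$ contains $supp(x)$ for each $x\in X$, we have $\pi\in Fix(supp(x))$, hence $\pi\cdot x = x$ by the definition of support. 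Therefore $\pi\star X=\{\pi\cdot x\mid x\in X\}=\{x\mid x\in X\}=X$, so $S$ supports $X$. This shows $X$ is finitely supported and $supp(X)\subseteq S$.

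For the reverse inclusion I argue that every finite set $T$ supporting $X$ must contain each $supp(x)$, which gives $S\subseteq T$ and in particular $S\subseteq supp(X)$. Fix $x_0\in X$; I claim $T$ supports the element $x_0$. Suppose not: then there is some $\pi\in Fix(T)$ with $\pi\cdot x_0\neq x_0$. I would like to derive a contradiction with $T$ supporting $X$ as a set. The subtlety is that $\pi\star X=X$ only guarantees $\pi$ permutes $X$, not that it fixes $x_0$ pointwise; a priori $\pi\cdot x_0$ could be another element of $X$. To handle this, I use the standard nominal technique: choose an atom $c$ outside $T\cup supp(x_0)\cup supp(\pi\cdot x_0)$ (possible since these are finite and $A$ is infinite) and pick a suitable atom $b\in supp(x_0)\setminus T$ witnessing the failure, then consider the transposition $(b\, c)$. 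Because $(b\, c)\in Fix(T)$ yet alters $x_0$ via Proposition \ref{2.15} (as $supp((b\,c)\cdot x_0)=(b\,c)(supp(x_0))\neq supp(x_0)$), one produces an element of $\pi\star X$ not lying in $X$, contradicting that $T$ supports $X$. This is the step I expect to be the main obstacle, precisely because moving from ``$T$ supports the set $X$'' to ``$T$ supports each member of $X$'' is genuinely false for arbitrary sets and relies on the finite, uniform, or element-by-element structure of the two cases.

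For part 1, the finiteness of $X$ makes this obstacle manageable: since $X$ is a finite set, a permutation in $Fix(T)$ fixing $X$ setwise permutes its finitely many elements, and by examining supports via Proposition \ref{2.15} one shows it must fix each element, forcing $supp(x)\subseteq T$. For part 2, uniform support gives the same conclusion directly, since a common support set together with the setwise-fixing property again forces pointwise fixing on the relevant atoms. Combining the two inclusions $supp(X)\subseteq S$ and $S\subseteq supp(X)$ yields $supp(X)=S=\bigcup\{supp(x)\mid x\in X\}$ in both cases, completing the proof.
\end{proof}
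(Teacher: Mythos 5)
Your first inclusion (that $S=\bigcup\{supp(x)\mid x\in X\}$ is finite and supports $X$, hence $supp(X)\subseteq S$ by Proposition \ref{p11}) is correct and matches the paper. The genuine gap is in the reverse inclusion, at exactly the step you yourself flag as the main obstacle. Having fixed $b\in supp(x_0)\setminus T$, you choose the fresh atom $c$ outside $T\cup supp(x_0)\cup supp(\pi\cdot x_0)$ and claim that $(b\,c)\cdot x_0$ is an element of the image of $X$ not lying in $X$. But with that choice of $c$ you cannot conclude $(b\,c)\cdot x_0\notin X$: by Proposition \ref{2.15} its support is $(b\,c)(supp(x_0))$, which contains $c$, and the only way to rule out $(b\,c)\cdot x_0$ being some \emph{other} element $y\in X$ is to know that no element of $X$ has $c$ in its support. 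Your $c$ was chosen fresh only for $x_0$, $\pi\cdot x_0$ and $T$; nothing prevents some $y\in X$ from having $c\in supp(y)$, and then no contradiction arises. This is precisely where the hypotheses of the two parts must enter: for finite $X$ one must pick $c$ outside $supp(X)\cup\bigcup_{x\in X}supp(x)$ (a finite union of finite sets), and for uniformly supported $X$ outside $supp(X)\cup T_0$ where $T_0$ is the common support; then $c\in supp((b\,c)\cdot x_0)$ while $c\notin supp(y)$ for every $y\in X$, so $(b\,c)\cdot x_0\notin X$, contradicting $(b\,c)\star X=X$.

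Your closing paragraphs do not repair this: for part 1 you assert that ``by examining supports via Proposition \ref{2.15} one shows it must fix each element,'' and for part 2 that ``uniform support gives the same conclusion directly,'' but these sentences restate the claim to be proven rather than prove it; the entire content of the hard direction is the strengthened choice of fresh atom above. Note also that the statement your part-1 sentence leans on --- that a permutation fixing a finite set $X$ setwise must fix it elementwise --- is false in general: the transposition $(a\,b)$ fixes $\{(a,b),(b,a)\}\subseteq A\times A$ setwise while swapping its two elements; what is true is only that such a permutation cannot lie in $Fix(T)$ for $T$ supporting $X$, which is again the claim at issue. For comparison, the paper's proof is your strategy executed correctly: for $a\in supp(x_j)$ it picks $b$ fresh for $supp(X)$ \emph{and for the supports of all elements of} $X$ (this is where finiteness, respectively uniform support, is used), shows $(b\,a)\cdot x_j\notin X$ so that $(b\,a)\star X\neq X$, and then concludes $a\in supp(X)$, since otherwise $(b\,a)$ would fix $supp(X)$ pointwise and hence fix $X$. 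Once you strengthen the choice of $c$ accordingly, your variant (showing $supp(x_0)\subseteq T$ for every finite supporting set $T$; the auxiliary permutation $\pi$ is then unnecessary) goes through.
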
 
\begin{proof}  
1.  Let $X=\left\{ x_{1},\ldots, x_{k}\right\}$, and  $S=supp(x_{1})\cup\ldots\cup supp(x_{k})$. Obviously, $S$ supports $X$. Indeed, let us consider $\pi\in Fix(S)$. We have that $\pi\in Fix(supp(x_{i}))$ for each
$i\in\{1,\ldots ,k\}$. Therefore, $\pi\cdot x_{i}=x_{i}$ for each $i\in\{1,\ldots ,k\}$
because $supp(x_{i})$ supports $x_{i}$ for each $i\in\{1,\ldots ,k\}$, and so  $supp(X) \subseteq S$. It remains to prove that $S \subseteq supp(X)$. Consider $a \in S$. This means there exists $j\in\{1,\ldots ,k\}$ such that $a \in supp(x_{j})$. Let $b$ be an atom such that $b \notin supp(X)$ and $b \notin supp(x_{i})$, $\forall i\in\{1,\ldots ,k\}$. Such an atom exists because $A$ is infinite, while $supp(X)$ and $supp(x_{i})$, $ i\in\{1,\ldots ,k\}$, are all finite. We prove by contradiction that $(b\; a) \cdot x_{j} \notin X$. Indeed, suppose that $(b\; a) \cdot x_{j} \in X$. In this case there is $y \in X$ with $(b\; a) \cdot x_{j}=y$. Since  $a \in supp(x_{j})$, we have $b \in (b\; a)(supp(x_{j}))$. However, according to Proposition \ref{2.15}, we have $supp(y)=(b\; a)(supp(x_{j}))$. We obtain that $b \in supp(y)$ for some $y \in X$, which is a contradiction with the choice of $b$. Therefore, $(b\; a) \star X \neq X$, where~$\star$ is the standard $S_{A}$-action on $\wp(U)$ is defined in Proposition \ref{p1}(2).  Since $b \notin supp(X)$, we prove by contradiction  that $a \in supp(X)$. Indeed, suppose that $a \notin supp(X)$. It follows that the transposition $(b\; a)$ fixes each element from $supp(X)$, i.e. $(b\; a) \in Fix(supp(X))$. Since $supp(X)$ supports $X$, by Definition \ref{2.4}, it follows that $(b\; a) \star X=X$, which is a contradiction. Thus, $a \in supp(X)$, and so $S \subseteq supp(X)$. 

2. Since $X$ is uniformly supported, there exists a finite subset of atoms $T$ such that $T$ supports every $x \in X$, i.e. $supp(x) \subseteq T$ for all $x \in X$. Thus, $\cup\{supp(x)\,|\, x\in X\} \subseteq T$. Clearly, $supp(X) \subseteq \cup\{supp(x)\,|\, x\in X\}$. Conversely, let $a \in \cup\{supp(x)\,|\, x\in X\}$. Thus, there exists $x_{0} \in X$ such that $a \in supp(x_{0})$. Let $b$ be an atom such that $b \notin supp(X)$ and $b \notin T$. Such an atom exists because $A$ is infinite, while $supp(X)$ and $T$ are both finite. We prove by contradiction that $(b\; a) \cdot x_{0} \notin X$. Indeed, suppose that $(b\; a) \cdot x_{0}=y \in X$.  Since  $a \in supp(x_{0})$, we have $b =(b\;a)(a) \in (b\; a)(supp(x_{0}))=supp((b\; a) \cdot x_{0})=supp(y)$. Since $supp(y) \subseteq T$, we get $b \in T$, a contradiction. Therefore, $(b\; a) \star X \neq X$.  Since $b \notin supp(X)$, we have that $a \in supp(X)$ as in the above item. 
\end{proof}

\begin{corollary}Let $X$ be a uniformly supported subset of an invariant set. Then $X$ is uniformly supported by $supp(X)$.
\end{corollary}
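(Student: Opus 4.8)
The plan is to derive this corollary directly from Proposition \ref{4.4-9}(2), which already computes the support of a uniformly supported set as the union of the supports of its elements. The corollary is really just an unpacking of what that formula says about the \emph{individual} elements, so the argument should be short, with the only care needed being the monotonicity of the $Fix$ operator.

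First I would recall the hypothesis: since $X$ is a uniformly supported subset of an invariant set $(U,\cdot)$, there is a finite set of atoms supporting every element of $X$ simultaneously, so $X$ is in particular finitely supported as an element of $\wp_{fs}(U)$ and $supp(X)$ is well defined by Proposition \ref{p11}. Next I would invoke Proposition \ref{4.4-9}(2) to obtain the key identity $supp(X)=\cup\{supp(x)\,|\, x\in X\}$. From this identity it is immediate that for every $x\in X$ we have $supp(x)\subseteq supp(X)$, since $supp(x)$ is one of the sets appearing in the union.

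The last step is to turn the inclusion $supp(x)\subseteq supp(X)$ into the statement that $supp(X)$ actually \emph{supports} each $x$. For this I would use that the inclusion $supp(x)\subseteq supp(X)$ yields $Fix(supp(X))\subseteq Fix(supp(x))$: any permutation fixing pointwise the larger set $supp(X)$ in particular fixes pointwise the smaller set $supp(x)$. Hence for every $\pi\in Fix(supp(X))$ we have $\pi\in Fix(supp(x))$, and since $supp(x)$ supports $x$ by Definition \ref{2.4} we conclude $\pi\cdot x=x$. Thus $supp(X)$ supports every $x\in X$, which is precisely the assertion that $X$ is uniformly supported by the single finite set $supp(X)$.

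There is no real obstacle here; the content of the corollary is entirely carried by Proposition \ref{4.4-9}(2), and the only point requiring a sentence of justification is the elementary anti-monotonicity of $Fix$ with respect to inclusion of atom sets. I would therefore keep the proof to a few lines and simply cite the proposition together with this monotonicity observation.
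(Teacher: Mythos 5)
Your proposal is correct and follows exactly the paper's own argument: cite Proposition \ref{4.4-9}(2) to get $supp(X)=\cup\{supp(x)\,|\,x\in X\}$, deduce $supp(x)\subseteq supp(X)$ for each $x\in X$, and conclude that $supp(X)$ supports every element. The paper merely leaves the final anti-monotonicity step ($Fix(supp(X))\subseteq Fix(supp(x))$) implicit, which you spell out.
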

\begin{proof}Since $supp(X)=\cup\{supp(x)\,|\, x\in X\}$, we have $supp(x) \subseteq supp(X)$ for all $x \in X$ which means $supp(X)$ supports every $x \in X$. 
\end{proof}

\begin{proposition}\label{p111}
We have $\wp_{fs}(A)=\wp_{fin}(A) \cup \wp_{cofin}(A)$.
\end{proposition}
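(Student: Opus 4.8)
The plan is to prove the two inclusions separately, the first being routine and the second carrying the real content. For $\wp_{fin}(A) \cup \wp_{cofin}(A) \subseteq \wp_{fs}(A)$ I would simply invoke Proposition \ref{p1}(3): since $A$ is an invariant set, both $\wp_{fin}(A)$ and $\wp_{cofin}(A)$ are invariant sets under the action $\star$, so every finite or cofinite subset of $A$ is finitely supported, i.e.\ lies in $\wp_{fs}(A)$. Concretely, a finite $Y$ is supported by $Y$ itself (using $supp(a)=\{a\}$ together with Proposition \ref{4.4-9}(1)), and a cofinite $Y$ is supported by the finite set $A\setminus Y$.

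For the reverse inclusion $\wp_{fs}(A) \subseteq \wp_{fin}(A) \cup \wp_{cofin}(A)$, which is the substantive direction, I would take an arbitrary $Z \in \wp_{fs}(A)$ and set $S = supp(Z)$, a finite set. The key observation is a dichotomy on the atoms outside $S$: all atoms of $A\setminus S$ have the same membership status relative to $Z$. To establish this, I would pick any two distinct atoms $a,b \in A\setminus S$ and consider the transposition $(a\, b)$. Since $(a\, b)$ fixes every atom of $S$, we have $(a\, b)\in Fix(S)$, and because $S$ supports $Z$ we get $(a\, b)\star Z = Z$. Unfolding the action $\star$ from Proposition \ref{p1}(2), this gives $a \in Z \iff (a\, b)(a)=b \in (a\, b)\star Z = Z$, hence $a\in Z \iff b\in Z$.

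With the dichotomy in hand, I would split into two cases. If some (equivalently, every) atom of $A\setminus S$ belongs to $Z$, then $A\setminus S \subseteq Z$, so $A\setminus Z \subseteq S$ is finite and $Z \in \wp_{cofin}(A)$. Otherwise no atom of $A\setminus S$ belongs to $Z$, so $Z \subseteq S$ is finite and $Z \in \wp_{fin}(A)$. In either case $Z \in \wp_{fin}(A)\cup\wp_{cofin}(A)$, which finishes the argument.

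I do not expect a genuine obstacle here. The only point requiring minimal care is that $A\setminus S$ must contain at least two atoms for the transposition argument to apply, and this holds because $A$ is infinite while $S$ is finite. The heart of the matter is recognizing that finite support forces uniform behaviour on the cofinite complement of the support, collapsing the possibilities down to exactly \emph{finite} or \emph{cofinite}.
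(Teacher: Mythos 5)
Your proof is correct and takes essentially the same approach as the paper: both handle the easy inclusion identically, and the substantive direction rests on the same transposition argument, exchanging atoms outside the finite support and using $Fix(S)$-invariance of $Z$. The only difference is cosmetic --- you phrase it as a direct membership dichotomy on $A\setminus S$ (all atoms outside $supp(Z)$ are in $Z$ or all are out), whereas the paper runs the contrapositive, picking $a\in D\setminus S$ and $b\in C_{D}\setminus S$ to contradict supportedness of a set that is neither finite nor cofinite.
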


\begin{proof} We know that $B$ is finitely supported with $supp(B)=B$ 
whenever $B \subset A$ and $B$ is finite. If $C \subseteq A$ and~$C$ is 
cofinite, then $C$ is finitely supported by $A \setminus C$ with 
$supp(C)=A \setminus C$. However, if $D \subsetneq A$ is neither finite 
nor cofinite, then $D$ is not finitely supported. Indeed, assume by 
contradiction that there exists a finite set of atoms~$S$ supporting $D$. 
Since $S$ is finite and both $D$ and its complementary $C_{D}$ are 
infinite, we can take $a \in D \setminus S$ and $b \in C_{D} \setminus S$. 
Then the transposition $(a\,b)$ fixes $S$ pointwise, but $(a\,b) \star D 
\neq D$ because $(a\,b)(a)=b \notin D$; this contradicts the assertion 
that $S$ supports $D$.  Therefore, $\wp_{fs}(A)=\wp_{fin}(A) \cup 
\wp_{cofin}(A)$. \end{proof}

\begin{definition}\label{2.10-1}
Let $X$ and $Y$ be invariant sets.
\begin{enumerate} 
\item A function $f:X\rightarrow Y$
is \emph{finitely supported} if $f\in\wp_{fs}(X\times Y)$. The set of all finitely supported functions from $X$ to $Y$ is denoted by $Y^{X}_{fs}$.
\item Let $Z$ be a finitely supported 
subset of $X$ and $T$ a finitely supported 
subset of $Y$. A function $f:Z\rightarrow T$ is \emph{finitely supported} if 
$f\in\wp_{fs}(X\times Y)$. 
The set of all finitely supported functions from $Z$ to $T$ is denoted by~$T^{Z}_{fs}$.
\end{enumerate}
\end{definition}

\begin{proposition}\label{2.18'} \cite{book} 
Let $(X,\cdot)$ and $(Y,\diamond)$ be two invariant sets.
\begin{enumerate}
\item   $Y^{X}$ (i.e. the set of all functions from $X$ to $Y$) is an 
$S_{A}$-set with the $S_{A}$-action $\widetilde{\star}:S_{A}\times 
Y^{X}\rightarrow Y^{X}$ defined by $(\pi \widetilde{\star}f)(x) = 
\pi\diamond(f(\pi^{-1}\cdot x))$ for all $\pi\in S_{A}$, $f\in Y^{X}$ and 
$x\in X$. A function $f:X\rightarrow Y$ is finitely supported in the sense 
of Definition \ref{2.10-1} if and only if it is finitely supported with 
respect the permutation action $\widetilde{\star}$.
\item Let $Z$ be a finitely supported 
subset of $X$ and $T$ a finitely supported 
subset of $Y$. A function $f:Z\rightarrow T$ is 
supported by a finite set $S \subseteq A$ if and only if for all $x \in Z$ and all $\pi 
\in Fix(S)$ we have $\pi \cdot x \in Z$, $\pi \diamond f(x) \in T$ and $f(\pi\cdot x)=\pi\diamond f(x)$. Particularly, a function $f:X\rightarrow Y$ is 
supported by a finite set $S \subseteq A$ if and only if for all $x \in X$ and all $\pi 
\in Fix(S)$ we have $f(\pi\cdot x)=\pi\diamond f(x)$.
\end{enumerate}
\end{proposition}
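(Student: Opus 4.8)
The plan is to handle both parts through the identification of a function with its graph, since Definition \ref{2.10-1} defines finite support of $f$ precisely as finite support of its graph inside $\wp_{fs}(X \times Y)$. For the first part I would begin with the routine verification that $\widetilde{\star}$ is an $S_A$-action. The identity law is immediate, since $(Id \,\widetilde{\star}\, f)(x) = Id \diamond f(Id^{-1} \cdot x) = f(x)$. For the composition law I would compute $(\pi \,\widetilde{\star}\,(\pi' \,\widetilde{\star}\, f))(x) = \pi \diamond ((\pi' \,\widetilde{\star}\, f)(\pi^{-1} \cdot x)) = \pi \diamond (\pi' \diamond f(\pi'^{-1} \cdot \pi^{-1} \cdot x))$, then collapse it using the action axioms on $Y$ and on $X$ together with $\pi'^{-1} \circ \pi^{-1} = (\pi \circ \pi')^{-1}$, obtaining $(\pi \circ \pi') \diamond f((\pi \circ \pi')^{-1} \cdot x) = ((\pi \circ \pi') \,\widetilde{\star}\, f)(x)$.

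The heart of part 1 is the claim that the powerset action $\star$ on the graph of $f$ coincides with the action $\widetilde{\star}$ on $f$ itself. Writing the graph as $G_f = \{(x, f(x)) \,|\, x \in X\}$, I would compute $\pi \star G_f = \{(\pi \cdot x, \pi \diamond f(x)) \,|\, x \in X\}$ via the product action $\otimes$ of Proposition \ref{p1}(1), and then reindex by the substitution $x \mapsto \pi^{-1} \cdot x$ (legitimate since $\pi$ acts bijectively on $X$) to rewrite this as $\{(x, \pi \diamond f(\pi^{-1} \cdot x)) \,|\, x \in X\} = G_{\pi \widetilde{\star} f}$. Thus $\pi \star G_f = G_{\pi \widetilde{\star} f}$ for every $\pi$, so a finite set $S$ supports $f$ as a graph in the sense of Definition \ref{2.10-1} if and only if it supports $f$ under $\widetilde{\star}$, which is exactly the stated equivalence.

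For part 2 I would invoke the remark following Definition \ref{2.14}, namely that a subset of an invariant set is supported by $S$ if and only if it is \emph{preserved} (not merely contained) under every $\pi \in Fix(S)$, where the containment version suffices because each permutation has finite order. Applying this to $G_f \subseteq X \times Y$: in the forward direction, assuming $S$ supports $f$, for $x \in Z$ and $\pi \in Fix(S)$ the element $(\pi \cdot x, \pi \diamond f(x)) \in \pi \star G_f = G_f$ must equal $(x', f(x'))$ for some $x' \in Z$, forcing $\pi \cdot x = x' \in Z$, $\pi \diamond f(x) = f(x') \in T$, and $f(\pi \cdot x) = \pi \diamond f(x)$, which are the three asserted conditions. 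In the backward direction, assuming the three conditions, I would check $\pi \star G_f \subseteq G_f$ directly: each $(\pi \cdot x, \pi \diamond f(x))$ equals $(\pi \cdot x, f(\pi \cdot x))$ with $\pi \cdot x \in Z$, hence lies in $G_f$; the finite-order remark then upgrades this inclusion to equality, so $S$ supports $f$. The particular case for total functions $X \to Y$ follows by taking $Z = X$ and $T = Y$, where the membership conditions $\pi \cdot x \in X$ and $\pi \diamond f(x) \in Y$ are automatic and only $f(\pi \cdot x) = \pi \diamond f(x)$ survives.

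I expect the main point of care — rather than a genuine obstacle — to be bookkeeping the reindexing $x \mapsto \pi^{-1} \cdot x$ correctly in the graph computation, and being attentive that in part 2 the condition $\pi \diamond f(x) \in T$ is precisely what guarantees the transported graph remains the graph of a function valued in $T$. This membership condition is in fact redundant given $f(\pi \cdot x) = \pi \diamond f(x)$ together with $f(\pi \cdot x) \in T$, but retaining it keeps the two directions symmetric and the argument transparent.
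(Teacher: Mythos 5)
Your proposal is correct. The paper offers no proof of Proposition \ref{2.18'} (it is imported from \cite{book} as a known result), but your argument — verifying the action axioms for $\widetilde{\star}$, establishing the key identity $\pi \star G_{f} = G_{\pi \widetilde{\star} f}$ between the powerset action on the graph and the function-space action, and then reading off both parts from it, with the finite-order remark upgrading $\pi \star G_{f} \subseteq G_{f}$ to equality in the backward direction of part 2 — is the standard proof of this fact and is exactly what the cited reference relies on; your side observation that the condition $\pi \diamond f(x) \in T$ is redundant given the other two is also accurate.
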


\section{Cardinalities and Order Properties}

\begin{definition} \label{FM-event struc} \ 
\begin{itemize}
\item An \emph{invariant partially ordered set (invariant poset)} is an invariant set $(P,\cdot)$ together
with an equivariant partial order relation $\sqsubseteq$ on $P$.
An invariant  poset is denoted by $(P,\sqsubseteq,\cdot)$ or
simply $P$.

\item A \emph{finitely supported partially ordered set (finitely supported poset)} is a finitely supported subset $X$ of an invariant set $(P,\cdot)$ together
with a partial order relation $\sqsubseteq$ on $X$ that is finitely supported as a subset of $P\times P$.
\end{itemize}
\end{definition}

Two FSM sets $X$ and $Y$ are called equipollent if there exists a finitely supported 
bijection $f:X \to Y$.  The 
FSM cardinality of  $X$ is defined as the equivalence class of all FSM sets 
equipollent to $X$, and is denoted by $|X|$. This means that for two FSM sets $X$ 
and $Y$ we have $|X|=|Y|$ if and only if there 
exists a finitely supported bijection $f:X \to Y$. On the family of cardinalities we can 
define the relations:
\begin{itemize}
\item  $\leq$ by: \ $|X| \leq |Y|$ if and only if there is a finitely supported
injective mapping $f:X \to Y$;
\item  $\leq^{*}$ by: $|X| \leq^{*} |Y|$ if and only if there is a finitely supported surjective mapping $f:Y \to X$.
\end{itemize}

\begin{theorem} \label{cardord}
\begin{enumerate}
\item The relation $\leq$ is equivariant, reflexive, anti-symmetric and transitive, but it is not total.
\item The relation $\leq^{*}$ is equivariant, reflexive and transitive, but it is not anti-symmetric, nor total.
\end{enumerate}
\end{theorem}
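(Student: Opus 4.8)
The plan is to verify the eight assertions one at a time, dispatching the routine algebraic properties quickly and isolating the two genuinely delicate points: the anti-symmetry of $\leq$ and the \emph{failure} of anti-symmetry of $\leq^{*}$.

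I would first dispose of equivariance, reflexivity and transitivity, which are common to both relations. The $S_{A}$-action on the family of cardinalities is trivial: for every $\pi\in S_{A}$ the map $x\mapsto\pi\star x$ is a bijection $X\to\pi\star X$ supported by $supp(\pi)$, so $|X|=|\pi\star X|$; hence every cardinality is an equivariant element and any relation between cardinalities is automatically equivariant. Reflexivity is witnessed by $id_{X}$, an equivariant bijection (in particular injective and surjective). For transitivity I would compose: if $f,g$ are finitely supported injections (respectively surjections), then $g\circ f$ is a finitely supported injection (respectively surjection) supported by $supp(f)\cup supp(g)$, the finite support of the composite being a standard consequence of the characterization in Proposition \ref{2.18'}. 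These arguments are verbatim the same for $\leq$ and $\leq^{*}$.

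The anti-symmetry of $\leq$ is exactly the Cantor--Schr\"oder--Bernstein theorem internalized in FSM, and this is where I expect the main obstacle to lie. Given finitely supported injections $f:X\to Y$ and $g:Y\to X$, set $S=supp(f)\cup supp(g)$ and run the classical Banach construction: $C_{0}=X\setminus g(Y)$, $C_{n+1}=g(f(C_{n}))$, $C=\bigcup_{n}C_{n}$, and $h(x)=f(x)$ for $x\in C$, $h(x)=g^{-1}(x)$ otherwise (note $g^{-1}$ is finitely supported as the inverse of a finitely supported bijection onto $g(Y)$). The only non-classical step, and the heart of the matter, is checking that $h$ is finitely supported. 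Using Proposition \ref{2.15} one shows by induction that $g(Y)$, and then every $C_{n}$, is supported by the \emph{same} set $S$; because the $C_{n}$ are uniformly supported by $S$, each $\pi\in Fix(S)$ fixes every $C_{n}$ and hence fixes $C=\bigcup_{n}C_{n}$, so $C$ is supported by $S$. This uniformity is precisely the feature that can fail for arbitrary ZF unions and must be secured through the $S$-finite supports principle rather than taken for granted; once $C$ is finitely supported, $h$ is defined from the $S$-supported data $C$, $f$ and $g^{-1}$ and is a finitely supported bijection, giving $|X|=|Y|$.

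For non-totality of both relations I would exhibit the incomparable pair $A$ and $\mathbb{N}$, the latter a trivial $S_{A}$-set (Example \ref{2.7}(3)). A finitely supported $f:A\to\mathbb{N}$ supported by $S$ satisfies $f((a\,b)(a))=f(a)$ for all $a,b\notin S$, so $f$ is constant on the infinite set $A\setminus S$; a finitely supported $g:\mathbb{N}\to A$ supported by $S$ satisfies $\pi(g(n))=g(n)$ for all $\pi\in Fix(S)$, forcing $g(\mathbb{N})\subseteq S$ to be finite. The first fact rules out any injection \emph{or} surjection $A\to\mathbb{N}$, the second rules out any injection \emph{or} surjection $\mathbb{N}\to A$, so $|A|$ and $|\mathbb{N}|$ are incomparable under both $\leq$ and $\leq^{*}$. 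Finally, for the failure of anti-symmetry of $\leq^{*}$ I would take the invariant set $W=\bigcup_{n\geq0}A^{n}$ of finite words over $A$ (each word supported by its finite set of letters, $W$ invariant by Proposition \ref{p1}) and compare $X=W$ with $Y=W+\wp_{fin}(A)$. A surjection $Y\to X$ is the equivariant map $(0,t)\mapsto t$, $(1,B)\mapsto()$; a surjection $X\to Y$ is obtained by splitting $W$ by length-parity, sending an odd word of length $2k+1$ to $(0,-)$ via its prefix of the first $k$ letters (equivariant and onto $W$, since every word is a prefix of longer words) and an even word to $(1,-)$ via the set of letters occurring in it (onto $\wp_{fin}(A)$). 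Thus $|X|\leq^{*}|Y|$ and $|Y|\leq^{*}|X|$. To certify $|X|\neq|Y|$ I would note that a finitely supported bijection would yield a finitely supported injection $j:\wp_{fin}(A)\to W$, which cannot exist: if $j$ were supported by $S$, then for $a,b\notin S$ the transposition $(a\,b)\in Fix(S)$ fixes $\{a,b\}$, hence fixes the word $j(\{a,b\})$, which therefore uses no letter of $\{a,b\}$ and, by Proposition \ref{2.15}, only letters of $S$; consequently $j$ is constant on the infinite $Fix(S)$-orbit of $\{a,b\}$, contradicting injectivity. The delicate part here is finding such a mutually surjective yet non-equipollent pair and supplying the no-injection lemma, which I regard as the second principal difficulty alongside the CSB construction.
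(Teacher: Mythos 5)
Your proposal is correct, but it takes a genuinely different route at exactly the two points you flag as delicate. For the anti-symmetry of $\leq$, the paper (Lemmas \ref{lem1} and \ref{lem2}) proves Cantor--Schr\"oder--Bernstein via a Knaster--Tarski fixed point: it forms the monotone operator $F(X)=B-g(C-f(X))$ on $\wp_{fs}(B)$, checks that $F$, the family $\{X \,|\, X\subseteq F(X)\}$ and its union $T$ are finitely supported, and splits along $T$; you instead iterate Banach's decomposition $C_{0}=X\setminus g(Y)$, $C_{n+1}=g(f(C_{n}))$ and support $C=\bigcup_{n}C_{n}$ by the uniform supportedness of the $C_{n}$. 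Both arguments are internally valid in FSM and rest on the same phenomenon (a uniformly supported family has a finitely supported union); note only that for general FSM sets, as opposed to invariant ones, your common support must also include $supp(X)\cup supp(Y)$ --- this bookkeeping is precisely why the paper states Lemma \ref{lem2} separately from Lemma \ref{lem1}, and your $S=supp(f)\cup supp(g)$ is too small in that generality. For the failure of anti-symmetry of $\leq^{*}$, the paper's Lemma \ref{lem3} takes $T_{fin}(A)$ and $T_{fin}(A)\setminus\bar{\emptyset}$ and rules out a bijection by \emph{counting}: a finitely supported injection $h$ would generate an infinite injective sequence $y_{n+1}=h(y_{n})$ uniformly supported by $supp(h)$, while only finitely many injective tuples are supported by a fixed finite set. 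Your pair $W=\bigcup_{n}A^{n}$ and $W+\wp_{fin}(A)$ cannot be handled this way, since infinitely many non-injective words (e.g. $a, aa, aaa, \ldots$) are supported by a finite set; your substitute --- $j(\{a,b\})$ is fixed by $(a\,b)$, hence its letters lie in $S$, hence $j$ is constant on the infinite $Fix(S)$-orbit of pairs disjoint from $S$ --- is sound, and it is the genuine novelty of your proof. What the paper's example buys is reuse: the same pair later witnesses that $\leq^{*}$ does not imply $\leq$ (Proposition \ref{pco2}) and feeds Corollary \ref{corcor}, whereas your example serves only the theorem at hand. The remaining items (reflexivity, transitivity, and non-totality via $A$ against the trivial invariant set $\mathbb{N}$) coincide with the paper's proofs; your equivariance argument is in fact slightly stronger than the paper's, since you observe that the induced action on cardinalities is trivial (via the finitely supported bijection $x\mapsto\pi\cdot x$, which is supported by $supp(\pi)\cup supp(X)$ rather than just $supp(\pi)$, the hidden step being that disjointly supported permutations commute), so that \emph{every} relation on cardinalities is automatically equivariant, while the paper only transports a given injection or surjection along $\pi$.
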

\begin{proof}
\begin{itemize}
\item $\leq$ and $\leq^{*}$ are equivariant because  for any FSM sets $X$ and $Y$, whenever there is a finitely supported injection/ surjection $f:X \to Y$, according to Proposition \ref{2.15}, we have that $\pi \star f:\pi \star X \to \pi \star Y$,  defined by $(\pi \star f)(\pi \cdot x)=\pi \cdot f(x)$ for all $x \in X$,  is a finitely supported injective/surjective mapping, and so $\pi \star X$ is comparable with $\pi \star Y$ (under  $\leq$ or $\leq^{*}$, after case). 
\item  $\leq$ and $\leq^{*}$ are obviously reflexive because for each FSM set $X$, the identity of $X$ is an equivariant  bijection from $X$ to $X$.
\item $\leq$ and $\leq^{*}$ are transitive because for any FSM sets $X$, $Y$ and $Z$, whenever there are two finitely supported injections/surjections $f:X \to Y$ and $g:Y \to Z$, there exists an injection/surjection $g \circ f:X \to Z$ which is finitely supported by $supp(f) \cup supp(g)$. 
\item The anti-symmetry of $\leq$. 

\begin{lemma} \label{lem1}  Let $(B, \cdot)$ and \; $(C, \diamond)$ be two invariant sets. If there 
exist a finitely supported injective mapping $f: B \to C$ and a finitely 
supported injective mapping $g: C \to B$, then there exists a finitely 
supported bijective mapping $h:B \to C$.  Furthermore, $supp(h) \subseteq 
supp(f) \cup supp(g)$.
\end{lemma}

\emph{Proof of Lemma \ref{lem1}.} 
Let us define $F:\wp_{fs}(B) \to \wp_{fs}(B)$ by $F(X)=B-g(C-f(X))$ for all finitely supported subsets $X$ of $B$. 

\textbf{Claim 1:} $F$ is correctly defined, i.e. $Im(F) \subseteq \wp_{fs}(B)$.\\ 
For every finitely supported subset $X$ of $B$, we have that $f(X)$ is 
supported by $supp(f) \cup supp(X)$. Indeed, let $\pi \in Fix(supp(f) \cup 
supp(X))$. Let~$y$ be an arbitrary element from $f(X)$; then $y=f(x)$ for 
some $x \in X$. However, because $\pi \in Fix (supp(X))$, it follows that 
$\pi \cdot x \in X$ and so, because $supp(f)$ supports $f$ and $\pi$ fixes 
$supp(f)$ pointwise, from Proposition~\ref{2.18'} we get $\pi \diamond y= \pi 
\diamond f(x)= f(\pi \cdot x) \in f(X)$. Thus $\pi \widetilde{\star} 
f(X)=f(X)$, where~$\widetilde{\star}$ is the $S_{A}$-action on~$\wp_{fs}(C)$ 
defined as in Proposition~\ref{p1}.  Analogously, $g(Y)$ is finitely 
supported by $supp(g) \cup supp(Y)$ for all $Y \in \wp_{fs}(C)$. It is easy 
to remark that for every finitely supported subset~$X$ of $B$ we have that 
$C-f(X)$ is also supported by $supp(f) \cup supp(X)$, $g(C-f(X))$ is 
supported by $supp(g) \cup supp(f) \cup supp(X)$, and $B-g(C-f(X))$ is 
supported by $supp(g) \cup supp(f) \cup supp(X)$. Thus,~$F$ is well-defined.

\textbf{Claim 2:} $F$ is a finitely supported function.\\ 
We prove that $F$ is finitely supported by $supp(f) \cup supp(g)$. Let us 
consider $\pi \in Fix(supp(f) \cup supp(g))$. Since $\pi \in Fix(supp(f))$ 
and $supp(f)$ supports~$f$, according to Proposition \ref{2.18'} we have 
that $f(\pi \cdot x)=\pi \diamond f(x)$ for all $x \in B$. Thus, for every 
finitely supported subset $X$ of $B$ we have $f(\pi \star X)=\{f(\pi \cdot 
x)\;|\;x \in X\}=\{\pi \diamond f(x)\;|\;x \in X\}=\pi \widetilde{\star} 
f(X)$, where~$\star$ is the $S_{A}$-action on $\wp_{fs}(B)$ and 
$\widetilde{\star}$ is the $S_{A}$-action on $\wp_{fs}(C)$. Similarly, 
$g(\pi \widetilde{\star} Y)=\pi \star g(Y)$ for any finitely supported 
subset $Y$ of $C$. Therefore, $F(\pi \star X)=B-g(C-f(\pi \star 
X))=B-g(C-\pi \widetilde{\star} f(X)) \overset{\pi 
\widetilde{\star}C=C}{=}B-g(\pi \widetilde{\star}(C-f(X)))=B-(\pi \star 
g(C-f(X))) \overset{\pi \star B=B}{=} \pi \star (B- g(C-f(X)))=\pi \star 
F(X)$. From Proposition \ref{2.18'} it follows that $F$ is finitely 
supported. Moreover, because $supp(F)$ is the least set of atoms supporting 
$F$, we have $supp(F) \subseteq supp(f) \cup supp(g)$.

\textbf{Claim 3:} For any $X,Y \in \wp_{fs}(B)$ with $X \subseteq Y$, we have $F(X) \subseteq F(Y)$. This remark follows by direct calculation.

\textbf{Claim 4:} The set $S:=\{X\;|\; X \in \wp_{fs}(B), X \subseteq 
F(X)\}$ is a non-empty finitely supported subset of $\wp_{fs}(B)$. 
Obviously, $\emptyset \in S$. We claim that $S$ is supported by $supp(F)$. 
Let $\pi \in Fix(supp(F))$, and $X \in S$. Then $X \subseteq F(X)$. From the 
definition of $\star$ (see Proposition \ref{p1}) we have $\pi \star X 
\subseteq \pi \star F(X)$. According to Proposition \ref{2.18'}, because 
$supp(F)$ supports $F$, we have $\pi \star X \subseteq \pi \star F(X)=F(\pi 
\star X)$, and so $\pi \star X \in S$. It 
follows that $S$ is finitely supported, and $supp(S) \subseteq supp(F)$.

\textbf{Claim 5:} $T:=\underset{X \in S}{\cup}X$ is finitely supported by $supp(S)$.\\ 
Let $\pi \in Fix(supp(S))$, and $t \in T$. Since $T=\underset{X \in 
S}{\cup}X$, we have that there exists $Z \in S$ such that $t \in Z$. 
Therefore, $\pi \cdot t \in \pi \star Z$. However, since~$\pi$ fixes 
$supp(S)$ pointwise and $supp(S)$ supports $S$, we have 
that $\pi \star Z \in S$. Thus, there exists $Y \in S$ such that $\pi \star 
Z=Y$. Therefore $\pi \cdot t \in Y$, and so $\pi \cdot t \in \underset{X \in 
S}{\cup}X$. It follows that $\underset{X \in S}{\cup}X$ is finitely 
supported, and so $T=\underset{X \in S}{\cup}X \in \wp_{fs}(B)$. 
Furthermore, $supp(T) \subseteq supp(S)$.

\textbf{Claim 6:} We prove that $F(T)=T$.\\ 
Let $X \in S$ arbitrary. We have $X \subseteq F(X) \subseteq F(T)$. By 
taking the supremum on $S$, this leads to $T \subseteq F(T)$.  However, 
because $T \subseteq F(T)$, from Claim 3 we also have $F(T) \subseteq F(F(T))$. 
Furthermore, $F(T)$ is supported by $supp(F) \cup supp(T)$ (i.e. by $supp(f) 
\cup supp(g)$), and so $F(T) \in S$. According to the definition of $T$, we 
get $F(T) \subseteq T$.
\smallskip

We get $T=B-g(C-f(T))$, or equivalently, $B-T=g(C-f(T))$. 
Since~$g$ is injective, we obtain that for each $x \in B-T$, $g^{-1}(x)$ is 
a set containing exactly one element.
Let us define $h:B \to C$ by 
\[
h(x)=\left\{ \begin{array}{ll}
f(x), & \text{for}\: x \in T;\\
g^{-1}(x), & \text{for}\: x \in B-T.\end{array}\right. 
\]

\textbf{Claim 7:} We claim that $h$ is supported by the set $supp(f) \cup 
supp(g) \cup supp(T)$ (more exactly, by $supp(f) \cup supp(g)$, according to 
the previous~claims).
Let $\pi \in Fix(supp(f) \cup supp(g) \cup supp(T))$, and $x$ an arbitrary element of~$B$.

If $x \in T$, because $\pi \in Fix(supp(T))$ and $supp(T)$ supports $T$, 
we have $\pi \cdot x \in T$. Thus, from Proposition~\ref{2.18'} we get 
$h(\pi \cdot x)=f(\pi \cdot x)=\pi \diamond f(x)=\pi \diamond h(x)$.

If $x \in B-T$, we have $\pi \cdot x \in B-T$. Otherwise, we would obtain 
the contradiction $x=\pi^{-1} \cdot (\pi \cdot x) \in T$ because $\pi^{-1}$ 
also fixes $supp(T)$ pointwise. Thus, because $g$ is finitely supported, 
according to Proposition~\ref{2.18'} we have 
$h(\pi \cdot x)=g^{-1}(\pi \cdot x)=\{y \in C\;|\; g(y)$=$\pi \cdot x\}=\{y 
\in C\;|\; \pi^{-1} \cdot g(y)$=$x\}=\{y\in C\;|\; g(\pi^{-1} \diamond y)$=
$x\}\overset{\pi^{-1} \diamond y:= z}{=}\{\pi \diamond z \in C\;|\; 
g(z)$=$x\}=\pi \diamond \{ z \in C\;|\; g(z)$=$x\}=\pi \diamond 
g^{-1}(x)=\pi \diamond h(x)$.
We obtained $h(\pi \cdot x)=\pi \diamond h(x)$ for all $\pi \in 
Fix(supp(f) \cup supp(g) \cup supp(T))$ and all $x \in B$. According to 
Proposition~\ref{2.18'}, we get that $h$ is finitely supported. 
Furthermore, we also have that $supp(h) \subseteq supp(f) \cup supp(g) 
\cup supp(T) \overset{Claim\; 5}{\subseteq} supp(f) \cup supp(g) \cup 
supp(S)$$ \overset{Claim \; 4}{\subseteq} supp(f) \cup supp(g) \cup 
supp(F) \overset{Claim \; 2}{\subseteq} supp(f)$ $ \cup supp(g)$.
 
\textbf{Claim 8:} $h$ is a bijective function. \\
First we prove that $h$ is injective. Let us suppose that $h(x)=h(y)$. We 
claim that either $x,y \in T$ or $x,y \in B-T$. Indeed, let us suppose that 
$x\in T$ and $y \notin T$ (the case $x \notin T$, $y \in T$ is similar). 
We have $h(x)=f(x)$ and $h(y)=g^{-1}(y)$. If we denote $g^{-1}(y)=z$, we 
have $g(z)=y$. However, we supposed that $y \in B-T$, and so there exists $u 
\in C-f(T)$ such that $y=g(u)$. Since $y=g(z)$, from the injectivity of $g$ 
we get $u=z$. This is a contradiction because $u \notin f(T)$, while $z=f(x) 
\in f(T)$.  Since we proved that both $x,y$ are contained either in $T$ or 
in $B-T$, the injectivity of $h$ follows from the injectivity of $f$ or $g$, 
respectively.

Now we prove that $h$ is surjective. Let $y \in C$ be arbitrarily chosen. If 
$y \in f(T)$, then there exists $z \in T$ such that $y=f(z)$, and so 
$y=h(z)$.  If $y \in C-f(T)$, and because $g(C-f(T))=B-T$, there exists $x 
\in B-T$ such that $g(y)=x$. Thus, $y \in g^{-1}(x)$. Since $g$ is 
injective, and so $g^{-1}(x)$ is a one-element set, we can say that 
$g^{-1}(x)=y$ with $x \in B-T$. Thus we have $y=h(x)$.

\begin{lemma} \label{lem2}
Let $(B, \cdot)$ and \; $(C, \diamond)$ be two invariant sets (in 
particular, $B$ and $C$ could coincide), $B_{1}$ a finitely supported subset 
of $B$ and $C_{1}$ a finitely supported subset of $C$. If there exist a 
finitely supported injective mapping $f: B_{1} \to C_{1}$ and a finitely 
supported injective mapping $g: C_{1} \to B_{1}$, then there exists a 
finitely supported bijective mapping $h:B_{1} \to C_{1}$. Furthermore, 
$supp(h) \subseteq supp(f) \cup supp(g) \cup supp(B_{1}) \cup supp(C_{1})$.
\end{lemma}

\emph{Proof of Lemma \ref{lem2}.}
We follow the proof of Lemma \ref{lem1}. We define $F:\wp_{fs'}(B_{1}) 
\to \wp_{fs'}(B_{1})$ by $F(X)=B_{1}-g(C_{1}-f(X))$ for all $X \in 
\wp_{fs'}(B_{1})$, where $\wp_{fs'}(B_{1})$ is a finitely supported subset 
of the invariant set $\wp_{fs}(B)$ (supported by $supp(B_{1})$) defined by 
$\wp_{fs'}(B_{1})=\{X \in \wp_{fs}(B)\;|\;X \subseteq B_{1}\}$. As in the 
previous lemma, but using Proposition \ref{2.18'}, we get that $F$ is 
well-defined, i.e. for every $X \in \wp_{fs'}(B_{1})$ we have that $F(X)$ is 
supported by $supp(f) \cup supp(g) \cup supp(B_{1}) \cup supp(C_{1}) \cup 
supp(X)$ which means $F(X) \in \wp_{fs'}(B_{1})$. Moreover, $F$ is itself 
finitely supported (in the sense of Definition~\ref{2.10-1}) 
by $supp(f) \cup supp(g) \cup supp(B_{1}) \cup supp(C_{1})$. The set 
$S:=\{X\;|\; X \in \wp_{fs'}(B_{1}), \ X \subseteq F(X)\}$ is contained in 
$\wp_{fs'}(B_{1})$ and it is supported by $supp(F)$ as a subset of 
$\wp_{fs}(B)$. The set $T:=\underset{X \in S}{\cup}X \in \wp_{fs'}(B_{1})$ 
is finitely supported by $supp(S)$, and it is a fixed point of $F$.

As in the proof of Lemma \ref{lem1}, 
we define the bijection $h:B_{1} \to C_{1}$ by 
\[
h(x)=\left\{ \begin{array}{ll}
f(x), & \text{for}\: x \in T;\\
g^{-1}(x), & \text{for}\: x \in B_{1}-T.\end{array}\right. \]

According to Proposition \ref{2.18'}, we obtain that $h$ is finitely 
supported by $ supp(f) \cup supp(g) \cup supp(B_{1}) \cup supp(C_{1}) \cup 
supp(T)$, and $supp(h) \subseteq supp(f) \cup supp(g) \cup supp(B_{1}) 
\cup supp(C_{1})$. Thus, $h$ is the required finitely supported 
bijection between $B_{1}$ and $C_{1}$.
\medskip

The anti-symmetry of $\leq$ follows from Lemma \ref{lem1} and Lemma 
\ref{lem2} because FSM sets are actually finitely supported subsets of 
invariant sets. It is worth noting that $\leq^{*}$ is not anti-symmetric.

\begin{lemma} \label{lem3} There are two invariant sets $B$ and $C$ such that there exist both a 
finitely supported surjective mapping $f: C \to B$ and a finitely supported surjective 
mapping $g: B \to C$, but it does not exist a finitely supported bijective 
mapping $h:B \to C$.
\end{lemma}

\emph{Proof of Lemma \ref{lem3}.} 
Let us consider the invariant set $(A, \cdot)$ of atoms. The family $T_{fin}(A)=\{(x_{1}, \ldots, x_{m}) \subseteq (A \times \ldots \times A)\,|\,m \geq 0\}$ of all finite injective tuples 
from $A$ (including the empty tuple denoted by~$\bar{\emptyset}$) 
is an $S_{A}$-set with the $S_{A}$-action $\star:S_{A}\times T_{fin}(A) 
\rightarrow T_{fin}(A)$ defined by $\pi \star (x_{1}, \ldots, x_{m})=(\pi 
\cdot x_{1}, \ldots, \pi \cdot x_{m})$ for all $ (x_{1}, \ldots, x_{m}) \in 
T_{fin}(A)$ and all $\pi \in S_{A}$. Since $A$ is an invariant 
set, we have that $T_{fin}(A)$ is an invariant set. Whenever $X$ is an 
invariant set, we have that each injective tuple $(x_{1}, \ldots, x_{m})$ of 
elements belonging to $X$ is finitely supported, and, furthermore,  $supp(x_{1}, \ldots, 
x_{m})=supp(x_{1}) \cup \ldots \cup supp(x_{m})$. Particularly, we obtain that $supp(a_{1}, \ldots, a_{m})=\{a_{1}, \ldots, a_{m}\}$, for any injective tuple of atoms 
$(a_{1}, \ldots, a_{m})$ (similarly as in Proposition 2.2 from \cite{book}).

Since $supp(\bar{\emptyset})=\emptyset$, it follows that 
$T^{*}_{fin}(A)=T_{fin}(A)\setminus \bar{\emptyset}$ is an equivariant subset of 
$T_{fin}(A)$, and is itself an invariant set. Let us fix an atom $a \in A$. We define $f:T_{fin}(A) \to T_{fin}(A)\setminus \bar{\emptyset}$ by 
\[
f(y)=\left\{ \begin{array}{ll}
y, & \text{if}\: \text{$y$ is an injective non-empty tuple};\\
(a), & \text{if}\: \text{$y=\bar{\emptyset}$}\: .\end{array}\right. \]
Clearly, $f$ is surjective. We claim that $f$ is supported by $supp(a)$. Let 
$\pi \in Fix(supp(a))$, i.e. $a=\pi(a)=\pi \star(a)$. If $y$ is a non-empty 
tuple of atoms, we obviously have $f(\pi \star y)=\pi \star y=\pi \star 
f(y)$. If $y=\bar{\emptyset}$, we have $\pi \star y=\bar{\emptyset}$, and so 
$f(\pi \star y)=(a)=\pi(a)=\pi \star f(y)$. Thus, $f(\pi \star y)=\pi \star 
f(y)$ for all $y \in T_{fin}(A)$. According to Proposition \ref{2.18'}, we 
have that $f$ is finitely supported.

We define an equivariant surjective function $g:T_{fin}(A)\setminus 
\bar{\emptyset} \to T_{fin}(A)$ by
\[
g(y)=\left\{ \begin{array}{ll}
\bar{\emptyset}, & \text{if}\: \text{$y$ is a tuple with exactly one element};\\
y', & \text{otherwise}\: ; \end{array}\right. \]
where $y'$ is a new tuple formed by deleting the first element in tuple~$y$ 
(the first position in a finite injective tuple exists without 
requiring any form of choice).
Clearly, $g$ is surjective. Indeed, $\bar{\emptyset}=g((a))$ for some 
one-element tuple $(a)$ ($A$ is non-empty, and so it has at least one atom). 
For a fixed finite injective non-empty $m$-tuple $y$, we have that $y$ can 
be seen as being ``contained" in an injective $(m+1)$-tuple $z$ of form 
$(b,y)$ (whose first element is a certain atom~$b$, and the following 
elements are precisely the elements of~$y$). The related atom~$b$ exists 
because $y$ is finite, while $A$ is infinite (generally, we can always find 
an atom $b \notin supp(y)=\{y\}$ according to the finite support requirement 
in FSM - more details in Section 2.9 of \cite{book}). We get $y=g (z)$. For proving the surjectivity of $g$ we do not need to `choose'  a precise such an  element $b$ (we do not need to define an inverse function for $g$); it is sufficient to ascertain that $g(b,y)=y$ for every $b \in A\setminus\{y\}$ and $A\setminus\{y\}$ is non-empty (the axiom of choice is not required because for proving only the surjectivity of $g$ we do not involve the construction of a system of representatives for the family $(g^{-1}(y))_{y \in T_{fin}(A)}$).

We claim now that $g$ is equivariant. Let $(x)$ be a one-element tuple 
from~$A$ and $\pi$ an arbitrary permutation from~$S_{A}$. We have that 
$\pi \star (x)=(\pi(x))$ is a one-element tuple from $A$, and so $g(\pi 
\star (x))=\bar{\emptyset} = \pi \star \bar{\emptyset} = \pi \star 
g((x))$. Now, let us consider $(x_{1}, \ldots, x_{m}) \in T_{fin}(A), m 
\geq 2$ and $\pi \in S_{A}$. We have $ g(\pi \star (x_{1}, \ldots, 
x_{m}))=g((\pi \cdot x_{1}, \ldots, \pi \cdot x_{m}))=g((\pi (x_{1}), 
\ldots, \pi (x_{m})))=(\pi (x_{2}), \ldots, \pi (x_{m}))=\pi \star (x_{2}, 
\ldots, x_{m})=\pi \star g(x_{1}, \ldots, x_{m})$. According to 
Proposition \ref{2.18'}, we have that $g$ is empty-supported (equivariant).

We prove by contradiction that there could not exist a finitely supported 
injective $h: T_{fin}(A) \to T_{fin}(A)\setminus \bar{\emptyset}$.  
Let us suppose there is a finitely supported injection $h:T_{fin}(A) 
\rightarrow T_{fin}(A)\setminus \bar{\emptyset}$. We have $\bar{\emptyset} 
\notin Im(h)$ because $Im(h) \subseteq T_{fin}(A) \setminus 
\bar{\emptyset}$. We can form an infinite sequence $\mathcal{F}$ which has 
the first term $y_{0}=\bar{\emptyset}$, and the general term 
$y_{n+1}=h(y_{n})$ for all $n \in \mathbb{N}$. Since $\bar{\emptyset}\notin 
Im(h)$, it follows that $\bar{\emptyset} \neq h(\bar{\emptyset})$. Since $h$ 
is injective and $\bar{\emptyset} \notin Im(h)$, we obtain by induction that 
$h^{n}(\bar{\emptyset}) \neq h^{m}(\bar{\emptyset})$ for all $n,m \in 
\mathbb{N}$ with $n \neq m$.

We prove now that for each $n \in \mathbb{N}$ we have that $y_{n+1}$ is 
supported by $supp(h)\cup supp(y_{n})$. Let $\pi \in Fix(supp(h)\cup 
supp(y_{n}))$. According to Proposition~\ref{2.18'}, because $\pi \in 
Fix(supp(h))$ we have $h(\pi \star y_{n})=\pi \star h(y_{n})$. Since $\pi 
\in Fix(supp(y_{n}))$ we have $\pi \star y_{n}=y_{n}$, and so $h(y_{n})=\pi 
\star h(y_{n})$. Thus, $\pi \star y_{n+1}= \pi \star h(y_{n}) = h(y_{n}) = 
y_{n+1}$. Furthermore, because $supp(y_{n+1})$ is the least set supporting 
$y_{n+1}$, we have $supp(y_{n+1}) \subseteq supp(h)\cup supp(y_{n})$ for all 
$n \in \mathbb{N}$.
Since each $y_{n}$ is a finite injective tuple of atoms, it follows that 
$supp(y_{n})=\{y_{n}\}$ for all $n \in \mathbb{N}$ (where by $\{y_{n}\}$ we 
denoted the set of atoms forming $y_{n}$). We get $\{y_{n+1}\} = 
supp(y_{n+1}) \subseteq supp(h)\cup supp(y_{n}) = supp(h) \cup \{y_{n}\}$. 
By repeatedly applying this result, we get $\{y_{n}\} \subseteq 
supp(h) \cup \{y_{0}\}= supp(h) \cup \emptyset =supp(h)$ for all 
$n\in\mathbb{N}$. Since $supp(h)$ has only a finite number of subsets, we 
contradict the statement that the infinite sequence $(y_{n})_{n}$ never 
repeats. Thus, there does not exist a finitely supported bijection between 
$T_{fin}(A)\setminus \bar{\emptyset}$ and $T_{fin}(A)$. 

\item $\leq$ and $\leq^{\star}$ are not total. 

We prove that whenever $X$ is an infinite ordinary (non-atomic) ZF-set, for any finitely supported function $f : A \to X$
and any finitely supported function $g : X \to A$, $Im(f)$ and $Im(g)$ are finite. As a direct consequence there are no finitely supported injective mappings and no finitely supported surjective mappings between $A$ and $X$.

Let us consider a finitely supported mapping
$f:A \to X$.  Let let us fix an element $b\in A$ with 
$b\notin supp(f)$. Let $c$ be an arbitrary element from 
$A\setminus supp(f)$. Since $b\notin supp(f)$, we have that $(b\, c)$ 
fixes every element from $supp(f)$, i.e. $(b\, c)\in Fix(supp(f))$. 
However, $supp(f)$ supports~$f$, and so, by Proposition \ref{2.18'}, we 
have $f((b\, c)(a))=(b\,c) \diamond f(a)=f(a)$ for all $a\in A$. In 
particular, $f(c)=f((b\,c)(b))=f(b)$. Since~$c$ has been chosen 
arbitrarily from $A \setminus supp(f)$, it follows that $f(c)=f(b)$, for 
all $c \in A \setminus supp(f)$. If $supp(f)=\{a_{1}, \ldots, a_{n}\}$, 
then $Im(f) = \{f(a_{1})\}\cup \ldots \cup \{f(a_{n})\} \cup \{f(b)\}$. Thus, $Im(f)$ 
is finite (because it is a finite union of singletons). 

Let $g : X \to A$ be a finitely supported function. Assume by contradiction that $Im(g)$ is infinite. Pick any atom $a \in Im(g) \setminus supp(g)$ (such an atom exists because $supp(g)$ is finite). There
exists an $x \in X$ such that $g(x) = a$. Now pick any atom $b \in Im(g) \setminus (supp(g) \cup \{a\})$, The transposition $(a\,b)$ fixes $supp(g)$ pointwise, and so $g(x)=g((a\,b) \diamond x)=(a\,b) \cdot g(x) =(a\, b)(a)= b$, contradicting the fact that
$g$ is a function. Thus, $Im(g)$ is finite. 

\end{itemize}
\end{proof}

\begin{corollary}\label{corcor}
There exist two invariant sets $B$ and $C$ such that there is a finitely 
supported bijection between $\wp_{fs}(B)$ and $\wp_{fs}(C)$, but there is 
no finitely supported bijection between $B$ and $C$.
\end{corollary}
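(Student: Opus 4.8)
The plan is to reuse the very same pair of invariant sets that already appears in Lemma \ref{lem3}, namely $B = T_{fin}(A)$ and $C = T_{fin}(A) \setminus \bar{\emptyset}$. For this pair Lemma \ref{lem3} already gives us one half of the claim for free: there is no finitely supported bijection between $B$ and $C$. So the whole task reduces to producing a finitely supported bijection between $\wp_{fs}(B)$ and $\wp_{fs}(C)$. Since $B$ and $C$ are invariant sets, Proposition \ref{p1}(2) tells us that $\wp_{fs}(B)$ and $\wp_{fs}(C)$ are themselves invariant sets, so the FSM Cantor--Schr{\"o}der--Bernstein theorem (Lemma \ref{lem1}) is available for them. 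Hence it is enough to exhibit finitely supported injections in both directions between $\wp_{fs}(B)$ and $\wp_{fs}(C)$.

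The device that provides these injections is the observation that a finitely supported surjection lifts, by taking preimages, to a finitely supported injection on finitely supported powersets. The proof of Lemma \ref{lem3} constructs finitely supported surjections $f \colon B \to C$ and $g \colon C \to B$. I would define $\Phi \colon \wp_{fs}(C) \to \wp_{fs}(B)$ by $\Phi(Y) = f^{-1}(Y) = \{x \in B \mid f(x) \in Y\}$ and, symmetrically, $\Psi \colon \wp_{fs}(B) \to \wp_{fs}(C)$ by $\Psi(Z) = g^{-1}(Z)$. Each of these is injective precisely because the maps it comes from are surjective: for a surjection one has $f(f^{-1}(Y)) = Y$, so $f^{-1}(Y_1) = f^{-1}(Y_2)$ forces $Y_1 = Y_2$, and likewise for $g$. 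Thus $|\wp_{fs}(C)| \leq |\wp_{fs}(B)|$ and $|\wp_{fs}(B)| \leq |\wp_{fs}(C)|$, and an application of Lemma \ref{lem1} then yields the desired finitely supported bijection between $\wp_{fs}(B)$ and $\wp_{fs}(C)$, which together with Lemma \ref{lem3} completes the proof.

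The step that actually requires care, and which I expect to be the main obstacle, is verifying that $\Phi$ and $\Psi$ land inside the finitely supported powersets and are themselves finitely supported; the injectivity and the final appeal to Cantor--Schr{\"o}der--Bernstein are immediate once this is settled. Using the support characterisation of functions in Proposition \ref{2.18'}, one checks that for $\pi \in Fix(supp(f))$ the preimage operation commutes with the permutation actions, i.e. $\pi \star f^{-1}(Y) = f^{-1}(\pi \star Y)$, where $\star$ is the powerset action from Proposition \ref{p1}(2); this follows from $f(\pi^{-1} \cdot x) = \pi^{-1} \diamond f(x)$ for $\pi \in Fix(supp(f))$. Specialising to $\pi \in Fix(supp(f) \cup supp(Y))$ gives $\pi \star f^{-1}(Y) = f^{-1}(Y)$, so that $f^{-1}(Y) \in \wp_{fs}(B)$ with $supp(f^{-1}(Y)) \subseteq supp(f) \cup supp(Y)$, while letting $Y$ vary shows via Proposition \ref{2.18'} that $\Phi$ is itself supported by $supp(f)$; the symmetric computation handles $\Psi$ with support $supp(g)$. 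This bookkeeping about the commutation of preimages with the group actions is the only genuinely technical point of the argument.
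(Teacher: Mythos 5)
Your proposal is correct and follows essentially the same route as the paper: the paper also takes the pair from Lemma \ref{lem3}, lifts the two finitely supported surjections to injections between the finitely supported powersets via preimages (this is exactly the paper's Lemma \ref{lemlem}, which you re-derive inline, including the same commutation check $\pi\star f^{-1}(Y)=f^{-1}(\pi\star Y)$ via Proposition \ref{2.18'}), and then applies the FSM Cantor--Schr\"{o}der--Bernstein theorem (Lemma \ref{lem1}). The only difference is presentational: the paper isolates the preimage construction as a standalone lemma with support bound $supp(f)\cup supp(X)\cup supp(Y)$, which in your case collapses to $supp(f)$ since the sets involved are invariant.
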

\begin{proof} Firstly we prove the following lemma.
\begin{lemma} \label{lemlem} Let $X$ and $Y$ be two FSM sets and $f:X \to Y$ a finitely supported surjective function. Then the mapping $g:\wp_{fs}(Y) \to \wp_{fs}(X)$ defined by $g(V)=f^{-1}(V)$ for all $V \in \wp_{fs}(Y)$ is well defined, injective and finitely supported by $supp(f) \cup supp(X) \cup supp(Y)$. 
\end{lemma}

\emph{Proof of Lemma \ref{lemlem}.} Let $V$ be an arbitrary element from $\wp_{fs}(Y)$.  We claim that $f^{-1}(V) \in 
\wp_{fs}(X)$. Indeed we prove that the set $f^{-1}(V)$ is supported by $supp(f) \cup 
supp(V) \cup supp(X) \cup supp(Y)$. Let $\pi \in Fix(supp(f) \cup supp(V) \cup supp(X) \cup supp(Y))$, and $x \in f^{-1}(V) $. 
This means $f(x) \in V$. According to Proposition \ref{2.18'}, and because $\pi$ fixes $supp(f)$ pointwise and $supp(f)$ supports $f$, we have $f(\pi \cdot x)= \pi \cdot f(x) \in \pi \star 
V = V$, and so $\pi \cdot x \in f^{-1}(V)$ (we denoted the actions on $X$ and $Y$ generically by $\cdot$, and the actions on their powersets by $\star$). Therefore, $f^{-1}(V)$ is 
finitely supported, and so the function $g$ is well defined. We claim that $g$ is supported by 
$supp(f) \cup supp(X) \cup supp(Y)$. Let $\pi \in Fix(supp(f) \cup supp(X) \cup supp(Y))$.  For any arbitrary $V \in \wp_{fs}(Y)$ we get $\pi \star V \in \wp_{fs}(Y)$ and $\pi \star g(V) \in \wp_{fs}(X)$, and by Proposition \ref{2.18'} we have that $\pi^{-1} \in Fix(supp(f))$, 
and so $f(\pi^{-1} \cdot x)=\pi^{-1} \cdot f(x)$ for all $x\in X$. For any 
arbitrary $V \in \wp_{fs}(Y)$, we have that $z \in g(\pi \star V) = 
f^{-1}(\pi \star V) \Leftrightarrow f(z) \in \pi \star V \Leftrightarrow 
\pi^{-1} \cdot f(z) \in V \Leftrightarrow f(\pi^{-1} \cdot z) \in V 
\Leftrightarrow \pi^{-1} \cdot z \in f^{-1}(V) \Leftrightarrow z \in \pi 
\star f^{-1}(V)=\pi \star g(V)$. If follows that $g(\pi \star V)=\pi \star 
g(V)$ for all $V \in \wp_{fs}(Y)$, and so $g$ is finitely supported. 
Moreover, because $f$ is surjective, a simple calculation shows us that $g$ 
is injective. Indeed, let us suppose that $g(U)=g(V)$ for some $U,V \in 
\wp_{fs}(Y)$. We have $f^{-1}(U) = f^{-1}(V)$, and so $f(f^{-1}(U)) = 
f(f^{-1}(V))$. Since $f$ is surjective, we get $U = f(f^{-1}(U)) = 
f(f^{-1}(V)) = V$.

We start the proof of Corollary \ref{corcor}.
As in Lemma \ref{lem3}, we consider the sets $B=T_{fin}(A)\setminus \bar{\emptyset}$ and $C=T_{fin}(A)$. According to Lemma \ref{lem3} there exists a finitely supported surjective function $f:C \to B$ and a finitely supported (equivariant) 
surjection $g:B \to C$. Thus, according to Lemma \ref{lemlem}, there exist a finitely supported injective function $f':\wp_{fs}(B) \to \wp_{fs}(C)$ and  a finitely supported injective function $g':\wp_{fs}(C) \to \wp_{fs}(B)$. According to Lemma \ref{lem1}, there is a finitely supported bijection between $\wp_{fs}(B)$ and $\wp_{fs}(C)$. However, we proved in Lemma \ref{lem3} that there is no finitely supported bijection between $B=T_{fin}(A)\setminus \bar{\emptyset}$ and $C=T_{fin}(A)$.
\end{proof}

The following result communicated by Levy in 1965 for non-atomic ZF sets can be reformulated in the world of finitely supported atomic structures. 
\begin{corollary}Let $X$ and $Y$ be two invariant sets with the property that whenever $|2^{X}_{fs}|=|2^{Y}_{fs}|$ we have $|X|=|Y|$. If $|X|\leq^{\star}|Y|$ and $|Y|\leq^{\star}|X|$, then $|X|=|Y|$.
\end{corollary}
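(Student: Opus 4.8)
The plan is to lift the two surjections furnished by the hypotheses to finitely supported injections between the finitely supported powersets, apply the FSM Cantor--Schr{\"o}der--Bernstein theorem (Lemma~\ref{lem1}) at the powerset level, and then feed the resulting equality of powerset cardinalities into the assumed implication to conclude $|X|=|Y|$.

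First I would identify $2^{X}_{fs}$ with $\wp_{fs}(X)$ up to FSM cardinality. Sending each finitely supported subset of $X$ to its characteristic function $X\to\{0,1\}$ (with the trivial action on $\{0,1\}$) is an equivariant bijection, since by Proposition~\ref{2.18'} a subset and its characteristic function are supported by exactly the same finite sets of atoms. Hence $|2^{X}_{fs}|=|\wp_{fs}(X)|$ and, symmetrically, $|2^{Y}_{fs}|=|\wp_{fs}(Y)|$, so it suffices to establish $|\wp_{fs}(X)|=|\wp_{fs}(Y)|$.

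Next, from $|X|\leq^{\star}|Y|$ I take a finitely supported surjection $f:Y\to X$ and invoke Lemma~\ref{lemlem} to obtain from it the finitely supported injection $V\mapsto f^{-1}(V)$ sending $\wp_{fs}(X)$ into $\wp_{fs}(Y)$; symmetrically, the surjection $g:X\to Y$ coming from $|Y|\leq^{\star}|X|$ yields a finitely supported injection $\wp_{fs}(Y)\to\wp_{fs}(X)$. Thus $|\wp_{fs}(X)|\leq|\wp_{fs}(Y)|$ and $|\wp_{fs}(Y)|\leq|\wp_{fs}(X)|$. Since $\wp_{fs}(X)$ and $\wp_{fs}(Y)$ are themselves invariant sets by Proposition~\ref{p1}(2), Lemma~\ref{lem1} applies directly and produces a finitely supported bijection between them, so $|\wp_{fs}(X)|=|\wp_{fs}(Y)|$, equivalently $|2^{X}_{fs}|=|2^{Y}_{fs}|$. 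The assumed implication now forces $|X|=|Y|$, which is the claim.

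The combinatorial skeleton is exactly Levy's classical reasoning, so the genuine work lies not in the set-theoretic bookkeeping but in staying inside the finitely supported universe at every stage: the preimage maps must be finitely supported (the content of Lemma~\ref{lemlem}), the bijection between the powersets must be finitely supported (guaranteed by Lemma~\ref{lem1}, whose proof is internal to FSM), and the characteristic-function correspondence must be equivariant. I expect no deeper obstacle, precisely because these three finiteness guarantees are already in hand; the only point demanding a line of care is checking that Lemma~\ref{lem1}, rather than the finitely supported subset version Lemma~\ref{lem2}, is the applicable form, which holds because $\wp_{fs}(X)$ and $\wp_{fs}(Y)$ are full invariant sets.
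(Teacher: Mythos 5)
Your proposal is correct and follows essentially the same route as the paper: both use Lemma~\ref{lemlem} to turn the two surjections into finitely supported injections between the powersets, apply the FSM Cantor--Schr{\"o}der--Bernstein result (Lemma~\ref{lem1}) to get a finitely supported bijection $\wp_{fs}(X)\to\wp_{fs}(Y)$, and then invoke the identification of $\wp_{fs}(\cdot)$ with $\{0,1\}^{\cdot}_{fs}$ (the paper cites Theorem~\ref{comp}, you re-derive it via characteristic functions) before feeding $|2^{X}_{fs}|=|2^{Y}_{fs}|$ into the hypothesis. No gaps; the argument matches the paper's proof step for step.
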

\begin{proof}According to the hypothesis and to Lemma \ref{lemlem} there exist two finitely supported injective functions $f:\wp_{fs}(Y) \to \wp_{fs}(X)$  and $g:\wp_{fs}(X) \to \wp_{fs}(Y)$. According to Lemma \ref{lem1}, there is a bijective mapping $h:\wp_{fs}(X) \to \wp_{fs}(Y)$ . According to Theorem \ref{comp}, we get $|2^{X}_{fs}|=|2^{Y}_{fs}|$, and so we get $|X|=|Y|$.
\end{proof}

\begin{proposition}[Cantor] \label{Cantor} Let $X$ be a finitely supported subset of an invariant set $(Y, \cdot)$. Then $|X| \lneq |\wp_{fs}(X)|$ and $|X| \lneq^{*} |\wp_{fs}(X)|$
\end{proposition}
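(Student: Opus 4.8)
The plan is to reproduce Cantor's classical diagonal argument while checking at every step that the sets and functions produced remain inside the finitely supported universe. I split the claim into the two non-strict inequalities and the strict (non-equality) part.

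For $|X| \leq |\wp_{fs}(X)|$ I would use the singleton map $\iota : X \to \wp_{fs}(X)$, $\iota(x)=\{x\}$. Each singleton $\{x\}$ is finitely supported with $supp(\{x\})=supp(x)$, so $\iota$ lands in $\wp_{fs}(X)$; it is visibly injective, and it is supported by $supp(X)$, since $\iota(\pi \cdot x)=\{\pi \cdot x\}=\pi \star \{x\}=\pi \star \iota(x)$ for every $\pi \in Fix(supp(X))$ (using that such a $\pi$ preserves $X$), so Proposition \ref{2.18'} applies. For $|X| \leq^{*} |\wp_{fs}(X)|$ (assuming $X \neq \emptyset$; the case $X=\emptyset$ being trivial under the convention $|\emptyset|\leq^{*}\kappa$) I would fix one element $x_{0}\in X$ and define $g:\wp_{fs}(X) \to X$ by $g(Z)=x$ when $Z=\{x\}$ is a singleton and $g(Z)=x_{0}$ otherwise. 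This $g$ is surjective (every $x$ is hit by $\{x\}$) and is supported by $supp(x_{0}) \cup supp(X)$: a permutation $\pi \in Fix(supp(x_{0})\cup supp(X))$ sends singletons to singletons and non-singletons to non-singletons (being a bijection it preserves cardinality of subsets and preserves $X$), and it fixes $x_{0}$, so $g(\pi \star Z)=\pi \cdot g(Z)$ in both cases, and Proposition \ref{2.18'} again applies.

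For strictness I would run the diagonal argument against surjections. Suppose, for contradiction, that $f:X \to \wp_{fs}(X)$ is any finitely supported surjection, and set $D=\{x \in X \mid x \notin f(x)\}$. The crucial step is to verify $D \in \wp_{fs}(X)$, and I claim $D$ is supported by $supp(f)\cup supp(X)$. For $\pi \in Fix(supp(f)\cup supp(X))$ and $x \in D$, we have $\pi \cdot x \in X$, and Proposition \ref{2.18'} gives $f(\pi \cdot x)=\pi \star f(x)$; were $\pi \cdot x \in f(\pi \cdot x)=\pi \star f(x)$, applying $\pi^{-1}$ would give $x \in f(x)$, contradicting $x \in D$, so $\pi \cdot x \in D$. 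Hence $\pi \star D \subseteq D$, and by the remark following Definition \ref{2.14} (permutations have finite order) $D$ is finitely supported. Surjectivity then provides $x_{0} \in X$ with $f(x_{0})=D$, and the equivalence $x_{0}\in D \Leftrightarrow x_{0}\notin f(x_{0})=D$ is contradictory. Thus no finitely supported surjection $X \to \wp_{fs}(X)$ exists; in particular no finitely supported bijection exists, so $|X| \neq |\wp_{fs}(X)|$, which combined with the above yields $|X|\lneq|\wp_{fs}(X)|$, while the absence of a surjection $X \to \wp_{fs}(X)$ also delivers the strictness in $|X| \lneq^{*} |\wp_{fs}(X)|$.

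The only genuinely delicate point is the finite supportedness of the diagonal set $D$; everything else is routine support bookkeeping. The subtlety is that $D$ is defined by a property ($x \notin f(x)$) mixing an element with its own image, so one must feed both $supp(f)$ and $supp(X)$ into the stabilizing permutation and invoke the identity $f(\pi\cdot x)=\pi \star f(x)$ from Proposition \ref{2.18'} together with the finite-order argument to upgrade $\pi \star D \subseteq D$ into genuine finite support. I would also check the boundary case $X=\emptyset$ separately for the $\leq^{*}$ statement, since a surjection onto the empty set does not exist and must be covered by convention.
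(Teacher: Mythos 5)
Your proof is correct and follows essentially the same route as the paper's: the singleton injection $x \mapsto \{x\}$ supported by $supp(X)$ for $\leq$, the singleton-extracting surjection $\wp_{fs}(X) \to X$ with a fixed default element for $\leq^{*}$, and the diagonal set $\{x \in X \,|\, x \notin f(x)\}$, shown to be supported by $supp(f) \cup supp(X)$, to rule out any finitely supported surjection $X \to \wp_{fs}(X)$ and hence any bijection. Your explicit treatment of the case $X=\emptyset$ and the finite-order remark used to upgrade $\pi \star D \subseteq D$ to genuine finite support are minor refinements of points the paper leaves implicit.
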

\begin{proof}First we prove that there is no finitely supported bijection between $X$ and $\wp_{fs}(X)$, and so their cardinalities cannot be equal. Assume, by contradiction, that there is a finitely supported surjective mapping $f: X \to \wp_{fs}(X)$. Let us consider $Z=\{x \in X\,|\,x \notin f(x)\}$. We claim that $supp(X) \cup supp(f)$ supports $Z$. Let $\pi \in Fix(supp(X) \cup supp(f))$. Let $x \in Z$. Then $\pi \cdot x \in X$ and $\pi \cdot x \notin \pi \star f(x)=f(\pi \cdot x)$. Thus, $\pi \cdot x \in Z$, and so $Z \in \wp_{fs}(X)$. Therefore, since $f$ is surjective there is $x_{0} \in X$ such that $f(x_{0})=Z$. However, from the definition of $Z$ we have $x_{0} \in Z$ if and only if $x_{0}\notin f(x_{0})=Z$, which is a contradiction. 

Now, it is clear that the mapping $i: X \to \wp_{fs}(X)$ defined by $i(x)=\{x\}$ is  injective and supported by $supp(X)$. Thus, $|X| \lneq |\wp_{fs}(X)|$.
Let us fix an atom $y \in X$. We define $s:\wp_{fs}(X) \to X$ by 
\[
s(U)=\left\{ \begin{array}{ll}
u, & \text{if}\: \text{$U$ is an one-element set \{u\} };\\
y, & \text{if}\: \text{$U$ has more than one element}\: .\end{array}\right. \]
Clearly, $s$ is surjective. We claim that $s$ is supported by $supp(y) \cup supp(X)$. Let 
$\pi \in Fix(supp(y) \cup supp(X))$. Thus, $y=\pi \cdot y$. If $U$ is of form $U=\{u\}$, we obviously have $s(\pi \star U)=s(\{\pi \cdot u\})=\pi \cdot u=\pi \cdot 
s(U)$. If $U$ has more than one element, then $\pi \star U$ has more than one element, and we have $s(\pi \star U)=y=\pi \cdot y=\pi \cdot s(U)$. Thus,  $\pi \star U \in \wp_{fs}(X)$, $\pi \cdot s(U) \in X$, and $s(\pi \star U)=\pi \cdot 
s(U)$ for all $U \in \wp_{fs}(X)$ . According to Proposition \ref{2.18'}, we 
have that $s$ is finitely supported. Therefore, $|X| \lneq^{*} |\wp_{fs}(X)|$.
\end{proof}

In Proposition \ref{Cantor} we used a technique for constructing a surjection starting from an injection defined in the opposite way, that can be generalized as follows. 

\begin{proposition} \label{pco2}Let $X$ and $Y$ be finitely supported subsets of an invariant set $U$. If  If $|X| \leq |Y|$, then  $|X| \leq ^{\star} |Y|$. The converse is not valid. However, if $|X| \leq ^{\star} |Y|$, then $|X| \leq |\wp_{fs}(Y)|$.
\end{proposition}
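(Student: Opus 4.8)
The plan is to establish the three assertions separately, reusing constructions already available in the excerpt rather than building new machinery.

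\textbf{The implication $|X|\leq|Y|\Rightarrow|X|\leq^{\star}|Y|$.} Starting from a finitely supported injection $f:X\to Y$, I would construct the required surjection $g:Y\to X$ by inverting $f$ on its image and collapsing the complement to a fixed point, exactly as in Proposition \ref{Cantor}. Assuming $X\neq\emptyset$ (the case $X=\emptyset$ being vacuous under the usual convention that $|\emptyset|\leq^{\star}|Y|$), I fix $x_{0}\in X$ and set $g(y)=f^{-1}(y)$ for $y\in Im(f)$, which is well defined since $f$ is injective, and $g(y)=x_{0}$ otherwise. Surjectivity is immediate from $g(f(x))=x$ for all $x\in X$. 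The only substantive step is the support computation: I would show that $g$ is supported by $supp(f)\cup supp(x_{0})\cup supp(X)\cup supp(Y)$ by fixing $\pi$ in the pointwise stabiliser of this set and verifying, via Proposition \ref{2.18'}, that $g(\pi\cdot y)=\pi\cdot g(y)$ in both branches. When $y=f(x)$ this reduces to $\pi\cdot y=f(\pi\cdot x)$; when $y\notin Im(f)$ one argues by contradiction using $\pi^{-1}$ that $\pi\cdot y$ cannot lie in $Im(f)$ either.

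\textbf{Failure of the converse.} This needs no new construction and is witnessed directly by Lemma \ref{lem3}. Taking $X=T_{fin}(A)$ and $Y=T_{fin}(A)\setminus\bar{\emptyset}$, the finitely supported (indeed equivariant) surjection $Y\to X$ exhibited there yields $|X|\leq^{\star}|Y|$, while the same lemma proves there is no finitely supported injection from $T_{fin}(A)$ into $T_{fin}(A)\setminus\bar{\emptyset}$, so $|X|\leq|Y|$ fails.

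\textbf{The implication $|X|\leq^{\star}|Y|\Rightarrow|X|\leq|\wp_{fs}(Y)|$.} From a finitely supported surjection $f:Y\to X$, I would produce a finitely supported injection $X\to\wp_{fs}(Y)$ by sending each point to its (non-empty) fibre, $x\mapsto f^{-1}(\{x\})$. Rather than recomputing support by hand, I would factor this as $\Phi=g\circ i$, where $i:X\to\wp_{fs}(X)$, $i(x)=\{x\}$, is the finitely supported injection already verified in Proposition \ref{Cantor}, and $g:\wp_{fs}(X)\to\wp_{fs}(Y)$, $g(V)=f^{-1}(V)$, is the finitely supported injection supplied by Lemma \ref{lemlem} applied to the surjection $f:Y\to X$. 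Since a composite of finitely supported injections is again a finitely supported injection, with support contained in the union of the two supports (as in the transitivity argument at the start of the proof of Theorem \ref{cardord}), $\Phi$ is the desired map and $|X|\leq|\wp_{fs}(Y)|$ follows.

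The main obstacle is the support verification in the first implication. Defining $g$ by a case split on $Im(f)$ is legitimate in FSM only if the case condition and the chosen values both respect the permutation action, so the crux is showing that $Im(f)$ is stable under every $\pi\in Fix(supp(f)\cup supp(X)\cup supp(Y))$ together with its inverse, and that the default value $x_{0}$ is itself fixed, which is precisely why $supp(x_{0})$ must be adjoined to the supporting set. The remaining two parts are essentially bookkeeping on top of Lemma \ref{lem3} and Lemma \ref{lemlem}.
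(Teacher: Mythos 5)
Your proposal is correct and follows essentially the same route as the paper: the same default-point construction (invert $f$ on $Im(f)$, send the rest to a fixed $x_{0}$, with the same support verification via Proposition \ref{2.18'}) for the first implication, the same appeal to Lemma \ref{lem3} for the failure of the converse, and the same fibre map $x \mapsto f^{-1}(\{x\})$ for the last implication. The only cosmetic difference is that you obtain the fibre map's support by composing the singleton injection from Proposition \ref{Cantor} with the preimage injection from Lemma \ref{lemlem}, whereas the paper redoes that computation by hand; both give the same conclusion.
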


\begin{proof} Suppose there exists a finitely supported injective mapping $f: X \to Y$. We consider the case $Y \neq \emptyset$ (otherwise, the result follows trivially).  Fix $x_{0} \in X$. Define the mapping $f':Y \to X$ by \[
f'(y)=\left\{ \begin{array}{ll}
f^{-1}(y), & \text{if}\: \text{$y \in Im(f)$ };\\
x_{0}, & \text{if}\: \text{$y \notin Im(f)$}\: .\end{array}\right. \]

Since $f$ is injective, it follows that $f^{-1}(y)$ is an one-element set 
for each $y \in Im(f)$, and so $f'$ is a function. Clearly,~$f'$ is 
surjective. We claim that $f'$ is supported by the set $supp(f) \cup 
supp(x_{0}) \cup supp(X) \cup supp(Y)$. Indeed, let us consider $\pi \in 
Fix(supp(f) \cup supp(x_{0}) \cup supp(X) \cup supp(Y))$. Whenever $y \in 
Im(f)$ we have $y=f(z)$ for some $z \in X$ and $\pi \cdot y=\pi \cdot 
f(z)=f(\pi \cdot z) \in Im(f)$, which means $Im(f)$ is finitely supported 
by $supp(f)$. Consider an arbitrary $y_{0} \in Im(f)$, and thus $\pi \cdot 
y_{0} \in Im(f)$. Then $f'(y_{0})= f^{-1}(y_{0})=z_{0}$ with 
$f(z_{0})=y_{0}$, and so $f(\pi \cdot z_{0})= \pi \cdot f(z_{0})=\pi \cdot 
y_{0}$, which means $f'(\pi \cdot y_{0})=f^{-1}(\pi \cdot y_{0})=\pi \cdot 
z_{0}=\pi \cdot f^{-1}(y_{0})=\pi \cdot f'(y_{0})$. Now, for $y \notin 
Im(f)$ we have $\pi \cdot y \notin Im(f)$, which means $f'(\pi \cdot 
y)=x_{0}=\pi \cdot x_{0}=\pi \cdot f(y)$ since $\pi$ fixes $x_{0}$ 
pointwise. Thus, $|X| \leq ^{\star} |Y|$. Conversely, from the proof of 
Lemma \ref{lem3}, we know that there is a finitely supported surjection 
$g:T_{fin}(A)\setminus \bar{\emptyset} \to T_{fin}(A)$, but there does not 
exist a finitely supported injection $h: T_{fin}(A) \to 
T_{fin}(A)\setminus \bar{\emptyset}$.

Assume now there is a finitely supported surjective mapping $f:Y \to X$. We proceed similarly as in the proof of Lemma \ref{lemlem}. Fix $x \in X$. Then $f^{-1}(\{x\})$ is supported by $supp(f) \cup 
supp(x) \cup supp(X)$. Indeed, let $\pi \in Fix(supp(f) \cup supp(x) \cup supp(X))$, and $y \in f^{-1}(\{x\})$. 
This means $f(y)=x$. According to Proposition \ref{2.18'}, we have $f(\pi \cdot y)= \pi \cdot f(y)= \pi \cdot x=x$, and so $\pi \cdot y \in f^{-1}(\{x\})$.  Define $g:X \to \wp_{fs}(Y)$ by $g(x)=f^{-1}(\{x\})$. We claim that $g$ is supported by 
$supp(f) \cup supp(X)$. Let $\pi \in Fix(supp(f) \cup supp(X))$. For any 
arbitrary $x \in X$, we have that $z \in g(\pi \cdot x) = 
f^{-1}(\{\pi \cdot x\}) \Leftrightarrow f(z)=\pi \cdot x \Leftrightarrow 
\pi^{-1} \cdot f(z)=x \Leftrightarrow f(\pi^{-1} \cdot z) =x
\Leftrightarrow \pi^{-1} \cdot z \in f^{-1}(\{x\}) \Leftrightarrow z \in \pi 
\star f^{-1}(\{x\})=\pi \star g(x)$. From Proposition \ref{2.18'} it follows that $g$ is finitely supported. Since $g$ is also injective, we get $|X| \leq |\wp_{fs}(Y)|$.
\end{proof}

\begin{proposition} \label{pco1} Let $X,Y,Z$ be finitely supported subsets of an invariant set $U$. The following properties hold.
\end{proposition}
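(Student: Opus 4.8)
The plan is to treat Proposition \ref{pco1} as a catalogue of order-theoretic relations among the three cardinalities $|X|$, $|Y|$, $|Z|$ with respect to $\leq$ and $\leq^{*}$, and to dispatch every clause with one uniform engine: from the hypothesised finitely supported injections and surjections I build an explicit witnessing map for the conclusion, and then I bound its support by a union of the supports already available. Because the family of finitely supported sets is not closed under arbitrary subset formation, the content of each clause is never the underlying set-theoretic idea (which is classical) but the verification that the witness, together with every auxiliary object used to construct it, is finitely supported; this is exactly the discipline imposed in Lemmas \ref{lem1}--\ref{lemlem} and in Proposition \ref{pco2}.

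The three standing tools are the following. First, composition of finitely supported maps is finitely supported: if $f$ and $g$ are finitely supported, then $g \circ f$ is supported by $supp(f) \cup supp(g)$, as already used for the transitivity of $\leq$ and $\leq^{*}$ in the proof of Theorem \ref{cardord}. Second, Proposition \ref{pco2} lets me convert between the two relations, since $|X| \leq |Y|$ yields $|X| \leq^{*} |Y|$, and $|X| \leq^{*} |Y|$ yields $|X| \leq |\wp_{fs}(Y)|$. Third, whenever two finitely supported injections run in opposite directions, the Cantor--Schr\"oder--Bernstein construction of Lemma \ref{lem1} (or Lemma \ref{lem2} for finitely supported subsets) produces a finitely supported bijection, hence an equality of cardinalities, with the bijection supported by the union of the two given supports. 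For each clause I will therefore name the composite or combination that witnesses the conclusion, check finite support through the functional criterion of Proposition \ref{2.18'}, namely that an $S$-supported map commutes with the action of every $\pi \in Fix(S)$, and finally read off the support bound as the union of the supports of the data.

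Concretely, the natural three-variable statements expected here are the mixed transitivities of $\leq$ and $\leq^{*}$: for instance, if $|X| \leq |Y|$ and $|Y| \leq^{*} |Z|$ then $|X| \leq^{*} |Z|$, and symmetrically if $|X| \leq^{*} |Y|$ and $|Y| \leq |Z|$ then $|X| \leq^{*} |Z|$. Each of these collapses to Proposition \ref{pco2} followed by the transitivity of $\leq^{*}$: I first upgrade the $\leq$-hypothesis to a $\leq^{*}$-hypothesis via Proposition \ref{pco2}, then compose the two resulting surjections and invoke that their composite is again a finitely supported surjection supported by the union of the two supports. Any further clause relating $\leq$ or $\leq^{*}$ to the powerset operation will be handled the same way, feeding the $\leq^{*}$-to-$\leq$ half of Proposition \ref{pco2} into such a composition and appealing to Cantor's Proposition \ref{Cantor} for the strictness where needed.

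The main obstacle is not the combinatorics of the maps but the support bookkeeping, and in particular the prohibition on using anything outside FSM in the intermediate steps. Unlike in classical ZF, I cannot pass through a preimage, an image, or a set of representatives without first certifying that it is finitely supported; this is precisely why the constructions of $f^{-1}(V)$ in Lemma \ref{lemlem} and of the fixed point $T$ in Lemma \ref{lem1} came equipped with explicit support computations, and the same care will be required for every auxiliary set appearing in the clauses of Proposition \ref{pco1}. I expect each verification to reduce, via Proposition \ref{2.18'}, to checking that a single equivariance identity $f(\pi \cdot x) = \pi \diamond f(x)$ holds for all $\pi \in Fix(S)$, with $S$ the relevant finite union of supports; so the real work is confirming that this $S$ is finite and that every object built along the way lies in $\wp_{fs}$ of the ambient invariant set $U$.
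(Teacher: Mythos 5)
Your uniform engine---explicit witness maps plus support bookkeeping via Proposition \ref{2.18'}, with supports bounded by unions of the supports of the data---is indeed exactly the discipline the paper's proof follows. The problem is that you guessed the content of the proposition wrong, so the clauses you actually sketch are not the clauses to be proved. Proposition \ref{pco1} is not a list of mixed transitivities of $\leq$ and $\leq^{*}$; it is a list of monotonicity laws of cardinal arithmetic: if $|X| \leq |Y|$ then $|X|+|Z| \leq |Y|+|Z|$, $|X| \cdot |Z| \leq |Y| \cdot |Z|$, $|X^{Z}_{fs}| \leq |Y^{Z}_{fs}|$, and (for $Z \neq \emptyset$) $|Z^{X}_{fs}| \leq |Z^{Y}_{fs}|$, together with the absolute inequality $|X|+|Y| \leq |X| \cdot |Y|$ when both $X$ and $Y$ have more than two elements. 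None of the composites you name witnesses any of these, so the proposal as written proves different (and easier) statements and leaves the proposition itself untouched.

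Concretely, what is missing: items 1 and 2 would fall to your engine once you write down the case-defined injection $X+Z \to Y+Z$ sending $(0,x) \mapsto (0,f(x))$ and $(1,z) \mapsto (1,z)$, and the product map $(x,z) \mapsto (f(x),z)$, but you never do. Item 3 requires post-composition $h \mapsto f \circ h$ on function spaces, and its support verification hinges on the identity $(\pi \widetilde{\star} f) \circ (\pi \widetilde{\star} h) = \pi \widetilde{\star} (f \circ h)$ for the conjugation action $\widetilde{\star}$ of Proposition \ref{2.18'}---a computation your sketch nowhere anticipates. Item 4 is \emph{contravariant} in the base: one must first invoke Proposition \ref{pco2} to convert the injection $f:X \to Y$ into a finitely supported surjection $f':Y \to X$ and then precompose, $h \mapsto h \circ f'$; you cite Proposition \ref{pco2} only in the service of your invented transitivity clauses, never for this purpose. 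Finally, item 5 has no hypothesised maps at all, so your scheme of composing given data produces nothing; the paper instead fixes $x_{0} \neq x_{1} \in X$ and $y_{0} \neq y_{1} \in Y$ and builds a three-case injection $X+Y \to X \times Y$ (sending $(0,x) \mapsto (x,y_{0})$ for $x \neq x_{0}$, $(1,y) \mapsto (x_{0},y)$, and $(0,x_{0}) \mapsto (x_{1},y_{1})$) supported by the supports of these four fixed elements---which is precisely where the more-than-two-elements hypothesis enters, a feature your proposal cannot account for because it never engages with the actual statement.
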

\begin{enumerate}
\item If $|X| \leq |Y|$, then $|X|+|Z| \leq |Y|+|Z|$; 
\item If $|X| \leq |Y|$, then $|X| \cdot |Z| \leq |Y| \cdot |Z|$; 
\item If $|X| \leq |Y|$, then $|X^{Z}_{fs}| \leq |Y^{Z}_{fs}|$; 
\item  If $|X| \leq |Y|$ and $Z\neq \emptyset$, then $|Z^{X}_{fs}| \leq |Z^{Y}_{fs}|$;
\item $|X|+|Y| \leq |X|\cdot|Y|$ whenever both $X$ and $Y$ have more than two elements.

\end{enumerate}

\begin{proof}
1. Suppose there is a finitely supported injective $f: X \to Y$, and 
define the injection $g: X+Z\to Y+Z$ by
\[
g(u)=\left\{ \begin{array}{ll}
(0,f(x)), & \text{if}\: u=(0,x)\: \text{with}\: x \in X;\\
(1,z), & \text{if}\:  u=(1,z)\: \text{with}\: z \in Z .\end{array}\right. 
\]
Since $f$ is finitely supported we have that $f(\pi \cdot x)=\pi \cdot 
f(x)$ for all $x \in X$ and $\pi \in Fix(supp(f))$. By using 
Proposition~\ref{2.18'}, i.e verifying that $g(\pi \star u)=\pi \star g(u)$ for all $u 
\in X+Z$ and all $\pi \in Fix(supp(f) \cup supp(X) \cup supp(Y) \cup 
supp(Z))$, we have that $g$ is also finitely supported.

2. Suppose there exists a finitely supported injective mapping $f: X \to Y$. Define the injection $g: X\times Z\to Y \times Z$ by $g((x,z))=(f(x),z)$ for all $(x,z) \in X \times Z$. Clearly $g$ is injective. Since $f$ is finitely supported we have that $f(\pi \cdot x)=\pi \cdot f(x)$ for all $x \in X$ and $\pi \in Fix(supp(f))$, and so $g(\pi \otimes(x,z))=g((\pi \cdot x, \pi \cdot z))=(f(\pi \cdot x),\pi \cdot z)=(\pi \cdot f(x),\pi \cdot z)=\pi \otimes g((x,z))$ for all $(x,z) \in X \times Z$ and $\pi \in Fix(supp(f) \cup supp(X) \cup supp(Y) \cup supp(Z))$, which means $g$ is supported by $supp(f) \cup supp(X) \cup supp(Y) \cup supp(Z)$.

3.  Suppose there exists a finitely supported injective mapping $f: X \to Y$. Define $g:X^{Z}_{fs} \to Y^{Z}_{fs}$ by $g(h)=f \circ h$. We have that $g$ is injective and for any $\pi \in Fix(supp(f))$ we have $\pi \widetilde{\star} f=f$, and so $g(\pi \widetilde{\star} h)=f \circ (\pi \widetilde{\star} h)= (\pi \widetilde{\star} f) \circ (\pi \widetilde{\star} h)=\pi \widetilde{\star} (f \circ h)=\pi \widetilde{\star} g(h)$ for all $h \in X^{Z}_{fs}$. We used the relation $(\pi \widetilde{\star} f) \circ (\pi \widetilde{\star} h)=\pi \widetilde{\star} (f \circ h)$ for all $\pi \in S_{A}$. This can be proved as follows. Fix $x\in Z$, we have $(\pi\widetilde{\star}(f\circ h))(x)=\pi\cdot(f(h(\pi^{-1}\cdot x)))$.
Also, if we denote $(\pi\widetilde{\star} h)(x)=y$ we have $y=\pi\cdot(h(\pi^{-1}\cdot x))$
and $((\pi\widetilde{\star} f)\circ(\pi\widetilde{\star} h))(x)=(\pi\widetilde{\star} f)(y)=\pi\cdot(f(\pi^{-1}\cdot y))=\pi\cdot(f((\pi^{-1}\circ\pi)\cdot h(\pi^{-1}\cdot x)))=\pi\cdot(f(h(\pi^{-1}\cdot x)))$. We finally obtain that $g$ is supported by $supp(f) \cup supp(X) \cup supp(Y) \cup supp(Z)$.

4.  Suppose there exists a finitely supported injective mapping $f: X \to Y$. According to Proposition \ref{pco2}, there is a finitely supported surjective mapping $f':Y \to X$. Define the injective mapping $g:Z^{X}_{fs} \to Z^{Y}_{fs}$ by $g(h)=h \circ f'$. As in item 3 one can prove that $g$ is finitely supported by $supp(f') \cup supp(X) \cup supp(Y) \cup supp(Z)$.

5. Fix $x_{0}, x_{1} \in X$ with $x_{0} \neq x_{1}$ and $y_{0}, y_{1} \in Y$ with $y_{0}\neq y_{1}$. Define the injection $g: X+Y\to X \times Y$ by 
\[
g(u)=\left\{ \begin{array}{ll}
(x, y_{0}), & \text{if}\: u=(0,x)\: \text{with}\: x \in X, x \neq x_{0};\\
(x_{0},y), & \text{if}\:  u=(1,y)\: \text{with}\: y \in Y;\\
(x_{1}, y_{1}), & \text{if}\: u=(0,x_{0})\end{array}\right. 
\]
It follows that $g$ is supported by $supp(x_{0}) \cup supp(y_{0}) \cup supp(x_{1}) \cup supp(y_{1}) \cup supp(X) \cup supp(Y)$, and $g$ is injective. 
\end{proof}

\begin{theorem} \label{comp} 
Let $(X, \cdot)$ be a finitely supported subset of an invariant set 
$(Z, \cdot)$. There exists a one-to-one mapping from $\wp_{fs}(X)$ onto 
$\{0,1\}^{X}_{fs}$ which is finitely supported by $supp(X)$, where 
$\wp_{fs}(X)$ is considered the family of those finitely supported subsets 
of $Z$ contained in $X$.
\end{theorem}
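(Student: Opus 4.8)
The plan is to use the classical characteristic-function correspondence, adapted so that every intermediate object stays finitely supported. I would define $\Phi:\wp_{fs}(X) \to \{0,1\}^{X}_{fs}$ by sending each finitely supported subset $V$ of $X$ contained in $X$ to its characteristic function $\chi_{V}$, where $\chi_{V}(x)=1$ if $x \in V$ and $\chi_{V}(x)=0$ otherwise. The target is meaningful because $\{0,1\}$ is a trivial (non-atomic) invariant set by Example \ref{2.7}(3), hence carries the trivial $S_{A}$-action $\pi \diamond 0 = 0$, $\pi \diamond 1 = 1$.

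First I would check that $\Phi$ is well defined, i.e. that each $\chi_{V}$ is finitely supported. Using Proposition \ref{2.18'} together with the triviality of the action on $\{0,1\}$, it suffices to verify that for every $\pi \in Fix(supp(X) \cup supp(V))$ and every $x \in X$ we have $\pi \cdot x \in X$ and $\chi_{V}(\pi \cdot x)=\chi_{V}(x)$. The first holds since $\pi$ fixes $supp(X)$ pointwise; the second reduces to the equivalence $x \in V \Leftrightarrow \pi \cdot x \in V$, which follows from $\pi \star V = V$ because $supp(V)$ supports $V$. Thus $\chi_{V}$ is supported by $supp(X) \cup supp(V)$. Injectivity of $\Phi$ is immediate, as distinct subsets of $X$ yield distinct characteristic functions.

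Next I would show $\Phi$ itself is supported by $supp(X)$. Applying Proposition \ref{2.18'} to a function between the invariant sets $\wp_{fs}(X)$ and $\{0,1\}^{X}_{fs}$, this amounts to proving $\chi_{\pi \star V}=\pi \widetilde{\star} \chi_{V}$ for all $\pi \in Fix(supp(X))$ and all $V \in \wp_{fs}(X)$. Unwinding the function action and using the trivial action on $\{0,1\}$, we have $(\pi \widetilde{\star} \chi_{V})(x)=\chi_{V}(\pi^{-1} \cdot x)$, which equals $1$ iff $\pi^{-1} \cdot x \in V$ iff $x \in \pi \star V$ iff $\chi_{\pi \star V}(x)=1$; hence the two functions coincide. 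Since $\wp_{fs}(X)$ is itself supported by $supp(X)$ (Proposition \ref{p1}), this gives $supp(\Phi) \subseteq supp(X)$.

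Finally, for surjectivity I would take an arbitrary $g \in \{0,1\}^{X}_{fs}$, put $V=g^{-1}(\{1\})=\{x \in X \mid g(x)=1\}$, and argue $V \in \wp_{fs}(X)$ with $\Phi(V)=g$. The one genuinely delicate point — and the main obstacle — is verifying that $V$ is finitely supported, precisely because the finitely supported universe is not closed under arbitrary subset formation. This is settled again by Proposition \ref{2.18'}: for $\pi \in Fix(supp(g) \cup supp(X))$ and $x \in V$, we get $\pi \cdot x \in X$ and $g(\pi \cdot x)=\pi \diamond g(x)=g(x)=1$, so $\pi \cdot x \in V$; therefore $V$ is supported by $supp(g) \cup supp(X)$ and lies in $\wp_{fs}(X)$. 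By construction $\chi_{V}=g$, so $\Phi$ is onto. Everything beyond the support bookkeeping is routine once the trivial action on $\{0,1\}$ is used to collapse the codomain side of Proposition \ref{2.18'}.
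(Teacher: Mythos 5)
Your proposal is correct and takes essentially the same route as the paper's own proof: the characteristic-function correspondence $V \mapsto \chi_{V}$, with $\chi_{V}$ supported by $supp(X) \cup supp(V)$, the map itself supported by $supp(X)$ via the identity $\pi \widetilde{\star} \chi_{V} = \chi_{\pi \star V}$, and surjectivity settled by showing $g^{-1}(\{1\})$ is supported by $supp(g) \cup supp(X)$. All the support bookkeeping matches the paper's argument, so there is nothing to add.
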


\begin{proof}
Let $Y$ be a finitely supported subset of $Z$ contained in $X$, and 
$\varphi_{Y}$ be the characteristic function on $Y$, i.e. $\varphi_{Y}:X \to 
\{0,1\}$ is defined by 
\begin{center}
$\varphi_{Y}(x)\overset{def}{=}\left\{ 
\begin{array}{ll} 1 & \text{for}\: x \in Y\\ 0 & \text{for}\: x\in X 
\setminus Y \end{array}\right.$.\\ 
\end{center} 
We prove that $\varphi_{Y}$ is a finitely supported function from $X$ to 
$\{0,1\}$ (according to Proposition \ref{p1}, $\{0,1\}$ is a trivial invariant 
set), and the mapping $Y \mapsto \varphi_{Y}$ defined on $\wp_{fs}(X)$ 
is also finitely supported in the sense of Definition~\ref{2.10-1}.

First we prove that $\varphi_{Y}$ is supported by $supp(Y) \cup supp(X)$. 
Let $\pi \in Fix(supp(Y) \cup supp(X))$. Thus $\pi \star Y=Y$ (where $\star$ represents the canonical permutation action on $\wp(Z)$), and so $\pi 
\cdot x \in Y$ if and only if $x \in Y$. Since we additionally have $\pi 
\star X=X$, we obtain $\pi \cdot x \in X \setminus Y$ if and only if $x 
\in X \setminus Y$. Thus, $\varphi_{Y}(\pi\cdot x)=\varphi_{Y}(x)$ for all 
$x \in X$. Furthermore, because $\pi$ fixes $supp(X)$ pointwise we have $\pi 
\cdot x \in X$ for all $x \in X$, and from Proposition \ref{2.18'} we get 
that $\varphi_{Y}$ is supported by $supp(Y) \cup supp(X)$.

We remark that $\{0,1\}^{X}_{fs}$ is a finitely supported 
subset of the set $(\wp_{fs}(Z \times \{0,1\}), \widetilde{\star})$. Let $\pi \in 
Fix(supp(X))$ and $f:X \to \{0,1\}$ finitely supported. We have 
$\pi\widetilde{\star} f=\{(\pi\cdot x,$ $\pi\diamond y)\,|\,(x,y)\in 
f\}=\{(\pi\cdot x,$ $y)\,|\,(x,y)\in f\}$ because $\diamond$ is the trivial 
action on $\{0,1\}$. Thus, $\pi\widetilde{\star}f $ is a function with the 
domain $\pi\star X=X$ which is finitely supported as an element of $(\wp (Z 
\times \{0,1\}), \widetilde{\star})$ according to Proposition~\ref{2.15}. 
Moreover, $(\pi \widetilde{\star} f)(\pi\cdot x)= f(x)$ for all $x \in X$ (1).

According to Proposition \ref{2.18'}, to prove that the function $g:=Y 
\mapsto \varphi_{Y}$ defined on $\wp_{fs}(X)$ (with the codomain contained 
in $\{0,1\}^{X}_{fs}$) is supported by $supp(X)$, we have to prove that $\pi 
\widetilde{\star}g(Y)=g(\pi \star Y)$ for all $\pi \in Fix(supp(X))$ and all 
$Y \in \wp_{fs}(X)$ (where $\widetilde{\star}$ symbolizes the induced 
$S_{A}$-action on $\{0,1\}^{X}_{fs}$). This means that we need to verify the 
relation $\pi \widetilde{\star} \varphi_{Y} = \varphi_{\pi \star Y}$ for all 
$\pi \in Fix(supp(X))$ and all $Y \in \wp_{fs}(X)$. Let us consider $\pi \in 
Fix(supp(X))$ (which means $\pi \cdot x \in X$ for all $x \in X$) and $Y \in 
\wp_{fs}(X)$. For any $x \in X$, we know that $x \in \pi \star Y$ if and 
only if $\pi^{-1} \cdot x \in Y$. Thus, $\varphi_{Y}(\pi^{-1} \cdot x 
)=\varphi_{\pi \star Y}(x)$ for all $x \in X$, and so $(\pi 
\widetilde{\star} \varphi_{Y})(x) \overset{(1)}{=}\varphi_{Y}(\pi^{-1} \cdot 
x )=\varphi_{\pi \star Y}(x)$ for all $x\in X$. Moreover, from Proposition 
\ref{2.15}, $\pi \star Y$ is a finitely supported subset of $Z$ contained in 
$\pi \star X=X$, and $\{0,1\}^{X}_{fs}$ can be represented as a finitely 
supported subset of $\wp_{fs}(Z \times \{0,1\})$ (supported by $supp(X)$). 
According to Proposition \ref{2.18'} we have that~$g$ is a finitely 
supported function from $\wp_{fs}(X)$ to $\{0,1\}^{X}_{fs}$.

Obviously, $g$ is one-to-one. Now we prove that $g$ is onto. 
Let us consider an arbitrary finitely supported function $f: X \to \{0,1\}$. 
Let $Y_{f}\overset{def}{=}\{x \in X\;|\; f(x)=1\}$. We claim that $Y_{f} \in 
\wp_{fs}(X)$. Let $\pi \in Fix(supp(f))$. According to Proposition 
\ref{2.18'} we have $\pi \cdot x \in X$ and $f(\pi \cdot x)=f(x)$ for all 
$x \in X$. Thus, for each $x \in Y_{f}$, we have $\pi \cdot x \in Y_{f}$. 
Therefore $\pi \star Y_{f}=Y_{f}$, and so $Y_{f}$ is finitely supported by 
$supp(f)$ as a subset of $Z$, and it is contained in $X$. 
A simple calculation show us that $g(Y_{f})=f$, and so $g$ is onto. 
\end{proof}

One can easy verify that the properties of $\leq$ presented in Proposition \ref{pco1} (1), (2) and (4) also hold for $\leq^{\star}$. We left the details to the reader.   

\begin{theorem} \label{cardord1} There exists an invariant set $X$ (particularly the set $A$ of atoms) having the following properties.
\begin{enumerate}
\item $|X \times X| \nleq^{*} |\wp_{fs}(X)|$;
\item $|X \times X| \nleq |\wp_{fs}(X)|$;
\item $|X \times X| \nleq^{*} |X|$;
\item $|X \times X| \nleq |X|$;
\item For each $n \in \mathbb{N}, n \geq 2$ we have $|X| \lneq |\wp_{n}(X)| \lneq |\wp_{fs}(X)|$, where $\wp_{n}(X)$ is the family of all $n$-sized subsets of $X$;
\item For each $n \in \mathbb{N}$ we have $|X| \lneq^{*} |\wp_{n}(X)| \lneq^{*} |\wp_{fs}(X)|$;
\item $|X| \lneq |\wp_{fin}(X)| \lneq |\wp_{fs}(X)|$;
\item $|X| \lneq^{*} |\wp_{fin}(X)| \lneq^{*} |\wp_{fs}(X)|$;
\item $|\wp_{fs}(X) \times \wp_{fs}(X)| \nleq^{*} |\wp_{fs}(X)|$;
\item $|\wp_{fs}(X) \times \wp_{fs}(X)| \nleq |\wp_{fs}(X)|$;
\item$|X+X| \lneq^{*} |X\times X|$;
\item $|X+X| \lneq |X\times X|$.

\end{enumerate}
\end{theorem}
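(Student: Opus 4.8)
The plan is to treat the invariant set $X = A$ of atoms throughout, since the statement only asserts the existence of such an $X$, and all twelve inequalities should follow from the rigidity of finitely supported functions on $A$. The unifying principle is the computation already established in Lemma \ref{lem3}: any finitely supported function $f : A \to X$ into a trivial (non-atomic) set has finite image, and more generally finitely supported functions out of products or powersets of $A$ are tightly constrained by the observation that $supp$ of an atom-tuple equals its underlying atom set. I would organize the proof around four reusable sub-lemmas, each isolating one ``non-existence'' mechanism, and then derive each of the twelve items by citing the appropriate mechanism together with the already-proved comparison tools (Proposition \ref{Cantor}, Proposition \ref{pco2}, Theorem \ref{comp}).

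\emph{Mechanism 1 (no surjection $\wp_{fs}(A) \twoheadrightarrow A \times A$, and no injection the other way).} Here I would use that every finitely supported element of $\wp_{fs}(A)$ is either finite or cofinite (Proposition \ref{p111}), so $supp$ of such a set is a finite subset of $A$, whereas $supp(a,b) = \{a,b\}$ can be an \emph{arbitrary} two-element set. A finitely supported surjection $g : \wp_{fs}(A) \to A \times A$ supported by $S$ would force $supp(g(W)) \subseteq supp(W) \cup S$ for each $W$; choosing $W$ ranging over finite/cofinite sets and a target pair $(a,b)$ with $a,b \notin S$ and $a,b$ outside $supp(W)$ yields a contradiction by the usual transposition argument (swap $a$ with a fresh atom and track supports via Proposition \ref{2.15}). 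This settles items (1) and (2); note (2) also follows from (1) via Proposition \ref{pco2}, since $|X\times X| \leq |\wp_{fs}(X)|$ would give $|X \times X| \leq^{*} |\wp_{fs}(X)|$.

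\emph{Mechanism 2 (no injection/surjection $A \times A \to A$).} A finitely supported $f : A \times A \to A$ supported by $S$ satisfies $f(\pi a, \pi b) = \pi f(a,b)$ for $\pi \in Fix(S)$; taking $a,b,c,d$ all outside $S$ with $\{a,b\} \neq \{c,d\}$ and a suitable transposition shows $f$ cannot be injective (two distinct pairs collapse), killing (4), and a counting/support argument on $Im(f)$ analogous to the one in Lemma \ref{lem3} for $g : X \to A$ shows no surjection exists, killing (3). Items (5)--(8) I would handle by combining Proposition \ref{Cantor} (which already gives $|X| \lneq |\wp_{fs}(X)|$ and $|X| \lneq^{*} |\wp_{fs}(X)|$) with direct comparisons among $\wp_n(A)$, $\wp_{fin}(A)$, and $\wp_{fs}(A)$: the inclusions give the easy injections/surjections, and strictness follows because a finitely supported bijection between, say, $\wp_n(A)$ and $\wp_{fs}(A)$ would have to respect supports, contradicting that $n$-element subsets have support of fixed finite size while cofinite subsets of $\wp_{fs}(A)$ do not embed bijectively under a single finite support. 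For the strict separation $|X| \lneq |\wp_n(X)|$ I would exhibit $a \mapsto \{a, s_1, \dots, s_{n-1}\}$ for fixed atoms (injective, finitely supported) and rule out the reverse by the support-size obstruction.

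\emph{Mechanism 3 and the products (items 9--12).} Items (9),(10) are the ``$\wp_{fs}(A)$-level'' analogue of (1),(2): a finitely supported surjection $\wp_{fs}(A) \to \wp_{fs}(A) \times \wp_{fs}(A)$ fails because the support of a pair of finite/cofinite sets can be an arbitrary pair of finite sets of unbounded combined size, which cannot all be captured modulo one fixed $S$; again (10) reduces to (9) via Proposition \ref{pco2}. Finally (11),(12) assert $|A+A| \lneq |A \times A|$ and $\lneq^{*}$: the injection $A + A \to A \times A$ sending $(0,a)\mapsto(a,x_0)$ and $(1,a)\mapsto(x_0,a)$ for a fixed atom $x_0$ (adjusting one value to stay injective, exactly as in Proposition \ref{pco1}(5)) is finitely supported, giving $\leq$ and hence $\leq^{*}$; strictness is the claim that $A \times A$ does not inject/surject into $A + A$, which follows because $A + A$ has ``essentially one-dimensional'' support behaviour ($supp$ of any element is a single atom's support) whereas elements of $A \times A$ carry two independent atoms, so a finitely supported map $A \times A \to A+A$ must collapse the second coordinate by a transposition argument and thus cannot be injective, and a support-counting argument rules out surjectivity onto $A \times A$ from $A + A$.

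The main obstacle I anticipate is the strictness half of items (5)--(10): proving that a finitely supported \emph{bijection} (or the relevant injection/surjection) between two different ``levels'' of the powerset hierarchy cannot exist. The positive directions are routine inclusions, but each negative direction requires a carefully chosen element whose support exceeds what a fixed finite supporting set $S$ of the hypothetical map can control, and then a transposition swapping an atom in that support with a fresh atom to derive a contradiction via $supp(\pi \cdot x) = \pi(supp(x))$ (Proposition \ref{2.15}). Getting the quantifier order right---first fix the map and its support $S$, then choose the witnessing element depending on $S$---is where the care is needed, and I would state each negative direction as a small self-contained claim to keep the transposition bookkeeping transparent.
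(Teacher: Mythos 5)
Your overall architecture (fix $X=A$, prove the negative results by transposition and support-tracking arguments, and derive the remaining items via Proposition \ref{pco2} and the comparison tools) matches the paper's, and parts of it are sound: Mechanism 2 for items (3)--(4) works, and your route to (11)--(12) is essentially the paper's. But Mechanism 1 --- on which (1) and (2) rest, and from which (9), (10) (and in the paper also (3), (11), (12)) are derived --- has a genuine gap, and it is exactly the quantifier issue you flag at the end. Surjectivity of $g:\wp_{fs}(A)\to A\times A$ hands you \emph{some} preimage $W$ of your chosen pair $(a,b)$ with $a,b\notin S=supp(g)$; you cannot additionally arrange that $a,b$ lie outside $supp(W)$. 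On the contrary, the support inclusion you invoke forces $\{a,b\}=supp((a,b))\subseteq supp(W)\cup S$, hence $a,b\in supp(W)$. In that situation your prescribed move (swap $a$ with a fresh atom $c$) yields no contradiction: $(a\,c)\star W$ is simply a different element of $\wp_{fs}(A)$, and $g((a\,c)\star W)=(c,b)$ is perfectly consistent with $g$ being a function. That support bookkeeping alone cannot finish the proof is shown by the identity map on $A\times A$: it is an equivariant surjection onto $A\times A$ in which every preimage of $(a,b)$ has both $a$ and $b$ in its support, so no contradiction can follow from that conclusion; a correct proof must use something about the domain $\wp_{fs}(A)$ beyond finiteness of supports.

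The missing idea is the paper's key use of Proposition \ref{p111}: if both $a,b\in supp(W)$ for $W\in\wp_{fs}(A)$, then $(a\,b)\star W=W$, because $W$ is either finite (so $supp(W)=W$ and $a,b\in W$) or cofinite (so $supp(W)=A\setminus W$ and $a,b\notin W$), and in either case the transposition permutes $W$ into itself; then $g(W)=g((a\,b)\star W)=(a\,b)\otimes(a,b)=(b,a)\neq(a,b)$ gives the contradiction in the main case, while the fresh-atom swap you describe handles only the complementary case in which some coordinate of the pair lies outside $supp(W)$. You cite Proposition \ref{p111}, but only to conclude that supports are finite, which holds in every FSM set and therefore cannot suffice. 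The same defect recurs in Mechanism 3: for (9)--(10), the claim that supports of ``unbounded combined size cannot be captured modulo one fixed $S$'' is not an argument, since preimages may have arbitrarily large supports; the repair is either the paper's reduction (compose a hypothetical surjection $\wp_{fs}(A)\to\wp_{fs}(A)\times\wp_{fs}(A)$ with the coordinatewise singleton-or-default surjection $s:\wp_{fs}(A)\to A$ to contradict item (1)) or a rerun of the corrected item-(1) argument on target pairs of the form $(\{a\},\{b\})$. Finally, the strictness halves of (5)--(8) need more than your ``support-size obstruction'': the paper uses a cyclic-permutation argument to exclude a finitely supported injection $\wp_{n}(A)\to A$, and the fact that only finitely many elements of $\wp_{fs}(A)$ (the subsets of $S$ and the supersets of $A\setminus S$) are supported by a fixed finite $S$, whence no finitely supported injection maps $\wp_{fs}(A)$ onto a proper finitely supported subset of itself; both can be made to work along the lines you gesture at, but neither is implied by what you wrote.
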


\begin{proof}

1. We prove that that there does not exist a finitely supported surjective mapping  $f: \wp_{fs}(A) \to A \times A$.  Suppose, by contradiction, that there is a finitely supported surjective mapping  $f: \wp_{fs}(A) \to A \times A$.  Let us consider two atoms $a,b\notin supp(f)$ with $a 
\neq b$. These atoms exist because $A$ is infinite, while $supp(f) \subseteq 
A$ is finite.  It follows that the transposition $(a\, b)$ fixes each 
element from $supp(f)$, i.e. $(a\, b) \in Fix(supp(f))$. Since $f$ is surjective, it follows that there exists an element $X \in \wp_{fs}(A)$ 
such that $f(X)=(a,b)$. Since $supp(f)$ supports $f$ and $(a\, b) \in 
Fix(supp(f))$, from Proposition \ref{2.18'} we have $f((a\,b) \star X)=(a\,b) 
\otimes f(X)=(a\,b) \otimes (a,b)=((a\,b)(a), (a\,b)(b))=(b,a)$. Due to the functionality of $f$ we should have $(a\,b) \star X \neq X$. Otherwise, we would obtain $(a,b)=(b,a)$.

We claim that if both $a,b \in supp(X)$, then $(a\,b)\star X=X$. Indeed, suppose $a,b \in supp(X)$. Since $X$ is a finitely supported subset of $A$, then $X$ is either finite or cofinite. If $X$ is finite, then $supp(X)=X$, and so $a,b \in X$. Moreover, $(a\, 
b)(a)=b$, $(a\, b)(b)=a$, and $(a\, b)(c)=c$ for all $c \in X$ with $c\neq a,b$. Therefore, $(a\,b) \star X=\{(a\,b)(x)\,|\,x \in X\}=\{(a\,b)(a)\} \cup \{(a\,b)(b)\} \cup \{(a\,b)(c)\,|\,c \in X \setminus\{a,b\}\}=\{b\} \cup \{a\} \cup (X \setminus \{a,b\})=X$. Now, if $X$ is cofinite, then $supp(X)=A \setminus X$, and so $a,b \in A \setminus X$. Since $a,b \notin X$, we have $a,b \neq   x$ for all $x \in X$, and so $(a\,b)(x)=x$ for all $x \in X$. Thus, in this case we also have $(a\,b) \star X=X$.

Since when both $a,b \in supp(X)$ we have $(a\,b)\star X=X$, it follows that one of $a$ or $b$ does not belong to $supp(X)$. Suppose $b \notin supp(X)$ (the other case is analogue). Let us consider $c\neq a,b$, $c \notin supp(f)$, $c \notin supp(X)$. Then $(b\, c) \in Fix(supp(X))$, and, because $supp(X)$ supports $X$, we have $(b\,c)\star X=X$. Furthermore, $(b\, c) \in Fix(supp(f))$, and by Proposition \ref{2.18'} we have $(a,b)=f(X)=f((b\,c) \star X)=(b\,c) 
\otimes f(X)=(b\,c) \otimes (a,b)=((b\,c)(a), (b\,c)(b))=(a,c)$ which is a contradiction because $b\neq c$. 
Thus, $|A \times A| \nleq^{*} |\wp_{fs}(A)|$.

2. We prove that there does not exist a finitely supported injective 
mapping $f: A \times A \to \wp_{fs}(A)$. Suppose, by contradiction, that 
there is a finitely supported injective mapping $f: A \times A \to 
\wp_{fs}(A)$. According to Proposition~\ref{pco2}, one can define a 
finitely supported surjection $g: \wp_{fs}(A) \to A \times A$. This 
contradicts the above item. Thus, $|A \times A| \nleq |\wp_{fs}(A)|$.

3. We prove that there does not exist a finitely supported surjection $f: A \to A \times A$. Since there exists a surjection $s$ from $\wp_{fs}(A)$ onto $A$ defined by \[
s(X)=\left\{ \begin{array}{ll}
x, & \text{if}\: \text{$X$ is an one-element set $\{x\}$ };\\
a, & \text{if}\: \text{$X$ is not an one-element set,}\: \end{array}\right. 
\] 
where $a$ is a fixed atom, and $s$ is finitely supported (by$\{a\}$), the result follows from item 1. Thus, $|A \times A| \nleq^{*} |A|$.

4. We prove that there does not exist a finitely supported injection $f: A \times A \to A$.
Since there exists an equivariant injection from $A$ into $\wp_{fs}(A)$ defined as $x \mapsto \{x\}$, the result follows from item 2.  Thus, $|A \times A| \nleq |A|$;

Alternatively, one can prove that there does not exist a one-to-one mapping from $A \times A$ to $A$ (and so neither a finitely supported one). Suppose, by contradiction, that there is a an injective mapping  $i: A \times A \to A$. Let us fix two atoms $x$ and $y$ with $x \neq y$. The sets $\{i(a,x)\,|\,a \in A\}$ and $\{i(a,y)\,|\,a \in A\}$ are disjoint and infinite. Thus, $\{i(a,x)\,|\,a \in A\}$ is a infinite and coinfinite subset of $A$, which contradicts the fact that any subset of $A$ is either finite or cofinite.

5. We prove that $|A| \lneq |\wp_{n}(A)| \lneq |\wp_{fs}(A)|$ for all $n \in \mathbb{N}, n \geq 2$. 
Consider $a_{1},a_{2}, \ldots, a_{n-1}$ $, a^{1}_{1}, \ldots, a^{n}_{1}, \ldots, a^{1}_{n-1}, \ldots, a^{n}_{n-1} \in A$ a family of pairwise different elements. Then $i: A \to \wp_{n}(A)$ defined by
 \[i(x)=\left\{ \begin{array}{ll}
\{x,a_{1},a_{2}, \ldots, a_{n-1}\}, & \text{if}\: \text{$x \neq a_{1}, \ldots, a_{n-1}$ };\\
\{ a^{1}_{1}, \ldots, a^{n}_{1}\}, & \text{if}\: \text{$x=a_{1}$ }\\ \vdots \\

\{ a^{1}_{n-1}, \ldots, a^{n}_{n-1}\}, & \text{if}\: \text{$x=a_{n-1}$ }
\: \end{array}\right.\]  
is obviously an  injective mapping from $(A, \cdot)$ to $(\wp_{n}(A), \star)$. Furthermore, we can easy check that $i$ is supported by the finite set \{$a_{1},a_{2}, \ldots, a_{n-1} $ $, a^{1}_{1}, \ldots, a^{n}_{1}, \ldots, a^{1}_{n-1}, \ldots, a^{n}_{n-1}\}$, and so $|A| \leq |\wp_{n}(A)|$ in FSM.

We claim that there does not exist a finitely supported injection from $\wp_{n}(A)$ into $A$. Assume on the contrary
that there exists an finitely supported  injection $f:\wp_{n}(A) \to A$.

First, we claim that, for any $Y \in \wp_{n}(A)$ which is disjoint from $supp(f)$, we have $f(Y) \notin Y$.  Assume by contradiction that $f(Y) \in Y$ for a fixed $Y$ with $Y \cap supp(f)=\emptyset$. Let $\pi$ 
be a permutation of atoms which fixes $supp(f)$ pointwise, and interchanges all the elements of $Y$ (e.g. $\pi$ is a cyclic permutation of $Y$). Since $\pi$ permutes all the elements of $Y$, we have $\pi \cdot 
f(Y)=\pi(f(Y)) \neq f(Y)$. However, $\pi \star Y =\{\pi(a_{1}), 
\ldots, \pi(a_{n})\}=\{a_{1}, \ldots, a_{n}\}=Y$. Since $\pi$ fixes $supp(f)$ pointwise and~$supp(f)$ supports $f$, 
we have $\pi (f(Y))=\pi \cdot f(Y)= f(\pi \star Y)=f(Y)$, a contradiction.

Since $supp(f)$ is finite,  there are infinitely many such $Y$ with the property that $Y \cap supp(f)=\emptyset$. Thus, because it is injective, $f$ takes infinitely many values on those $Y$. Since $supp(f)$ is finite, there should exist at least one element in $\wp_{n}(A)$, denoted by $Z$ such that $Z \cap supp(f)=\emptyset$ and $f(Z) \notin supp(f)$. Thus, 
$f(Z) = a$ for some $a \in A \setminus (Z \cup supp(f))$. Let $b \in A \setminus (supp(f) \cup Z \cup \{a\})$ and also let $\pi = (a\, b)$.
Then $\pi \in Fix(supp(f) \cup Z)$, and hence $f(Z) =f((a\,b) \star Z)=(a\,b)(f(Z))= b$, a contradiction. We obtained that $|A| \neq |\wp_{n}(A)|$ in FSM, and so $|A|<|\wp_{n}(A)|$. 

 We obviously have $|\wp_{n}(A)| \leq |\wp_{fs}(A)|$. We prove below  that there does not exist a finitely supported injective mapping from $\wp_{fs}(A)$ onto one of its finitely supported proper subsets, i.e. any finitely supported injection $f:\wp_{fs}(A) \rightarrow 
\wp_{fs}(A)$ is also surjective.  Let us consider a finitely supported injection $f:\wp_{fs}(A) \rightarrow \wp_{fs}(A)$. Suppose, by contradiction, $Im(f) \subsetneq \wp_{fs}(A)$. This means that there 
exists $X_{0}\in\wp_{fs}(A)$ such that $X_{0}\notin Im(f)$. Since $f$ is injective, we can define an infinite 
sequence $\mathcal{F}=(X_{n})_{n}$ starting from $X_{0}$, with distinct terms of form $X_{n+1}=f(X_{n})$ for all $n \in \mathbb{N}$. Furthermore, according to Proposition \ref{2.18'}, for a fixed $k \in \mathbb{N}$ and $\pi \in Fix(supp(f) \cup supp(X_{k}))$, we have $\pi \star X_{k+1}=\pi \star f(X_{k})=f(\pi \star X_{k})=f(X_{k})=X_{k+1}$. Then, $supp(X_{n+1}) \subseteq supp(f)\cup supp(X_{n})$ for all 
$n \in \mathbb{N}$, and by induction on $n$ we have that $supp(X_{n}) \subseteq supp(f)\cup supp(X_{0})$ for all $n \in \mathbb{N}$. We obtained that each element $X_{n}\in \mathcal{F}$ is supported by the 
same finite set $S:=supp(f)\cup supp(X_{0})$. However, there could exist 
only finitely many subsets of $A$ (i.e. only finitely many elements in $\wp_{fs}(A)$) supported by $S$, namely the subsets of 
$S$ and the supersets of $A\setminus S$ (where a superset of $A \setminus S$ 
is of form $A\setminus X$ with $X \subseteq S$). We contradict the statement 
that the infinite sequence $(X_{n})_{n}$ never repeats. Thus, $f$ is surjective, and so there could not exist a bijection between $\wp_{fin}(A)$ and $\wp_{fs}(A)$, which means $|\wp_{n}(A)| \neq |\wp_{fs}(A)|$. 

6.  Fix $n \in \mathbb{N}$. As in the above item there does not exist neither a finitely supported bijection between $\wp_{n}(A)$ and $\wp_{fs}(A)$, nor a finitely supported bijection between $A$ and $\wp_{n}(A)$. However, there exists a finitely supported injection  $i: A \to \wp_{n}(A)$. Fix an atom $a \in A$. The mapping $s: \wp_{n}(A) \to A $ defined by 
\[
s(X)=\left\{ \begin{array}{ll}
i^{-1}(X), & \text{if}\: \text{$X\in Im(i)$ };\\
a, & \text{if}\: \text{$X \notin Im(i)$}\: \end{array}\right. \]
is supported by $supp(i) \cup \{a\}$ and is surjective. 

Now, fix $n$ atoms $x_{1}, \ldots, x_{n}$.  The mapping $g:\wp_{fs}(A) \to \wp_{n}(A)$ defined by 
\[g(X)=\left\{ \begin{array}{ll}
X, & \text{if}\: \text{$X\in \wp_{n}(A)$ };\\
\{x_{1}, \ldots x_{n}\}, & \text{if}\: \text{$X \notin \wp_{n}(A)$}\: \end{array}\right. 
\] is supported by $\{x_{1}, \ldots x_{n}\}$ and is surjective.  
 
7. We prove that $|A| \lneq |\wp_{fin}(A)| \lneq |\wp_{fs}(A)|$. We obviously have that $|A| \leq |\wp_{fin}(A)|$ by taking the equivariant injective mapping $f:A \to \wp_{fin}(A)$ defined by $f(a)=\{a\}$ for all $a \in A$. We prove, by contradiction, that there is no finitely supported surjection from $A$ onto $\wp_{fin}(A)$. Assume that $g:A \to \wp_{fin}(A)$ is a finitely supported surjection. Let us fix two atoms $x$ and $y$. We define the function $h: \wp_{fin}(A) \to \wp_{2}(A)$ by 
$
h(X)=\left\{ \begin{array}{ll}
X, & \text{if}\: \text{$|X|=2$ };\\
\{x,y\}, & \text{if}\: \text{$|X| \neq 2$}\: .\end{array}\right. $. Since for every $\pi \in S_{A}$ and $X \in \wp_{fin}(A)$ we have $|\pi \star X|=|X|$, we conclude that $h$ is finitely supported by $\{x,y\}$. Thus, $h \circ g$ is a surjection from $A$ onto $\wp_{2}(A)$ supported by $supp(g)\cup\{x,y\}$, which contradicts the previous item. Therefore, $|A|<|\wp_{fin}(A)|$. Since every element in $\wp_{fin}(A)$ belongs to $\wp_{fs}(A)$, but there does not exist a finitely supported injective mapping from $\wp_{fs}(A)$ onto one of its finitely supported proper subsets, we also have  $|\wp_{fin}(A)|<|\wp_{fs}(A)|$.

8. As in the above item there does not exist neither a finitely supported bijection between $\wp_{fin}(A)$ and $\wp_{fs}(A)$, nor a finitely supported bijection between $A$ and $\wp_{fin}(A)$. Fix an atom $a \in A$. The mapping $s: \wp_{fin}(A) \to A$ defined by 
\[
s(X)=\left\{ \begin{array}{ll}
x, & \text{if}\: \text{$X$ is an one-element set $\{x\}$ };\\
a, & \text{if}\: \text{$X$ is not an one-element set}\: \end{array}\right. \]
is supported by $\{a\}$ and is surjective. 

Now, fix an atom $b$.  The mapping $g:\wp_{fs}(A) \to \wp_{fin}(A)$ defined by 
\[g(X)=\left\{ \begin{array}{ll}
X, & \text{if}\: \text{$X\in \wp_{fin}(A)$ };\\
\{b\}, & \text{if}\: \text{$X \notin \wp_{fin}(A)$}\: \end{array}\right. \] is supported by $\{b\}$ and is surjective.  

9. According to Theorem \ref{cardord1}(1) there is no finitely supported surjection from $\wp_{fs}(A)$ onto $A \times A$.
Suppose there is a finitely supported surjective mapping $f: \wp_{fs}(A) \to \wp_{fs}(A) \times \wp_{fs}(A)$. Obviously, there exists a supported surjection $s:\wp_{fs}(A) \to A$ defined by \[
s(X)=\left\{ \begin{array}{ll}
a, & \text{if}\: \text{$X$ is an one-element set \{a\} };\\
x, & \text{if}\: \text{$X$ has more than one element}\: .\end{array}\right. \] where $x$ is a fixed atoms of $A$. The surjection $s$ is supported by $supp(x)=x$. Thus, we can define a surjection $g:\wp_{fs}(A) \times \wp_{fs}(A) \to A \times A$ by $g(X,Y)=(s(X),s(Y))$ for all $X,Y \in \wp_{fs}(A)$. Let $\pi \in Fix(supp(s))$. Since $supp(s)$ supports $s$, by Proposition \ref{2.18'} we have  $g(\pi \otimes_{\star} (X,Y))=g(\pi \star X,\pi \star Y)=(s(\pi \star X),s(\pi \star Y))=(\pi \cdot s(X),\pi \cdot s(Y))=\pi \otimes (s(X),s(Y))$ for all $X,Y \in \wp_{fs}(A)$, where $\otimes_{\star}$ and $\otimes$ represent the $S_{A}$-actions on $\wp_{fs}(A) \times \wp_{fs}(A)$ and $A \times A$, respectively. Thus, $supp(s)$ supports $g$, and so $supp(g) \subseteq supp(s)$. Furthermore, the function $h=g \circ f: \wp_{fs}(A) \to A \times A$ is surjective and finitely supported by $supp(s) \cup supp(f)$. This is a contradiction, and so $|\wp_{fs}(A) \times \wp_{fs}(A)| \nleq^{*} |\wp_{fs}(A)|$.

10.  Suppose, by contradiction, that there is a finitely supported injective mapping  $f:  \wp_{fs}(A) \times \wp_{fs}(A) \to \wp_{fs}(A)$. In the view of Proposition \ref{pco2}, let us fix two finitely supported subsets of $A$, namely $U$ and $V$. We define the function $g: \wp_{fs}(A) \to \wp_{fs}(A) \times \wp_{fs}(A)$ by 
\[
g(X)=\left\{ \begin{array}{ll}
f^{-1}(X), & \text{if}\: \text{$X\in Im(f)$ };\\
(U,V), & \text{if}\: \text{$X \notin Im(f)$}\: .\end{array}\right. 
\]
Clearly, $g$ is surjective. 
Furthermore, $g$ is supported by $supp(f) \cup supp(U) \cup supp(V)$ (the proof uses the fact that $Im(f)$ is a subset of $\wp_{fs}(A)$ supported by $supp(f)$). This contradicts the above item, and so $|\wp_{fs}(A) \times \wp_{fs}(A)| \nleq |\wp_{fs}(A)|$.

11.  In the view of Proposition \ref{pco1}(5) there is a finitely supported injection from $A+A$ into $A \times A$, and a finitely supported surjection from $A \times A$ onto $A+A$ according to Proposition \ref{pco2}. Thus $|A+A|\leq |A \times A|$ and $|A+A|\leq^{*}|A \times A|$  Fix three different atoms $a,b,c \in A$. Define the mapping $f:A+A \to \wp_{fs}(A)$ by 
\[
f(u)=\left\{ \begin{array}{ll}
\{x\}, & \text{if}\: u=(0,x)\: \text{with}\: x \in A;\\
\{a,y\}, & \text{if}\:  u=(1,y)\: \text{with}\: y \in A, y \neq a;\\
\{b,c\}, & \text{if}\: u=(1,a)\end{array}\right. 
\]
One can directly prove that $f$ is injective and supported by $\{a,b,c\}$. According to Proposition \ref{pco2}, we have $|A+A|\leq^{*}|\wp_{fs}(A)|$. If we had $|A \times A|=|A+A|$, we would obtain $|A\times A| \leq^{*} |\wp_{fs}(A)|$ which contradicts item 1. 

12. According to the above item $|A+A|\leq |\wp_{fs}(A)|$.  If we had $|A \times A|=|A+A|$, we would obtain $|A\times A| \leq |\wp_{fs}(A)|$ which contradicts item 2. 
\end{proof}

\begin{proposition} \label{cardord2'}  There exists an invariant set $X$ having the following properties:
\begin{enumerate}
\item $|X| \lneq |X|+|X|$;
\item $|X| \lneq^{*} |X|+|X|$.
\end{enumerate}
\end{proposition}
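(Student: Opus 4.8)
The plan is to take $X = A$, the invariant set of atoms, and to verify both $|A| \lneq |A|+|A|$ and $|A| \lneq^{*} |A|+|A|$. Here $|A|+|A| = |A+A|$, where $A+A$ is the disjoint union of two tagged copies of $A$. The two ``easy'' directions are immediate and in fact equivariant: the map $a \mapsto (0,a)$ is an equivariant injection $A \to A+A$, giving $|A| \leq |A+A|$, while the map sending both $(0,a)$ and $(1,a)$ to $a$ is an equivariant surjection $A+A \to A$, giving $|A| \leq^{*} |A+A|$. Consequently both strict inequalities reduce to a single claim: there is no finitely supported bijection $h : A \to A+A$, i.e. $|A| \neq |A+A|$.

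To prove this central claim I would argue by contradiction. Suppose $h : A \to A+A$ is a finitely supported bijection and set $S = supp(h)$. The key observation is that every element of $A+A$ has a singleton support, namely $supp((i,b)) = \{b\}$ for $i \in \{0,1\}$. For an atom $a \notin S$, applying Proposition \ref{2.18'} to permutations fixing $S \cup \{a\}$ pointwise shows $supp(h(a)) \subseteq S \cup \{a\}$; writing $h(a) = (i_a, b_a)$ this forces $b_a \in S \cup \{a\}$. I would then exclude the possibility $b_a \in S$: if $b_a \in S$, then for any other $a' \notin S$ the transposition $(a\, a') \in Fix(S)$ gives $h(a') = (a\, a') \star h(a) = (i_a, b_a) = h(a)$, contradicting injectivity. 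Hence $b_a = a$ for every $a \notin S$.

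Next I would show that the tag is constant off $S$. Applying the same transposition $(a\, a')$ to $h(a) = (i_a, a)$ yields $h(a') = (i_a, a')$, so $i_{a'} = i_a$; denote the common value by $i_0$. Thus $h(a) = (i_0, a)$ for all $a \notin S$. The final step exploits surjectivity: the infinitely many elements $(1 - i_0, c)$ with $c \in A \setminus S$ all lie in $A+A$ and must each have an $h$-preimage, yet no atom outside $S$ maps to a pair with tag $1 - i_0$, so all these preimages lie in the finite set $S$. By injectivity of $h$ these preimages are distinct, which would require an injection from an infinite set into $S$ — impossible. This contradiction establishes $|A| \neq |A+A|$ and, together with the first paragraph, proves both items.

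The main obstacle is precisely the nonexistence of the finitely supported bijection; the equivariant injection and surjection are routine. The delicate point is controlling $supp(h(a))$ for atoms $a$ outside $supp(h)$ and turning this into the observation that a single-orbit set like $A$ cannot be split by a finitely supported map into two disjoint copies: a finite support can pin down only finitely many atoms, so it cannot coherently decide into which copy a generic atom is sent. Making this intuition rigorous via the transposition argument above is the heart of the proof, and I expect no further difficulty beyond it.
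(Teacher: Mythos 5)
Your proof is correct, but it takes a genuinely different route from the paper's. The paper witnesses the proposition with $X=\wp_{fin}(A)$ rather than $X=A$: it first establishes $|\wp_{fs}(A)|=2|\wp_{fin}(A)|$ via the equivariant complement bijection $U\mapsto A\setminus U$ between $\wp_{fin}(A)$ and $\wp_{cofin}(A)$ together with the decomposition $\wp_{fs}(A)=\wp_{fin}(A)\cup\wp_{cofin}(A)$ from Proposition \ref{p111}, and then rules out a finitely supported bijection between $\wp_{fin}(A)$ and $\wp_{fs}(A)$ by appealing to the fact, proved inside Theorem \ref{cardord1}(5), that no finitely supported injection maps $\wp_{fs}(A)$ onto a finitely supported proper subset of itself; the surjection needed for item 2 is built by collapsing the infinite sets to a point. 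Your choice $X=A$ makes the easy directions trivial equivariant maps and puts all the weight on the nonexistence of a finitely supported bijection $h:A\to A+A$, which you settle by a correct support-and-transposition analysis: $supp(h(a))\subseteq supp(h)\cup\{a\}$ forces $h(a)=(i_{0},a)$ for all $a\notin supp(h)$ with a constant tag $i_{0}$, and then surjectivity onto the infinitely many elements $(1-i_{0},c)$ fails. In the paper's terminology your central claim is exactly the statement that $A$ is not FSM Tarski III infinite, which the paper only obtains later (Corollary \ref{ti4}, via Dedekind finiteness of $A$ and Theorem \ref{TTRR}(2)), so your argument is a direct, self-contained proof of that fact; it could even be shortened by invoking Proposition \ref{p111} directly, since $h^{-1}(\{0\}\times A)$ would be an infinite, coinfinite subset of $A$ supported by $supp(h)$. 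What the paper's choice buys is the identity $|\wp_{fs}(A)|=2|\wp_{fin}(A)|$ recorded along the way, reusing machinery it has already developed; what yours buys is elementarity and independence from Theorem \ref{cardord1}.
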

\begin{proof}

1. First we prove that in FSM we have $|\wp_{fs}(A)|=2|\wp_{fin}(A)|$.
Let us consider the function $f:\wp_{fin}(A) \to \wp_{cofin}(A)$ defined by $f(U)=A \setminus U$ for all $U \in \wp_{fin}(A)$. Clearly, $f$ is bijective. We claim that $f$ is equivariant. Indeed, let $\pi \in S_{A}$. To prove that $f(\pi \star U)$=$\pi \star f(U)$ for all $U \in \wp_{fin}(A)$, we have to prove that $A \setminus (\pi \star U)=\pi \star (A \setminus U)$ for all $U \in \wp_{fin}(A)$. Let $y \in A \setminus (\pi \star U)$. We can express $y$ as $y=\pi \cdot (\pi^{-1} \cdot y)$. If $\pi^{-1} \cdot y \in U$, then $y \in \pi \star U$, which is a contradiction. Thus, $\pi^{-1} \cdot y \in (A \setminus U)$, and so $y \in \pi \star (A \setminus U)$. Conversely, if  
$y \in \pi \star (A \setminus U)$, then $y=\pi\cdot x$ with $x \in A \setminus U$. Suppose $y \in \pi \star U$. Then $y=\pi\cdot z$ with $z \in U$. Thus, $x=z$ which is a contradiction, and so $y \in A \setminus (\pi \star U)$. Since $f$ is equivariant and bijective, it follows that $|\wp_{fin}(A)|=|\wp_{cofin}(A)|$. However, every finitely supported subset of $A$ is either finite or cofinite, and so $\wp_{fs}(A)$ is the union of the disjoint subsets $\wp_{fin}(A)$ and $\wp_{cofin}(A)$. Thus, $|\wp_{fs}(A)|=2|\wp_{fin}(A)|$.  Moreover, there exists an equivariant injection $i: \wp_{fin}(A) \to \wp_{fs}(A)$ defined by $i(U)=U$ for all $U \in \wp_{fin}(A)$. However, there does not exist a finitely supported one-to-one mapping from $\wp_{fs}(A)$ onto one of its finitely supported proper subsets. Thus, there could not exist a bijection $f: \wp_{fs}(A) \to \wp_{fin}(A)$. Therefore, $|\wp_{fin}(A)| \neq |\wp_{fs}(A)|=2|\wp_{fin}(A)|$. We can consider $X=\wp_{fin}(A)$ or $X=\wp_{cofin}(A)$.

2. It remains to prove that there is a finitely supported surjection 
from $\wp_{fs}(A)$ onto $\wp_{fin}(A)$. We either use Proposition~\ref{pco2} 
or effectively construct the surjection as below. Fix $a \in A$. We define $g:\wp_{fs}(A) \to \wp_{fin}(A)$ by
\[ g(U)=\left\{ \begin{array}{ll}
U, & \text{if}\: \text{$U\in \wp_{fin}(A)$ };\\
\{a\}, & \text{if}\: \text{$U \notin \wp_{fin}(A)$}\: .\end{array}\right. 
\] Clearly, $g$ is supported by $\{a\}$ and surjective. We can consider $X=\wp_{fin}(A)$ or  $X=\wp_{cofin}(A)$. 
\end{proof}

\section{Forms of Infinite in Finitely Supported Structures} \label{chap9}

The equivalence of various definitions for infinity is provable in ZF 
under the consideration of the axiom of choice. Since in FSM the axiom of 
choice fails, our goal is to study various FSM forms of infinite and to 
provide several relations between them.

\begin{definition}
Let $X$ be a finitely supported subset of an invariant set. 

\begin{enumerate}
\item $X$ is called \emph{FSM usual infinite} if $X$ does not correspond one-to-one and onto to a finite ordinal. 
We simply call \emph{infinite} an FSM usual infinite set. 
\item $X$ is \emph{FSM covering infinite} if there is a finitely supported directed family $\mathcal{F}$ of finitely supported sets
with the property that $X$ is contained in the union of the members of $\mathcal{F}$, but there does not exist $Z\in \mathcal{F}$ such that $X\subseteq Z$.
\item $X$ is called \emph{FSM Tarski I infinite} if there exists a finitely supported one-to-one mapping of $X$ onto $X \times X$.
\item $X$ is called \emph{FSM Tarski II infinite} if there exists a finitely supported family of finitely supported subsets of $X$, totally ordered by inclusion, having no maximal element.
\item $X$ is called \emph{FSM Tarski III infinite} if $|X|=2|X|$.
\item $X$ is called \emph{FSM Mostowski infinite} if there exists an infinite finitely supported  totally ordered subset of $X$. 
\item $X$ is called \emph{FSM Dedekind infinite} if there exist a finitely supported one-to-one mapping of $X$ onto a finitely supported proper subset of $X$. 
\item $X$ is \emph{FSM ascending infinite} if there is a finitely supported increasing countable chain of finitely supported sets $X_{0}\subseteq X_{1}\subseteq\ldots\subseteq X_{n}\subseteq\ldots$
with $X\subseteq\cup X_{n}$, but there does not exist $n\in\mathbb{N}$ such
that $X\subseteq X_{n}$;
\end{enumerate}
\end{definition}

Note that in the definition of FSM Tarski II infinity for a certain $X$, 
the existence of a finitely supported family of finitely supported subsets 
of $X$ is required, while in the definition of FSM ascending infinity for 
$X$, the related family of finitely supported subsets of $X$ has to be FSM 
countable (i.e. the mapping $n \mapsto X_{n}$ should be finitely 
supported). It is immediate that if $X$ is FSM ascending infinite, then it 
is also FSM Tarski II infinite.

\begin{theorem}
Let $X$ be a finitely supported subset of an invariant set. Then $X$ is FSM usual infinite if and only if $X$ is FSM covering infinite.
\end{theorem}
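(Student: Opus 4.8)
The plan is to establish the two implications separately, the nontrivial direction amounting to a single explicit construction. For ``FSM usual infinite $\Rightarrow$ FSM covering infinite'', I would take $\mathcal{F}=\wp_{fin}(X)$, the family of all finite subsets of $X$. By Proposition \ref{p1}(3) together with Definition \ref{2.14}, $\wp_{fin}(X)$ is a finitely supported family (supported by $supp(X)$), and each of its members is a finite subset of an invariant set, hence finitely supported by Proposition \ref{4.4-9}(1). This family is directed under inclusion, since for any two finite subsets $Y_{1},Y_{2}\subseteq X$ their union $Y_{1}\cup Y_{2}$ is again a finite subset of $X$ and serves as an upper bound. Moreover $X\subseteq\bigcup\mathcal{F}$, because every $x\in X$ lies in the member $\{x\}\in\wp_{fin}(X)$. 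Finally, no single member contains $X$: any member $Z$ is finite, so $X\subseteq Z$ would force $X$ to be finite, i.e. to correspond one-to-one and onto to a finite ordinal, contradicting the hypothesis that $X$ is FSM usual infinite. Thus $\mathcal{F}$ witnesses that $X$ is FSM covering infinite.

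For the converse I would argue by contraposition: assuming $X$ is not FSM usual infinite, hence $X=\{x_{1},\ldots,x_{n}\}$ is finite, I show that $X$ cannot be FSM covering infinite. Let $\mathcal{F}$ be any finitely supported directed family of finitely supported sets with $X\subseteq\bigcup\mathcal{F}$. For each $i$ pick $Z_{i}\in\mathcal{F}$ with $x_{i}\in Z_{i}$; this is only a finite selection, so no choice principle is needed. By directedness, applied finitely many times, the finite subfamily $\{Z_{1},\ldots,Z_{n}\}$ admits an upper bound $Z\in\mathcal{F}$ under inclusion, and then $X=\{x_{1},\ldots,x_{n}\}\subseteq Z$. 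Hence there does exist a member of $\mathcal{F}$ containing $X$, so $X$ fails the definition of FSM covering infinite.

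The routine part is the finite-support bookkeeping, and the only place that genuinely uses the FSM setting rather than plain ZF is the assertion that $\mathcal{F}=\wp_{fin}(X)$ and all its members are finitely supported; this is precisely what Proposition \ref{p1}(3), Proposition \ref{4.4-9}(1) and Definition \ref{2.14} supply, so no real obstruction arises. The subtler point to state carefully is that the directedness argument in the converse remains internal to FSM: because $X$ is finite, both the selection of the $Z_{i}$ and the iterated use of directedness are finite, so the argument never leaves the world of finitely supported structures and never invokes the (failing in FSM) axiom of choice.
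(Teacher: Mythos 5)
Your proposal is correct and follows essentially the same route as the paper: for the forward direction both take $\mathcal{F}=\wp_{fin}(X)$ (the family of finite subsets of $X$), supported by $supp(X)$, directed under inclusion, covering $X$, with no finite member containing the infinite set $X$; for the converse both use finiteness of $X$ to select finitely many members covering $x_{1},\ldots,x_{n}$ and apply directedness finitely often to get a single member containing $X$. The only differences are cosmetic (direct verification versus argument by contradiction, contraposition versus contradiction), so there is nothing substantive to add.
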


\begin{proof}
Let us suppose that $X$ is FSM usual infinite. Let $\mathcal{F}$ be the 
family of all FSM usual non-infinite (FSM usual finite) subsets of $X$ 
ordered by inclusion. Since $X$ is finitely supported, it follows that 
$\mathcal{F}$ is supported by $supp(X)$. Moreover, since all the elements 
of $\mathcal{F}$ are finite sets, it follows that all the elements of 
$\mathcal{F}$ are finitely supported. Clearly,~$\mathcal{F}$ is directed 
and $X$ is the union of the members of $\mathcal{F}$. Suppose by 
contradiction, that $X$ is not FSM covering infinite. Then there exists 
$Z\in\mathcal{F}$ such that $X\subseteq Z$. Therefore, $X$ should by FSM 
usual finite which is a contradiction with our original assumption.

Conversely, assume that $X$ is FSM covering infinite. Suppose, by contradiction that $X$ is FSM usual finite, i.e. $X=\{x_{1}, \ldots x_{n}\}$.  Let $\mathcal{F}$ be a directed
family such that $X$ is contained in the union of the members of
$\mathcal{F}$ (at least one such a family exists, for example $\wp_{fs}(X)$). Then for each $i \in \{1, \ldots, n\}$ there exists $F_{i} \in \mathcal{F}$ such that $x_{i} \in F_{i}$. Since $\mathcal{F}$ is directed, there is $Z \in \mathcal {F}$ such that $F_{i} \subseteq Z$ for all $i \in \{1, \ldots, n\}$, and so $X \subseteq Z$ with $Z\in \mathcal{F}$, which is a contradiction. 
\end{proof}

\begin{theorem} \label{ti1} The following properties of FSM Dedekind infinite sets hold.
\begin{enumerate}
\item Let $X$ be a finitely supported subset of an invariant set $Y$. Then $X$ is FSM Dedekind infinite if and only if there exists a finitely supported one-to-one mapping $f: \mathbb{N} \to X$. As a consequence, an FSM superset of an FSM Dedekind infinite set is FSM Dedekind infinite, and an FSM subset of an FSM set that is not Dedekind infinite is also not FSM Dedekind infinite.
\item Let $X$ be an infinite finitely supported subset of an invariant set $Y$. Then the sets $\wp_{fs}(\wp_{fin}(X))$ and $\wp_{fs}(T_{fin}(X))$ are FSM Dedekind infinite.
\item Let $X$ be an infinite finitely supported subset of an invariant set $Y$. Then the set $\wp_{fs}(\wp_{fs}(X))$ is FSM Dedekind infinite.
\item Let $X$ be a finitely supported subset of an invariant set $Y$ such that $X$ does not contain an infinite subset $Z$ with the property that all the elements of $Z$ are supported by the same set of atoms. Then $X$ is not FSM Dedekind infinite. 
\item Let $X$ be a finitely supported subset of an invariant set $Y$ such that $X$ does not contain an infinite subset $Z$ with the property that all the elements of $Z$ are supported by the same set of atoms. Then $\wp_{fin}(X)$ is not FSM Dedekind infinite.
\item Let $X$ and $Y$ be two finitely supported subsets of an invariant set $Z$. If neither $X$ nor $Y$ is FSM Dedekind infinite, then $X \times Y$ is not FSM Dedekind infinite. 
\item Let $X$ and $Y$ be two finitely supported subsets of an invariant set $Z$. If neither $X$ nor $Y$ is FSM Dedekind infinite, then $X + Y$ is not FSM Dedekind infinite. 
\item Let $X$ be a finitely supported subset of an invariant set $Y$. Then $\wp_{fs}(X)$ is FSM Dedekind infinite if and only if $X$ is FSM ascending infinite.
\item Let $X$ be a finitely supported subset of an invariant set $Y$. If $X$ is FSM Dedekind infinite, then $X$ is FSM ascending infinite. The reverse implication is not valid.

\end{enumerate}
\end{theorem}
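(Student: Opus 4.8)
The plan is to build everything on a single engine extracted from part~(1), so I would prove (1) first and then reuse it throughout. For the forward implication of (1), if $f:X\to X$ is a finitely supported injection with $f(X)\subsetneq X$, I pick $a\in X\setminus f(X)$ and set $g(n)=f^{n}(a)$; since $\mathbb{N}$ carries the trivial action, $g$ is supported by $supp(f)\cup supp(a)$, and injectivity is the usual argument ($a\notin f(X)$ while every later term lies in $f(X)$, then apply injectivity of $f$). Conversely, a finitely supported injection $f:\mathbb{N}\to X$ yields the shift that sends $f(n)\mapsto f(n+1)$ and fixes $X\setminus Im(f)$, a finitely supported self-injection with proper image. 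The two ``consequences'' are then immediate by composition (supersets) and contraposition (subsets). The single most useful by-product, which I would record as a lemma, is that $X$ is FSM Dedekind infinite \emph{iff} there is a finitely supported $h:\mathbb{N}\to X$ with infinite image: an infinite image lets one recursively pick the least index giving a new value, a definition needing no choice because $\mathbb{N}$ is well ordered, and the resulting reindexing $\mathbb{N}\to\mathbb{N}$ is automatically equivariant (any map between non-atomic sets is), so $h$ composed with it is a finitely supported injection.

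For (2) and (3) I would exhibit the explicit equivariant injection $n\mapsto \wp_{n}(X)$; since $X$ is infinite each $\wp_{n}(X)$ is a non-empty equivariant, hence finitely supported, member of $\wp_{fs}(\wp_{fin}(X))$ and of $\wp_{fs}(\wp_{fs}(X))$, and distinct $n$ give distinct values, so (1) applies. The tuple version uses $n\mapsto\{\text{injective }n\text{-tuples}\}$ identically. Parts (4) and (5) rest on one observation: if $f:\mathbb{N}\to X$ is finitely supported then $supp(f(n))\subseteq supp(f)$ for every $n$, because $\pi\cdot n=n$ forces $f(n)=\pi\cdot f(n)$ for $\pi\in Fix(supp(f))$. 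Thus $Im(f)$ is an infinite subset of $X$ uniformly supported by $supp(f)$, exactly what (4) forbids. For (5) I apply the same remark to $f:\mathbb{N}\to\wp_{fin}(X)$: every element appearing in some $f(n)$ is supported by $supp(f)$ (via Proposition~\ref{4.4-9}), and $\bigcup_n f(n)$ must be infinite (otherwise only finitely many finite subsets are available, contradicting injectivity), producing the forbidden uniformly supported infinite subset of $X$.

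Parts (6), (7) and (9) are direct applications of the infinite-image lemma. For (6), a finitely supported injection $\mathbb{N}\to X\times Y$ has finitely supported coordinate maps (compose with the equivariant projections); if neither $X$ nor $Y$ is Dedekind infinite both have finite image, so the pair lands in a finite set, contradicting injectivity. For (7) I split $\mathbb{N}$ into the two finitely supported (indeed equivariant, as subsets of a trivial set) preimages of the summands; one is infinite, and re-enumerating it gives a finitely supported injection into $X$ or $Y$. For the forward half of (9), from a finitely supported injection $f:\mathbb{N}\to X$ I set $X_{n}=(X\setminus Im(f))\cup\{f(0),\dots,f(n)\}$, a finitely supported increasing chain of proper subsets of $X$ with union $X$, witnessing ascending infinity; the reverse of (9) fails, with $\wp_{fin}(A)$ as witness, since it is ascending infinite by (8) and (2) yet not Dedekind infinite by (4).

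The real work is (8), and I would dispatch the easy direction first: an ascending chain $(X_{n})$ for $X$ can be replaced by $(X_{n}\cap X)$, a finitely supported increasing chain of proper subsets of $X$ with union $X$, whence $n\mapsto X_{n}\cap X$ has infinite image in $\wp_{fs}(X)$ and the lemma gives Dedekind infinity of $\wp_{fs}(X)$. For the converse I expect the main obstacle: a finitely supported injection gives distinct finitely supported subsets $(Y_{k})$ of $X$, all supported by a common $S$, but their cumulative unions may stabilize, so a naive chain need not keep growing. My remedy is to pass to the signature map $G:X\to\{0,1\}^{\mathbb{N}}$ whose $k$-th coordinate is $1$ exactly when $x\in Y_{k}$; it is supported by $S$ because each $Y_{k}$ is, and since $\{0,1\}^{\mathbb{N}}$ is non-atomic, any $\phi:\{0,1\}^{\mathbb{N}}\to\mathbb{N}$ is equivariant, so $\phi\circ G$ is automatically finitely supported by $S$. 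It then suffices to show $Im(G)$ is infinite, and here lies the crux: a short counting argument shows that if only finitely many row-patterns $G(x)$ occurred, then each ``column'' $Y_{k}$ would be determined by finitely many bits, forcing finitely many distinct $Y_{k}$ and contradicting injectivity. Choosing $\phi$ to separate infinitely many values of $G$ yields a finitely supported $g:X\to\mathbb{N}$ with infinite image, and $X_{n}=g^{-1}(\{0,\dots,n\})$ is the desired ascending chain. I expect verifying this counting lemma, together with the automatic finite support of $\phi\circ G$, to be the decisive and least routine step of the whole theorem.
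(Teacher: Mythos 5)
Your overall architecture is sound and, for most items, either coincides with the paper's proof or improves on it. Parts (1), (4), (5) are essentially the paper's arguments; your ``infinite image'' lemma ($X$ is FSM Dedekind infinite iff some finitely supported map $\mathbb{N}\to X$ has infinite image) is legitimate --- the least-new-index reindexing is a function inside the trivial invariant set $\mathbb{N}\times\mathbb{N}$, hence automatically equivariant, and no choice is needed --- and it gives a cleaner treatment of (6) and (7) than the paper's explicit extraction of an injective subsequence indexed by an infinite $B\subseteq\mathbb{N}$ followed by a ZF bijection $\mathbb{N}\to B$. Your direct chain $X_{n}=(X\setminus Im(f))\cup\{f(0),\dots,f(n)\}$ for (9) also avoids routing the forward implication through (8), as the paper does. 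Two small slips: in (2)--(3), the sets $\wp_{n}(X)$ and the map $n\mapsto \wp_{n}(X)$ are only $supp(X)$-supported, not equivariant, because $X$ is merely finitely supported rather than invariant (this is exactly what the paper verifies); they are still finitely supported, so nothing breaks.

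The genuine gap is in the converse of (8), at the step you pass over with ``choosing $\phi$ to separate infinitely many values of $G$.'' Your signature map $G:X\to\{0,1\}^{\mathbb{N}}$ is indeed supported by $S\cup supp(X)$, and your counting argument that $Im(G)$ is infinite is correct (each $Y_{k}$ is a union of fibers $G^{-1}(p)$, so a finite pattern set would allow at most $2^{|Im(G)|}$ distinct $Y_{k}$). But the existence of $\phi:\{0,1\}^{\mathbb{N}}\to\mathbb{N}$ with $\phi(Im(G))$ infinite cannot be waved through: in ZF without choice an infinite set need not admit \emph{any} map onto an infinite subset of $\mathbb{N}$ (weakly Dedekind-finite and amorphous sets are consistent), and the paper's Remark \ref{remarema} explicitly allows such pathological non-atomic sets inside FSM; so the claim must exploit the tree structure of Cantor space, and that argument is missing from your proposal. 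It can be supplied choice-freely: call a finite binary string \emph{heavy} if infinitely many elements of $Im(G)$ extend it; the empty string is heavy and every heavy string has a heavy child, so the leftmost heavy branch $s^{*}$ is canonically defined by recursion; setting $\phi(t)$ to be the length of the longest common prefix of $t$ and $s^{*}$ (with $\phi(s^{*})=0$) gives $\phi(Im(G))$ infinite, and $\phi$ is equivariant since $\{0,1\}^{\mathbb{N}}$ and $\mathbb{N}$ are trivial invariant sets. With that paragraph added, your route is a genuine alternative to the paper's, which instead disjointifies the uniformly supported family $(Y_{k})$ by a Kuratowski-style recursion (kept uniformly supported via the $S$-finite support principle) and assembles the ascending chain $V_{n}$ from the disjoint pieces; your version concentrates all the difficulty into one purely ZF lemma about $\{0,1\}^{\mathbb{N}}$, while the paper's stays inside the atomic sets but needs the recursion bookkeeping.
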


\begin{proof}
\begin{enumerate}
\item Let us suppose that $(X, \cdot)$ is FSM Dedekind infinite, and 
$g: X \rightarrow X$ is an injection supported by the finite set $S 
\subsetneq A$ with the property that $Im(g) \subsetneq X$. This means 
that there exists $supp(g) \subseteq S$ and there exists $x_{0}\in X$ such that $x_{0}\notin Im(g)$.  We can form a sequence of elements from~$X$ which has the first term $x_{0}$ and the general term 
$x_{n+1}=g(x_{n})$ for all $n \in \mathbb{N}$.  Since $x_{0}\notin Im(g)$ it follows that $x_{0} 
\neq g(x_{0})$. Since $g$ is injective and $x_{0} \notin Im(g)$, by 
induction we obtain that $g^{n}(x_{0}) \neq g^{m}(x_{0})$ for all $n,m \in 
\mathbb{N}$ with $n \neq m$. Furthermore,~$x_{n+1}$ 
is supported by $supp(g)\cup supp(x_{n})$ for all $n \in \mathbb{N}$. 
Indeed, let $\pi \in Fix(supp(g)\cup supp(x_{n}))$. According to 
Proposition~\ref{2.18'}, $\pi \cdot x_{n+1}= \pi \cdot g(x_{n})=g(\pi 
\cdot x_{n})=g(x_{n})=x_{n+1}$. Since $supp(x_{n+1})$ is the least set 
supporting $x_{n+1}$, we obtain $supp(x_{n+1}) \subseteq supp(g)\cup 
supp(x_{n})$ for all $n \in \mathbb{N}$. By finite recursion, we have 
$supp(x_{n}) \subseteq supp(g)\cup supp(x_{0})$ for all 
$n\in\mathbb{N}$. Since all $x_{n}$ are supported by the same set of 
atoms $supp(g)\cup supp(x_{0})$, we have that the function $f:\mathbb{N} \to X$, defined by 
$f(n)=x_{n}$, is also finitely supported (by the set $supp(g)\cup 
supp(x_{0}) \cup supp(X)$ not depending on $n$). Indeed, for any $\pi \in Fix(supp(g)\cup supp(x_{0}) \cup supp(X))$ we 
have $f(\pi \diamond n)=f(n)=x_{n}=\pi \cdot x_{n}=\pi\cdot f(n)$, 
$\forall n \in \mathbb{N}$, where by $\diamond$ we denoted the trivial 
$S_{A}$-action on $\mathbb{N}$. Furthermore, because $\pi$ fixes $supp(X)$ pointwise we have $\pi \cdot f(n) \in X$ for all $n \in \mathbb{N}$. From Proposition 
\ref{2.18'} we have that $f$ is finitely supported. Obviously, $f$ is also injective.

Conversely, suppose there exists a finitely supported injective mapping 
$f: \mathbb{N} \to X$. According to Proposition~\ref{2.18'}, it follows 
that for any $\pi \in Fix(supp(f))$ we have $\pi \cdot f(n)=f(\pi \diamond 
n)=f(n)$ and $\pi \cdot f(n) \in X$ for all $n \in \mathbb{N}$. Let us 
define $g:X \to X$ by \[ g(x)=\left\{ \begin{array}{ll} f(n+1), & 
\text{if}\: \text{there exists $n \in \mathbb{N}$ with $x=f(n)$};\\ x, & 
\text{if}\: \text{$x \notin Im(f)$}\: .\end{array}\right. \] We claim that 
$g$ is supported by $supp(f) \cup supp(X)$. Indeed, let us consider $\pi 
\in Fix(supp(f) \cup supp(X))$ and $x \in X$. If there is some $n$ such 
that $x=f(n)$, we have that $\pi \cdot x=\pi \cdot f(n)=f(n)$, and so 
$g(\pi \cdot x)=g(f(n))=f(n+1)=\pi \cdot f(n+1)=\pi \cdot g(x)$. If $x 
\notin Im(f)$, we prove by contradiction that $\pi \cdot x \notin Im(f)$. 
Indeed, suppose that $\pi \cdot x \in Im(f)$. Then there is $y \in 
\mathbb{N}$ such that $\pi \cdot x=f(y)$ or, equivalently, $x = \pi^{-1} 
\cdot f(y)$. However, since $\pi \in Fix(supp(f))$, from Proposition 
\ref{2.18'} we have $\pi^{-1} \cdot f(y)=f(\pi^{-1} \diamond y)$, and so 
we get $x=f(\pi^{-1} \diamond y)=f(y) \in Im(f)$ which contradicts the 
assumption that $x \notin Im(f)$. Thus, $\pi \cdot x \notin Im(f)$, and so 
$g(\pi \cdot x)=\pi \cdot x=\pi \cdot g(x)$. We obtained that $g(\pi \cdot 
x)=\pi \cdot x=\pi \cdot g(x)$ for all $x \in X$ and all $\pi \in 
Fix(supp(f) \cup supp(X))$. Furthermore, $\pi \cdot g(x) \in \pi \star 
X=X$ (where by $\star$ we denoted the $S_{A}$-action on $\wp_{fs}(Y)$), 
and so $g$ is finitely supported. Since $f$ is injective, it follows 
immediately that $g$ is injective. Furthermore, $Im(g)=X \setminus 
\{f(0)\}$ which is a proper subset of $X$, finitely supported by 
$supp(f(0)) \cup supp(X)=supp(f) \cup supp(X)$.

\item The family $\wp_{fin}(X)$ represents the family of those finite subsets of $X$ (these  subsets of $X$ are finitely supported as subsets of the invariant set $Y$ in the sense of Definition \ref{2.14}). Obviously, $\wp_{fin}(X)$ is a finitely supported subset of the invariant set $\wp_{fs}(Y)$, supported by $supp(X)$. This is because whenever $Z$ is an element of $\wp_{fin}(X)$ (i.e. whenever $Z$ is a finite subset of $X$) and $\pi$ fixes $supp(X)$ pointwise, we have that $\pi \star Z$ is also a finite subset of $X$. The family $\wp_{fs}(\wp_{fin}(X))$ represents the family of those subsets of $\wp_{fin}(X)$ which are finitely supported as subsets of the invariant set $\wp_{fs}(Y)$ in the sense of Definition \ref{2.14}. As above, according to Proposition \ref{2.15}, we have that $\wp_{fs}(\wp_{fin}(X))$ is a finitely supported subset of the invariant set $\wp_{fs}(\wp_{fs}(Y))$, supported by $supp(\wp_{fin}(X)) \subseteq supp(X)$.

Let $X_{i}$ be the set of all $i$-sized subsets from $X$, i.e. $X_{i}=\{Z 
\subseteq X\,|\,|Z|=i\}$. Since $X$ is infinite, it follows that each 
$X_{i}, i \geq 1$ is non-empty. Obviously, we have that any $i$-sized 
subset $\{x_{1}, \ldots, x_{i}\}$ of $X$ is finitely supported (as a 
subset of $Y$) by $supp(x_{1}) \cup \ldots \cup supp(x_{i})$. Therefore, 
$X_{i} \subseteq \wp_{fin}(X)$ and $X_{i} \subseteq \wp_{fs}(Y)$ for all 
$i \in \mathbb{N}$. Since $\cdot$ is a group action, the image of an 
$i$-sized subset of $X$ under an arbitrary permutation is an $i$-sized 
subset of $Y$. However, any permutation of atoms that fixes $supp(X)$ 
pointwise also leaves $X$ invariant, and so for any permutation $\pi \in 
Fix(supp(X))$ we have that $\pi \star Z$ is an $i$-sized subset of $X$ 
whenever $Z$ is an $i$-sized subset of $X$. Thus, each $X_{i}$ is a subset 
of $\wp_{fin}(X)$ finitely supported by $supp(X)$, and so~$X_{i} \in 
\wp_{fs}(\wp_{fin}(X))$.

We define $f: \mathbb{N} \to \wp_{fs}(\wp_{fin}(X))$ by $f(n)=X_{n}$. We claim that $supp(X)$ supports $f$. Indeed, let $\pi \in Fix(supp(X))$. Since $supp(X)$ supports $X_{n}$ for all $n \in \mathbb{N}$, we have $\pi \star f(n)=\pi \star X_{n}=X_{n}=f(n)= f(\pi \diamond n)$ (where $\diamond$ is the trivial $S_{A}$-action on $\mathbb{N}$) and $\pi \star f(n)=\pi \star X_{n}=X_{n} \in \wp_{fs}(\wp_{fin}(X))$ for all $n \in \mathbb{N}$. According to Proposition \ref{2.18'}, we have that $f$ is finitely supported. Furthermore, $f$ is injective and, by  item 1, we have that $\wp_{fs}(\wp_{fin}(X))$ is FSM Dedekind infinite.

If we consider $Y_{i}$ the set of all $i$-sized injective tuples formed by elements of $X$, we have that each $Y_{i}$ is a subset of $T_{fin}(X)$ supported by $supp(X)$, and the family $(Y_{i})_{i \in \mathbb{N}}$ is a countably infinite, uniformly supported, subset of $\wp_{fs}(T_{fin}(X))$. From item 1 we get that $\wp_{fs}(T_{fin}(X))$ is FSM Dedekind infinite. 

\item The proof is actually the same as in the above item because every $X_{i} \in \wp_{fs}(\wp_{fs}(A))$.

\item If there does not exist a uniformly supported subset of $X$, then there does not exist a finitely supported injective mapping $f:\mathbb{N} \to X$, and so $f$ cannot be FSM Dedekind infinite.

\item We prove the following lemma:

\begin{lemma} \label{lem4} Let $X$ be a finitely supported subset of an invariant set $Y$ such that $X$ does not contain an infinite uniformly supported subset. Then the set $\wp_{fin}(X)=\{Z\!\subseteq\! X\,|\, Z\, \text{finite}\}$ does not contain an infinite uniformly supported subset. 
\end{lemma}

\emph{Proof of Lemma \ref{lem4}.} Suppose, by contradiction, that the set 
$\wp_{fin}(X)$ contains an infinite subset $\mathcal{F}$ such that all the 
elements of $\mathcal{F}$ are different and supported by the same finite 
set $S$. Therefore, we can express $\mathcal{F}$ as 
$\mathcal{F}=(X_{i})_{i \in I} \subseteq \wp_{fin}(X)$ with the properties 
that $X_{i} \neq X_{j}$ whenever $i \neq j$ and $supp(X_{i}) \subseteq S$ 
for all $i \in I$. Fix an arbitrary $j \in I$. However, from Proposition 
\ref{4.4-9}, because $supp(X_{j})=\underset{x \in X_{j}}{\cup}supp(x)$, we 
have that $X_{j}$ has the property that $supp(x) $ $\subseteq S$ for all 
$x \in X_{j}$. Since $j$ has been arbitrarily chosen from $I$, it follows 
that every element from every set of form $X_{i}$ is supported by $S$, and 
so $\underset{i}{\cup}X_{i}$ is an uniformly supported subset of $X$ (all 
its elements being supported by $S$). Furthermore, $\underset{i \in 
I}{\cup}X_{i}$ is infinite because the family $(X_{i})_{i \in I}$ is 
infinite and $X_{i} \neq X_{j}$ whenever $i \neq j$.  Otherwise, if 
$\underset{i}{\cup}X_{i}$ was finite, the family $(X_{i})_{i \in I}$ would 
be contained in the finite set $\wp (\underset{i}{\cup}X_{i})$, and so it 
couldn't be infinite with the property that $X_{i} \neq X_{j}$ whenever $i 
\neq j$. We were able to construct an infinite uniformly supported subset 
of $X$, namely $\underset{i}{\cup}X_{i}$, and this contradicts the 
hypothesis that~$X$ does not contain an infinite uniformly supported subset.
\smallskip

\emph{Proof of this item} According to the above lemma, if $X$ does not 
contain an infinite uniformly supported subset, then $\wp_{fin}(X)$ does 
not contain an infinite uniformly supported subset. Suppose, by 
contradiction, that $\wp_{fin}(X)$ is FSM Dedekind infinite. According to 
item 1, there exists a finitely supported injective mapping $f: \mathbb{N} 
\to \wp_{fin}(X)$. Thus, because $\mathbb{N}$ is a trivial invariant set, 
according to Proposition \ref{2.18'}, there exists an infinite injective 
(countable) sequence $f(\mathbb{N})=(X_{i})_{i \in \mathbb{N}} \subseteq 
\wp_{fin}(X)$ having the property $supp(X_{i}) \subseteq supp(f)$ for all 
$i \in \mathbb{N}$. We obtained that $\wp_{fin}(X)$ contains an infinite 
uniformly supported subset $(X_{i})_{i \in \mathbb{N}}$, which is a 
contradiction.

\item Suppose, by contradiction, that $X \times Y$ is FSM Dedekind 
infinite. According to item 1, there exists a finitely supported injective 
mapping $f: \mathbb{N} \to X \times Y$ Thus, according to Proposition 
\ref{2.18'}, there exists an infinite injective sequence 
$f(\mathbb{N})=((x_{i},y_{i}))_{i \in \mathbb{N}} \subseteq X \times Y$ 
with the property that $supp((x_{i}, y_{i})) \subseteq supp(f)$ for all $i 
\in \mathbb{N}$ (1). Fix some $j \in \mathbb{N}$. We claim that 
$supp((x_{j}, y_{j})) =supp(x_{j}) \cup supp(y_{j})$.  Let $U=(x_{j}, 
y_{j})$, and $S=supp(x_{j})\cup supp(y_{j})$. Obviously, $S$ supports $U$. 
Indeed, let us consider $\pi\in Fix(S)$. We have that $\pi\in 
Fix(supp(x_{j}))$ and also $\pi\in Fix(supp(y_{j}))$ Therefore, $\pi\cdot 
x_{j}=x_{j}$ and $\pi\cdot y_{j}=y_{j}$, and so $\pi \otimes (x_{j}, 
y_{j})=(\pi \cdot x_{j}, \pi \cdot y_{j})=(x_{j}, y_{j})$, where $\otimes$ 
represent the $S_{A}$ action on $X \times Y$ described in Proposition 
\ref{p1}. Thus, $supp(U) \subseteq S$. It remains to prove that $S 
\subseteq supp(U)$. Fix $\pi \in Fix(supp(U))$. Since $supp(U)$ supports 
$U$, we have $\pi \otimes (x_{j}, y_{j})=(x_{j}, y_{j})$, and so $(\pi 
\cdot x_{j}, \pi \cdot y_{j})=(x_{j}, y_{j})$, from which we get $\pi 
\cdot x_{j}=x_{j}$ and $\pi \cdot y_{j}= y_{j}$. Thus, $supp(x_{j}) 
\subseteq supp(U)$ and $supp(y_{j}) \subseteq supp(U)$. Hence 
$S=supp(x_{j})\cup supp(y_{j}) \subseteq supp(U)$.

According to relation (1) we obtain, $supp(x_{i})\cup supp(y_{i}) 
\subseteq supp(f)$ for all $i \in \mathbb{N}$. Thus, $supp(x_{i}) 
\subseteq supp(f)$ for all $i \in \mathbb{N}$ and $supp(y_{i}) \subseteq 
supp(f)$ for all $i \in \mathbb{N}$ (2). Since the sequence 
$((x_{i},y_{i}))_{i \in \mathbb{N}}$ is infinite and injective, then at 
least one of the sequences $(x_{i})_{i \in \mathbb{N}}$ and $(y_{i})_{i 
\in \mathbb{N}}$ is infinite. Assume that $(x_{i})_{i \in \mathbb{N}}$ is 
infinite. Then there exists an infinite subset $B$ of $\mathbb{N}$ such 
that $(x_{i})_{i \in B}$ is injective, and so there exists an injection 
$u: B \to X$ defined by $u(i)=x_{i}$ for all $i \in B$ which is supported 
by $supp(f)$ (according to relation (2) and Proposition \ref{2.18'}).  
However, since $B$ is an infinite subset of $\mathbb{N}$, there exists a 
ZF bijection $h: \mathbb{N} \to B$. The construction of $h$ requires only 
the fact that $\mathbb{N}$ is well-ordered which is obtained from the 
Peano construction of~$\mathbb{N}$ and does not involve a form of the 
axiom of choice.  Since both $B$ and $\mathbb{N}$ are trivial invariant 
sets, it follows that~$h$ is equivariant. Thus, $u \circ h$ is an 
injection from $\mathbb{N}$ to $X$ which is finitely supported by $supp(u) 
\subseteq supp(f)$. This contradicts the assumption that $X$ is not FSM 
Dedekind infinite.

\begin{remark} \label{rrr} 
Analogously, using the relation $supp(x) \cup supp(y)=supp((x,y))$ for all $x \in X$ and $y \in Y$ derived from Proposition \ref{4.4-9}, it can be proved that $X \times Y$ does not contain an infinite uniformly supported subset if neither $X$ nor $Y$ contain an infinite uniformly supported subset.
\end{remark}

\item Suppose, by contradiction, that $X + Y$ is FSM Dedekind infinite. According to item 1, there exists a finitely supported injective mapping $f: \mathbb{N} \to X + Y$. Thus, there exists an infinite injective sequence $(z_{i})_{i \in \mathbb{N}} \subseteq X+ Y$ such that $supp(z_{i}) \subseteq supp(f)$ for all $i \in \mathbb{N}$.   According to the construction of the disjoint union of two $S_{A}$-sets (see Proposition \ref{p1}), as in the proof of item 6,  there should exist an infinite subsequence of $(z_{i})_{i}$ of form $((0, x_{j}))_{x_{j} \in X}$ which is uniformly supported by $supp(f)$, or an infinite sequence of form $((1, y_{k}))_{y_{k} \in Y}$ which is uniformly supported by $supp(f)$.  Since $0$ and $1$ are constants, this means there should exist at least an infinite uniformly supported sequence of elements from $X$, or an infinite uniformly supported sequence of elements from $Y$. This contradicts the hypothesis neither $X$ nor $Y$ is FSM Dedekind infinite.

\begin{remark}  Analogously, it can be proved that $X + Y$ does not contain an infinite uniformly supported subset if neither $X$ nor $Y$ contain an infinite uniformly supported subset.
\end{remark}

\item Assume, by contradiction, that $(X_{n})_{n \in \mathbb{N}}$ is an 
infinite countable family of different subsets of $X$ such that the 
mapping $n\mapsto X_{n}$ is finitely supported. Thus, each $X_{n}$ is 
supported by the same set $S=supp(n \mapsto X_{n})$. We define a countable 
family $(Y_{n})_{n \in \mathbb{N}}$ of subsets of $X$ that are non-empty 
and pairwise disjoint. A ZF construction of such a family belongs to 
Kuratowski and can also be found in Lemma 4.11 from \cite{herrlich}. This 
approach works also in FSM in the view of the $S$-finite support principle 
because every $Y_{k}$ is defined only involving elements in the family 
$(X_{n})_{n \in \mathbb{N}}$, and so whenever $(X_{n})_{n \in \mathbb{N}}$ 
is uniformly supported (meaning that all $X_{n}$ are supported by the same 
set of atoms), we get that $(Y_{n})_{n \in \mathbb{N}}$ is uniformly 
supported. Formally the sequence $(Y_{n})_{n \in \mathbb{N}}$ is 
recursively constructed as below. For $n \in \mathbb{N}$, assume that 
$Y_{m}$ is defined for any $m<n$ such that the set $\{ X_{k} \setminus 
\underset{m<n}\cup Y_{m}\,|\,k \geq n\}$ is infinite. Define 
$n'=min\{k\,|\,k \geq n \:\text{and}\: X_{k} \setminus \underset{m<n}\cup 
Y_{m} \neq \emptyset \:\text{and}\: (X \setminus X_{k}) \setminus 
\underset{m<n}\cup Y_{m} \neq \emptyset\}$. We define \[Y_{n}=\left\{ 
\begin{array}{ll} X_{n'} \setminus \underset{m<n}\cup Y_{m}, & \text{if}\: 
\{X_{k}\setminus (X_{n'} \cup \underset{m<n}\cup Y_{m}) \,|\, 
k>n'\}\:\text{is infinite};\\ (X \setminus X_{n'})\setminus 
\underset{m<n}\cup Y_{m}, & \text{otherwise} .\end{array}\right. \]
Obviously, $Y_{1}$ is supported by $S \cup supp(X)$. By induction, assume 
that $Y_{m}$ is supported by $S \cup supp(X)$ for each $m<n$. Since 
$Y_{n}$ is defined as a set combination of $X_{i}$'s (which are all 
$S$-supported) and $Y_{m}$'s with $m<n$, we get that $Y_{n}$ is supported 
by $S \cup supp(X)$ according to the $S$-finite support principle. 
Therefore the family $(Y_{i})_{i \in \mathbb{N}}$ is uniformly supported 
by $S \cup supp(X)$.  Let $U_{i}=Y_{0} \cup \ldots \cup Y_{i}$ for all $i 
\in \mathbb{N}$. Clearly all $U_{i}$ are supported by $S \cup supp(X)$, 
and $U_{0} \subsetneq U_{1} \subsetneq U_{2} \subsetneq \ldots \subsetneq 
X$. Let $V_{n}=(X\setminus\underset{i\in\mathbb{N}}{\cup}U_{i})\cup 
U_{n}$. Clearly, $X=\underset{n\in \mathbb{N}}{\cup} V_{n}$. Moreover, 
$V_{n}$ is supported by $S \cup supp(X)$ for all $n \in \mathbb{N}$. 
Therefore, the mapping $n\mapsto V_{n}$ is finitely supported. Obviously, 
$V_{0} \subsetneq V_{1} \subsetneq V_{2} \subsetneq \ldots \subsetneq X$. 
However, there does not exist $n\in\mathbb{N}$ such that $X=V_{n}$, and so 
$X$ is FSM ascending infinite.

The converse holds since  if $X$ is FSM ascending infinite, there is a finitely supported increasing countable chain of finitely supported sets $X_{0}\subseteq X_{1}\subseteq\ldots\subseteq X_{n}\subseteq\ldots$ with $X\subseteq\cup X_{n}$, but there does not exist $n\in\mathbb{N}$ such that $X\subseteq X_{n}$. In this sequence there should exist infinitely many different elements of form $X_{i}$ (otherwise their union will be a term of the sequence), and the result follows from Proposition \ref{cou}.  

\item Suppose $X$ is FSM Dedekind infinite. Therefore, $\wp_{fs}(X)$ is 
FSM Dedekind infinite. According to item 8, we have that $X$ is FSM 
ascending infinite. The reverse implication is not valid because, as it is
proved in Proposition~\ref{tari}, $\wp_{fin}(A)$ is FSM ascending infinite, 
but not FSM Dedekind infinite.

\end{enumerate}
\end{proof}

\begin{corollary} \label{ti2}
The following sets and all of their FSM usual infinite subsets are 
FSM usual infinite, but they are not FSM Dedekind infinite.
\begin{enumerate}
\item The invariant set $A$ of atoms.
\item The powerset $\wp_{fs}(A)$ of the set of atoms.
\item The set $T_{fin}(A)$ of all finite injective tuples of atoms.
\item The invariant set $A^{A}_{fs}$ of all finitely supported functions from $A$ to $A$.
\item The invariant set of all finitely supported functions $f:A \to A^{n}$, where $n \in \mathbb{N}$.
\item  The invariant set of all finitely supported functions $f:A \to T_{fin}(A)$.
\item  The invariant set of all finitely supported functions $f:A \to \wp_{fs}(A)$.
\item The sets
 $\wp_{fin}(A)$, $\wp_{cofin}(A)$, $\wp_{fin}(\wp_{fs}(A))$, $\wp_{fin}(\wp_{cofin}(A))$, $\wp_{fin}(\wp_{fin}(A))$, $\wp_{fin}(A^{A}_{fs})$.
\item Any construction of finite powersets of form
$\wp_{fin}(\ldots \wp_{fin}(A))$, $\wp_{fin}(\ldots \wp_{fin}(P(A)))$, or $\wp_{fin}(\ldots \wp_{fin}(\wp_{fs}(A)))$.
\item Every finite Cartesian combination between the set $A$, $\wp_{fin}(A)$, $\wp_{cofin}(A)$, $\wp_{fs}(A)$ and $A^{A}_{fs}$\ .
\item The disjoint unions $A+A^{A}_{fs}$, $A+\wp_{fs}(A)$,  $\wp_{fs}(A)+A^{A}_{fs}$ and $A+\wp_{fs}(A)+A^{A}_{fs}$ and all finite disjoint unions between  $A$, $A^{A}_{fs}$ and  $\wp_{fs}(A)$.
\end{enumerate}
\end{corollary}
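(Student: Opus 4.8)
The plan is to isolate a single combinatorial property shared by every set in the list which, via Theorem \ref{ti1}, already forbids FSM Dedekind infinity. For a finitely supported set $Z$, say $Z$ has property $(\ast)$ if for every finite $T\subseteq A$ there are only finitely many elements of $Z$ supported by $T$. Property $(\ast)$ is exactly the negation of ``containing an infinite uniformly supported subset'': an infinite set all of whose elements are supported by a common finite $S$ is the same as infinitely many elements supported by $S$. Hence, by Theorem \ref{ti1}(4), any $Z$ with $(\ast)$ is not FSM Dedekind infinite; and since the $T$-supported elements of a subset $W\subseteq Z$ form a subset of the $T$-supported elements of $Z$, property $(\ast)$ passes to finitely supported subsets, so by Theorem \ref{ti1}(1) no FSM subset of such a $Z$ is FSM Dedekind infinite either. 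All the listed sets are patently FSM usual infinite (each contains infinitely many elements, e.g. the singletons $\{a\}$, the one-element tuples $(a)$, the constant functions), and an FSM usual infinite subset is infinite by definition; thus the whole statement reduces to verifying $(\ast)$ for each listed set.

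First I would check $(\ast)$ for the base sets. For $A$ the elements supported by $T$ are the atoms in $T$ (since $supp(a)=\{a\}$), so there are $|T|$ of them. For $\wp_{fs}(A)$, Proposition \ref{p111} and the support computations in its proof show that a subset supported by $T$ is a subset of $T$ or the complement of a subset of $T$, giving at most $2^{|T|+1}$ possibilities. For $T_{fin}(A)$, an injective tuple is supported by $T$ iff all its entries lie in $T$, and there are finitely many injective tuples with entries in the finite set $T$. The set $A^{n}$ is a Cartesian product, so it will be covered by the product closure below.

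Next I would show $(\ast)$ is stable under the four constructions occurring in the list. Closure under finitely supported subsets is immediate. For $X\times Y$ and $X+Y$, the point is that $(x,y)$ (respectively $(0,x)$ or $(1,y)$) is supported by $T$ iff its components are; using Proposition \ref{4.4-9} one gets that the $T$-supported elements of $X\times Y$ are the pairs of $T$-supported elements, and similarly for $X+Y$ — these are precisely Remark \ref{rrr} and the remark following Theorem \ref{ti1}(7) recast in the language of $(\ast)$. For the finite powerset, Proposition \ref{4.4-9}(1) gives $supp(F)=\bigcup_{x\in F}supp(x)$, so a finite $F$ supported by $T$ consists of $T$-supported elements; thus $F\subseteq\{z\in Z\,|\,supp(z)\subseteq T\}$, a finite set, leaving only finitely many such $F$ (this is Lemma \ref{lem4} recast). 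The genuinely new closure, and the step I expect to be the main obstacle, is the function space $Z^{A}_{fs}$ when $Z$ has $(\ast)$. Fix a finite $T$ and $f:A\to Z$ with $supp(f)\subseteq T$. For $a\in T$ and $\pi\in Fix(T)$ we have $\pi\cdot a=a$, so Proposition \ref{2.18'} gives $f(a)=\pi\cdot f(a)$, i.e. $f(a)$ is supported by $T$; hence $f|_{T}$ ranges over a finite set by $(\ast)$ for $Z$. Fix one atom $a_{0}\notin T$; for every other $b\notin T$ the transposition $(a_{0}\,b)$ lies in $Fix(T)$, so $f(b)=(a_{0}\,b)\cdot f(a_{0})$ is determined by $f(a_{0})$, and $f(a_{0})$ is supported by $T\cup\{a_{0}\}$, hence one of finitely many values by $(\ast)$ for $Z$. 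Thus $f$ is completely determined by the pair $(f|_{T},f(a_{0}))$, so only finitely many such $f$ exist and $Z^{A}_{fs}$ has $(\ast)$; in particular the closure may be iterated freely.

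Finally I would assemble the list. Items 1--3 are the base cases; item 4 is $A^{A}_{fs}$, and items 5--7 follow from the product and function-space closures over the bases $A$, $A^{n}$, $T_{fin}(A)$, $\wp_{fs}(A)$. The sets in items 8 and 9 are obtained by iterating $\wp_{fin}$ over bases with $(\ast)$ (noting $\wp_{fin}(A),\wp_{cofin}(A)\subseteq\wp_{fs}(A)$, and reading $P(A)$ as $\wp_{fs}(A)$), while items 10 and 11 are finite Cartesian products and finite disjoint unions of sets with $(\ast)$. In every case the resulting set has $(\ast)$, hence is not FSM Dedekind infinite, and by the subset remark neither is any of its FSM usual infinite subsets; since each is infinite, each is FSM usual infinite. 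This completes the argument.
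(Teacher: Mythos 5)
Your proposal is correct, and its skeleton is the paper's own: both arguments reduce everything to ``no infinite uniformly supported subset'' (your property $(\ast)$) plus Theorem \ref{ti1}(4), and both use the same closure facts for subsets, Cartesian products, disjoint unions and finite powersets --- these are exactly Lemma \ref{lem4}, Remark \ref{rrr} and the remark following Theorem \ref{ti1}(7), as you note. Where you genuinely depart from the paper is on the function spaces, items 4--7. The paper proves a structure lemma for $A^{A}_{fs}$ (Lemma \ref{lem'''}: an $S$-supported $f:A\to A$ is the identity or a constant from $S$ on $A\setminus S$), transports item 5 along an equivariant bijection between $(A^{n})^{A}_{fs}$ and $(A^{A}_{fs})^{n}$, and then for $T_{fin}(A)^{A}_{fs}$ and $\wp_{fs}(A)^{A}_{fs}$ runs long ad hoc case analyses (splitting $f$ into $f|_{S}$ and $f|_{A\setminus S}$, analysing possible images inside $A'^{n}$ respectively $\wp_{n}(A)$, and ruling out infinitely many cardinalities). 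You replace all of this by one uniform closure lemma: if $Z$ has $(\ast)$, then so does $Z^{A}_{fs}$, since a $T$-supported $f:A\to Z$ satisfies $f(b)=(a_{0}\,b)\cdot f(a_{0})$ for every $b\notin T\cup\{a_{0}\}$, hence is determined by the pair $\bigl(f|_{T},f(a_{0})\bigr)$, whose two entries range over the finite sets of $T$-supported, respectively $(T\cup\{a_{0}\})$-supported, elements of $Z$. Each step there is a legitimate instance of Proposition \ref{2.18'} (using $supp(f)\subseteq T$, so $Fix(T)\subseteq Fix(supp(f))$), and the injectivity of $f\mapsto(f|_{T},f(a_{0}))$ on $T$-supported functions is immediate, so the lemma is sound; when $Z=A$ it even recovers Lemma \ref{lem'''} as a corollary, since $f(a_{0})\in T\cup\{a_{0}\}$ forces $f$ to be the identity or a constant off $T$. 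Your route is shorter, handles items 4--7 uniformly, and is strictly more general (any codomain with $(\ast)$ works); what the paper's longer route buys is the explicit classification of Lemma \ref{lem'''}, which it reuses later (e.g. in Corollary \ref{pTII}), whereas your lemma delivers only the finiteness needed here.
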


\begin{proof}
\begin{enumerate}
\item $A$ does not contain an infinite uniformly supported subset, and so it is not FSM Dedekind infinite (according to Theorem \ref{ti1}(4)).
\item $\wp_{fs}(A)$ does not contain an infinite uniformly supported subset because for any finite set $S$ of atoms there exist only finitely many elements of $\wp_{fs}(A)$ supported by $S$, namely the subsets of $S$ and the supersets of $A\setminus S$. Thus, $\wp_{fs}(A)$ it is not FSM Dedekind infinite (Theorem \ref{ti1}(4)).
\item $T_{fin}(A)$ does not contain an infinite uniformly supported subset 
because the finite injective tuples of atoms supported by a finite set $S$ are 
only those injective tuples formed by elements of $S$, being at most 
$1+A_{|S|}^{1}+A_{|S|}^{2}+\ldots+A_{|S|}^{|S|}$ such tuples, where 
$A_{n}^{k}=n(n-1)\ldots (n-k+1)$.
\item We prove the following lemmas.
\begin{lemma} \label{lem''} 
Let $S=\{s_{1},\ldots,s_{n}\}$ be a finite subset of an invariant set $(U, \cdot)$ and $X$ a finitely supported subset of an invariant set $(V, \diamond)$. Then if $X$ is does not contain an infinite uniformly supported subset, we have that $X^{S}_{fs}$ does not contain an infinite uniformly supported subset.
\end{lemma}
\emph{Proof of Lemma \ref{lem''}}.
First we prove that there is an FSM injection $g$ from $X^{S}_{fs}$ into 
$X^{|S|}$.  For $f \in X^{S}_{fs}$ define $g(f)=(f(s_{1}),\ldots, 
f(s_{n}))$. Clearly $g$ is injective (and it is also surjective). Let $\pi 
\in Fix(supp(s_{1}) \cup \ldots \cup supp(s_{n}) \cup supp(X))$. Thus, 
$g(\pi \widetilde{\star} f)=(\pi \diamond f(\pi^{-1} \cdot s_{1}),\ldots, 
\pi \diamond f(\pi^{-1} \cdot s_{n}))=(\pi \diamond f( s_{1}),\ldots, \pi 
\diamond f(s_{n}))$ $=\pi \otimes g(f)$ for all $f \in X^{S}_{fs}$, where 
$\otimes$ is the $S_{A}$-action on $X^{|S|}$ defined as in Proposition 
\ref{p1}. Hence $g$ is finitely supported, and the conclusion follows from 
Theorem \ref{ti1}(1) and by repeatedly applying similar arguments as in 
Theorem \ref{ti1}(6) (if we slightly modify the proof of the theorem, 
using the fact that $supp(x) \cup supp(y)=supp((x,y))$ for all $x,y \in 
X$, we show that the $|S|$-time Cartesian product of $X$, i.e. $X^{|S|}$ 
does not contain an infinite uniformly supported subset; otherwise $X$ 
should contain itself an infinite uniformly supported subset, which 
contradicts the hypothesis).

\begin{lemma} \label{lemyy} 
Let $S=\{s_{1},\ldots,s_{n}\}$ be a finite subset of an invariant set $(U, \cdot)$ and $X$ a finitely supported subset of an invariant set $(V, \diamond)$. Then if $X$ is not FSM Dedekind infinite, we have that $X^{S}_{fs}$ is not FSM Dedekind-infinite.
\end{lemma}
\emph{Proof of Lemma \ref{lemyy}}
First we proved that there is an FSM injection $g$  from $X^{S}_{fs}$ into $X^{|S|}$.  The conclusion follows from Theorem \ref{ti1}(1) and by repeatedly applying Theorem \ref{ti1}(6) (from which we know that the $|S|$-time Cartesian product of $X$, i.e. $X^{|S|}$, is not FSM Dedekind infinite).

\begin{lemma} \label{lem'''}Let $f:A \to A$ be a function that is finitely supported by a certain finite set of atoms $S$. Then either $f|_{A \setminus S}=Id$ or $f|_{A \setminus S}$ is an one-element subset of $S$.
\end{lemma}
\emph{Proof of Lemma \ref{lem'''}}
Let $f:A \to A$ be a function that is finitely supported by the finite set of atoms $S$. We distinguish two cases:

I. There is $a \notin S$ with $f(a)=a$.  Then for each $b\notin S$ we have that $(a\, b) \in Fix(S)$, and so $f(b)=f((a\, b)(a))=(a\, b)(f(a))=(a\, b)(a)=b$. Thus, $f|_{A \setminus S}=Id$.

II.  For all $a \notin S$ we have $f(a) \neq a$.  We claim that $f(a) \in S$ for all $a \notin S$. Suppose, by contradiction, that $f(a)=b \in A \setminus S$ for a certain $a \notin S$. Thus, $(a\, b) \in Fix(S)$, and so $f(b)=f((a\, b)(a))=(a\, b)(f(a))=(a\, b)(b)=a$. Let us consider $c \in A \setminus S$, $c \neq a,b$.  Thus, $(a\, c) \in Fix(S)$, and so $f(c)=f((a\, c)(a))=(a\, c)(f(a))=(a\, c)(b)=b$. Furthermore, $(b\, c) \in Fix(S)$, and so $f(b)=f((b\, c)(c))=(b\, c)(f(c))=(b\, c)(b)=c$. However, $f(b)=a$ which contradicts the functionality of $f$. Thus $f(a) \in S$ for any $a \notin S$. If $x,y \notin S$, then we should have $f(x),f(y) \in S$, and so, because $(x\,y) \in Fix(S)$, we get $f(x)=f((x\, y)(y))=(x\, y)(f(y))=f(y)$ since both $x$ and $y$ belong to $A\setminus S$ which means they are different from $f(y)$ belonging to $S$. Therefore there is $x_{0} \in S$ such that $f|_{A \setminus S}$ $=\{x_{0}\}$.

\emph{Proof of this item}. Assume, by contradiction, that $A^{A}_{fs}$ contains an infinite, uniformly supported subset, meaning that there are infinitely many functions from $A$ to $A$ supported by the same finite set $S$. According to Lemma \ref{lem'''}, any $S$-supported function $f:A \to A$ should have the property that either $f|_{A \setminus S}=Id$ or $f|_{A \setminus S}$ is an one-element subset of $S$. A function from $A$ to $A$ is precisely characterized by the set of values it takes on the elements of $S$ and on the elements of  $A\setminus S$, respectively. For each possible definition of such an $f$ on $S$ we have at most $|S|+1$ possible ways to define $f$ on $A \setminus S$. Since we assumed that there exist infinitely many finitely supported functions from $A$ to $A$ supported by the same set $S$, there should exist infinitely many finitely supported functions from $S$ to $A$ supported by the set $S$. But this is a contradiction according to Lemma \ref{lem''} which states that $A^{S}_{fs}$ is does not contain an infinite uniformly supported subset (because $A$ does not contain an infinite uniformly supported subset). 

\item There is an equivariant bijective mapping between $(A^{n})^{A}_{fs}$ and $(A^{A}_{fs})^{n}$ defined as follows. If $f:A \to A^{n}$ is a finitely supported function with $f(a)=(a_{1},\ldots, a_{n})$, we associate to $f$ the Cartesian pair $(f_{1},\ldots, f_{n})$ where for each $i \in \mathbb{N}$, $f_{i}:A \to A$ is defined by $f_{i}(a)=a_{i}$ for all $a \in A$. We omit technical details since they are based only on the application of Proposition \ref{2.18'}. We proved above that $A^{A}_{fs}$ does not contain an infinite uniformly supported subset, and so neither $(A^{A}_{fs})^{n}$ contains an infinite uniformly supported subset by involving a similar proof as of Theorem \ref{ti1}(6) (see the proof of Lemma \ref{lem''}).

\item Assume by contradiction that $T_{fin}(A)^{A}$ contains an infinite 
$S$-uniformly supported subset. If $f:A \to T_{fin}(A)$ is a function 
supported by $S$, then consider $f(a)=x$ for some $a \notin S$. For $b 
\notin S$ we have $(a\,b) \in Fix(S)$, and so 
$f(b)=f((a\,b)(a))=(a\,b)\otimes f(a)=(a\,b)\otimes x$ which means 
$|f(a)|=|f(b)|$ for all $a,b \notin S$.  Each $S$-supported function $f:A 
\to T_{fin}(A)$ is fully described the values it takes on the elements of 
$S$ and on the elements of $A\setminus S$, respectively, i.e., by the 
elements of $f(S)$ and of $f(A\setminus S)$. More precisely, each 
$S$-supported function $f:A \to T_{fin}(A)$ can be uniquely decomposed 
into two $S$-supported functions $f|_{S}$ and $f|_{A \setminus S}$ (this 
follows from Proposition \ref{2.18'} and because both $S$ and $A \setminus 
S$ are supported by $S$). However, $f(A\setminus S) \subseteq A'^{n}$ for 
some $n \in \mathbb{N}$, where $A'^{n}$ is the set of all injective 
$n$-tuples of $A$. According to Lemma~\ref{lem''} we have at most finitely 
many $S$-supported functions from $S$ to $T_{fin}(A)$. According to item 5, 
we have at most finitely many $S$-supported functions from $A\setminus S$ 
to $A'^{n}$ for each fixed $n \in \mathbb{N}$. This is because $A'^{n}$ is 
a subset of $A^{n}$ and $A \setminus S$ is a subset of $A$, and so by 
involving Proposition \ref{pco1}(3) and (4) we find a finitely supported 
injection $\varphi$ from $(A'^{n})^{A \setminus S}$ and $(A^{n})^{A}$; if 
$\mathcal{K}$ was an infinite subset in $(A'^{n})^{A \setminus S}$ 
uniformly supported by $T$, then $\varphi(\mathcal{K})$ would be an 
infinite subset of $(A^{n})^{A}$ uniformly supported by $T \cup 
supp(\varphi)$.  Therefore, there should exist an infinite subset $M 
\subseteq \mathbb{N}$ such that we have at least one $S$-supported 
function $g:A\setminus S \to A'^{k}$ for any $k \in M$. We do not need to 
find a set of representatives for such $g$'s; we consider all of them.  
Fix $a \in A \setminus S$. For each of the above $g$'s (that form an 
$S$-supported family $\mathcal{F}$) we have that $g(a)$'s form an 
uniformly supported family (by $S \cup \{a\}$) of $T_{fin}(A)$, which is 
also infinite because tuples having different cardinalities are different 
and $M$ is infinite. However, we contradict the proof of item 3 stating 
that $T_{fin}(A)$ does not contain an infinite uniformly supported subset. 
Alternatively, one can remark that if $|S \cup \{a\}|=l$ with $l$ fixed, 
then there is $m\in M$ fixed with $m>l$. Moreover, $g(a)$ for some 
$g:A\setminus S \to A'^{m}$ in $\mathcal{F}$ (we need to select only a 
function from those functions $g:A\setminus S \to A'^{m}$ with $m$ fixed 
depending only on the fixed $l$, \emph{and not} a set of representatives 
for the entire family $(\{g:A\setminus S \to A'^{k}\})_{k \in M}$), which 
is an injective $m$-tuple of atoms, cannot be supported by $S \cup \{a\}$; 
thus, the set of all $g(a)$'s cannot be infinite and uniformly supported.

\item We can use a similar approach as in item 6, to prove that there exist at most finitely many $S$-supported functions from $A$ to $\wp_{fin}(A)$. For this we just replace $A'^{n}$ with the set of all $n$-sized subsets of $A$, $\wp_{n}(A)$. All it remains is to prove that, for each $n \in \mathbb{N}$, there cannot exist infinitely many functions $g:A \to \wp_{n}(A)$ supported by the same set $S'$. Fix $n \in \mathbb{N}$. Assume, by contradiction that there exist infinitely many functions $g:A \to \wp_{n}(A)$ supported by the same set $S'$. According to Lemma \ref{lem''} there are only finitely many functions from $S'$ to $\wp_{n}(A)$ supported by the same set of atoms, and so there should exist infinitely many functions $g:(A\setminus S') \to \wp_{n}(A)$ supported by $S'$. For such a $g$, let us fix an element $a\in A$ with $a\notin S'$. There exist $x_{1}, \ldots, x_{n} \in A$ fixed (depending only on the fixed $a$) and different such that $g(a)=\{x_{1}, \ldots, x_{n}\}$. Let $b$ be an arbitrary element 
from $A\setminus S'$, and so $(a\, b)\in Fix(S')$ which means $g(b)=g((a\,b)(a))=(a\,b) \star g(a)=(a\,b) \star \{x_{1}, \ldots, x_{n}\}=\{(a\,b)(x_{1}),\ldots, (a\,b)(x_{n})\}$. 

We analyze the two possibilities:

Case 1: One of $x_{1}, \ldots, x_{n}$ coincides to $a$. Suppose $x_{1}=a$. 
We claim that $x_{2}, \ldots, x_{n} \in S'$. Assume the contrary, that is, 
there exists $i \in \{2,\ldots,n\}$ such that $x_{i} \notin S'$. Without 
losing the generality suppose $x_{2} \notin S'$, which means $(a\,x_{2}) 
\in Fix(S')$, and so $g(x_{2})=g((a\,x_{2})(a))=(a\,x_{2}) \star 
g(a)=(a\,x_{2}) \star \{a,x_{2}, \ldots, x_{n}\}=\{a,x_{2}, \ldots, 
x_{n}\}$. Let $c \in A\setminus S'$ with $c$ different from $a,x_{2}, 
\ldots, x_{n}$. We have $g(c)=g((a\,c)(a))=(a\,c) \star g(a)=(a\,c) \star 
\{a,x_{2}, \ldots, x_{n}\}=\{c,x_{2}, \ldots, x_{n}\}$, and hence 
$g(x_{2})=g((c\,x_{2})(c))=(c\,x_{2}) \star g(c)=(c\,x_{2}) \star 
\{c,x_{2}, \ldots, x_{n}\}=\{c,x_{2}, \ldots, x_{n}\}$ which contradicts 
the functionality of $g$. Therefore, $g(b)=(b,x_{2},\ldots, x_{n})$ for 
all $b \in A \setminus S'$, and so only the selection of $x_{2}, \ldots 
x_{n}$ provides the distinction between $g$'s.  Since $S'$ is finite, 
$\{x_{2}, \ldots, x_{n}\}$ can be selected in $C_{|S'|}^{n-1}$ ways if 
$|S'|\geq n-1$, or in~$0$ ways otherwise.

Case 2: Consider now that all $x_{1}, \ldots, x_{n}$ are different from $a$.

Then $g(b)= \left\{ \begin{array}{ll}
\{x_{1}, \ldots, x_{n}\}, & \text{if}\: \text{$b \neq x_{1}, \ldots, x_{n}$ };\\
\{a,x_{2}, \ldots, x_{n}\}, & \text{if}\: \text{$x_{1} \notin S'$ and $b=x_{1}$ };\\ \ldots\\
\{x_{1},\ldots, x_{n-1}, a\}, & \text{if}\: \text{$x_{n} \notin S'$ and $b=x_{n}$}
\: .\end{array}\right.$ 

Since $x_{1}, \ldots, x_{n},a$ are fixed atoms, then $g(A \setminus S')$ 
is finite. However, $Im(g)$ should be supported by~$S'$. According to 
Proposition \ref{4.4-9}, since $Im(g)$ is finite, it should be uniformly 
supported by $S'$.  We obtain that $x_{1}, \ldots, x_{n} \in S'$, and so 
$g(A \setminus S') =\{x_{1}, \ldots, x_{n}\}$. Otherwise, if some $x_{i} 
\notin S'$, we would get $\{x_{1}, \ldots, a, \ldots, x_{n}\} \in Im(g)$ 
(where $a$ replaces $x_{i}$) and so $\{x_{1}, \ldots, a, \ldots, x_{n}\}$ 
is supported by $S'$. Again by Proposition \ref{4.4-9} we would have that 
$a$ is supported by $S'$ which means $\{a\}=supp(a)\subseteq S'$ 
contradicting the choice of $a$. Alternatively, for proving that all 
$x_{1}, \ldots, x_{n} \in S'$, assume by contradiction that one of them 
(say $x_{1}$) does not belong to $S'$. Let $c$ be an atom from $A 
\setminus S'$ with $c$ different from $a, x_{1},x_{2}, \ldots, x_{n}$. We 
have $g(c)=g((a\,c)(a))=(a\,c) \star g(a)=(a\,c) \star \{x_{1}, \ldots, 
x_{n}\}=\{x_{1}, \ldots, x_{n}\}$, and hence 
$g(x_{1})=g((c\,x_{1})(c))=(c\,x_{1}) \star g(c)=(c\,x_{1}) \star \{x_{1}, 
x_{2}, \ldots, x_{n}\}=\{c,x_{2}, \ldots, x_{n}\}$. However 
$g(x_{1})=\{a,x_{2}, \ldots, x_{n}\}$ which contradicts the functionality 
of $g$. Since $S'$ is finite, $\{x_{1}, \ldots, x_{n}\}$ can be selected 
in $C_{|S'|}^{n}$ ways $|S'|\geq n$ or in $0$ ways otherwise.

In either case, there couldn't exist infinitely many $g$'s supported by $S'$, and so for each $n \in \mathbb{N}$, there exist at most finitely many functions from $A$ to $\wp_{n}(A)$ supported by the same set of atoms. 

 Assume by contradiction that $\wp_{fin}(A)^{A}$ contains an infinite $S$-uniformly supported subset. If $f:A \to \wp_{fin}(A)$ is a function supported by $S$, then we have $|f(a)|=|(a\,b)\star f(a)|=|f((a\,b)(a))|=|f(b)|$ for all $a,b \notin S$.  According to Proposition \ref{2.18'} (since both $S$ and $A \setminus S$ are supported by $S$),  each $S$-supported function $f:A \to \wp_{fin}(A)$  is uniquely decomposed into two $S$-supported functions $f|_{S}$ and $f|_{A \setminus S}$. However $f(A\setminus S) \subseteq \wp_{n}(A)$ for some $n \in \mathbb{N}$. According to Lemma \ref{lem''} there are at most finitely many $S$-supported functions from $S$ to $\wp_{fin}(A)$. Furthermore, there exist at most finitely many $S$-supported functions from $A\setminus S$ to $\wp_{n}(A)$ for each fixed $n \in \mathbb{N}$.  Therefore, there should exist an infinite subset $M \subseteq \mathbb{N}$ such that we have at least one $S$-supported function $g:A\setminus S \to \wp_{k}(A)$ for any $k \in M$. Fix $a \in A \setminus S$. For each of the above $g$'s (that form an $S$-supported family $\mathcal{F}$) we have that $g(a)$'s form an uniformly supported family (by $S \cup \{a\}$) of $\wp_{fin}(A)$. If  $|S \cup \{a\}|=l$ with $l$ fixed, then there is $m\in M$ fixed with $m>l$. Moreover, $g(a)$ for  $g:A\setminus S \to \wp_{m}(A) \in \mathcal{F}$, which is an  $m$-sized subset of atoms, cannot be supported by $S \cup \{a\}$ (according to Proposition \ref{4.4-9}); thus, the set of all $g(a)$'s cannot be infinite and uniformly supported.

Analogously, there there exist at most finitely many $S$-supported functions from $A$ to $\wp_{cofin}(A)$ (using eventually the fact that there is an equivariant bijection $X \mapsto A\setminus X$ between  $\wp_{fin}(A)$ and $\wp_{cofin}(A)$).
Assume by contradiction that $\wp_{fs}(A)^{A}$ contains an infinite $S$-uniformly supported subset.  If $f:A \to \wp_{fs}(A)$ is a function supported by $S$, then consider $f(a)=X$ for some $a \notin S$. For $b \notin S$  we have $f(b)=(a\,b)\star X$ which means $f(A \setminus S)$ is formed only by finite subsets of atoms  if $X$ is finite, and $f(A \setminus S)$ is formed only by cofinite subsets of atoms if $X$ is cofinite. Thus, whenever $f:A \to \wp_{fs}(A)$ is a function supported by $S$, we have either $f(A \setminus S) \subseteq \wp_{fin}(A)$ or $f(A \setminus S) \subseteq \wp_{cofin}(A)$. Each $S$-supported function $f:A \to \wp_{fs}(A)$  is fully described by $f(S)$ and $f(A\setminus S)$.  According to Lemma \ref{lem''} we have at most finitely many $S$-supported functions from $S$ to $\wp_{fs}(A)$. Furthermore, we have at most finitely many $S$-supported functions from $A\setminus S$ to $\wp_{fin}(A)$, and at most finitely many $S$-supported functions from $A\setminus S$ to $\wp_{cofin}(A)$. Thus, $\wp_{fs}(A)^{A}$ does not contain an infinite uniformly supported subset.

\item The sets $\wp_{fin}(A)$, $\wp_{cofin}(A)$, $\wp_{fin}(\wp_{fs}(A))$, $\wp_{fin}(\wp_{cofin}(A))$, $\wp_{fin}(\wp_{fin}(A))$, $\wp_{fin}(A^{A}_{fs})$ do not contain  infinite uniformly supported subsets, and so they are not FSM Dedekind infinite (Theorem \ref{ti1}(5)).

\item Directly from Theorem \ref{ti1}(5).

\item According to Theorem \ref{ti1}(6).

\item According to Theorem \ref{ti1}(7).
\end{enumerate}
\end{proof}

\begin{corollary} There exist two FSM sets that whose cardinalities 
incomparable via the relation $\leq$ on cardinalities, and none of them is 
FSM Dedekind infinite. 
\end{corollary}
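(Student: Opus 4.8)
The plan is to exhibit the concrete pair $X=\wp_{fs}(A)$ and $Y=A\times A$ and to show that these two FSM sets are $\leq$-incomparable while neither is FSM Dedekind infinite. First I would record the Dedekind-finiteness claims, both of which are already available: $\wp_{fs}(A)$ is not FSM Dedekind infinite by Corollary \ref{ti2}(2) (it contains no infinite uniformly supported subset), and $A\times A$ is not FSM Dedekind infinite by Theorem \ref{ti1}(6) applied to its two factors, since $A$ is not FSM Dedekind infinite by Corollary \ref{ti2}(1). So the whole difficulty is concentrated in the incomparability.

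One half of the incomparability comes for free: Theorem \ref{cardord1}(2) with the invariant set taken to be $A$ gives $|A\times A|\nleq|\wp_{fs}(A)|$, i.e.\ there is no finitely supported injection $A\times A\to\wp_{fs}(A)$. It therefore remains to establish the reverse non-comparability $|\wp_{fs}(A)|\nleq|A\times A|$, namely that there is no finitely supported injection $f:\wp_{fs}(A)\to A\times A$. This is the heart of the argument and the step I expect to require genuine work, since it has no direct counterpart among the statements proved earlier; the idea is that elements of $A\times A$ carry supports of size at most two, which is too rigid to accommodate the two-element subsets of $A$ injectively.

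To prove it I would argue by contradiction, assuming such an $f$ exists and setting $S:=supp(f)$, a finite set. I then restrict attention to the two-element subsets of $A$ disjoint from $S$. Fix such a set $Y=\{y_{1},y_{2}\}$ with $Y\cap S=\emptyset$. The transposition $(y_{1}\,y_{2})$ lies in $Fix(S)$ and satisfies $(y_{1}\,y_{2})\star Y=Y$, so by Proposition \ref{2.18'} we obtain $f(Y)=(y_{1}\,y_{2})\otimes f(Y)$, where $\otimes$ is the product action on $A\times A$. Writing $f(Y)=(p,q)$, this equality forces $p$ and $q$ to be fixed by $(y_{1}\,y_{2})$, whence $p,q\notin Y$. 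On the other hand the standard support bound (Proposition \ref{2.15}) gives $supp(f(Y))\subseteq S\cup supp(Y)=S\cup Y$, and since $supp(f(Y))=\{p,q\}$ while $p,q\notin Y$, I conclude that $p,q\in S$, that is $f(Y)\in S\times S$. Thus $f$ maps every two-element subset of $A$ disjoint from $S$ into the finite set $S\times S$; as $A\setminus S$ is infinite there are infinitely many such subsets, so $f$ cannot be injective, a contradiction.

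Combining the two non-comparabilities, $|A\times A|\nleq|\wp_{fs}(A)|$ and $|\wp_{fs}(A)|\nleq|A\times A|$, shows that $|\wp_{fs}(A)|$ and $|A\times A|$ are incomparable under $\leq$, and together with the Dedekind-finiteness of both sets this completes the proof. The only delicate point is the support-counting contradiction of the previous paragraph; everything else reduces to direct appeals to Theorem \ref{cardord1}, Theorem \ref{ti1} and Corollary \ref{ti2}.
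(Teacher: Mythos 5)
Your proof is correct, and it diverges from the paper's exactly where it matters. The paper uses the same pair $A\times A$ and $\wp_{fs}(A)$, obtains the Dedekind-finiteness of both from Corollary \ref{ti2}, and obtains $|A\times A|\nleq|\wp_{fs}(A)|$ from Theorem \ref{cardord1}, just as you do; but for the reverse direction $|\wp_{fs}(A)|\nleq|A\times A|$ it merely cites Lemma 11.10 of \cite{jech} as ``preserved in FSM (proof omitted)''. Your transposition-plus-support-counting argument supplies an internal, finitely supported proof of precisely that omitted step, in the same style as the paper's own proof of Theorem \ref{cardord1}(5): for $Y=\{y_{1},y_{2}\}$ disjoint from $S=supp(f)$, the transposition $(y_{1}\,y_{2})$ lies in $Fix(S)$ and fixes $Y$ setwise, hence fixes $f(Y)$ componentwise, forcing both components of $f(Y)$ outside $Y$; combined with $supp(f(Y))\subseteq S\cup supp(Y)=S\cup Y$ this forces $f(Y)\in S\times S$, and injectivity fails by pigeonhole over the infinitely many admissible $Y$. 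All steps are sound. One small citation slip: the bound $supp(f(Y))\subseteq supp(f)\cup supp(Y)$ does not follow from Proposition \ref{2.15} (which states $supp(\pi\cdot x)=\pi(supp(x))$), but from the standard observation that every $\pi\in Fix(supp(f)\cup supp(Y))$ satisfies $\pi\otimes f(Y)=f(\pi\star Y)=f(Y)$ --- the same fact used in Claim 1 of Lemma \ref{lem1} and in Theorem \ref{ti1}(1); you also implicitly use $supp(Y)=Y$ for finite $Y\subseteq A$, justified in the proof of Proposition \ref{p111}. In sum, your route buys a self-contained FSM argument where the paper relies on an external, unproved transfer of a ZF/ZFA lemma.
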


\begin{proof}
According to Corollary \ref{ti2}, none of the sets $A \times A$ and 
$\wp_{fs}(A)$ is FSM Dedekind infinite. According to Theorem 
\ref{cardord1}, there does not exist a finitely supported injective 
mapping $f: A \times A \to \wp_{fs}(A)$. According to Lemma~11.10 from 
\cite{jech} that is preserved in FSM (proof omitted) there does not exist 
a finitely supported injective mapping $f: \wp_{fs}(A) \to A \times A$.
\end{proof}

\begin{corollary} \label{ti3} The following sets and all of their supersets, their powersets and the families of their finite subsets, are both FSM usual infinite and  FSM Dedekind infinite. 
\begin{enumerate}
\item The invariant sets $\wp_{fs}(\wp_{fs}(A))$,  $\wp_{fs}(\wp_{fin}(A))$ and $\mathbb{N}$.
\item The set of all finitely supported mappings from $X$ to $Y$, and the set of all finitely supported mappings from $Y$ to $X$, where $X$ is a finitely supported subset of an invariant set with at least two elements, and $Y$ is an FSM Dedekind infinite set. 
\item The set of all finitely supported functions $f:\wp_{fin}(Y) \to X$ and the set of all finitely supported functions $f:\wp_{fs}(Y) \to X$, where $Y$ is an infinite finitely supported subset of an invariant set, and  $X$ is a finitely supported subset of an invariant set with at least two elements.
\item The set $T^{\delta}_{fin}(A)=\underset{n\in \mathbb{N}}{\cup}A^{n}$ of all finite tuples of atoms (not necessarily injective). 
\end{enumerate}
\end{corollary}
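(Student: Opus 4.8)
The plan is to reduce everything to Theorem~\ref{ti1}(1): a finitely supported set is FSM Dedekind infinite exactly when it admits a finitely supported injection from $\mathbb{N}$, and any such set is automatically FSM usual infinite (the distinct images of $\mathbb{N}$ forbid a bijection with a finite ordinal). Thus for each listed set $W$ it suffices to produce a finitely supported injection $\mathbb{N}\to W$. To dispatch the clause ``and all of their supersets, powersets and families of finite subsets'' uniformly, I would first record a general lemma: if $Z$ is FSM Dedekind infinite with witnessing injection $\phi:\mathbb{N}\to Z$, then every FSM superset of $Z$ is FSM Dedekind infinite by Theorem~\ref{ti1}(1), while $\wp_{fin}(Z)$ (hence its superset $\wp_{fs}(Z)$) is FSM Dedekind infinite via $n\mapsto\{\phi(n)\}$. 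This last map is finitely supported because $\mathbb{N}$ carries the trivial action, so by Proposition~\ref{2.18'} every value $\phi(n)$ is supported by the single finite set $supp(\phi)$, and it is injective since $\phi$ is.

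For Part~1, the sets $\wp_{fs}(\wp_{fs}(A))$ and $\wp_{fs}(\wp_{fin}(A))$ are FSM Dedekind infinite immediately from Theorem~\ref{ti1}(3) and Theorem~\ref{ti1}(2) applied with the infinite set $X=A$, and $\mathbb{N}$ is FSM Dedekind infinite because the identity is an equivariant injection $\mathbb{N}\to\mathbb{N}$. For Part~2, write $\phi:\mathbb{N}\to Y$ for a finitely supported injection supplied by $Y$ being FSM Dedekind infinite. For $Y^{X}_{fs}$ I would use constant functions: since $X\neq\emptyset$, the assignment $n\mapsto c_{\phi(n)}$, where $c_{y}(x)=y$ for all $x\in X$, lands in $Y^{X}_{fs}$, is injective, and is finitely supported by $supp(\phi)\cup supp(X)$. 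For $X^{Y}_{fs}$ I would fix $x_{0}\neq x_{1}$ in $X$ (here the hypothesis $|X|\geq 2$ is essential) and set $h_{n}(y)=x_{1}$ if $y=\phi(n)$ and $h_{n}(y)=x_{0}$ otherwise; because all $\phi(n)$ share the support $supp(\phi)$, the map $n\mapsto h_{n}$ is finitely supported, and it is injective since $\phi(n)$ is recovered as the unique point at which $h_{n}$ takes the value $x_{1}$.

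For Part~3, Theorem~\ref{comp} gives finitely supported bijections $\{0,1\}^{\wp_{fin}(Y)}_{fs}$ with $\wp_{fs}(\wp_{fin}(Y))$ and $\{0,1\}^{\wp_{fs}(Y)}_{fs}$ with $\wp_{fs}(\wp_{fs}(Y))$, and the latter two are FSM Dedekind infinite by Theorem~\ref{ti1}(2) and Theorem~\ref{ti1}(3) since $Y$ is infinite. As $|X|\geq 2$ yields a finitely supported injection $\{0,1\}\to X$, hence $|\{0,1\}|\leq|X|$, Proposition~\ref{pco1}(3) produces finitely supported injections $\{0,1\}^{Z}_{fs}\to X^{Z}_{fs}$ for $Z\in\{\wp_{fin}(Y),\wp_{fs}(Y)\}$; composing with the injection $\mathbb{N}\to\{0,1\}^{Z}_{fs}$ transports FSM Dedekind infiniteness to $X^{\wp_{fin}(Y)}_{fs}$ and $X^{\wp_{fs}(Y)}_{fs}$. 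For Part~4, the decisive observation is that, unlike the injective tuples of Corollary~\ref{ti2}(3), the set $T^{\delta}_{fin}(A)$ of arbitrary finite tuples contains an infinite uniformly supported family: fixing one atom $a$, the constant tuples $t_{n}=(a,\ldots,a)$ of length $n$ are all supported by $\{a\}$ and pairwise distinct, so $n\mapsto t_{n}$ is a finitely supported injection $\mathbb{N}\to T^{\delta}_{fin}(A)$. Finally, applying the general lemma of the first paragraph to each set just established settles the supersets, powersets and finite-subset families, completing all four items.

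The step I expect to be the main obstacle is the support verification for $X^{Y}_{fs}$ in Part~2: one must use carefully that a finitely supported $\phi:\mathbb{N}\to Y$ has all of its values $\phi(n)$ supported by the one finite set $supp(\phi)$ (a consequence of the trivial action on $\mathbb{N}$ via Proposition~\ref{2.18'}), for otherwise the supports of the $h_{n}$ could grow with $n$ and the map $n\mapsto h_{n}$ would fail to be finitely supported. The same care is precisely what makes the singleton map $n\mapsto\{\phi(n)\}$ in the general lemma and the constant-tuple construction in Part~4 go through.
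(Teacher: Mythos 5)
Your proposal is correct and follows essentially the same route as the paper: Part 1 via Theorem~\ref{ti1}(2)--(3), Part 2 via constant functions into $Y^{X}_{fs}$ and two-valued indicator-style functions $h_{n}$ into $X^{Y}_{fs}$ built from a uniformly supported injective sequence, Part 3 via Theorem~\ref{comp} and an injection from $\{0,1\}$-valued functions into $X$-valued functions, and Part 4 via the constant tuples $(a,\ldots,a)$. The only differences are cosmetic: you invoke Proposition~\ref{pco1}(3) where the paper constructs the injection $\{0,1\}^{U}_{fs}\to X^{U}_{fs}$ explicitly, and you make explicit (via the singleton map $n\mapsto\{\phi(n)\}$) the supersets/powersets/finite-subsets clause that the paper leaves implicit in Theorem~\ref{ti1}(1).
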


\begin{proof}
\begin{enumerate}
\item This follows from Theorem \ref{ti1}(3) and Theorem \ref{ti1} (2).

\item Let $(y_{n})_{n \in \mathbb{N}}$ be an injective, uniformly 
supported, countable sequence in $Y$ (that exists from Theorem~\ref{ti1}(1)). 
Thus, each $y_{n}$ is supported by the same set $S$ of 
atoms. In $Y^{X}$ we consider the injective family $(f_{n})_{n \in 
\mathbb{N}}$ of functions from $X$ to $Y$ where for each $i \in 
\mathbb{N}$ we define $f_{i}(x)=y_{i}$ for all $x \in X$. According to 
Proposition~\ref{2.18'}, each $f_{i}$ is supported by $S$, and so is the 
infinite family $(f_{n})_{n \in \mathbb{N}}$, meaning that there is an 
$S$-supported injective mapping from $\mathbb{N}$ to $Y^{X}$. In this case 
it is necessary to require only that $X$ is non-empty.

Fix two different elements $x_{1}, x_{2} \in X$. Take  $\mathcal{F}=(y_{n})_{n \in \mathbb{N}}$  an injective,  uniformly supported, countable sequence in $Y$. In $X^{Y}$ we consider the injective family $(g_{n})_{n \in \mathbb{N}}$ of functions from  $Y$ to $X$  where for each $i \in \mathbb{N}$ we define $g_{i}(y)=\left\{ \begin{array}{ll}
x_{1} & \text{if}\: y=y_{i}\\
x_{2} & \text{if}\: y=y_{j} \; \text{with}\; j \neq i, \; \text{or}\;  y\notin \mathcal{F}\end{array}
\right.$. According to  Proposition \ref{2.18'}, each $g_{i}$ is supported by the finite set $supp(x_{1}) \cup supp(x_{2}) \cup supp(\mathcal{F})$, and so the infinite family $(g_{n})_{n \in \mathbb{N}}$ is uniformly supported meaning that there is an injective mapping from $\mathbb{N}$ to $X^{Y}$ supported by $supp(x_{1}) \cup supp(x_{2}) \cup supp(\mathcal{F})$. 

\item From Theorem \ref{comp}, there exists a one-to-one mapping from $\wp_{fs}(U)$ onto 
$\{0,1\}^{U}_{fs}$ for an arbitrary finitely supported subset of an invariant set $U$. Fix two distinct elements $x_{1},x_{2} \in X$. There exists a finitely supported (by $supp(x_{1}) \cup supp(x_{2})$) bijective mapping from $\{0,1\}^{U}_{fs}$ to $\{x_{1},x_{2}\}^{U}_{fs}$ which associates to each $f \in \{0,1\}^{U}_{fs}$ an element $g \in \{x_{1},x_{2}\}^{U}_{fs}$ defined by $g(x)=\left\{ \begin{array}{ll}
x_{1} & \text{for}\: f(x)=0\\
x_{2} & \text{for}\: f(x)=1 \end{array}\right.$ for all $x \in U$ and supported by $supp(x_{1}) \cup supp(x_{2}) \cup supp(f)$. Obviously, there is a finitely supported injection between $\{x_{1},x_{2}\}^{U}_{fs}$ and $X^{U}_{fs}$. Thus, there is a finitely supported injection from $\wp_{fs}(U)$ into $X^{U}_{fs}$. If we take $U=\wp_{fin}(Y)$ or $U=\wp_{fs}(Y)$, the result follows from Theorem \ref{ti1}(1), Theorem \ref{ti1}(2) and Theorem \ref{ti1}(3).

\item Fix $a \in A$ and $i \in \mathbb{N}$. We consider the tuple $x_{i}=(a,\ldots,a) \in A^{i}$. Clearly $x_{i}$ is supported by $\{a\}$ for each $i \in \mathbb{N}$, and so $(x_{n})_{n \in \mathbb{N}}$ is a uniformly supported subset of $T^{\delta}_{fin}(A)$.
\end{enumerate}
\end{proof}

\begin{proposition} \label{propro} Let $X$ be a finitely supported subset of an invariant set such that $\wp_{fs}(X)$ is not FSM Dedekind infinite. Then each finitely supported surjective mapping $f:X \to X$ should be injective. 
\end{proposition}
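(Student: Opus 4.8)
The plan is to argue by contradiction: I would assume that there exists a finitely supported surjection $f:X \to X$ that fails to be injective, and then produce a finitely supported self-injection of $\wp_{fs}(X)$ whose image is a proper finitely supported subset, thereby showing that $\wp_{fs}(X)$ is FSM Dedekind infinite and contradicting the hypothesis. The whole difficulty of a ``choice-free'' argument is sidestepped by passing to powersets and working with the preimage operator, which is canonical and therefore automatically finitely supported.

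First, since $f:X \to X$ is finitely supported and surjective, I would invoke Lemma \ref{lemlem} (taking both the source and the target FSM set to be $X$) to obtain the mapping $g:\wp_{fs}(X) \to \wp_{fs}(X)$, $g(V)=f^{-1}(V)$, which is well defined, injective, and finitely supported by $supp(f) \cup supp(X)$. Its image $Im(g)$ is itself finitely supported: for $\pi \in Fix(supp(g))$ and $z = g(V) \in Im(g)$ one has $\pi \star z = \pi \star g(V) = g(\pi \star V) \in Im(g)$, so $Im(g)$ is a finitely supported subset of $\wp_{fs}(X)$.

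Next I would use the failure of injectivity to exhibit an element of $\wp_{fs}(X)$ lying outside $Im(g)$. Pick $a \neq b$ in $X$ with $f(a)=f(b)$; the singleton $\{a\}$ is a finite, hence finitely supported, subset of $X$ (Proposition \ref{4.4-9}(1)), so $\{a\} \in \wp_{fs}(X)$. If $\{a\}$ were of the form $f^{-1}(V)$, then $a \in f^{-1}(V)$ would give $f(b)=f(a) \in V$, whence $b \in f^{-1}(V)=\{a\}$, contradicting $a \neq b$. Thus $\{a\} \notin Im(g)$, so $Im(g) \subsetneq \wp_{fs}(X)$.

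Finally, $g$ is a finitely supported one-to-one map of $\wp_{fs}(X)$ onto its proper finitely supported subset $Im(g)$, which is exactly the definition of $\wp_{fs}(X)$ being FSM Dedekind infinite, contradicting the hypothesis. Hence no such $f$ exists and every finitely supported surjection $f:X \to X$ is injective. I expect the only points requiring genuine care to be the verification that the preimage map is finitely supported and that its image is finitely supported; both are delivered by Lemma \ref{lemlem} together with the equivariance of the $\star$-action (Proposition \ref{2.15}), so the argument stays entirely within the class of finitely supported structures and never appeals to a choice principle.
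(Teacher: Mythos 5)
Your proposal is correct and follows essentially the same route as the paper: both invoke Lemma \ref{lemlem} to obtain the injective preimage map $g(V)=f^{-1}(V)$ on $\wp_{fs}(X)$ and then play it against the hypothesis that $\wp_{fs}(X)$ is not FSM Dedekind infinite. The only difference is organizational: the paper first deduces that $g$ must be surjective and then derives injectivity of $f$ from that (using the two singletons $\{a\}$, $\{b\}$ and their $g$-preimages), whereas you run the contrapositive and exhibit $\{a\}$ directly as a finitely supported set outside $Im(g)$ --- a slightly tighter packaging of the same argument, with the added merit of explicitly verifying that $Im(g)$ is finitely supported, a point the paper's proof leaves implicit.
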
 

\begin{proof}Let $f: X \to X$ be a finitely supported surjection.
Since $f$ is surjective, we can define the function $g:\wp_{fs}(X) 
\rightarrow \wp_{fs}(X)$ by $g(Y) = f^{-1}(Y)$ for all $Y\in \wp_{fs}(X)$ which is finitely supported and injective according to Lemma \ref{lemlem}. Since $\wp_{fs}(X)$ is not FSM Dedekind infinite, it follow that $g$ is surjective.

Now let us consider two elements $a,b \in X$ such that $f(a)=f(b)$. We prove by 
contradiction that $a=b$. Suppose that $a \neq b$. Let us consider $Y=\{a\}$ 
and $Z=\{b\}$. Obviously, $Y,Z \in \wp_{fs}(X)$. Since $g$ is surjective, 
for $Y$ and $Z$ there exist $Y_{1}, Z_{1} \in \wp_{fs}(X)$ such that 
$f^{-1}(Y_{1})=g(Y_{1})=Y$ and $f^{-1}(Z_{1})=g(Z_{1})=Z$. We know that 
$f(Y) \cap f(Z)= \{f(a)\}$. Thus, $f(a) \in f(Y)=f(f^{-1}(Y_{1})) \subseteq 
Y_{1}$. Similarly, $f(a) =f(b) \in f(Z)=f(f^{-1}(Z_{1})) \subseteq Z_{1}$, 
and so $f(a) \in Y_{1} \cap Z_{1}$.  Thus, $a \in f^{-1}(Y_{1} \cap 
Z_{1})=f^{-1}(Y_{1}) \cap f^{-1}(Z_{1})=Y \cap Z$. However, since we assumed 
that $a \neq b$, we have that $Y \cap Z = \emptyset$, which represents a 
contradiction. It follows that $a=b$, and so $f$ is injective. 
\end{proof}

\begin{proposition} \label{propro''}
\begin{enumerate}
\item Let $X$ be a finitely supported subset of an invariant set. If $\wp_{fin}(X)$ is FSM Dedekind infinite, then $X$ should be FSM non-uniformly amorphous, meaning that $X$ should contain two disjoint, infinite, uniformly supported subsets. 
\item Let $X$ be a finitely supported subset of an invariant set. If $\wp_{fs}(X)$ is FSM Dedekind infinite, then $X$ should be FSM non-amorphous, meaning that $X$ should contain two disjoint, infinite, finitely supported supported subsets. The reverse implication is not valid.
\end{enumerate}
\end{proposition}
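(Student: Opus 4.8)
The plan is to unfold the Dedekind-infinity hypothesis, via Theorem \ref{ti1}(1), into an honest $\mathbb{N}$-indexed family of distinct finitely supported subsets, and then to cut that family into two disjoint infinite pieces \emph{without the axiom of choice}; this last point is the main obstacle, since one must never select individual elements from the sets involved, so all cutting is done at the level of set-increments.

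\emph{Part 1.} First I would invoke Theorem \ref{ti1}(1): as $\wp_{fin}(X)$ is FSM Dedekind infinite there is a finitely supported injection $\varphi\colon\mathbb{N}\to\wp_{fin}(X)$. Writing $X_i=\varphi(i)$ and $S=supp(\varphi)$, Proposition \ref{2.18'} (with $\mathbb{N}$ a trivial invariant set) gives that the $X_i$ are pairwise distinct finite subsets of $X$, each supported by $S$. By Proposition \ref{4.4-9}(1), $supp(X_i)=\bigcup_{x\in X_i}supp(x)\subseteq S$, so \emph{every element} of every $X_i$ is supported by $S$; hence $U=\bigcup_i X_i$ is an infinite subset of $X$ all of whose elements are supported by $S$. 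The key remark is that then any $V\subseteq U$ is uniformly supported by $S$ and automatically finitely supported: for $\pi\in Fix(S)$ one has $\pi\cdot x=x$ for every $x\in V$, whence $\pi\star V=V$. To split $U$ I would use the finite partial unions $P_n=\bigcup_{i\le n}X_i$, which increase to the infinite set $U$, so $|P_n|$ is unbounded and, taking least indices, I can extract a strictly increasing chain $F_0\subsetneq F_1\subsetneq\cdots$ of finite subsets of $U$. The increments $G_k=F_{k+1}\setminus F_k$ are non-empty, pairwise disjoint, and (lying inside $U$) uniformly supported by $S$, and staying inside $U$ is exactly what guarantees uniform support. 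Then $B=\bigcup_{k\text{ even}}G_k$ and $C=\bigcup_{k\text{ odd}}G_k$ are disjoint, each infinite (a union of infinitely many non-empty disjoint sets), and each uniformly supported by $S$; this is the required pair, so $X$ is FSM non-uniformly amorphous.

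\emph{Part 2.} The same scheme works, but the witnessing subsets are no longer finite, so I would run it through the increasing-chain picture. By Theorem \ref{ti1}(8), $\wp_{fs}(X)$ being FSM Dedekind infinite is equivalent to $X$ being FSM ascending infinite; fix a finitely supported increasing chain $X_0\subseteq X_1\subseteq\cdots$ of finitely supported subsets with $X\subseteq\bigcup_n X_n$ but no $X_n\supseteq X$. Since the union contains $X$ while no term does, the chain is not eventually constant, so extracting least indices yields a strictly increasing subchain $X_{n_0}\subsetneq X_{n_1}\subsetneq\cdots$. The increments $W_k=X_{n_{k+1}}\setminus X_{n_k}$ are non-empty, pairwise disjoint, and each finitely supported by $S'=supp(k\mapsto X_{n_k})\cup supp(X)$. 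Taking $B=\bigcup_{k\text{ even}}W_k$ and $C=\bigcup_{k\text{ odd}}W_k$ gives two disjoint infinite finitely supported subsets of $X$: each union is fixed by every $\pi\in Fix(S')$ because $\pi$ fixes each $W_k$ setwise, so $\pi\star B=B$ and $\pi\star C=C$. (Here the $W_k$ need only be finitely supported as sets, not uniformly supported, which is why this part yields FSM non-amorphous rather than non-uniformly amorphous.)

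\emph{Failure of the converse.} For this I would take $X=A+A$. Its two copies $\{(0,a)\mid a\in A\}$ and $\{(1,a)\mid a\in A\}$ are disjoint, infinite and equivariant, so $A+A$ is FSM non-amorphous. Yet $\wp_{fs}(A+A)$ is \emph{not} FSM Dedekind infinite; by Theorem \ref{ti1}(8) this reduces to showing $A+A$ is not FSM ascending infinite, which is the delicate point. A subset of $A+A$ is determined by its two traces in $A$, and by Proposition \ref{p111} a finitely supported trace is finite or cofinite, so for any fixed finite $T$ there are only finitely many $T$-supported subsets of $A+A$. Any finitely supported chain $(Z_n)_n$ is uniformly supported by a single finite $T$ (Proposition \ref{2.18'}), hence takes only finitely many values, so an increasing such chain is eventually constant and its union equals some term $Z_N\supseteq A+A$, contradicting ascending infinity. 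Thus $A+A$ shows that being FSM non-amorphous does not force $\wp_{fs}(X)$ to be FSM Dedekind infinite.
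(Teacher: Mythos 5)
Your Part 1 is correct and is essentially the paper's own argument: both proofs turn the uniformly supported injective family $(X_i)$ given by Theorem \ref{ti1}(1) into a sequence of pairwise disjoint non-empty increments (the paper via $Y_n=X_n\setminus\underset{m<n}{\cup}X_m$, discarding the empty terms; you via a strictly increasing subchain of the partial unions) and then split by parity; in both cases all support questions are trivialized by the observation, via Proposition \ref{4.4-9}, that every element of every $X_i$ is $S$-supported, so \emph{every} subset of $U=\cup_i X_i$ is automatically uniformly $S$-supported.

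In Part 2 and in your converse, however, there is a genuine gap. The definition of FSM ascending infinite (and hence the chain that Theorem \ref{ti1}(8) hands you) does \emph{not} require the chain members $X_n$ to be subsets of $X$: they are arbitrary finitely supported sets with $X\subseteq\bigcup_n X_n$ and no $X_n\supseteq X$. Consequently your increments $W_k=X_{n_{k+1}}\setminus X_{n_k}$ need not lie in $X$, and your $B$ and $C$ are disjoint, infinite, finitely supported \emph{sets}, but not necessarily subsets of $X$, which is what non-amorphousness demands. Intersecting $B$ and $C$ with $X$ at the end does not repair this: one can show that infinitely many increments meet $X$ (otherwise $X$ would already be contained in some $X_{n_K}$), but nothing controls the \emph{parities} of those increments, so one of $B\cap X$, $C\cap X$ could be finite or empty. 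The fix must come earlier: replace the chain by $(X_n\cap X)$, which is still increasing and finitely supported, has union $X$, and has no term equal to $X$; then extract the strictly increasing subchain and take increments, which are now non-empty pairwise disjoint subsets of $X$, and your parity split works. The same omission occurs in the converse: counting the $T$-supported subsets of $A+A$ (via Proposition \ref{p111}) only applies after the chain has been replaced by its trace on $A+A$, since the definition of ascending infinity allows chain members that are not subsets of $A+A$. With these insertions your route — through ascending infinity and Theorem \ref{ti1}(8) in both directions — is a legitimate alternative to the paper's, which instead builds the disjoint family $(Y_n)$ of non-empty subsets of $X$ directly by the Kuratowski construction inside the proof of Theorem \ref{ti1}(8), and settles the converse via the equivariant bijection between $\wp_{fs}(A+A)$ and $\wp_{fs}(A)\times\wp_{fs}(A)$ combined with Corollary \ref{ti2}(2) and Theorem \ref{ti1}(6).
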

\begin{proof}
1. Assume that 
$(X_{n})_{n \in \mathbb{N}}$ is a countable family of different finite  subsets of $X$ 
such that the mapping $n\mapsto X_{n}$ is finitely supported. Thus, each $X_{n}$ is supported by the same set $S=supp(n \mapsto X_{n})$. Since each $X_{n}$ is finite (and the support of a finite set coincides with the union of the supports of its elements), as in the proof of Lemma  \ref{lem4}, we have that $\underset{n \in \mathbb{N}}\cup X_{n}$
is uniformly supported by $S$. Furthermore, $\underset{n \in \mathbb{N}}\cup X_{n}$ is infinite since all $X_{i}$ are pairwise different.  Moreover, the countable sequence $(Y_{n})_{n \in \mathbb{N}}$ defined by $Y_{n}=X_{n} \setminus \underset{m<n}\cup X_{m}$ is a uniformly supported (by $S$) sequence of pairwise disjoint uniformly supported sets with $\underset{n \in \mathbb{N}}\cup X_{n}=\underset{n \in \mathbb{N}}\cup Y_{n}$. Again since each $Y_{n}$ is finite (and the support of a finite set coincides with the union of the supports of its elements), any element belonging to a set from the sequence $(Y_{n})_{n \in \mathbb{N}}$ is $S$-supported. Since the union of all  $Y_{n}$ is infinite, and each $Y_{n}$ is finite, there should exist infinitely many terms from the sequence $(Y_{n})_{n \in \mathbb{N}}$ that are non-empty. Assume that $(Y_{n})_{n \in M \subseteq \mathbb{N}}$ with $M$ infinite is a subset of $(Y_{n})_{n \in \mathbb{N}}$ formed by non-empty terms. Let $U_{1}=\{\cup Y_{k}\,|\, k \in M, k \;\text{is odd}\}$ and $U_{2}=\{\cup Y_{k}\,|\, k \in M, k\; \text{is even}\}$. Then $U_{1}$ and $U_{2}$ are disjoint, uniformly $S$-supported and infinite subsets of $X$. 

2. Assume that $\wp_{fs}(X)$ is FSM Dedekind infinite. As in the proof of Theorem \ref{ti1}(8), we can define a uniformly supported, countable family $(Y_{n})_{n \in \mathbb{N}}$  of subsets of $X$ that are non-empty and pairwise disjoint. Let $V_{1}=\{\cup Y_{k}\,|\,  k \;\text{is odd}\}$ and $V_{2}=\{\cup Y_{k}\,|\, k\; \text{is even}\}$. Then $V_{1}$ and $V_{2}$ are disjoint, infinite subsets of $X$. Since each $Y_{i}$ is supported by $S'=supp(n \mapsto Y_{n})$ we have $\pi \star Y_{i}=Y_{i}$ for all $i \in \mathbb{N}$ and $\pi \in Fix(S')$.  Fix $\pi \in Fix(S')$ and $x \in V_{1}$. Thus, there is $l \in \mathbb{N}$ such that $x \in Y_{2l+1}$. We obtain $\pi \cdot x \in \pi \star Y_{2l+1}=Y_{2l+1}$, and so $\pi \cdot x \in V_{1}$. Thus, $V_{1}$ is supported by $S'$. Analogously, $V_{2}$ is supported by $S'$, and so $X$ is FSM non-amorphous. 

Conversely, the set $A+A=\{0,1\}\times A$ (the disjoint union of $A$ and $A$) is obviously non-amorphous  because $\{(0,a)\,|\,a \in A\}$ is equivariant, infinite and coinfinite. One can define the equivariant bijection $f: \wp_{fs}(A) \times \wp_{fs}(A) \to  \wp_{fs}(\{0,1\}\times A)$ by $f(U,V)=\{(0,x)\,|\,x \in U\} \cup \{(1,y)\,|\,y \in V\}$ for all $U,V \in \wp_{fs}(A)$. Clearly $f$ is equivariant because for each $\pi \in S_{A}$ we have $f(\pi \star U,\pi \star V)=\pi \star f(U,V)$. However, $\wp_{fs}(A) \times \wp_{fs}(A)$ is not FSM Dedekind infinite according to Corollary \ref{ti2}(2) and Theorem \ref{ti1}(6).
\end{proof}

It is also worth noting that non-uniformly amorphous FSM sets are non-amorphous FSM sets since uniformly supported sets are obviously finitely supported. The converse however is not valid since $\wp_{fin}(A)$ is non-amorphous but it has no infinite uniformly supported subset (the only finite subsets of atoms supported by a finite set $S$ of atoms being the subsets of $S$), and so it cannot be non-uniformly amorphous. 

\begin{corollary} \label{propro'} 
Let $X$ be a finitely supported amorphous subset of an invariant set (i.e. 
any finitely supported subset of~$X$ is either finite or cofinite). Then 
each finitely supported surjective mapping $f:X \to X$ should be injective. 
\end{corollary}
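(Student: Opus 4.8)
The plan is to derive this corollary directly from Proposition \ref{propro}, which states that if $\wp_{fs}(X)$ is not FSM Dedekind infinite, then every finitely supported surjection $f:X \to X$ is injective. So the only thing I need to establish is that an amorphous $X$ (one whose every finitely supported subset is finite or cofinite) has the property that $\wp_{fs}(X)$ is \emph{not} FSM Dedekind infinite.

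By Theorem \ref{ti1}(4), a finitely supported set fails to be FSM Dedekind infinite whenever it contains no infinite uniformly supported subset. Applying this with $\wp_{fs}(X)$ in place of the ambient set, it suffices to show that $\wp_{fs}(X)$ contains no infinite uniformly supported family. So I would argue by contradiction: suppose $(X_i)_{i \in I}$ is an infinite family of pairwise distinct finitely supported subsets of $X$, all supported by a single finite set $S$ of atoms. The goal is to contradict amorphousness of $X$.

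First I would invoke Proposition \ref{ti1}(8) together with Proposition \ref{propro''}(2): since an infinite uniformly supported subset of $\wp_{fs}(X)$ makes $\wp_{fs}(X)$ FSM Dedekind infinite (via Theorem \ref{ti1}(1), as the map $i \mapsto X_i$ would be finitely supported after reindexing an injective subsequence by $\mathbb{N}$), Proposition \ref{propro''}(2) would then force $X$ to be FSM non-amorphous, i.e. $X$ would contain two disjoint infinite finitely supported subsets $V_1, V_2$. But if $X$ is amorphous, every finitely supported subset is finite or cofinite, and two disjoint cofinite subsets cannot exist (their intersection would be cofinite, hence nonempty and in fact infinite), while a finite subset cannot be infinite; so $V_1$ and $V_2$ cannot both be infinite and disjoint. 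This is the contradiction, and it shows $\wp_{fs}(X)$ is not FSM Dedekind infinite.

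The main obstacle is marshalling the correct chain of previously proved results rather than any genuinely new construction: the content is entirely bundled into Proposition \ref{propro}, Proposition \ref{propro''}(2), and the characterization in Theorem \ref{ti1}. Concretely, I would write: since $X$ is amorphous it is not FSM non-amorphous (two disjoint infinite finitely supported subsets of an amorphous set are impossible, as one would have to be cofinite and infinite while disjoint from another infinite subset, forcing a finite complement to contain an infinite set); hence by the contrapositive of Proposition \ref{propro''}(2), $\wp_{fs}(X)$ is not FSM Dedekind infinite; finally Proposition \ref{propro} yields that every finitely supported surjection $f:X \to X$ is injective, completing the proof.
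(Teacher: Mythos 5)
Your final argument is correct, and it takes a genuinely different route from the paper's proof. The paper only establishes the weaker intermediate fact that $\wp_{fin}(X)$ (not $\wp_{fs}(X)$) fails to be FSM Dedekind infinite, via the contrapositive of Proposition \ref{propro''}(1); consequently it cannot apply Proposition \ref{propro} as a black box and must re-run its argument with $\wp_{fin}(X)$ in place of $\wp_{fs}(X)$. The price is an extra step specific to amorphous sets: for finite $Y$, the preimage $f^{-1}(Y)$ is finitely supported, hence finite or cofinite, and cofiniteness would force $Im(f)$ to be finite, contradicting surjectivity; thus $g(Y)=f^{-1}(Y)$ maps $\wp_{fin}(X)$ injectively into itself, and non-Dedekind-infiniteness of $\wp_{fin}(X)$ finishes the proof. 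You instead obtain the stronger fact that $\wp_{fs}(X)$ itself is not FSM Dedekind infinite, via the contrapositive of Proposition \ref{propro''}(2) (an amorphous set admits no two disjoint infinite finitely supported subsets, since each would have to be cofinite), after which Proposition \ref{propro} applies verbatim. Your route is shorter and more modular; what the paper's route buys is lighter machinery, since Proposition \ref{propro''}(1) needs only the elementary disjointification of uniformly supported finite sets, whereas Proposition \ref{propro''}(2) rests on the heavier Kuratowski-type construction behind Theorem \ref{ti1}(8).

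One warning about your middle paragraph: the claim that an infinite uniformly supported subset of $\wp_{fs}(X)$ makes $\wp_{fs}(X)$ FSM Dedekind infinite, ``after reindexing an injective subsequence by $\mathbb{N}$,'' is not justified. In this choice-free setting an infinite set, even a uniformly supported one, need not contain a countably infinite subset at all; this is precisely the point of Remark \ref{remarema}, and it is why Theorem \ref{ti1}(4) is stated as a one-way implication rather than an equivalence. (The uniform support is not the obstacle --- any map from $\mathbb{N}$ into a uniformly supported family is automatically finitely supported --- the obstacle is the ZF existence of an injective enumeration.) Fortunately, your concluding write-up abandons this detour entirely and uses only the chain amorphous $\Rightarrow$ not non-amorphous $\Rightarrow$ $\wp_{fs}(X)$ not FSM Dedekind infinite $\Rightarrow$ surjections are injective, so the proof as finally formulated is sound.
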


\begin{proof}
Since any finitely supported subset of $X$ is either finite or cofinite, 
then any uniformly supported subset of $X$ is either finite or cofinite. 
From Proposition \ref{propro''}, $\wp_{fin}(X)$ is not FSM Dedekind 
infinite. For the rest of the proof we follow step-by-step the proof of 
Proposition \ref{propro} (and of Lemma \ref{lemlem}). If $X$ is finite, we 
are done, so assume $X$ is infinite.  If $Y \in \wp_{fin}(X)$, then 
$f^{-1}(Y) \in \wp_{fs}(X)$ (supported by $supp(f) \cup supp(X) \cup 
supp(Y)$). Since $X$ is amorphous, it follows that $f^{-1}(Y)$ is either 
finite or cofinite. If $f^{-1}(Y)$ is cofinite, then its complementary 
$\{x \in X \;|\; f(x) \notin Y\}$ is finite. This means that all but 
finitely many elements in $X$ would have their image under $f$ belonging 
to the finite set~$Y$. Therefore, $Im(f)$ would be a finite subset of $X$, 
which contradicts the surjectivity of $f$. Thus, $f^{-1}(Y)$ is a finite 
subset of $X$. In this sense we can well-define define the function 
$g:\wp_{fin}(X) \rightarrow \wp_{fin}(X)$ by $g(Y) = f^{-1}(Y)$ which is 
supported by $supp(f) \cup supp(X)$ and injective. Since $\wp_{fin}(X)$ is 
not FSM Dedekind infinite, it follow that $g$ is surjective, and so $f$ is 
injective exactly as in the last paragraph of the proof of Proposition 
\ref{propro}. 
\end{proof}

\begin{proposition}
\begin{enumerate}
\item Let $X$ be an FSM Dedekind infinite set. Then there exists a 
finitely supported surjection $j:X \to \mathbb{N}$. The reverse 
implication is not valid.

\item If $X$ is a finitely supported subset of an invariant set such that there exists a finitely supported surjection $j:X \to \mathbb{N}$, then $\wp_{fs}(X)$ is FSM Dedekind infinite. The reverse implication is also valid. 
\end{enumerate}
\end{proposition}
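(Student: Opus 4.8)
The plan is to treat the three implications separately, leaning on the bridge $\wp_{fs}(Y)\to\wp_{fs}(X)$ supplied by Lemma \ref{lemlem} and on the Dedekind-infinity criterion of Theorem \ref{ti1}(1).

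For the forward direction of item 1, I would start from a finitely supported injection $f:\mathbb{N}\to X$, which exists by Theorem \ref{ti1}(1) because $X$ is FSM Dedekind infinite; recall from the proof of that theorem that every $f(n)$ is then supported by $supp(f)$. I define $j:X\to\mathbb{N}$ by $j(x)=f^{-1}(x)$ when $x\in Im(f)$ and $j(x)=0$ otherwise. This is well defined since $f$ is injective and surjective since $j(f(n))=n$. The only point to check is finite support: for $\pi\in Fix(supp(f)\cup supp(X))$ and $x=f(n)$ one has $\pi\cdot x=f(n)=x$ (the action on $\mathbb{N}$ being trivial), whereas for $x\notin Im(f)$ a short argument using $\pi^{-1}\in Fix(supp(f))$ shows $\pi\cdot x\notin Im(f)$; in both cases $j(\pi\cdot x)=j(x)$, so Proposition \ref{2.18'} yields finite support. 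To show the converse fails, I would exhibit the equivariant arity map $T_{fin}(A)\to\mathbb{N}$ sending a tuple to its length: it is surjective because $A$ is infinite, yet $T_{fin}(A)$ is not FSM Dedekind infinite by Corollary \ref{ti2}(3).

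For the forward direction of item 2, I would feed the surjection $j:X\to\mathbb{N}$ into Lemma \ref{lemlem} to obtain a finitely supported injection $g:\wp_{fs}(\mathbb{N})\to\wp_{fs}(X)$ given by $g(V)=j^{-1}(V)$. Since $\mathbb{N}$ carries the trivial action, every subset of $\mathbb{N}$ is equivariant, so $n\mapsto\{n\}$ is an equivariant map $\mathbb{N}\to\wp_{fs}(\mathbb{N})$; composing it with $g$ gives $\psi:\mathbb{N}\to\wp_{fs}(X)$, $\psi(n)=j^{-1}(\{n\})$. The sets $j^{-1}(\{n\})$ are non-empty by surjectivity and pairwise disjoint, hence distinct, so $\psi$ is injective; it is finitely supported by $supp(g)$ because $\pi\star g(\{n\})=g(\pi\star\{n\})=g(\{n\})$ for $\pi\in Fix(supp(g))$. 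Theorem \ref{ti1}(1) then gives that $\wp_{fs}(X)$ is FSM Dedekind infinite.

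For the reverse direction of item 2, I would invoke Theorem \ref{ti1}(8): $\wp_{fs}(X)$ being FSM Dedekind infinite forces $X$ to be FSM ascending infinite, and (as recorded in the proof of that theorem and reused in Proposition \ref{propro''}(2)) this yields a countable family $(Y_n)_{n\in\mathbb{N}}$ of non-empty, pairwise disjoint subsets of $X$, all supported by a common finite set $S$. I then define $j:X\to\mathbb{N}$ by $j(x)=n$ when $x\in Y_n$ and $j(x)=0$ when $x\notin\underset{n}{\cup}\,Y_n$; surjectivity is immediate since every $Y_n$ is non-empty, and finite support follows because any $\pi\in Fix(S\cup supp(X))$ satisfies $\pi\star Y_n=Y_n$ for all $n$ (the index set $\mathbb{N}$ carrying the trivial action), so membership of $x$ in a given block is $\pi$-invariant. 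I expect this reverse direction to be the main obstacle: the surjection onto $\mathbb{N}$ cannot be read off directly from $X$ and genuinely requires the partition of $X$ into infinitely many disjoint finitely supported blocks furnished by the Kuratowski-style disjointification inside the ascending-infinity construction; a naive attempt to extract levels from an arbitrary ascending chain could skip indices and fail to be onto.
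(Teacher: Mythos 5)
Your proposal is correct and follows essentially the same route as the paper's proof: the forward direction of item 1 flips the injection from Theorem \ref{ti1}(1) into a surjection exactly as the paper does, your item 2 forward direction produces the very same map $n \mapsto j^{-1}(\{n\})$ (you justify its support by composing with Lemma \ref{lemlem}, while the paper verifies it directly), and your reverse direction of item 2 invokes the same uniformly supported disjoint family $(Y_{n})_{n}$ built via Kuratowski disjointification in the proof of Theorem \ref{ti1}(8). The only substantive deviation is your counterexample for item 1 --- the equivariant length map $T_{fin}(A) \to \mathbb{N}$ instead of the paper's cardinality map $\wp_{fin}(A) \to \mathbb{N}$ --- which is equally valid, since $T_{fin}(A)$ is not FSM Dedekind infinite by Corollary \ref{ti2}(3).
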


\begin{proof}
1. Let $X$ be an FSM Dedekind infinite set. According to Theorem \ref{ti1}(1), there is a finitely supported injection $i:\mathbb{N} \to X$. Let us fix $n_{0} \in \mathbb{N}$. We define the function $j: X \to \mathbb{N}$ by 
\[
j(x)=\left\{ \begin{array}{ll}
i^{-1}(x), & \text{if}\: \text{$x\in Im(i)$ };\\
n_{0}, & \text{if}\: \text{$x\notin Im(i)$}\: .\end{array}\right. \]

Since $Im(i)$ is supported by $supp(i)$ and $n_{0}$ is empty supported, by verifying the condition in Proposition \ref{2.18'}  we have that $j$ is supported by $supp(i) \cup supp(X)$. Indeed, when   $\pi \in Fix(supp(i) \cup supp(X))$, then $x \in Im(i) \Leftrightarrow \pi \cdot x \in Im(i)$, and $n=i^{-1}(\pi \cdot x) \Leftrightarrow i(n)=\pi \cdot x \Leftrightarrow \pi^{-1} \cdot i(n)=x \Leftrightarrow i(\pi^{-1} \diamond n)=x \Leftrightarrow i(n)=x \Leftrightarrow n=i^{-1}(x)$, where $\diamond$ is the trivial action on $\mathbb{N}$; similarly $y \notin Im(i) \Leftrightarrow \pi \cdot y \notin Im(i)$ and $j(\pi \cdot y)=n_{0}=\pi \diamond n_{0}=\pi \diamond j(y)$. Clearly, $j$ is surjective. However, the reverse implication is not valid because the mapping $f:\wp_{fin}(A) \to \mathbb{N}$ defined by $f(X)=|X|$ for all $X \in \wp_{fin}(A)$ is equivariant and surjective, but $\wp_{fin}(A)$ is not FSM Dedekind infinite.

2. Suppose now there exists a finitely supported surjection $j:X \to \mathbb{N}$. Clearly, for any $n \in \mathbb{N}$, the set $j^{-1}(\{n\})$ is non-empty and supported by $supp(j)$. Define $f: \mathbb{N} \to \wp_{fs}(X)$ by $f(n)=j^{-1}(\{n\})$. For $\pi \in Fix(supp(j))$ and an arbitrary $n \in \mathbb{N}$ we have $j(x)=n \Leftrightarrow j(\pi^{-1} \cdot x)=n$, and so $x \in j^{-1}(\{n\}) \Leftrightarrow \pi^{-1} \cdot x \in j^{-1}(\{n\})$, which means $f(n) = \pi \star f(n)$ for all $n \in \mathbb{N}$, and so $f$ is supported by $supp(j)$. Since $f$ is also injective, by Theorem \ref{ti1}(1) we have that $\wp_{fs}(X)$ is FSM Dedekind infinite.

Conversely, assume that $\wp_{fs}(X)$ is FSM Dedekind infinite. As in the proof of Theorem \ref{ti1}(8), we can define a uniformly supported, countable family $(Y_{n})_{n \in \mathbb{N}}$  of subsets of $X$ that are non-empty and pairwise disjoint. The mapping $f$ can be defined by 
$
f(x)=\left\{ \begin{array}{ll}
n, & \text{if}\; \exists n. x\in Y_{n} ;\\
0, & \text{otherwise} \end{array}\right.$, and, obviously, $f$ is supported by $supp(n \mapsto Y_{n})$.
\end{proof}

\begin{proposition}Let $X$ be an infinite finitely supported subset of an invariant set. Then there exists a finitely supported surjection $f:\wp_{fs}(X) \to \mathbb{N}$.
\end{proposition}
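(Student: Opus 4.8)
The plan is to construct the surjection explicitly from the cardinalities of the finite subsets of $X$, exploiting the fact that every permutation of atoms preserves cardinality and preserves the finite/infinite dichotomy. Concretely, I would define $f:\wp_{fs}(X) \to \mathbb{N}$ by setting $f(Y)=|Y|$ whenever $Y$ is a finite subset of $X$ and $f(Y)=0$ whenever $Y$ is an infinite (but finitely supported) subset of $X$. Since $\mathbb{N}$ is a trivial invariant set, its action is trivial, so the entire task reduces to checking that $f$ is well defined, surjective, and finitely supported.

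For surjectivity I would first recall that, by Proposition \ref{4.4-9}(1), every finite subset of $X$ is finitely supported, hence $\wp_{fin}(X) \subseteq \wp_{fs}(X)$. Because $X$ is infinite, for each $n \in \mathbb{N}$ one can select $n$ pairwise distinct elements of $X$ and form an $n$-element subset $Y_{n} \subseteq X$ with $f(Y_{n})=n$, while $f(\emptyset)=0$. Thus every natural number lies in the image and $f$ is onto.

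For finite support I would invoke Proposition \ref{2.18'}(2) with candidate support $S=supp(X)$. Given $\pi \in Fix(supp(X))$ we have $\pi \star X = X$, so for any $Y \in \wp_{fs}(X)$ the image $\pi \star Y$ is again a finitely supported subset of $X$ by Proposition \ref{2.15} (indeed $\pi \star Y \subseteq \pi \star X = X$), i.e. $\pi \star Y \in \wp_{fs}(X)$. Moreover $\pi$ acts bijectively on $X$, so $|\pi \star Y|=|Y|$ and $\pi \star Y$ is finite exactly when $Y$ is finite; hence $f(\pi \star Y)=f(Y)$. As the action on $\mathbb{N}$ is trivial, $\pi \diamond f(Y)=f(Y)$, so $f(\pi \star Y)=\pi \diamond f(Y)$, which is precisely the condition of Proposition \ref{2.18'}(2) certifying that $f$ is supported by $supp(X)$.

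I do not expect a genuine obstacle: the only point demanding care is verifying that $\pi \star Y$ stays inside the domain $\wp_{fs}(X)$ for $\pi \in Fix(supp(X))$, which is exactly where the hypothesis that $X$ (and hence $\wp_{fs}(X)$) is finitely supported enters, together with the elementary observation that permutations preserve cardinality and finiteness. It is worth stressing that this result does \emph{not} force $\wp_{fs}(X)$ to be FSM Dedekind infinite; for $X=A$, the elements of $\wp_{fs}(A)$ are exactly the finite and the cofinite subsets, all the cofinite ones map to $0$, and $\wp_{fs}(A)$ is not FSM Dedekind infinite by Corollary \ref{ti2}(2), in agreement with the earlier remark that a finitely supported surjection onto $\mathbb{N}$ need not entail FSM Dedekind infinity.
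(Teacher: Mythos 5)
Your proof is correct and essentially identical to the paper's: the paper defines the same function (phrased via the fibers $X_{n}$ of $n$-sized subsets of $X$, with infinite sets sent to $0$), proves surjectivity from the infiniteness of $X$, and verifies via Proposition \ref{2.18'} that $supp(X)$ supports the map, exactly as you do. The only cosmetic difference is that the paper packages the cardinality map through the uniformly supported family $(X_{n})_{n\in\mathbb{N}}$ rather than writing $f(Y)=|Y|$ directly.
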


\begin{proof}Let $X_{i}$ be the set of all $i$-sized subsets from $X$, 
i.e. $X_{i}=\{Z \subseteq X\,|\,|Z|=i\}$. 
The family $(X_{i})_{i \in \mathbb{N}}$ is uniformly supported by $supp(X)$ 
and all $X_{i}$ are non-empty and pairwise disjoint. 
Define the mapping $f$ by 
$f(Y)=\left\{ \begin{array}{ll}
n, & \text{if}\: Y\in X_{n};\\
0, & \text{if}\; Y\; \text{is infinite} .\end{array}\right.$ 
According to Proposition \ref{2.18'}, $f$ is supported by $supp(X)$ (since 
any $X_{n}$ is supported by $supp(X)$) and it is surjective. We actually 
proved the existence of a finitely supported surjection from 
$\wp_{fin}(X)$ onto~$\mathbb{N}$. 
\end{proof}

The sets $A$ and $\wp_{fin}(A)$ are both FSM usual infinite and none of 
them is FSM Dedekind infinite. We prove below that~$A$ is not FSM 
ascending infinite, while $\wp_{fin}(A)$ is FSM ascending infinite.

\begin{proposition} \label{tari}
\begin{itemize}
\item The set $A$ is not FSM ascending infinite.
\item Let $X$ be a finitely supported subset of an invariant set $U$. If $X$ is FSM usual infinite, then the set $\wp_{fin}(X)$ is FSM ascending infinite. 
\end{itemize}
\end{proposition}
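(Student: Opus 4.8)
The plan is to handle the two bullets separately, using in both cases the observation that when the indexing map $n \mapsto X_{n}$ is finitely supported by a finite set $S$ and $\mathbb{N}$ carries the trivial action, Proposition \ref{2.18'} forces every term $X_{n}$ of the chain to be supported by the \emph{same} set $S$.

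For the first bullet I would argue by negation. Take an \emph{arbitrary} finitely supported increasing chain $X_{0}\subseteq X_{1}\subseteq\ldots$ of finitely supported sets with $A\subseteq\bigcup_{n}X_{n}$, and show that $A\subseteq X_{N}$ for some $N$, contradicting the definition of FSM ascending infinite. Let $S$ support the map $n\mapsto X_{n}$, so each $X_{n}$ is supported by $S$. The key step is a transposition ``spreading'' argument: if some atom $a\notin S$ lies in $X_{n}$, then for every other atom $b\notin S$ the transposition $(a\,b)$ fixes $S$ pointwise, whence $(a\,b)\star X_{n}=X_{n}$ and therefore $b=(a\,b)\cdot a\in X_{n}$; thus a single atom of $A\setminus S$ inside $X_{n}$ forces $A\setminus S\subseteq X_{n}$. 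Since $A\subseteq\bigcup_{n}X_{n}$, pick any $a_{0}\in A\setminus S$ (which exists because $S$ is finite and $A$ infinite) lying in some $X_{n_{0}}$; then $A\setminus S\subseteq X_{n_{0}}$. The finitely many atoms of $S$ each lie in some term of the chain, so taking $N$ to be the maximum of $n_{0}$ and these finitely many indices and using that the chain is increasing gives $A=(A\setminus S)\cup S\subseteq X_{N}$, the desired contradiction.

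For the second bullet I would exhibit an explicit chain. Set $Y_{n}=\{Z\in\wp_{fin}(X)\,|\,|Z|\le n\}$, the family of subsets of $X$ of cardinality at most $n$; note each such $Z$ is automatically finitely supported by Proposition \ref{4.4-9}, so the $Y_{n}$ are genuine finitely supported families. They clearly form an increasing chain with $\bigcup_{n}Y_{n}=\wp_{fin}(X)$. Each $Y_{n}$ is supported by $supp(X)$, since any $\pi\in Fix(supp(X))$ satisfies $\pi\star X=X$ and preserves cardinality, hence sends size-$\le n$ subsets of $X$ to size-$\le n$ subsets of $X$; because $\mathbb{N}$ is trivial, the same computation together with Proposition \ref{2.18'} shows the map $n\mapsto Y_{n}$ is itself supported by $supp(X)$. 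Finally, since $X$ is FSM usual infinite it contains a finite subset of every cardinality (built by induction, each step picking a new element from the nonempty complement of a finite set, so no choice principle is invoked); a subset of size $n+1$ witnesses $\wp_{fin}(X)\not\subseteq Y_{n}$ for every $n$. Hence no single $Y_{n}$ covers $\wp_{fin}(X)$, and $\wp_{fin}(X)$ is FSM ascending infinite.

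The main obstacle is the spreading argument in the first bullet, specifically absorbing the exceptional atoms lying in the support $S$: the transposition trick propagates membership only among atoms outside $S$, so one must separately handle the finitely many atoms of $S$ by exploiting the increasing (directed) structure of the chain to pass to a common later index $N$. The second bullet is essentially a routine verification once the natural filtration of $\wp_{fin}(X)$ by cardinality is chosen.
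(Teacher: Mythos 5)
Your proposal is correct. For the second bullet you use exactly the paper's construction: the filtration $Y_{n}=\{Z\in\wp_{fin}(X)\,|\,|Z|\le n\}$, uniformly supported by $supp(X)$ because permutations preserve cardinality and fix $X$ setwise, with union $\wp_{fin}(X)$ and no single term covering it; your extra remark that an infinite $X$ has finite subsets of every size without invoking choice is a detail the paper leaves implicit. For the first bullet, however, your route is genuinely different from the paper's. The paper first proves that \emph{every} finitely supported increasing chain of finitely supported subsets of $A$ is stationary: all terms are supported by the common set $S=supp(n\mapsto X_{n})$, and by Proposition \ref{p111} there are only finitely many $S$-supported subsets of $A$ (the subsets of $S$ and the supersets of $A\setminus S$), so the chain takes finitely many values; it then reduces the general case to this one by intersecting an arbitrary chain $(Y_{n})$ with $A$ and using $\cup(A\cap Y_{n})=A$. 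You instead run a transposition-spreading argument directly on the given chain: a single atom $a\notin S$ in $X_{n}$ forces $A\setminus S\subseteq X_{n}$ via $(a\,b)\star X_{n}=X_{n}$, and the finitely many atoms of $S$ are absorbed by passing to a later index, which the increasing structure of the chain permits. Your argument is more elementary and self-contained (it never invokes the classification of $\wp_{fs}(A)$, nor the counting of $S$-supported subsets, and it handles chain terms that are not subsets of $A$ without the intersection step), while the paper's argument establishes a stronger intermediate fact, the stationarity of every uniformly supported chain in $\wp_{fs}(A)$, which is the kind of statement it reuses in related proofs (e.g., for Dedekind-type properties of $\wp_{fs}(A)$). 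Both proofs rest on the same initial observation that finite support of $n\mapsto X_{n}$ plus the trivial action on $\mathbb{N}$ makes the chain uniformly supported, and both are choice-free, so there is no gap in your version.
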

\begin{proof}
In order to prove that $A$ is not FSM ascending infinite, we prove firstly that each finitely supported increasing countable chain of finitely supported subsets of $A$ must be stationary. Indeed, if there exists an increasing countable chain
$X_{0}\subseteq X_{1}\subseteq\ldots\subseteq A$ such that $n \mapsto X_{n}$ is finitely supported, then, according to Proposition \ref{2.18'} and because $\mathbb{N}$ is a trivial invariant set, each element $X_{i}$ of the chain must be
supported by the same $S=supp(n \mapsto X_{n})$. However, there are only finitely many such
subsets of $A$ namely the subsets of $S$ and the supersets of $A\setminus S$.
Therefore the chain is finite, and, because it is ascending, there exists 
$n_{0}\in\mathbb{N}$ such that $X_{n}=X_{n_{0}},\forall n\geq n_{0}$. Now, 
let $Y_{0}\subseteq Y_{1}\subseteq\ldots\subseteq Y_{n}\subseteq\ldots$ be 
a finitely supported countable chain with $A\subseteq \underset{n \in 
\mathbb{N}}{\cup} Y_{n}$. Then $A \cap Y_{0}\subseteq A \cap 
Y_{1}\subseteq\ldots\subseteq A \cap Y_{n}\subseteq\ldots \subseteq A$ is 
a finitely supported countable chain of subsets of $A$ (supported by 
$supp(n \mapsto Y_{n})$) which should be stationary (finite). Furthermore, 
since $\underset{i \in \mathbb{N}}{\cup}(A\cap Y_{i})=A \cap(\underset{i 
\in \mathbb{N}}{\cup} Y_{i})=A$, there is some $k_{0}$ such that $A \cap 
Y_{k_{0}}=A$, and so $A \subseteq Y_{k_{0}}$. Thus,~$A$ is not FSM 
ascending infinite.

We know that $\wp_{fin}(X)$ is a subset of the invariant set 
$\wp_{fin}(U)$ supported by $supp(X)$. Let us consider 
$X_{n}\!=\!\{Z\!\in\!\wp_{fin}(X)\,|\,|Z|\!\leq\! n\}$. Clearly, 
$X_{0}\subseteq X_{1}\subseteq\ldots\subseteq X_{n}\subseteq\ldots$. 
Furthermore, because permutations of atoms are bijective, we have that for 
an arbitrary $k \in \mathbb{N}$, $|\pi \star Y| =|Y|$ for all $\pi \in 
S_{A}$ and all $Y \in X_{k}$, and so $\pi \star Y \in X_{k}$ for all $\pi 
\in Fix(supp(X))$ and all $Y \in X_{k}$. Thus, each $X_{k}$ is a subset of 
$\wp_{fin}(X)$ finitely supported by $supp(X)$, and so $(X_{n})_{n \in 
\mathbb{N}}$ is finitely (uniformly) supported by $supp(X)$. Obviously, 
$\wp_{fin}(X)=\underset{n \in \mathbb{N}}{\cup} X_{n}$. However, there 
exists no $n\in\mathbb{N}$ such that $\wp_{fin}(X)=X_{n}$. Thus, 
$(\wp_{fin}(X), \star)$ is FSM ascending infinite. 
\end{proof}

\begin{theorem} \label{Tt} Let $X$ be a finitely supported subset of an invariant set $(Z, \cdot)$. 

\begin{enumerate}\item If $X$ is FSM Dedekind infinite, then $X$ is FSM Mostowski infinite. 
\item If $X$ is FSM Mostowski infinite, then $X$ is FSM Tarski II infinite. The reverse implication is not valid. 
\end{enumerate}
\end{theorem}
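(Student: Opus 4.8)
The plan is to treat the two implications separately and then supply a counterexample for the failing converse.

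For the first implication, I would start from the characterization in Theorem \ref{ti1}(1): if $X$ is FSM Dedekind infinite, there is a finitely supported injection $f:\mathbb{N}\to X$. Because $\mathbb{N}$ carries the trivial action, every $\pi\in Fix(supp(f))$ satisfies $\pi\cdot f(n)=f(\pi\diamond n)=f(n)$, so each value $f(n)$ is fixed by $Fix(supp(f))$; hence $Im(f)$ is an infinite subset of $X$ uniformly supported by $supp(f)$, and in particular finitely supported by Proposition \ref{4.4-9}. I would then transport the usual order of $\mathbb{N}$ along $f$, defining $f(m)\sqsubseteq f(n)$ iff $m\le n$. This is a total order on $Im(f)$, and since every element of $Im(f)$ is fixed by $Fix(supp(f))$, the relation $\sqsubseteq\,\subseteq Im(f)\times Im(f)$ is fixed pointwise by $Fix(supp(f))$, hence finitely supported. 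Thus $Im(f)$ is an infinite finitely supported totally ordered subset of $X$, witnessing that $X$ is FSM Mostowski infinite.

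For the forward direction of the second implication, let $Y\subseteq X$ be an infinite finitely supported subset carrying a finitely supported total order $\sqsubseteq$, and put $S=supp(Y)\cup supp(\sqsubseteq)$. The key observation, via Proposition \ref{2.18'}, is that every $\pi\in Fix(S)$ both leaves $Y$ invariant and is an order-automorphism of $(Y,\sqsubseteq)$. If $Y$ has no $\sqsubseteq$-greatest element, I would take the family $\mathcal{F}=\{D_y\,|\,y\in Y\}$ of principal down-sets $D_y=\{z\in Y\,|\,z\sqsubseteq y\}$: it is supported by $S$, its members are finitely supported subsets of $X$, it is totally ordered by inclusion (since $D_y\subseteq D_{y'}\Leftrightarrow y\sqsubseteq y'$), and it has no maximal element precisely because $Y$ has no greatest element, so $X$ is FSM Tarski II infinite. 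The remaining case, where $Y$ has a greatest element, is the main obstacle, and I would resolve it by peeling off maxima: set $m_0=\max Y$ and $m_{k+1}=\max(Y\setminus\{m_0,\ldots,m_k\})$ as long as the maximum exists. Since order-automorphisms fixing $S$ preserve each successively removed maximum, an induction shows every $m_k$ is supported by $S$. If this process stops at some stage $n$, then $Y_n=Y\setminus\{m_0,\ldots,m_{n-1}\}$ is infinite, finitely supported by $S$, and has no greatest element, so the down-set construction applies to $Y_n$. If the process never stops, the elements $m_0\sqsupset m_1\sqsupset\cdots$ form an infinite uniformly $S$-supported sequence, and the finite sets $L_k=\{m_0,\ldots,m_k\}$ form a finitely supported, strictly increasing chain under inclusion with no maximal element; again $X$ is FSM Tarski II infinite.

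Finally, to show the converse fails, I would exhibit $\wp_{fin}(A)$, which is FSM ascending infinite by Proposition \ref{tari}, hence FSM Tarski II infinite. It remains to check that $\wp_{fin}(A)$ is not FSM Mostowski infinite: suppose $\mathcal{Z}\subseteq\wp_{fin}(A)$ were infinite, finitely supported, and totally ordered by a finitely supported $\sqsubseteq$, with $S=supp(\mathcal{Z})\cup supp(\sqsubseteq)$. Since only finitely many finite subsets of $A$ are contained in $S$, some $z\in\mathcal{Z}$ contains an atom $a\notin S$; choosing $b\notin S\cup z$ (possible as $A$ is infinite), the transposition $(a\,b)$ lies in $Fix(S)$, so it leaves $\mathcal{Z}$ invariant and sends $z$ to the distinct element $z'=(z\setminus\{a\})\cup\{b\}\in\mathcal{Z}$. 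As $(a\,b)$ is an order-automorphism of involution type interchanging $z$ and $z'$, comparability $z\sqsubset z'$ would force $z'=(a\,b)\cdot z\sqsubset(a\,b)\cdot z'=z$, a contradiction, and the same contradiction arises from $z'\sqsubset z$. Hence no such $\mathcal{Z}$ exists, so $\wp_{fin}(A)$ is FSM Tarski II infinite but not FSM Mostowski infinite. I expect the peeling argument together with the finite-support bookkeeping for the removed maxima to be the most delicate part of the write-up.
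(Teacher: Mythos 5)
Your proposal is correct, but for item 2 it takes a genuinely different route from the paper. For item 1 you do essentially what the paper does: pull back the order of $\mathbb{N}$ along the injection given by Theorem \ref{ti1}(1), noting that both the image and the transported order are fixed pointwise by $Fix(supp(f))$, hence finitely supported. For the implication Mostowski $\Rightarrow$ Tarski II, the paper argues by contraposition: assuming every finitely supported inclusion-chain of finitely supported subsets of $X$ has a maximal element, it applies this to the families of down-sets $\{\downarrow z\}$ and up-sets $\{\uparrow z\}$ of any finitely supported totally ordered subset $(U,<)$, concluding that $<$ and its reverse are both well-orderings and hence that $U$ is finite. You instead build a witnessing chain directly from an infinite totally ordered $(Y,\sqsubseteq)$: principal down-sets when $Y$ has no greatest element, and otherwise the ``peeling maxima'' recursion, which either terminates in an $S$-supported infinite subset with no greatest element or produces a strictly increasing uniformly supported chain $L_0\subsetneq L_1\subsetneq\cdots$ of finite sets. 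Both routes hinge on the same key observation (every $\pi\in Fix(S)$ is an order-automorphism of $(Y,\sqsubseteq)$, so successive extrema are $S$-supported, extrema being unique in a total order); the paper's contrapositive is shorter, but the step from ``doubly well-ordered'' to ``finite'' implicitly requires exactly the kind of support bookkeeping for peeled extrema that you carry out explicitly, so your version trades a case analysis for making that step visible. For the failure of the converse you use the same counterexample $\wp_{fin}(A)$ (FSM ascending infinite by Proposition \ref{tari}, hence FSM Tarski II infinite), but where the paper defers to Corollary \ref{pTII} (a forward reference resting on Theorem \ref{Tti}), you give a self-contained argument: a transposition $(a\,b)$ with $a\in z\setminus S$ and $b\notin S\cup z$ lies in $Fix(S)$, is an involutive order-automorphism, and swaps the distinct comparable elements $z$ and $z'=(z\setminus\{a\})\cup\{b\}$ of the chain, contradicting antisymmetry. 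This is the finite-order-of-permutations idea behind Theorem \ref{Tti} specialized to involutions, and it has the advantage of removing the forward dependence in the paper's exposition.
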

\begin{proof}
1. Suppose $X$ is FSM Dedekind infinite. According to Theorem \ref{ti1}(1) there exists an uniformly supported infinite injective sequence $T=(x_{n})_{n \in \mathbb{N}}$ of elements from $X$. Thus, each element of $T$ is supported by $supp(T)$ and there is a bijective correspondence between $\mathbb{N}$ and $T$ defined as $n \mapsto x_{n}$ which is supported by $supp(T)$.  If we define the relation $\sqsubset$ on $T$ by: $x_{i} \sqsubset x_{j}$ if and only if $i<j$, we have that $\sqsubset$ is a (strict) total order relation supported by $supp(T)$. Thus $T$ is an infinite finitely supported (strictly) totally ordered subset of $X$, and so $X$ is FSM Mostowski infinite since any strict total order can be extended to a total order. 

2. Suppose that $X$ is not FSM Tarski II infinite. Then every non-empty 
finitely supported family of finitely supported subsets of $X$ which is 
totally ordered by inclusion has a maximal element under inclusion. Let 
$(U, <)$ be a finitely supported strictly totally ordered subset of $X$ 
(any total order relation induces a strict total order relation). We prove 
that~$U$ is finite, and so $X$ is not FSM Mostowski infinite. In this 
sense it is sufficient to prove that $<$ and $>$ are well-orderings.  
Since both of them are (strict) total orderings, we need to prove that any 
finitely supported subset of $U$ has a least and a greatest element wrt 
$<$, i.e. a minimal and a maximal element (because $<$ is total). Let $Y$ 
be a finitely supported subset of $U$. The set $\downarrow z=\{y \in 
Y\,|\,y<z\}$ is supported by $supp(z) \cup supp(Y) \cup supp(<)$ for all 
$z \in Y$. The family $T=\{\downarrow z\,|\,z \in Y\}$ is itself finitely 
supported by $supp(Y) \cup supp(<)$ because for all $\pi \in Fix(supp(Y) 
\cup supp(<))$ we have $\pi \cdot \downarrow z=\downarrow \pi \cdot z$. 
Since $<$ is transitive, we have that $T$ is (strictly) totally ordered by 
inclusion, and so it has a maximal element, which means $Y$ has a maximal 
element. Analogously, the set $\uparrow z=\{y \in Y\,|\,z<y\}$ is 
supported by $supp(z) \cup supp(Y) \cup supp(<)$ for all $z \in Y$ and the 
family $T'=\{\uparrow z\,|\,z \in Y\}$ is itself finitely supported by 
$supp(Y) \cup supp(<)$ because for all $\pi \in Fix(supp(Y) \cup supp(<))$ 
we have $\pi \cdot \uparrow z=\uparrow \pi \cdot z$. The family $T'$ is 
(strictly) totally ordered by inclusion, and so it has a maximal element, 
from which $Y$ has a minimal element. We used the obvious properties $z<t$ 
if and only if $\downarrow z \subset \downarrow t$, and $z<t$ if and only 
if $\uparrow t \subset \uparrow z$.

Conversely, according to Proposition \ref{tari}, $\wp_{fin}(A)$ is FSM 
ascending infinite, and so it is FSM Tarski II infinite. However,
$\wp_{fin}(A)$ is not FSM Mostowski infinite, according to Corollary~\ref{pTII}.
\end{proof}

\begin{proposition} \label{propamo} 
Let $X$ be a finitely supported subset of an invariant set $(Z, \cdot)$. 
If $X$ is FSM Mostowski infinite, then~$X$ is non-amorphous meaning that 
$X$ can be expressed as a disjoint union of two infinite finitely 
supported subsets. The reverse implication is not valid. 
\end{proposition}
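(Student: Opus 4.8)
The plan is to prove the forward implication by taking an infinite finitely supported totally ordered subset $(U,\sqsubseteq)$ of $X$ and splitting $U$ itself into two infinite finitely supported pieces; non-amorphousness of $X$ then follows immediately, since if $U=U_{1}\sqcup U_{2}$ with $U_{1},U_{2}$ infinite and finitely supported, then $X=U_{1}\sqcup(X\setminus U_{1})$ exhibits $X$ as a disjoint union of two infinite finitely supported sets (here $X\setminus U_{1}$ is supported by $supp(X)\cup supp(U_{1})$ and contains the infinite set $U_{2}$). So first I would fix a single finite set $S$ supporting $U$, the relation $\sqsubseteq$ (as a subset of $Z\times Z$), and $X$ simultaneously, reducing everything to partitioning $U$.

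The key observation, and the step that makes the whole argument go through, is that such a $U$ is automatically \emph{uniformly} supported by $S$. Indeed, for any $\pi\in Fix(S)$, since $S$ supports both $U$ and $\sqsubseteq$, the restriction $\pi|_{U}$ is an order-automorphism of $(U,\sqsubseteq)$. Because every finitary permutation of atoms has finite order, $\pi$ has some order $n$ and hence $(\pi|_{U})^{n}=Id_{U}$. But a finite-order automorphism $\sigma$ of a totally ordered set must be the identity: if $\sigma(u)\sqsupset u$ for some $u$ then $u\sqsubset\sigma(u)\sqsubset\sigma^{2}(u)\sqsubset\cdots\sqsubset\sigma^{n}(u)=u$, a contradiction, and symmetrically $\sigma(u)\sqsubset u$ is impossible. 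Thus $\pi\cdot u=u$ for every $u\in U$ and every $\pi\in Fix(S)$, so $S$ supports each element of $U$; consequently $\pi\star V=V$ for \emph{every} subset $V\subseteq U$, i.e. every subset of $U$ is finitely supported by $S$.

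With this in hand, the forward direction reduces to a purely ZF statement: every infinite linearly ordered set can be partitioned into two infinite subsets, and by the previous paragraph \emph{any} such partition of $U$ is automatically finitely supported. I would prove the ZF fact by the standard case analysis on $L=\{u\in U\mid \{v\mid v\sqsubseteq u\}\ \text{is finite}\}$ and its dual $R$: if $L$ (resp. $R$) is infinite it has order type $\omega$ and splits by the parity of the rank $|\{v\mid v\sqsubseteq u\}|$; if both $L,R$ are finite, one passes to the convex equivalence relation ``$u\sim v$ iff the interval between them is finite'', splitting an infinite $\mathbb{Z}$-type class by the parity relation, or, if all classes are finite, choosing a single class $q_{0}$ in the dense quotient $U/\!\sim$ and cutting there. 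The step I expect to be the genuine obstacle is precisely this last, dense/middle case, where no \emph{equivariant} cut exists; the whole point of the uniform-support lemma is that it frees us from needing an equivariant or canonical split, since finite support of the pieces now comes for free and a bare ZF partition suffices.

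For the reverse implication I would take $X=\wp_{fin}(A)$. It is non-amorphous because $\wp_{fin}(A)=\{F\mid |F|\ \text{even}\}\sqcup\{F\mid |F|\ \text{odd}\}$ is a disjoint union of two equivariant (hence finitely supported) infinite subsets, permutations preserving cardinality. On the other hand $\wp_{fin}(A)$ is not FSM Mostowski infinite: if it contained an infinite finitely supported totally ordered subset $U$, then by the lemma above $U$ would be an infinite uniformly supported subset of $\wp_{fin}(A)$, contradicting the fact (Corollary~\ref{ti2}) that $\wp_{fin}(A)$ has no infinite uniformly supported subset, since the only finite subsets of $A$ supported by a fixed finite set $S$ are the subsets of $S$. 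Hence the reverse implication fails.
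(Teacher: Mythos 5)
Your proposal is correct, but it follows a genuinely different route from the paper's. Your key lemma --- that a finitely supported infinite chain $(U,\sqsubseteq)$ is automatically \emph{uniformly} supported, because every $\pi\in Fix(S)$ restricts to a finite-order order-automorphism of $U$, and a finite-order automorphism of a totally ordered set must be the identity --- is exactly the content of the paper's Theorem \ref{Tti}, which the paper states and proves only \emph{after} Proposition \ref{propamo} and does not invoke in its proof of the proposition. The paper instead assumes, for contradiction, that the chain $Y$ is amorphous and runs a lengthy two-case analysis on the down-sets $\downarrow z$ and up-sets $\uparrow z$, re-deriving uniform supports case by case and extracting a countable well-ordered subsequence. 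Your route is shorter and arguably sharper: once every subset of $U$ is $S$-supported, the forward implication reduces to the choice-free ZF fact that an infinite linear order splits into two infinite pieces, and any bare ZF partition then inherits finite support for free --- precisely the right way to sidestep the absence of an equivariant or canonical cut; it also delivers at once the stronger conclusion (the paper's corollary following Theorem \ref{Tti}) that $X$ is non-\emph{uniformly} amorphous, since both pieces of $U$ consist of $S$-supported elements. Two minor points. First, your ``both $L,R$ finite'' case is more baroque than necessary, and as sketched the cut at a class $q_{0}$ of the quotient would need $q_{0}$ to avoid the finitely many extremal classes; the simple fix is that any $u_{0}\notin L\cup R$ has $\downarrow u_{0}$ and $\uparrow u_{0}$ both infinite, so cutting at $u_{0}$ finishes that case outright. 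Second, for the converse you use $\wp_{fin}(A)$ (even/odd cardinality split, plus the fact that the only finite subsets of $A$ supported by $S$ are subsets of $S$), whereas the paper uses $A+A$ and reduces to $A$ not being FSM Mostowski infinite via Corollary \ref{pTII}; both counterexamples are legitimate, and yours is self-contained given your lemma and Corollary \ref{ti2}.
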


\begin{proof}Suppose that there is an infinite finitely supported totally ordered subset $(Y, \leq)$ of $X$. Assume, by contradiction, that $Y$ is amorphous, meaning that any finitely supported subset of $Y$ is either finite or cofinite. As in the proof of Theorem \ref{Tt} (without making the requirement that $\leq$ is strict, which anyway would not essentially change the proof), for $z \in Y$ we define the finitely supported subsets $\downarrow z=\{y \in Y\,|\,y \leq z\}$ and $\uparrow z=\{y \in Y\,|\,z \leq y\}$ for all $z \in Y$. We have that the mapping $z \mapsto \downarrow z$ from $Y$ to $T=\{\downarrow z\,|\,z \in Y\}$ is itself finitely supported by $supp(Y) \cup supp(\leq)$. Furthermore it is bijective, and so $T$ is amorphous. Thus, any subset $Z$ of $T$ is either finite or cofinite, and obviously any subset $Z$ of $T$ is finitely supported. Analogously, the mapping $z \mapsto \uparrow z$  from $Y$ to $T'=\{\uparrow z\,|\,z \in Y\}$ is finitely supported and bijective, which means that any subset of $T'$ is either finite or cofinite, and clearly any subset of $T'$ is finitely supported. 

We distinguish the following two cases:

1. There are only finitely many elements $x_{1}, \ldots, x_{n} \in Y$ such 
that $\downarrow x_{1}, \ldots, \downarrow x_{n}$ are finite. Thus, for $y 
\in U= Y \setminus \{x_{1}, \ldots, x_{n}\}$ we have $\downarrow y$ 
infinite. Since $\downarrow y$ is a subset of $Y$, it should be cofinite, 
and so $\uparrow y$ is finite (because $\leq$ is a total order relation). 
Let $M=\{\uparrow y\,|\,y \in U\}$. As in Theorem \ref{Tt} we have that 
$M$ is totally ordered with respect to sets inclusion. Furthermore, for an 
arbitrary $y \in U$ we cannot have $y \leq x_{k}$ for some $k \in 
\{1,\ldots,n\}$ because $\downarrow y$ is infinite, while $\downarrow 
x_{k}$ is finite, and so $\uparrow y$ is a subset of $U$. Thus, $M$ is an 
infinite, finitely supported (by $supp(U) \cup supp(\leq)$), totally 
ordered family formed by finite subsets of $U$. Since $M$ is finitely 
supported, for each $y \in U$ and each $\pi \in Fix(supp(M))$ we have $\pi 
\cdot \uparrow y \in M$. Since $\uparrow y$ is finite, we have that $\pi 
\cdot \uparrow y$ is finite having the same number of elements as 
$\uparrow y$. Since $\pi \cdot \uparrow y$ and $\uparrow y$ are comparable 
via inclusion, they should be equal. Thus,~$M$ is uniformly supported. 
Since $\leq$ is a total order, for $\pi \in Fix(supp(\uparrow y))$ we have 
$\uparrow \pi \cdot y=\pi \cdot \uparrow y=\uparrow y$, and so $\pi \cdot 
y=y$, from which $supp(y) \subseteq supp(\uparrow y)$. Thus, $U$ is 
uniformly supported.  Since any element of $U$ has only a finite number of 
successors (leading to the conclusion that $\geq$ is an well-ordering on 
$U$ uniformly supported by $supp(U)$) and~$U$ is \emph{uniformly 
supported}, we can define an order monomorphism between $\mathbb{N}$ and 
$U$ which is supported by $supp(U)$. For example, choose $u_{0}\neq u_{1} 
\in U$, then let $u_{2}$ be \emph{the greatest element} (w.r.t. $\leq$) in 
$U\setminus \{u_{0}, u_{1}\}$, $u_{3}$ be \emph{the greatest element} in 
$U\setminus \{u_{0}, u_{1}, u_{2}\}$ (no choice principle is used since 
$\geq$ is an well-ordering, and so such a \emph{greatest} element is 
precisely defined), and so on, and find an infinite, uniformly supported 
countable sequence $u_{0}, u_{1}, u_{2},\ldots$.  Since $\mathbb{N}$ is 
non-amorphous (being expressed as the union between the even elements and 
the odd elements), we conclude that $U$ is non-(uniformly) amorphous 
containing two infinite uniformly supported disjoint subsets.

2. We have cofinitely many elements $z$ such that $\downarrow z$ is 
finite. Thus, there are only finitely many elements $y_{1}, \ldots, y_{m} 
\in Y$ such that $\downarrow y_{1}, \ldots, \downarrow y_{m}$ are 
infinite. Since every infinite subset of $Y$ is cofinite, only $\uparrow 
y_{1}, \ldots, \uparrow y_{m}$ are finite. Let $z \in Y \setminus \{y_{1}, 
\ldots, y_{m}\}$ which means $\uparrow z$ infinite. Since $\uparrow z$ is 
a subset of $Y$ it should be cofinite, and so $\downarrow z$ is finite. As 
in the above item, the set $M'=\{\downarrow z\,|\,z \in Y \setminus 
\{y_{1}, \ldots, y_{m}\} \}$ is an infinite, finitely supported, totally 
ordered (by inclusion) family of finite sets, and so it has to be 
uniformly supported, from which $Y \setminus \{y_{1}, \ldots, y_{m}\}$ is 
uniformly supported, and so $\leq$ is an FSM well ordering on $Y \setminus 
\{y_{1}, \ldots, y_{m}\}$. Therefore, $Y \setminus \{y_{1}, \ldots, 
y_{m}\}$ has an infinite, uniformly supported, countable subset, and so $Y 
\setminus \{y_{1}, \ldots, y_{m}\}$ is non-(uniformly) amorphous 
containing two infinite uniformly supported disjoint subsets.
Thus $Y$ is non-amorphous, and so $X$ is non-amorphous.

Conversely, the set $A+A$ (the disjoint union of $A$ and $A$) is obviously non-amorhpous because because $\{(0,a)\,|\,a \in A\}$ is equivariant, infinite and coinfinite. However, if we assume there exists a finitely supported total order relation on an infinite subset of $A+A$, then there should exist an infinite, finitely supported, total order on at least one of the sets $\{(0,a)\,|\,a \in A\}$ or $\{(1,a)\,|\,a \in A\}$, which leads to an infinite finitely supported total order relation on $A$. However $A$ is not FSM Mostowski infinite by Corollary \ref{pTII}. 
\end{proof}

\begin{theorem} \label{Tti} Let $X$ be a finitely supported subset of an invariant set $(Z, \cdot)$. If $X$ contains no infinite uniformly supported subset, then $X$ is not FSM Mostowski infinite. 
\end{theorem}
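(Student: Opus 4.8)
The plan is to prove the contrapositive: if $X$ is FSM Mostowski infinite, then $X$ contains an infinite uniformly supported subset. So I would start by assuming that $Y \subseteq X$ is an infinite finitely supported subset equipped with a finitely supported total order $\leq$ (regarded as a finitely supported subset of $Z \times Z$). Put $S = supp(Y) \cup supp(\leq)$, a finite set of atoms. The entire argument rests on one claim, namely that $Y$ is in fact \emph{uniformly} supported by $S$; since $Y$ is infinite, this immediately exhibits an infinite uniformly supported subset of $X$ and contradicts the hypothesis.

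First I would record that every $\pi \in Fix(S)$ acts on $(Y, \leq)$ as an order automorphism. Because $Y$ is supported by $S$ and permutations of atoms have finite order, the characterization of supported subsets noted after Definition \ref{2.4} gives $\pi \star Y = Y$, so $\pi \cdot y \in Y$ for every $y \in Y$. Because $\leq$ is supported by $S$ as a subset of $Z \times Z$, for $\pi \in Fix(S)$ we have $y \leq z \iff \pi \cdot y \leq \pi \cdot z$; thus the restriction $\pi|_{Y}$ is an order-preserving bijection of the totally ordered set $(Y, \leq)$.

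The key step is then purely order-theoretic: an order-preserving bijection of finite order of a totally ordered set must be the identity. Concretely, fix $y \in Y$ and $\pi \in Fix(S)$ and consider the orbit $\{y, \pi \cdot y, \pi^{2} \cdot y, \ldots\}$, which is finite because $\pi$ has finite order. If $\pi \cdot y \neq y$, then by totality either $y < \pi \cdot y$ or $\pi \cdot y < y$; in the first case, applying the order-preserving map $\pi$ repeatedly yields a strictly increasing chain $y < \pi \cdot y < \pi^{2} \cdot y < \cdots$, and since $\pi^{n} = Id$ for some $n$ we obtain $y < \pi^{n} \cdot y = y$, a contradiction (the case $\pi \cdot y < y$ is symmetric). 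Hence $\pi \cdot y = y$.

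Finally I would assemble the pieces: since $\pi \cdot y = y$ for every $\pi \in Fix(S)$, the finite set $S$ supports $y$, i.e. $supp(y) \subseteq S$; as $y \in Y$ was arbitrary, $Y$ is uniformly supported by $S$. Being infinite and uniformly supported, $Y$ is an infinite uniformly supported subset of $X$, contradicting the assumption. I expect the only delicate point to be the bookkeeping ensuring that both $Y$ and $\leq$ are genuinely $S$-supported, so that each $\pi \in Fix(S)$ really restricts to an order automorphism of $Y$; once this is secured, the finite-order argument forcing $\pi$ to fix every element is the crux and is entirely elementary.
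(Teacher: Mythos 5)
Your proposal is correct and follows essentially the same route as the paper's proof: both take an infinite finitely supported totally ordered subset $(Y,\leq)$, consider $\pi \in Fix(supp(Y) \cup supp(\leq))$, and use the fact that every permutation of atoms has finite order to show that $\pi \cdot y \neq y$ would generate a strictly increasing (or decreasing) chain returning to $y$, a contradiction, whence $Y$ is uniformly supported. The only cosmetic difference is that you phrase the argument as a contrapositive while the paper phrases it as a proof by contradiction.
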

\begin{proof}Assume, by contradiction, that $X$ is FSM Mostowski infinite, meaning that $X$ contains an infinite, finitely supported, totally ordered subset $(Y, \leq)$.  We claim that $Y$ is uniformly supported by $supp(\leq) \cup supp(Y)$. Let $\pi \in Fix (supp(\leq) \cup supp(Y))$ and let $y \in Y$ an arbitrary element. Since $\pi$ fixes $supp(Y)$ pointwise and $supp(Y)$ supports $Y$, we obtain that $\pi \cdot y \in Y$, and so we should have either $y<\pi \cdot y$, or $y=\pi \cdot y$, or $\pi \cdot y<y$. If $y<\pi \cdot y$, then, because $\pi$ fixes $supp(\leq)$ pointwise and because the mapping $z\mapsto \pi \cdot z$ is bijective from $Y$ to $\pi \star Y$, we get $y<\pi \cdot y<\pi^{2} \cdot y<\ldots < \pi^{n} \cdot y$ for all $n \in \mathbb{N}$. However, since any permutation of atoms interchanges only finitely many atoms, it has a finite order in the group $S_{A}$, and so there is $m \in \mathbb{N}$ such that $\pi^{m}=Id$. This means $\pi^{m} \cdot y=y$, and so we get $y<y$ which is a contradiction. Analogously, the assumption $\pi \cdot y<y$, leads to the relation $\pi^{n} \cdot y<\ldots<\pi \cdot y<y$ for all $n \in \mathbb{N}$ which is also a contradiction since $\pi$ has finite order. Therefore, $\pi \cdot y=y$, and because $y$ was arbitrary chosen form $Y$, $Y$ should be a uniformly supported infinite subset of $X$. 
\end{proof}

Looking to the proof of Proposition \ref{propamo}, the following result 
follows directly.

\begin{corollary}
Let $X$ be a finitely supported subset of an invariant set $(Z, \cdot)$. 
If $X$ is FSM Mostowski infinite, then~$X$ is non-uniformly amorphous meaning that $X$ has two disjoint, infinite, uniformly supported subsets. 
\end{corollary}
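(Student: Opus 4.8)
The plan is to reduce the statement to a purely order-theoretic splitting problem and to let Theorem~\ref{Tti} supply the only FSM ingredient we need. First I would use FSM Mostowski infinity to fix an infinite, finitely supported, totally ordered subset $(Y,\le)$ of $X$. The proof of Theorem~\ref{Tti} shows, using only that every permutation of atoms has finite order, that such a $Y$ is in fact \emph{uniformly} supported by $S:=supp(\le)\cup supp(Y)$; I would simply quote that argument rather than rerun it. The decisive consequence is that every $\pi\in Fix(S)$ fixes each element of $Y$ pointwise, so $Fix(S)$ acts trivially on $Y$ and hence setwise on $\wp(Y)$. Therefore \emph{every} subset $Z\subseteq Y$ satisfies $\pi\star Z=Z$ for all $\pi\in Fix(S)$ and has all its elements supported by $S$, so by Proposition~\ref{4.4-9} it is uniformly supported by $S$. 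This removes the FSM difficulty entirely: it now suffices to exhibit two disjoint infinite subsets of the linearly ordered set $Y$, with no further attention to supports or choice functions.

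Next I would perform the splitting as in the two cases of the proof of Proposition~\ref{propamo}, but now unconditionally, since the amorphousness hypothesis there was only used to force a dichotomy I no longer require. Writing $\downarrow y=\{z\in Y\mid z\le y\}$ and $\uparrow y=\{z\in Y\mid y\le z\}$, I distinguish whether some element has both infinitely many predecessors and infinitely many successors. If $D=\{y\mid \downarrow y\ \text{finite}\}$ is infinite, then $D$ is an initial segment well-ordered by $\le$ of type $\omega$ (the $n$-th element being the unique one with $|\downarrow y|=n+1$, a minimum existing because $\downarrow y$ is a nonempty finite chain), so collecting its even- and odd-indexed elements produces two disjoint infinite subsets; if instead $U=\{y\mid \uparrow y\ \text{finite}\}$ is infinite the symmetric argument on $\ge$ applies. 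In the remaining case both $D$ and $U$ are finite, so some $y_{0}\in Y\setminus(D\cup U)$ has $\downarrow y_{0}$ and $\uparrow y_{0}$ both infinite, and then $\{y\mid y<y_{0}\}$ and $\{y\mid y>y_{0}\}$ are disjoint and infinite. In every case the two sets produced are subsets of $Y$, hence uniformly supported by $S$, so $X$ contains two disjoint infinite uniformly supported subsets, i.e.\ $X$ is non-uniformly amorphous.

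The main obstacle is the last case, the dense/$\mathbb{Z}$-like situation in which no well-ordered end is available: here one cannot build a canonical $\omega$-indexed enumeration as in Proposition~\ref{propamo}, so the even/odd device does not directly apply. The point to get right is that a canonical enumeration is unnecessary — splitting the order at a single element $y_{0}$ already yields two infinite halves, and, crucially, these halves need \emph{not} be defined by any finitely supported choice or selection: because $Fix(S)$ acts trivially on $Y$, an arbitrary order-defined subset is automatically uniformly supported by $S$. I would therefore make explicit that the uniform support of $Y$ furnished by Theorem~\ref{Tti} is exactly what allows the order-theoretic construction to be imported into FSM with no extra support bookkeeping, which is the sense in which the corollary ``follows directly'' from the proof of Proposition~\ref{propamo}.
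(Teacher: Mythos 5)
Your proof is correct, and it takes a genuinely more self-contained route than the paper's. The paper dispatches this corollary with a single sentence pointing at the proof of Proposition \ref{propamo}; but that proof's two cases (only finitely many $y$ with $\downarrow y$ finite, versus cofinitely many) are exhaustive only under the amorphousness hypothesis assumed there for contradiction, and converting its outcome into the present statement still tacitly requires the uniform support of $Y$ in order to upgrade ``two disjoint infinite finitely supported halves'' to ``two disjoint infinite uniformly supported halves.'' You eliminate both issues by front-loading the content of Theorem \ref{Tti}: once $Y$ is uniformly supported by $S=supp(\le)\cup supp(Y)$, every subset of $Y$ whatsoever is automatically uniformly supported by $S$ (the definition of uniform support in Definition \ref{2.14} already suffices here; your appeal to Proposition \ref{4.4-9} is harmless but not needed), so the problem reduces to the pure ZF fact that an infinite linear order splits into two disjoint infinite pieces. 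Your trichotomy --- $D=\{y\,|\,\downarrow y\ \text{finite}\}$ infinite (order type $\omega$, split by parity), $U=\{y\,|\,\uparrow y\ \text{finite}\}$ infinite (symmetric), or both finite (split at any $y_{0}\notin D\cup U$, whose strict down-set and up-set are then both infinite) --- is exhaustive and correctly argued, and its third case is precisely the dense or $\mathbb{Z}$-like configuration that the paper's argument never confronts, because amorphousness rules it out there. What the paper's pointer buys is brevity and reuse of existing machinery; what your argument buys is a complete, choice-free proof of the sharper intermediate statement that any infinite finitely supported totally ordered subset of an FSM set splits into two disjoint infinite uniformly supported pieces, with all support bookkeeping visibly trivialized.
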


\begin{remark} \label{remarema} 
In a permutation model of set theory with atoms, a set can be well-ordered if and only if there is a one-to-one mapping of the related set into the kernel of the model. Also it is noted that axiom of choice is valid in the kernel of the model \cite{jech}. Although FSM/nominal is somehow related to (has connections with) permutation models of set theory with atoms, it is independently developed over ZF without being necessary to relax the axioms of extensionality or foundation. FSM sets are ZF sets together with group actions, and such a theory makes sense over ZF without being necessary to require the validity of the axiom of choice on ZF sets. Thus, FSM is the entire ZF together with atomic sets with finite support (where the set of atoms is a fixed ZF formed by element whose internal structure is ignored and which are basic in the higher order construction). There may exist infinite ZF sets that do not contain infinite countable subsets, and as well there may exist infinite uniformly supported FSM sets (particularly such ZF sets) that do not contain infinite countable, uniformly supported, subsets.
\end{remark}

\begin{corollary} \label{pTII} 
\begin{enumerate}
\item The sets $A$, $A+A$ and $A \times A$ are FSM usually infinite, but there are not FSM Mostowski infinite, nor FSM Tarski II infinite.
\item None of the sets $\wp_{fin}(A)$, $\wp_{cofin}(A)$,  $\wp_{fs}(A)$ and $\wp_{fin}(\wp_{fs}(A))$ is Mostowski infinite in FSM.
\item None of the sets $A^{A}_{fs}$, $T_{fin}(A)^{A}_{fs}$ and $\wp_{fs}(A)^{A}_{fs}$ is FSM Mostowksi infinite.
\end{enumerate}
\end{corollary}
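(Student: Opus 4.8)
The plan is to reduce every Mostowski-infinity assertion to Theorem~\ref{Tti}, and to obtain the two Tarski~II negations in item~1 by a separate orbit-counting argument. Theorem~\ref{Tti} says that a finitely supported set containing no infinite uniformly supported subset is not FSM Mostowski infinite; so for each listed set it suffices to invoke the fact that it has no infinite uniformly supported subset, and every such fact is already recorded in Corollary~\ref{ti2}. Concretely: $A$ (Corollary~\ref{ti2}(1)); $\wp_{fs}(A)$ (Corollary~\ref{ti2}(2)); $\wp_{fin}(A)$, $\wp_{cofin}(A)$ and $\wp_{fin}(\wp_{fs}(A))$ (Corollary~\ref{ti2}(8)); $A^{A}_{fs}$ (Corollary~\ref{ti2}(4)); $T_{fin}(A)^{A}_{fs}$, i.e. the finitely supported maps $A\to T_{fin}(A)$ (Corollary~\ref{ti2}(6)); and $\wp_{fs}(A)^{A}_{fs}$, i.e. the finitely supported maps $A\to\wp_{fs}(A)$ (Corollary~\ref{ti2}(7)). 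For $A\times A$ and $A+A$ the same property follows from Remark~\ref{rrr} and the remark after Theorem~\ref{ti1}(7), applied with both factors equal to $A$. Applying Theorem~\ref{Tti} in each case yields all the non-Mostowski conclusions of items~1, 2 and~3.

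Where a justification inside Corollary~\ref{ti2}(8) is only sketched, I would spell it out: for $\wp_{cofin}(A)$ one transports the absence of an infinite uniformly supported subset across the equivariant bijection $U\mapsto A\setminus U$ between $\wp_{fin}(A)$ and $\wp_{cofin}(A)$ from the proof of Proposition~\ref{cardord2'} (an equivariant bijection preserves supports), and for $\wp_{fin}(\wp_{fs}(A))$ one applies Lemma~\ref{lem4} with $X=\wp_{fs}(A)$.

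It remains to prove that $A$, $A+A$ and $A\times A$ are not FSM Tarski~II infinite. First I would establish a uniformity lemma valid for any finitely supported $X$: a finitely supported family $\mathcal{F}$ of finitely supported subsets of $X$ that is totally ordered by inclusion is uniformly supported by $S:=supp(\mathcal{F})$. The proof is the finite-order trick from Theorem~\ref{Tti}: for $B\in\mathcal{F}$ and $\pi\in Fix(S)$ we have $\pi\star B\in\mathcal{F}$, so $B$ and $\pi\star B$ are $\subseteq$-comparable; iterating $\pi$ produces a monotone chain $B\subseteq\pi\star B\subseteq\pi^{2}\star B\subseteq\cdots$ (or its reverse) which closes up because $\pi$ has finite order in $S_{A}$, forcing $\pi\star B=B$. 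Hence every member of $\mathcal{F}$ is supported by $S$. Now for $X\in\{A,\,A+A,\,A\times A\}$ there are only finitely many $Fix(S)$-orbits for each finite $S$ (classify an element by which of its atom-entries lie in $S$, and, in the case of $A\times A$, by whether the two generic entries coincide), so only finitely many $S$-supported subsets of $X$ exist; thus $\mathcal{F}$ is finite and has a maximal element, contradicting FSM Tarski~II infinity. Consequently none of $A$, $A+A$, $A\times A$ is FSM Tarski~II infinite; this also re-derives their non-Mostowski status through the contrapositive of Theorem~\ref{Tt}(2). FSM usual infiniteness of the three sets is immediate, as each contains an equivariant injective copy of the infinite set $A$.

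The main obstacle is precisely this Tarski~II part of item~1: it is not covered by Theorem~\ref{Tti}, and its two delicate points are the passage from \emph{totally ordered by inclusion} to \emph{uniformly supported} via the finite order of permutations, and the orbit-finiteness count guaranteeing that only finitely many $S$-supported subsets exist. Everything else is bookkeeping over Corollary~\ref{ti2}, Lemma~\ref{lem4} and the product/coproduct remarks following Theorem~\ref{ti1}.
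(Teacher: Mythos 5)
Your proposal is correct, and its Mostowski half coincides with the paper's own proof: the paper likewise reduces every non-Mostowski claim to Theorem~\ref{Tti} by checking that each listed set contains no infinite uniformly supported subset, quoting exactly the facts you cite (the $S$-supported subsets of $A$ are just the subsets of $S$ and the supersets of $A\setminus S$; Lemma~\ref{lem4} for $\wp_{fin}(\wp_{fs}(A))$; Lemma~\ref{lem'''} for $A^{A}_{fs}$; the analyses behind Corollary~\ref{ti2}(6) and (7) for $T_{fin}(A)^{A}_{fs}$ and $\wp_{fs}(A)^{A}_{fs}$; Remark~\ref{rrr} and the disjoint-union remark for $A\times A$ and $A+A$). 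Where you genuinely diverge is the Tarski~II part of item~1. Both you and the paper begin with the same finite-order trick: a finitely supported family of finitely supported subsets totally ordered by inclusion is uniformly supported, so everything reduces to showing that only finitely many $S$-supported subsets of $X$ exist for each finite $S$. The paper obtains this finiteness by transporting uniform-support facts along equivariant bijections: directly for $\wp_{fs}(A)$, via $\wp_{fs}(A+A)\cong\wp_{fs}(A)\times\wp_{fs}(A)$ together with the product argument of Theorem~\ref{ti1}(6) for $A+A$, and via $\wp_{fs}(A\times A)\cong\wp_{fs}(A)^{A}_{fs}$ together with Corollary~\ref{ti2}(7) for $A\times A$. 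You instead count $Fix(S)$-orbits directly on $A$, $A+A$ and $A\times A$: an $S$-supported subset is a union of $Fix(S)$-orbits, and each of these sets has finitely many such orbits (for $A\times A$, at most $|S|^{2}+2|S|+2$, the diagonal of the generic part being, as you correctly note, a separate orbit). Your route is more elementary and self-contained; it avoids the function-space machinery entirely, yields an explicit bound on the number of $S$-supported subsets, and in fact proves the stronger statement that no orbit-finite invariant set can be FSM Tarski~II infinite. The paper's route, by contrast, recycles results it has already established and stays within its general theme of transferring uniform-support properties across equivariant bijections. Both arguments are sound, and your observation that the non-Mostowski status of the item~1 sets can be re-derived from the Tarski~II negation through the contrapositive of Theorem~\ref{Tt}(2) is also correct, though logically redundant given the direct application of Theorem~\ref{Tti}.
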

\begin{proof}
In the view of Theorem \ref{Tti} it is sufficient to prove that none of 
the sets $A$, $\wp_{fin}(A)$, $\wp_{cofin}(A)$, $\wp_{fs}(A)$, $A+A$, $A 
\times A$, $A^{A}_{fs}$,$T_{fin}(A)^{A}_{fs}$ and $\wp_{fs}(A)^{A}_{fs}$ 
contain infinite uniformly supported subsets. For $A$, $\wp_{fin}(A)$, 
$\wp_{cofin}(A)$ and $\wp_{fs}(A)$ this is obvious since for any finite 
set $S$ of atoms there are at most finitely many subsets of $A$ supported 
by~$S$, namely the subsets of $S$ and the supersets of $A \setminus S$. 
Moreover, $\wp_{fin}(\wp_{fs}(A))$ does not contain an infinite uniformly 
supported subset according to Lemma \ref{lem4} since $\wp_{fs}(A)$ does 
not contain an infinite uniformly supported subset.

Regarding $A^{A}_{fs}$, the things are also similar with Corollary \ref{ti2}(4). According to Lemma \ref{lem'''}, any $S$-supported function $f:A \to A$ should have the property that either $f|_{A \setminus S}=Id$ or $f|_{A \setminus S}$ is an one-element subset of $S$.  For each possible definition of such an $f$ on $S$ we have at most $|S|+1$ possible ways to define $f$ on $A \setminus S$, and so at most $|S|+1$ possible ways to completely define $f$ on $A$. If there was an infinite uniformly $S$-supported sequence of finitely supported functions from $A$ to $A$, there should exist infinitely many finitely supported functions from $S$ to $A$ supported by the same finite set $S$. But this contradicts the fact that $A^{|S|}$ does not contain an infinite uniformly supported subset (this follows by applying finitely many times the result that $X \times X$ does not contain an infinite uniformly supported subset whenever $X$ does not contain an infinite uniformly supported subset). Analyzing the proofs of   Corollary \ref{ti2}(6) and (7), we also conclude that $T_{fin}(A)^{A}_{fs}$ and $\wp_{fs}(A)^{A}_{fs}$ do not contain infinite uniformly supported subsets.

We also have that $A$ is not FSM Tarski II infinite because $\wp_{fs}(A)$ contains no infinite uniformly supported subsets, and so every totally ordered subset (particularly via inclusion) of $\wp_{fs}(A)$ should be finite meaning that it should have a maximal element. Furthermore, we have that there is an equivariant bijection between $\wp_{fs}(A+A)$ and $\wp_{fs}(A)\times \wp_{fs}(A)$. Since $\wp_{fs}(A)$ does not contain an infinite uniformly supported subset, we have that $\wp_{fs}(A) \times \wp_{fs}(A)$ does not contain an infinite uniformly supported subset (the proof is quasi-identical to the one of Theorem \ref{ti1}(6) without taking count on the countability of the related infinite uniformly supported family). Therefore, any infinite totally ordered (via inclusion) uniformly supported family of  $\wp_{fs}(A+A)$ should be finite containing a maximal element. There is an equivariant bijection between  $\wp_{fs}(A)^{A}_{fs}$ and $\wp_{fs}(A \times A)$. Therefore any uniformly supported totally ordered subset of $\wp_{fs}(A \times A)$ should be finite containing a maximal element.
\end{proof}

\begin{corollary}
Let $X$ be a finitely supported subset of an invariant set $Y$ such that $X$ does not contain an infinite uniformly supported subset. Then the set $\wp_{fin}(X)$ is not FSM Mostowski infinite. 
\end{corollary}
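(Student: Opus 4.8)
The plan is to obtain this statement as an immediate consequence of two results already established, namely Lemma \ref{lem4} and Theorem \ref{Tti}. The whole argument reduces to a short chain of implications, so I expect no genuine obstacle; the only point requiring a moment's care is to confirm that $\wp_{fin}(X)$ itself satisfies the hypothesis of Theorem \ref{Tti}, i.e. that it is a finitely supported subset of an invariant set, so that the theorem may legitimately be applied with $\wp_{fin}(X)$ playing the role of the generic set in its statement.

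First I would recall, exactly as observed in the proof of Theorem \ref{ti1}(2), that $\wp_{fin}(X)$ is a finitely supported subset of the invariant set $\wp_{fs}(Y)$, supported by $supp(X)$: whenever $Z$ is a finite subset of $X$ and $\pi \in Fix(supp(X))$, Proposition \ref{2.15} guarantees that $\pi \star Z$ is again a finite subset of $X$, so $supp(X)$ supports $\wp_{fin}(X)$ as an element of $\wp_{fs}(\wp_{fs}(Y))$. This places $\wp_{fin}(X)$ precisely in the framework to which Theorem \ref{Tti} applies.

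Next I would invoke Lemma \ref{lem4}, whose hypothesis coincides verbatim with the hypothesis of the present corollary: since $X$ does not contain an infinite uniformly supported subset, the set $\wp_{fin}(X)$ likewise contains no infinite uniformly supported subset. This is the essential transfer step, and since it is exactly the content of the lemma, nothing further needs to be argued here.

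Finally, having established that $\wp_{fin}(X)$ is a finitely supported subset of an invariant set which contains no infinite uniformly supported subset, I would apply Theorem \ref{Tti} directly to $\wp_{fin}(X)$ to conclude that $\wp_{fin}(X)$ is not FSM Mostowski infinite. Every step is a direct citation of a prior result, so the hardest part is merely verifying that the hypotheses line up correctly, which they do; in particular the corollary is genuinely a one-line consequence of Lemma \ref{lem4} followed by Theorem \ref{Tti}.
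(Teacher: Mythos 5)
Your proof is correct and follows exactly the paper's own argument: Lemma \ref{lem4} transfers the no-infinite-uniformly-supported-subset hypothesis from $X$ to $\wp_{fin}(X)$, and Theorem \ref{Tti} then yields the conclusion. The extra verification that $\wp_{fin}(X)$ is a finitely supported subset of the invariant set $\wp_{fs}(Y)$ (supported by $supp(X)$) is a sound addition that the paper leaves implicit, having established it earlier in the proof of Theorem \ref{ti1}(2).
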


\begin{proof}
According to Lemma \ref{lem4}, $\wp_{fin}(X)$ does not contain an infinite 
uniformly supported subset. Thus, by Theorem~\ref{Tti}, $\wp_{fin}(X)$ is 
not FSM Mostowski infinite. 
\end{proof}

\begin{theorem} \label{TTRR} Let $X$ be a finitely supported subset of an invariant set $(Y, \cdot)$. 
\begin{enumerate}
\item If $X$ is FSM Tarski I infinite, then $X$ is FSM Tarski III infinite. The converse does not hold. However if $X$ is FSM Tarski III infinite, then $\wp_{fs}(X)$ is FSM Tarski I  infinite. 
\item If $X$ is FSM Tarski III infinite, then $X$ is FSM Dedekind infinite. The converse does not hold. However if $X$ is FSM Dedekind infinite, then $\wp_{fs}(X)$ is FSM Tarski III  infinite. 
\end{enumerate}
\end{theorem}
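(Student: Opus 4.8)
The plan is to prove each part through three essentially independent arguments (the forward implication, a counterexample to the converse, and the powerset strengthening), using the Cantor--Schr\"{o}der--Bernstein theorem available here as Lemma \ref{lem1}/Lemma \ref{lem2} and the cardinal arithmetic of Proposition \ref{pco1}. For the forward implication of part 1 I would argue by sandwiching: assuming $X$ has more than two elements (the singleton case being the only genuine degeneracy), Proposition \ref{pco1}(5) gives $|X|+|X|\leq |X|\cdot|X|=|X\times X|$, and FSM Tarski I infiniteness makes the right-hand side equal $|X|$; together with the equivariant injection $x\mapsto(0,x)$ witnessing $|X|\leq |X|+|X|$, Cantor--Schr\"{o}der--Bernstein yields $|X|=|X|+|X|$, i.e. FSM Tarski III infiniteness. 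For the third assertion I would use the equivariant bijection $\wp_{fs}(U+V)\cong\wp_{fs}(U)\times\wp_{fs}(V)$ (the construction in the proof of Proposition \ref{propro''}(2), sending a finitely supported subset of $U+V$ to the pair of its two traces): a finitely supported bijection $X\to X+X$ induces by direct image a finitely supported bijection $\wp_{fs}(X)\to\wp_{fs}(X+X)\cong\wp_{fs}(X)\times\wp_{fs}(X)$, which is exactly Tarski I infiniteness of $\wp_{fs}(X)$.

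To refute the converse in part 1 I propose $X=\mathbb{N}\times A$. It is FSM Tarski III infinite because $\mathbb{N}+\mathbb{N}\cong\mathbb{N}$ equivariantly (the action on $\mathbb{N}$ being trivial), so $(\mathbb{N}\times A)+(\mathbb{N}\times A)\cong(\mathbb{N}+\mathbb{N})\times A\cong\mathbb{N}\times A$. It is \emph{not} Tarski I infinite: if $f\colon\mathbb{N}\times A\to(\mathbb{N}\times A)\times(\mathbb{N}\times A)$ were a finitely supported bijection with support $S$, then for distinct $a,b\notin S$ the element $((0,a),(0,b))$ has support $\{a,b\}$, while its preimage $(n,e)$ has support contained in $\{e\}$; since $supp(f(z))\subseteq S\cup supp(z)$ by Proposition \ref{2.18'}, taking $z=(n,e)$ gives $\{a,b\}\subseteq S\cup\{e\}$, which is impossible for two distinct atoms outside $S$.

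For part 2 the forward implication splits off a copy: given a finitely supported bijection $\varphi\colon X\to X+X$, the composite $\varphi^{-1}\circ\iota_1$ with the equivariant injection $\iota_1\colon x\mapsto(1,x)$ is a finitely supported injection of $X$ onto the finitely supported subset $\varphi^{-1}(\{1\}\times X)$, which is proper (since $X\neq\emptyset$), so $X$ is FSM Dedekind infinite. The powerset strengthening exploits the copy of $\mathbb{N}$: by Theorem \ref{ti1}(1) fix a uniformly supported $D=i(\mathbb{N})\subseteq X$; then $Z\mapsto(Z\cap D,Z\setminus D)$ is a finitely supported bijection $\wp_{fs}(X)\cong\wp_{fs}(D)\times\wp_{fs}(X\setminus D)\cong\wp(\mathbb{N})\times\wp_{fs}(X\setminus D)$ (here $\wp_{fs}(\mathbb{N})=\wp(\mathbb{N})$ carries the trivial action). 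Since $\wp(\mathbb{N})$ is equivariantly Tarski III infinite, distributing $+$ over $\times$ gives $\wp_{fs}(X)+\wp_{fs}(X)\cong(\wp(\mathbb{N})+\wp(\mathbb{N}))\times\wp_{fs}(X\setminus D)\cong\wp(\mathbb{N})\times\wp_{fs}(X\setminus D)\cong\wp_{fs}(X)$, so $\wp_{fs}(X)$ is Tarski III infinite.

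To refute the converse in part 2 I propose $X=A+\mathbb{N}$, which is FSM Dedekind infinite through its equivariant copy of $\mathbb{N}$. A support argument shows it is not Tarski III: any finitely supported bijection $f\colon(A+\mathbb{N})+(A+\mathbb{N})\to A+\mathbb{N}$ with support $S$ cannot send an atom-copy $L_a$ or $R_a$ of a generic $a\notin S$ into the trivial $\mathbb{N}$-part nor to $(0,c)$ with $c\in S$ (either would collapse infinitely many elements of an $S$-orbit to one value, via Proposition \ref{2.18'}), so it is forced to set $f(L_a)=f(R_a)=(0,a)$, contradicting injectivity. The main obstacle I anticipate is making these two non-implications fully rigorous: both rest on the principle that a finitely supported map cannot create or merge ``free-atom'' degrees of freedom outside its support, and the bookkeeping (excluding images in the trivial part and tracking $supp(f(z))\subseteq S\cup supp(z)$) must be carried out carefully with Proposition \ref{2.18'} and Proposition \ref{2.15}; by contrast the positive implications and the two powerset strengthenings are routine once the bijection $\wp_{fs}(U+V)\cong\wp_{fs}(U)\times\wp_{fs}(V)$ and the Tarski III-ness of $\wp(\mathbb{N})$ are established.
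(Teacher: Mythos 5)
Your proposal is correct, and its skeleton coincides with the paper's: the forward implication of part 1 by sandwiching $|X|\leq |X|+|X|\leq |X\times X|=|X|$ and closing with Cantor--Schr\"{o}der--Bernstein (Lemma \ref{lem2}), the same counterexamples $\mathbb{N}\times A$ and $A+\mathbb{N}$ (the paper writes the latter as $A\cup\mathbb{N}$), and the powerset strengthenings via the finitely supported bijection $\wp_{fs}(U+V)\cong\wp_{fs}(U)\times\wp_{fs}(V)$. Where you genuinely diverge is in three sub-arguments, and in each case your route is at least as clean. First, to show $\mathbb{N}\times A$ is not Tarski I infinite you use the support-growth bound $supp(f(z))\subseteq supp(f)\cup supp(z)$ (a consequence of Proposition \ref{2.18'}): every element of $\mathbb{N}\times A$ carries at most one atom, while $((0,a),(0,b))$ carries two fresh ones. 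The paper instead passes through Proposition \ref{pco2} to get a finitely supported surjection $\mathbb{N}\times A\to A\times A$ and runs a transposition case analysis; your argument is shorter and in fact rules out surjections as well, not only bijections. Second, for Dedekind infinite $\Rightarrow$ $\wp_{fs}(X)$ Tarski III infinite, you split $\wp_{fs}(X)\cong\wp(\mathbb{N})\times\wp_{fs}(X\setminus D)$ along the uniformly supported copy $D=i(\mathbb{N})$ (legitimate, since every subset of a uniformly supported set is finitely supported, so $\wp_{fs}(D)=\wp(D)$) and use the equivariant ZF bijection witnessing $2\cdot 2^{\aleph_{0}}=2^{\aleph_{0}}$; the paper instead adjoins an external point $y_{1}$, proves $|X\cup\{y_{1}\}|=|X|$, and deduces $2^{|X|}=2^{|X|+1}=2\cdot 2^{|X|}$ via Lemma \ref{lemlem} and CSB -- both work, yours trading the external-point trick for the $D$-splitting. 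Third, in the part 2 counterexample your endgame forces $f(L_{a})=f(R_{a})=(0,a)$ for generic $a$, while the paper extracts two disjoint infinite sets of atoms inside the images and contradicts amorphousness of $A$ (Proposition \ref{p111}); the underlying principle (a finitely supported injection cannot send generic atoms into the trivial part or onto $S$-supported values) is the same. One shared caveat: like the paper, you wave off the degenerate cases -- a singleton is Tarski I infinite but not Tarski III infinite, and the empty set is Tarski III infinite but not Dedekind infinite -- so both proofs really establish the statement only for sets with at least two elements (respectively, nonempty sets), which the paper dismisses as ``trivial''.
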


\begin{proof}

1. We consider the case when $X$ has at least two elements (otherwise the 
theorem is trivial). Let $X$ be FSM Tarski I infinite. Then $|X \times X|= 
|X|$.  Fix two elements $x_{1}, x_{2} \in X$ with $x_{1}\neq x_{2}$. We 
can define an injection $f:X \times \{0,1\} \to X \times X$ by 
$f(u)=\left\{ \begin{array}{ll} (x,x_{1}) & \text{for}\: u=(x,0)\\ 
(x,x_{2}) & \text{for}\: u=(x,1) \end{array}\right.$. Clearly, by checking 
the condition in Proposition~\ref{2.18'} and using Proposition~\ref{p1}, 
we have that $f$ is supported by $supp(X) \cup supp(x_{1}) \cup 
supp(x_{2})$ (since $\{0,1\}$ is necessarily a trivial invariant set), and 
so $|X\times\{0,1\}|\leq |X \times X|$. Thus, $|X\times\{0,1\}| \leq |X|$. 
Obviously, there is an injection $i: X \to X\times\{0,1\}$ defined by 
$i(x)=(x,0)$ for all $x \in X$ which is supported by $supp(X)$. According 
to Lemma~\ref{lem2}, we get $2|X|=|X \times \{0,1\}|=|X|$.

Let us consider $X=\mathbb{N} \times A$. We make the remark that 
$|\mathbb{N}\times \mathbb{N}|=|\mathbb{N}|$ by considering the 
equivariant injection $h:\mathbb{N} \times \mathbb{N} \to \mathbb{N}$ 
defined by $h(m,n)=2^{m}3^{n}$ and using Lemma \ref{lem2}. Similarly, 
$|\{0,1\}\times \mathbb{N}|=|\mathbb{N}|$ by considering the equivariant 
injection $h':\mathbb{N} \times \{0,1\}\to \mathbb{N}$ defined by 
$h'(n,0)=2^{n}$ and $h'(n,1)=3^{n}$ and using Lemma \ref{lem2}. We have 
$2|X|=2|\mathbb{N}||A|=|\mathbb{N}||A|=|X|$. However, we prove that $|X 
\times X|\neq |X|$. Assume the contrary, and so we have $|\mathbb{N} 
\times (A \times A)|=|\mathbb{N} \times A \times \mathbb{N} \times 
A|=|\mathbb{N} \times A|$. Thus, there is a finitely supported injection 
$g: A \times A \to \mathbb{N} \times A$, and by Proposition \ref{pco2} 
there is a finitely supported surjection $f:\mathbb{N} \times A \to A 
\times A$. Let us consider three different atoms $a,b,c \notin supp(f)$.  
There exists $(i,x) \in \mathbb{N} \times A$ such that $f(i,x)=(a,b)$. 
Since $(a\,b) \in Fix(supp(f))$ and~$\mathbb{N}$ is trivial invariant set, 
we have $f(i,(a\,b)(x))=(a\,b)f(i,x)=(a\,b)(a,b)=((a\,b)(a),(a\,b)(b))=(b,a)$. 
We should have $x=a$ or $x=b$, otherwise $f$ is not a function. Assume 
without losing the generality that $x=a$, which means $f(i,a)=(a,b)$. 
Therefore $f(i,b)=f(i,(a\,b)(a))=(a\,b)f(i,a)=(a\,b)(a,b)=(b,a)$. 
Similarly, since $(a\,c),(b\,c) \in Fix(supp(f))$, we have 
$f(i,c)=f(i,(a\,c)(a))=(a\,c)f(i,a)=(a\,c)(a,b)=(c,b)$ and 
$f(i,b)=f(i,(b\,c)(c))=(b\,c)f(i,c)=(b\,c)(c,b)=(b,c)$. But $f(i,b)=(b,a)$ 
contradicting the functionality of~$f$. Therefore, $X$ is FSM Tarski III 
infinite, but it is not FSM Tarski I infinite.

Now, suppose that $X$ is FSM Tarski III infinite, which means 
$|\{0,1\}\times X|=|X|$. We define the mapping $\psi:\wp_{fs}(X) \times 
\wp_{fs}(X) \to \wp_{fs}(\{0,1\}\times X)$ by $f(U,V)=\{(0,x)\,|\,x \in 
U\} \cup \{(1,y)\,|\,y \in V\}$ for all $U,V \in \wp_{fs}(X)$. Clearly 
$\psi$ is well defined and bijective, and for each $\pi \in Fix(supp(X))$ 
we have $\psi(\pi \star U,\pi \star V)=\pi \star \psi(U,V)$ which means 
$\psi$ is finitely supported. Therefore, $|\wp_{fs}(X) \times 
\wp_{fs}(X)|=|\wp_{fs}(\{0,1\}\times X)|=|\wp_{fs}(X)|$. The last equality 
follows by applying twice Lemma \ref{lemlem} (using the fact that there is 
a finitely supported surjection from $X$ onto $X\times\{0,1\}$ and a 
finitely supported surjection from $X\times\{0,1\}$ onto $X$, we obtain 
there is a finitely supported injection from $\wp_{fs}(X\times\{0,1\})$ 
into $\wp_{fs}(X)$, and a finitely supported injection from $\wp_{fs}(X)$ 
into $\wp_{fs}(X\times\{0,1\})$) and Lemma~\ref{lem3}.

2. Let us assume that $X$ is FSM Tarski III infinite.
Let us consider an element $y_{1}$ belonging to an invariant set (whose action is also denoted by $\cdot$) with $y_{1}\notin X$ (such an element can be, for example, a non-empty element in $\wp_{fs}(X) \setminus X$). Fix  $y_{2} \in X$.  One can define a mapping $f:X \cup \{y_{1}\} \to X \times \{0,1\}$ by $f(x)=\left\{ \begin{array}{ll}
(x,0) & \text{for}\: x \in X\\
(y_{2}, 1) & \text{for}\: x=y_{1} \end{array}\right.$.  Clearly $f$ is injective and it is supported by $S=supp(X) \cup supp(y_{1}) \cup supp(y_{2})$ because for all $\pi$ fixing $S$ pointwise we have $f(\pi \cdot x)=\pi \cdot f(x)$ for all $x \in X \cup \{y_{1}\}$. Therefore, $|X \cup \{y_{1}\}| \leq |X \times \{0,1\}|=|X|$, and so there is a finitely supported injection $g:X \cup \{y_{1}\} \to X$. The mapping $h:X \to X$ defined by  $h(x)=g(x)$ is injective, supported by $supp(g) \cup supp(X)$, and $g(y_{1}) \in X \setminus h(X)$, which means $h$ is not surjective. It follows that $X$ is FSM Dedekind infinite.

Let us consider $X=A \cup \mathbb{N}$. Since $A$ and $\mathbb{N}$ are 
disjoint, we have that $X$ is an invariant set (similarly as in 
Proposition~\ref{p1}). Clearly $X$ is FSM Dedekind infinite. Assume, by 
contradiction, that $|X|=2|X|$, that is $|A \cup 
\mathbb{N}|=|A+A+\mathbb{N}|=|(\{0,1\}\times A) \cup \mathbb{N}|$. Thus, 
there is a finitely supported injection $f:(\{0,1\}\times A) \cup 
\mathbb{N}\to A \cup \mathbb{N}$, and so there exists a finitely supported 
injection $f:(\{0,1\}\times A) \to A \cup \mathbb{N}$. We prove that 
whenever $\varphi:A \to A \cup \mathbb{N}$ is finitely supported and 
injective, for $a \notin supp(\varphi)$ we have $\varphi(a) \in A$. 
Assume, by contradiction, that there is $a \notin supp(\varphi)$ such that 
$\varphi(a)\in \mathbb{N}$. Since $supp(\varphi)$ is finite, there exists 
$b \notin supp(\varphi)$, $b \neq a$. Thus, $(a\,b) \in 
Fix(supp(\varphi))$, and so $\varphi(b)=\varphi((a\,b)(a))=(a\,b)\diamond 
\varphi(a)=\varphi(a)$ since $(\mathbb{N}, \diamond)$ is a trivial 
invariant set. This contradicts the injectivity of $\varphi$.  We can 
consider the mappings $\varphi_{1},\varphi_{2}: A \to A \cup \mathbb{N}$ 
defined by $\varphi_{1}(a)=f(0,a)$ for all $a \in A$ and 
$\varphi_{2}(a)=f(1,a)$ for all $a \in A$, that are injective and 
supported by $supp(f)$. Therefore, $f(\{0\} \times A)=\varphi_{1}(A)$ 
contains at most finitely many element from $\mathbb{N}$, and $f(\{1\} 
\times A)=\varphi_{2}(A)$ also contains at most finitely many element from 
$\mathbb{N}$. Thus, $f$ is an injection from $(\{0,1\}\times A)$ to $A 
\cup Z$ where $Z$ is a finite subset of $\mathbb{N}$. It follows that 
$f(\{0\} \times A)$ contains an infinite subset of atoms $U$, and $f(\{1\} 
\times A)$ contains an infinite subset of atoms $V$. Since $f$ is 
injective, it follows that $U$ and $V$ are infinite disjoint subsets of 
$A$, which contradicts Proposition \ref{p111} stating that $A$ is 
amorphous.

Now, if $X$ is FSM Dedekind infinite, we have that there is a finitely 
supported injection $h$ from $X$ onto a finitely supported proper subset 
$Z$ of $X$. Consider an element $y_{1}$ belonging to an invariant set with 
$y_{1}\notin X$. We can define an injection $h': X \cup \{y_{1}\} \to X$ 
by taking $h'(x)=h(x)$ for all $x \in X$ and $h'(y_{1})=b$ with $b \in X 
\setminus Z$. Clearly $h'$ is supported by $supp(h) \cup supp(y_{1}) \cup 
supp(b)$.  Since there also exists an $supp(X)$-supported injection from 
$X$ to $X \cup \{y_{1}\}$, according to Lemma \ref{lem3}, one can define a 
finitely supported bijection $\psi$ from $X$ to $X \cup \{y_{1}\}$. 
According to Lemma \ref{lemlem} the mapping $g:\wp_{fs}(X \cup \{y_{1}\}) 
\to \wp_{fs}(X)$ defined by $g(V)=f^{-1}(V)$ for all $V \in \wp_{fs}(X 
\cup \{y_{1}\})$ is finitely supported and injective. Therefore, 
$2^{|X|}\geq 2^{|X|+1} =2\cdot 2^{|X|}$ which in the view of Lemma~\ref{lem3} 
leads to the conclusion that $\wp_{fs}(X)$ is FSM Tarski III infinite. 
\end{proof}

\begin{corollary} \label{ti4} 
The following sets are FSM usual infinite, but they are not FSM Tarski I infinite, 
nor FSM Tarski III infinite.
\begin{enumerate}
\item The invariant set $A$.
\item The invariant set $\wp_{fs}(A)$.
\item The invariant sets $\wp_{fin}(A)$ and $\wp_{cofin}(A)$.
\item The set $\wp_{fin}(X)$ where $X$ is a finitely supported subset of an invariant set containing no infinite uniformly supported subset.
\end{enumerate} 
\end{corollary}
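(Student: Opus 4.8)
The plan is to reduce the entire corollary to the single implication chain recorded in Theorem \ref{TTRR}: there it is shown that FSM Tarski I infinity implies FSM Tarski III infinity, and that FSM Tarski III infinity implies FSM Dedekind infinity. Taking contrapositives, a finitely supported set that is \emph{not} FSM Dedekind infinite can be neither FSM Tarski III infinite nor FSM Tarski I infinite. Hence for each of the four listed sets it suffices to verify two things: that it is FSM usual infinite, and that it is not FSM Dedekind infinite. Both verifications are already available from earlier results, so no new construction is needed.

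First I would dispatch the usual-infinity claims, which are immediate: each listed set is a genuinely infinite ZF set and therefore admits no bijection onto a finite ordinal. Concretely, $A$ is infinite; $\wp_{fin}(A)$ and $\wp_{fs}(A)$ contain the infinite family of singletons $\{a\}$, $a\in A$; $\wp_{cofin}(A)$ contains the infinite family of cosingletons $A\setminus\{a\}$; and $\wp_{fin}(X)$ is infinite as soon as $X$ is infinite, since then the singletons of $X$ already form an infinite subfamily. Next I would invoke the catalogue of non-Dedekind-infinite sets proved earlier: $A$ is not FSM Dedekind infinite by Corollary \ref{ti2}(1), $\wp_{fs}(A)$ by Corollary \ref{ti2}(2), and $\wp_{fin}(A)$, $\wp_{cofin}(A)$ by Corollary \ref{ti2}(8); finally, item 4 is literally the conclusion of Theorem \ref{ti1}(5), which via Lemma \ref{lem4} propagates the absence of an infinite uniformly supported subset from $X$ to $\wp_{fin}(X)$ and thereby rules out FSM Dedekind infinity. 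Feeding each of these ``not FSM Dedekind infinite'' facts into the contrapositives of Theorem \ref{TTRR} yields simultaneously that the set is not FSM Tarski III infinite and not FSM Tarski I infinite.

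I do not expect a genuine obstacle, since the hard analytic content lives in Theorem \ref{TTRR} and in the structural arguments behind Corollary \ref{ti2}; this corollary is in effect a bookkeeping assembly. The only points requiring attention are that every non-Dedekind conclusion above ultimately rests on the absence of an infinite uniformly supported subset --- for $A$ and its powersets this is the standard fact (underlying Proposition \ref{p111}) that, for a fixed finite $S$, the only $S$-supported subsets of $A$ are the subsets of $S$ and the supersets of $A\setminus S$ --- and that in item 4 one must explicitly assume $X$ infinite; otherwise $\wp_{fin}(X)$ collapses to a finite set and the usual-infinity clause fails, even though the remaining (negative) clauses continue to hold vacuously.
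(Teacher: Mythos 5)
Your proposal is correct and is essentially the paper's own proof: the paper likewise disposes of the corollary in one line by noting that each listed set fails to be FSM Dedekind infinite (citing Theorem~\ref{ti1} and Corollary~\ref{ti2}) and then implicitly applying the contrapositive of the implication chain in Theorem~\ref{TTRR} (Tarski I $\Rightarrow$ Tarski III $\Rightarrow$ Dedekind). Your additional remark that item~4 needs $X$ to be infinite for the usual-infinity clause is a fair reading of an implicit hypothesis, but it does not alter the argument.
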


\begin{proof}
The result follows directly because the related sets are not FSM Dedekind 
infinite, according to Theorem~\ref{ti1} and Corollary~\ref{ti2}.
\end{proof}

\begin{corollary}Let $X$ be an infinite finitely supported subset of an invariant set. Then $\wp_{fs}(\wp_{fs}(\wp_{fs}(X)))$ is FSM Tarski III infinite and, consequently, $\wp_{fs}(\wp_{fs}(\wp_{fs}(\wp_{fs}(X))))$ is FSM Tarski I infinite. 
\end{corollary}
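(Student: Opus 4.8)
The plan is to prove the statement by chaining together three implications already established in the paper, each of which climbs one level in the finitely supported powerset hierarchy; no new construction is needed. Since the relevant notions of infinity are all defined for finitely supported subsets of invariant sets, I would first note that every intermediate object is legitimate: by Proposition \ref{p1}(2) the finitely supported powerset of an invariant set is again an invariant set, so each of $\wp_{fs}(X)$, $\wp_{fs}(\wp_{fs}(X))$, and $\wp_{fs}(\wp_{fs}(\wp_{fs}(X)))$ is a finitely supported (indeed invariant) set to which the earlier theorems apply.

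The first step is to invoke Theorem \ref{ti1}(3): because $X$ is an infinite finitely supported subset of an invariant set, the set $\wp_{fs}(\wp_{fs}(X))$ is FSM Dedekind infinite. The second step applies the final clause of Theorem \ref{TTRR}(2), namely that whenever a set $W$ is FSM Dedekind infinite then $\wp_{fs}(W)$ is FSM Tarski III infinite. Taking $W = \wp_{fs}(\wp_{fs}(X))$ yields that $\wp_{fs}(\wp_{fs}(\wp_{fs}(X)))$ is FSM Tarski III infinite, which is exactly the first assertion of the corollary.

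For the ``consequently'' clause, the third step applies the final clause of Theorem \ref{TTRR}(1): whenever a set $V$ is FSM Tarski III infinite then $\wp_{fs}(V)$ is FSM Tarski I infinite. Setting $V = \wp_{fs}(\wp_{fs}(\wp_{fs}(X)))$ gives that $\wp_{fs}(\wp_{fs}(\wp_{fs}(\wp_{fs}(X))))$ is FSM Tarski I infinite, completing the argument.

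There is essentially no genuine obstacle here, since the whole proof is a matter of matching the conclusion of each implication to the hypothesis of the next. The only point demanding care is purely bookkeeping: tracking the exact number of nested applications of $\wp_{fs}$ so that the three-fold powerset receives the Tarski III conclusion and the four-fold powerset receives the Tarski I conclusion, and confirming (as noted above) that each intermediate set still satisfies the standing hypothesis of being a finitely supported subset of an invariant set, so that Theorems \ref{ti1} and \ref{TTRR} are genuinely applicable at each step.
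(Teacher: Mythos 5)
Your proof is correct and follows essentially the same route as the paper: Theorem \ref{ti1}(3) gives that $\wp_{fs}(\wp_{fs}(X))$ is FSM Dedekind infinite, then the final clauses of Theorem \ref{TTRR}(2) and \ref{TTRR}(1) lift this to Tarski III infinity of the three-fold powerset and Tarski I infinity of the four-fold powerset (the paper merely re-walks the cardinal-arithmetic step $|\wp_{fs}(\wp_{fs}(X))|+1=|\wp_{fs}(\wp_{fs}(X))|$ inside TTRR(2) instead of citing it as a black box). One cosmetic remark: when $X$ is only finitely supported rather than equivariant, the iterated powersets are finitely supported (by $supp(X)$) subsets of invariant sets rather than invariant sets themselves, but this is exactly the standing hypothesis of Theorems \ref{ti1} and \ref{TTRR}, so nothing is affected.
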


\begin{proof}Since $\wp_{fs}(\wp_{fs}(X))$ is FSM Dedekind infinite, as in the proof of Theorem \ref{TTRR}(2) one can prove $|\wp_{fs}(\wp_{fs}(X))|+1=|\wp_{fs}(\wp_{fs}(X))|$. The result now follows directly using arithmetic properties of FSM cardinalities proved above. 
\end{proof}
 
In a future work we intend to prove an even stronger result claiming that $\wp_{fs}(\wp_{fs}(X))$ is FSM Tarski III infinite and, consequently, $\wp_{fs}(\wp_{fs}(\wp_{fs}(X)))$ is FSM Tarski I infinite, whenever $X$ is an infinite finitely supported subset of an invariant set.

\begin{corollary} \label{CAN1} 
The sets $A^{\mathbb{N}}_{fs}$ and $\mathbb{N}^{A}_{fs}$ are FSM Tarski I infinite, and so they are also Tarski III infinite. 
\end{corollary}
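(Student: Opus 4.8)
The plan is to exhibit, for each of the two sets, a finitely supported bijection between the set and its own Cartesian square; this establishes FSM Tarski I infiniteness directly from the definition, and FSM Tarski III infiniteness then follows immediately from Theorem \ref{TTRR}(1). In both cases the bijection will in fact be \emph{equivariant}, obtained by composing a standard exponential/product identification with an equivariant bijection arising from a ZF bijection between trivial invariant sets. Here I use repeatedly that $\mathbb{N}$, $\mathbb{N}+\mathbb{N}$ and $\mathbb{N}\times\mathbb{N}$ all carry the trivial $S_{A}$-action, so that \emph{any} ZF bijection between them is automatically equivariant (a map between trivial invariant sets $U,V$ satisfies $(\pi\widetilde{\star}\phi)(u)=\pi\cdot\phi(\pi^{-1}\cdot u)=\phi(u)$, hence is equivariant). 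Note also that it suffices to produce a finitely supported bijection in either direction, since the inverse of a finitely supported bijection is again finitely supported.

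For $A^{\mathbb{N}}_{fs}$ I would identify a pair $(f,g)$ of functions $\mathbb{N}\to A$ with the single function $[f,g]\colon \mathbb{N}+\mathbb{N}\to A$ given by $[f,g](0,n)=f(n)$ and $[f,g](1,n)=g(n)$, and then precompose with a fixed equivariant bijection $\gamma\colon\mathbb{N}\to\mathbb{N}+\mathbb{N}$ (for instance the even/odd splitting). The composite $\Phi(f,g)=[f,g]\circ\gamma$ is a bijection $A^{\mathbb{N}}\times A^{\mathbb{N}}\to A^{\mathbb{N}}$, concretely the interleaving $\Phi(f,g)(2n)=f(n)$, $\Phi(f,g)(2n+1)=g(n)$. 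Equivariance is checked directly from Proposition \ref{2.18'}: because the action on $\mathbb{N}$ is trivial we have $(\pi\widetilde{\star}f)(n)=\pi\cdot f(n)$, and both $\Phi(\pi\otimes(f,g))$ and $\pi\widetilde{\star}\Phi(f,g)$ send $2n\mapsto \pi\cdot f(n)$ and $2n+1\mapsto\pi\cdot g(n)$, so they agree. For $\mathbb{N}^{A}_{fs}$ the dual construction works: identify a pair $(f,g)$ of functions $A\to\mathbb{N}$ with $\langle f,g\rangle\colon A\to\mathbb{N}\times\mathbb{N}$, $a\mapsto(f(a),g(a))$, and postcompose with a fixed equivariant pairing bijection $\delta\colon\mathbb{N}\times\mathbb{N}\to\mathbb{N}$. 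The composite $\Psi(f,g)=\delta\circ\langle f,g\rangle$ is a bijection $\mathbb{N}^{A}\times\mathbb{N}^{A}\to\mathbb{N}^{A}$; equivariance follows as before, now using $(\pi\widetilde{\star}f)(a)=f(\pi^{-1}\cdot a)$ together with the equivariance of $\delta$.

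The step I expect to be the main obstacle is passing from these equivariant bijections on the \emph{full} $S_{A}$-sets of functions to bijections on their \emph{finitely supported} parts, i.e. confirming that $\Phi$ and $\Psi$ restrict to bijections $A^{\mathbb{N}}_{fs}\times A^{\mathbb{N}}_{fs}\to A^{\mathbb{N}}_{fs}$ and $\mathbb{N}^{A}_{fs}\times \mathbb{N}^{A}_{fs}\to \mathbb{N}^{A}_{fs}$. For this I would use two facts. First, $supp((x,y))=supp(x)\cup supp(y)$ (the relation derived from Proposition \ref{4.4-9} and used in the proof of Theorem \ref{ti1}(6)), so that the finitely supported part of a product $S_{A}$-set is precisely the product of the finitely supported parts; in particular the codomain $X\times X$ in the Tarski I definition, for $X=A^{\mathbb{N}}_{fs}$ respectively $X=\mathbb{N}^{A}_{fs}$, is exactly the product of the two finitely supported function sets. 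Second, an equivariant bijection preserves finite support in both directions, since its inverse is again equivariant and an equivariant map never enlarges the support of an element.

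Combining these, each of $\Phi$ and $\Psi$ restricts to an equivariant bijection between a set and its Cartesian square, and being equivariant it is finitely supported in the sense of Definition \ref{2.10-1}. Taking inverses gives the finitely supported one-to-one maps of $A^{\mathbb{N}}_{fs}$ onto $A^{\mathbb{N}}_{fs}\times A^{\mathbb{N}}_{fs}$ and of $\mathbb{N}^{A}_{fs}$ onto $\mathbb{N}^{A}_{fs}\times \mathbb{N}^{A}_{fs}$ required by the definition of FSM Tarski I infiniteness. Finally, Theorem \ref{TTRR}(1) yields that both $A^{\mathbb{N}}_{fs}$ and $\mathbb{N}^{A}_{fs}$ are FSM Tarski III infinite, completing the proof.
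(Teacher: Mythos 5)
Your proof is correct, and it shares its core with the paper's: both arguments hinge on the equivariant exponential-law identification of a pair of functions with a single function on a doubled index set ($(A^{\mathbb{N}})^{2}_{fs}\cong A^{\mathbb{N}+\mathbb{N}}_{fs}$, respectively $(\mathbb{N}^{A})^{2}_{fs}\cong(\mathbb{N}\times\mathbb{N})^{A}_{fs}$; the paper writes $\mathbb{N}\times\{0,1\}$ where you write $\mathbb{N}+\mathbb{N}$). Where you genuinely diverge is in the finishing step. The paper passes from the intermediate function space back to $A^{\mathbb{N}}_{fs}$ (resp. $\mathbb{N}^{A}_{fs}$) via cardinal arithmetic: Proposition \ref{pco1}(3)--(4) to transport $|\mathbb{N}\times\mathbb{N}|=|\mathbb{N}|$ through the exponential, and then Lemma \ref{lem2} (the FSM Cantor--Schr{\"o}der--Bernstein theorem) to convert the two injections into a bijection. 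You instead compose the exponential identification directly with the equivariant bijection induced by a ZF bijection $\mathbb{N}\cong\mathbb{N}+\mathbb{N}$ (even/odd interleaving) or $\mathbb{N}\times\mathbb{N}\cong\mathbb{N}$ (a pairing function), correctly observing that any ZF map between trivial invariant sets is automatically equivariant. This buys you a single, fully explicit equivariant bijection and eliminates the appeal to Cantor--Schr{\"o}der--Bernstein entirely, which is arguably cleaner in a setting where one must constantly certify that intermediate constructions are finitely supported. Your handling of the one delicate point --- that the equivariant bijection on full function spaces restricts to a bijection of the finitely supported parts, because an equivariant bijection and its (equivariant) inverse never enlarge supports, and because the finitely supported elements of a product are exactly the pairs of finitely supported elements --- is also sound, and makes explicit something the paper leaves implicit in its statement that $\psi$ and $\varphi$ are bijections between the $fs$-subsets.
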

\begin{proof}There is an equivariant bijection $\psi$ between $(A^{\mathbb{N}})^{2}_{fs}$ and $A^{\mathbb{N} \times \{0,1\}}_{fs}$ that associates to each Cartesian pair $(f,g)$ of mappings from $\mathbb{N}$ to $A$ a mapping $h:\mathbb{N} \times\{0,1\} \to A$ defined as follows.  \[ h(u)=\left\{ \begin{array}{ll}

f(n) & \text{if}\:   u=(n,0) \\
\\
g(n) & \text{if}\:  u=(n,1) \end{array}\right., \] 
The equivariance of $\psi$ follows from Proposition \ref{2.18'} because if $\pi \in S_{A}$ we have $\psi(\pi \widetilde {\star} f, \pi \widetilde {\star} g)=h'$ where $h'(n,0)=(\pi \widetilde {\star} f)(n)=\pi(f(n))$ and $h'(n,1)=(\pi \widetilde {\star} g)(n)=\pi(g(n))$. Thus, $h'(u)=\pi(h(u))$ for all $u \in \mathbb{N} \times \{0,1\}$ which means $h'=\pi \widetilde {\star} h=\pi \widetilde {\star} \psi (f,g)$. 

There also exists an equivariant bijection $\varphi$  between $(\mathbb{N}^{A})^{2}_{fs}$ and $(\mathbb{N} \times \mathbb{N})^{A}_{fs}$ that associates to each Cartesian pair $(f,g)$ of mappings from $A$ to $\mathbb{N}$ a mapping $h: A \to \mathbb{N} \times \mathbb{N}$ defined by $h(a)=(f(a),g(a))$ for all $a \in A$.  The equivariance of $\varphi$ follows from Proposition \ref{2.18'} because if $\pi \in S_{A}$ we have $\varphi(\pi \widetilde {\star} f, \pi \widetilde {\star} g)=h'$ where $h'(a)=((\pi \widetilde {\star} f)(a), (\pi \widetilde {\star} g)(a))=(f(\pi^{-1}(a)), g(\pi^{-1}(a)))=h(\pi^{-1}(a))=(\pi \widetilde {\star}h)(a)$ for all $a \in A$, and so $h'=\pi \widetilde {\star} h=\pi \widetilde {\star} \varphi(f,g)$.
Therefore  $|(A^{\mathbb{N}})^{2}_{fs}|=|A^{\mathbb{N} \times \{0,1\}}_{fs}|= |A^{\mathbb{N}}_{fs}|$. Therefore, $|(\mathbb{N}^{A})^{2}_{fs}|=|(\mathbb{N} \times \mathbb{N})^{A}_{fs}|=|\mathbb{N}^{A}_{fs}|$ according to Proposition \ref{pco1}(3) and Lemma \ref{lem2} (we used $|\mathbb{N} \times \mathbb{N}|=|\mathbb{N}|$). 
\end{proof}

\begin{theorem} Let $X$ be a finitely supported subset of an invariant set $(Y, \cdot)$. 
 If $\wp_{fs}(X)$ is FSM Tarski I infinite, then $\wp_{fs}(X)$ is FSM Tarski III infinite. The converse does not hold.
\end{theorem}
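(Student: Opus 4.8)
The forward implication requires no new work. The object $\wp_{fs}(X)$ is itself a finitely supported subset of the invariant set $\wp_{fs}(Y)$, so Theorem~\ref{TTRR}(1), applied with $\wp_{fs}(X)$ playing the role of the ambient finitely supported set, immediately yields that FSM Tarski I infinity of $\wp_{fs}(X)$ forces its FSM Tarski III infinity. Thus the entire content of the statement lies in the failure of the converse, i.e. in producing a finitely supported set $X$ for which $\wp_{fs}(X)$ is FSM Tarski III infinite but not FSM Tarski I infinite.

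To refute the converse I would exhibit a single witness, and the natural candidate is $X=A\cup\mathbb{N}$, the disjoint union of the invariant set of atoms and the trivial invariant set $\mathbb{N}$, which is an invariant set by Proposition~\ref{p1}(4). Since the inclusion $n\mapsto n$ of $\mathbb{N}$ into $X$ is an equivariant injection, $X$ is FSM Dedekind infinite by Theorem~\ref{ti1}(1); hence $\wp_{fs}(X)$ is FSM Tarski III infinite by the last assertion of Theorem~\ref{TTRR}(2). It then remains to prove that $\wp_{fs}(X)$ is \emph{not} FSM Tarski I infinite.

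For this last step the plan is to reduce FSM Tarski I infinity of $\wp_{fs}(X)$ to the existence of a finitely supported surjection onto $A\times A$, and then to rule that out. First I would fix an atom $a_{0}$ and define $\widetilde{s}:\wp_{fs}(X)\to A$ by $\widetilde{s}(W)=x$ when $W\cap A=\{x\}$ and $\widetilde{s}(W)=a_{0}$ otherwise; this is finitely supported by $\{a_{0}\}\cup supp(X)$ and surjective, so $\widetilde{s}\times\widetilde{s}:\wp_{fs}(X)\times\wp_{fs}(X)\to A\times A$ is a finitely supported surjection. Consequently, any finitely supported bijection $\wp_{fs}(X)\to\wp_{fs}(X)\times\wp_{fs}(X)$ — which would have to exist were $\wp_{fs}(X)$ FSM Tarski I infinite — composes with $\widetilde{s}\times\widetilde{s}$ to give a finitely supported surjection $g:\wp_{fs}(X)\to A\times A$.

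Finally I would show that no such $g$ exists, by transcribing the transposition argument of Theorem~\ref{cardord1}(1). The key observation is that every finitely supported $W\subseteq X$ splits as $W=(W\cap A)\cup(W\cap\mathbb{N})$, where $W\cap\mathbb{N}$ is equivariant because $\mathbb{N}$ is trivial, and hence $supp(W)=supp(W\cap A)$, which is either $W\cap A$ or its complement in $A$ by Proposition~\ref{p111}. Assuming $g$ is supported by a finite set $S$ and $g(W)=(a,b)$ with $a,b\notin S$ and $a\neq b$, applying a transposition $(a\,b)\in Fix(S)$ leaves the $\mathbb{N}$-part of $W$ fixed and acts on $W\cap A$ exactly as in the purely atomic setting: if $a,b\in supp(W)$ then $(a\,b)\star W=W$ yet $g((a\,b)\star W)=(b,a)$, a contradiction; otherwise, say $b\notin supp(W)$, choosing a fresh $c\notin S\cup supp(W)\cup\{a,b\}$ gives $(b\,c)\star W=W$ while $g((b\,c)\star W)=(a,c)$, again contradicting functionality. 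I expect the only delicate point to be this support bookkeeping — confirming that the $\mathbb{N}$-coordinate contributes nothing to $supp(W)$ and that both cases close off precisely as in Theorem~\ref{cardord1}(1); everything else is a direct assembly of results already established in the paper.
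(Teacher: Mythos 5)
Your proposal is correct, and it shares the paper's overall skeleton (forward direction via Theorem~\ref{TTRR}(1); witness $X=A\cup\mathbb{N}$; Tarski III infinity of $\wp_{fs}(X)$ via Dedekind infinity and Theorem~\ref{TTRR}(2); final contradiction by a transposition argument), but your middle reduction is genuinely different and in fact more direct than the paper's. The paper gets its forbidden surjection by cardinal arithmetic: it rewrites $\wp_{fs}(X)\times\wp_{fs}(X)$ as $\wp_{fs}(A+\mathbb{N}+A+\mathbb{N})$, collapses $\mathbb{N}+\mathbb{N}$ to $\mathbb{N}$, extracts a finitely supported injection from $\wp_{fs}(A+A)$ into $\wp_{fs}(A)\times\wp_{fs}(\mathbb{N})$ via Proposition~\ref{pco1}(4), injects $A\times A$ into that, and only then converts to a surjection $\wp_{fs}(A)\times\wp_{fs}(\mathbb{N})\to A\times A$ using Proposition~\ref{pco2}; the transposition argument is then run on pairs $(X,M)$ with $M\in\wp_{fs}(\mathbb{N})$ inert. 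You instead compose the hypothetical bijection $\wp_{fs}(X)\to\wp_{fs}(X)\times\wp_{fs}(X)$ with the explicit singleton-extraction surjection $\widetilde{s}\times\widetilde{s}$, obtaining a finitely supported surjection $g:\wp_{fs}(X)\to A\times A$ in one step, and run the same transposition argument on $g$ using the observation that $supp(W)=supp(W\cap A)$ (since $W\cap\mathbb{N}$ is equivariant), with $W\cap A$ finite or cofinite by Proposition~\ref{p111}. Your route buys economy: it bypasses Propositions~\ref{pco1}(4) and~\ref{pco2} and the disjoint-union cardinal identities entirely, at the cost of verifying the (easy) support bookkeeping for $\widetilde{s}$ and for the split $W=(W\cap A)\cup(W\cap\mathbb{N})$; the paper's route instead exercises its general cardinality toolkit. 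Both reductions terminate in the identical key fact, and your case analysis ($a,b\in supp(W)$ forces $(a\,b)\star W=W$; otherwise a fresh $c$ gives $(b\,c)\star W=W$ while $g$ moves the output) closes exactly as in Theorem~\ref{cardord1}(1), so there is no gap.
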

\begin{proof}
The direct implication is a consequence of Theorem \ref{TTRR}(1). Thus, we focus on the proof of the invalidity of the reverse implication.

Firstly we make the remark that whenever $U,V$ are finitely supported subsets of an invariant set with $U \cap V=\emptyset$, we have that there is a finitely supported (by $supp(U) \cup supp(V)$) bijection from $\wp_{fs}(U\cup V)$ into $\wp_{fs}(U) \times \wp_{fs}(V)$ that maps each $X \in  \wp_{fs}(U\cup V)$ into the pair $(X \cap U, X \cap V)$. Analogously, whenever $B,C$ are invariant sets there is  an equivariant bijection from $\wp_{fs}(B) \times \wp_{fs}(C)$ into $\wp_{fs}(B+C)$ that maps each pair $(B_{1},C_{1}) \in \wp_{fs}(B) \times \wp_{fs}(C)$ into the set $\{(0,b)\,|\,b \in B_{1}\} \cup \{(1,c)\,|\,c \in C_{1}\}$. This follows directly by verifying the conditions in Proposition \ref{2.18'}.

Let us consider the set $A \cup \mathbb{N}$ which is FSM Dedekind infinite. According to Theorem \ref{TTRR}(2), we have that $\wp_{fs}(A \cup \mathbb{N})$ is FSM Tarski III infinite. We prove that it is not FSM Tarski I infinite. Assume, by contradiction that $|\wp_{fs}(A \cup \mathbb{N}) \times \wp_{fs}(A \cup \mathbb{N})|=|\wp_{fs}(A \cup \mathbb{N})|$ which means $|\wp_{fs}(A + \mathbb{N}+A + \mathbb{N})|=|\wp_{fs}(A \cup \mathbb{N})|$, and so  $|\wp_{fs}(A +A + \mathbb{N})|=|\wp_{fs}(A \cup \mathbb{N})|$. Thus, according to Proposition \ref{pco1}(4), there is a finitely supported injection from  $\wp_{fs}(A + A)$ to $\wp_{fs}(A \cup \mathbb{N})$, which means there is a finitely supported injection from $\wp_{fs}(A) \times \wp_{fs}(A)$ to $ \wp_{fs}(A) \times \wp_{fs}(\mathbb{N})$, and so there is a finitely supported injection from  $A \times A$ to $ \wp_{fs}(A) \times \wp_{fs}(\mathbb{N})$. According to Proposition \ref{pco2}, there should exist a finitely supported surjection   $f: \wp_{fs}(A)  \times \wp_{fs}(\mathbb{N}) \to A \times A$.  Let us consider two atoms $a,b\notin supp(f)$ with $a 
\neq b$.  It follows that $(a\, b) \in Fix(supp(f))$. Since $f$ is surjective, there exists $(X,M) \in \wp_{fs}(A) \times \wp_{fs}(\mathbb{N})$ 
such that $f(X,M)=(a,b)$. According to Proposition \ref{2.18'} and because $\mathbb{N}$ is a trivial invariant set meaning that $(a\,b) \star M = M$, we have $f((a\,b) \star X,M)=f((a\,b) \otimes (X,M))=(a\,b) 
\otimes f(X,M)=(a\,b) \otimes (a,b)=((a\,b)(a), (a\,b)(b))=(b,a)$. Due to the functionality of $f$ we should have $((a\,b) \star X,M) \neq (X,M)$, which means $(a\,b) \star X \neq X$. 

We prove that if both $a,b \in supp(X)$, then $(a\,b)\star X=X$. Indeed, suppose $a,b \in supp(X)$. Since $X \in \wp_{fs}(A)$, from Proposition \ref{p111} we have that  $X$ is either finite or cofinite. If $X$ is finite, then $supp(X)=X$, and so $a,b \in X$. Therefore, $(a\,b) \star X=\{(a\,b)(x)\,|\,x \in X\}=\{(a\,b)(a)\} \cup \{(a\,b)(b)\} \cup \{(a\,b)(c)\,|\,c \in X \setminus\{a,b\}\}=\{b\} \cup \{a\} \cup (X \setminus \{a,b\})=X$. Now, if $X$ is cofinite, then $supp(X)=A \setminus X$, and so $a,b \in A \setminus X$. Since $a,b \notin X$, we have $a,b \neq   x$ for all $x \in X$, which means $(a\,b)(x)=x$ for all $x \in X$, and again $(a\,b)\star X=X$.

Thus, one of $a$ or $b$ does not belong to $supp(X)$. Assume $b \notin 
supp(X)$. Let us consider $c\neq a,b$, $c \notin supp(f)$, $c \notin 
supp(X)$. Then $(b\, c) \in Fix(supp(X))$, and so $(b\,c)\star X=X$. 
Moreover, $(b\, c) \in Fix(supp(f))$, and by Proposition~\ref{2.18'} we 
have $(a,b)=f(X,M)=f((b\,c) \star X,M)=f((b\,c)\otimes (X,M))=(b\,c) 
\otimes f(X,M)=(b\,c) \otimes (a,b)=(a,c)$ which is a contradiction 
because $b\neq c$. 
\end{proof}

\begin{proposition}
Let $X$ be a finitely supported subset of an invariant set $(Y, \cdot)$. If $X$ is FSM Tarski III infinite, then there exists a finitely supported bijection $g:\mathbb{N} \times X \to X$. The reverse implication is also valid. 
\end{proposition}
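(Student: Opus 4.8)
The plan is to prove the two implications separately, with the forward direction carrying essentially all of the weight. The reverse implication is immediate: if there is a finitely supported bijection $\mathbb{N}\times X\to X$, then because $\mathbb{N}$ is a trivial invariant set there is an equivariant bijection $\mathbb{N}\to\{0,1\}\times\mathbb{N}$ (any ZF bijection between trivial invariant sets is automatically equivariant), and composing the induced finitely supported bijections gives $X\cong\mathbb{N}\times X\cong(\{0,1\}\times\mathbb{N})\times X\cong\{0,1\}\times(\mathbb{N}\times X)\cong\{0,1\}\times X$. Hence $|X|=2|X|$, so $X$ is FSM Tarski III infinite; every bijection in this chain is finitely supported by Proposition \ref{2.18'} and Proposition \ref{p1}.

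For the forward direction, suppose $X$ is FSM Tarski III infinite, so there is a finitely supported bijection $v:X\to X+X$. Writing the two copies of $X$ inside $X+X$ as $\{0\}\times X$ and $\{1\}\times X$, set $L=v^{-1}(\{0\}\times X)$ and $R=v^{-1}(\{1\}\times X)$. These are finitely supported by $S:=supp(v)\cup supp(X)$, they partition $X$, and $v$ restricts (after projecting away the tag) to finitely supported bijections $v_{L}:L\to X$ and $v_{R}:R\to X$, each supported by $S$. First I would carve out countably many pairwise disjoint copies of $X$ inside $X$ by the recursion $A_{0}=L$ and $A_{n+1}=v_{R}^{-1}(A_{n})$. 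A straightforward induction shows the $A_{n}$ are pairwise disjoint (since $A_{0}\subseteq L$, $A_{n}\subseteq R$ for $n\geq 1$, and $v_{R}^{-1}$ is injective), and that each $A_{n}$ is equipollent to $X$ via the finitely supported bijection $\theta_{n}:A_{n}\to X$ defined by $\theta_{0}=v_{L}$ and $\theta_{n+1}=\theta_{n}\circ(v_{R}|_{A_{n+1}})$.

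The crux, and the step I expect to be the main obstacle, is to verify that this is all handled \emph{internally} in FSM, i.e. that the maps $n\mapsto A_{n}$ and $n\mapsto\theta_{n}$ are themselves finitely supported rather than merely each $A_{n}$ being finitely supported. Here I would invoke the $S$-finite support principle: for $\pi\in Fix(S)$ the permutation $\pi$ fixes $v$, $L$, $R$, $v_{L}$, and $v_{R}$, so applying $\pi$ commutes with the defining recursion, and an induction on $n$ yields $\pi\star A_{n}=A_{n}$ and $\pi\,\widetilde{\star}\,\theta_{n}=\theta_{n}$ for all $n$ simultaneously. Consequently the two sequences are $S$-supported, $\bigsqcup_{n}A_{n}$ is a finitely supported subset of $X$, and the mapping $j:\mathbb{N}\times X\to X$ given by $j(n,x)=\theta_{n}^{-1}(x)$ is a finitely supported injection (injectivity follows from the disjointness of the $A_{n}$ together with the injectivity of each $\theta_{n}^{-1}$).

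Finally, since $x\mapsto(0,x)$ is an obvious finitely supported injection $X\to\mathbb{N}\times X$, I would apply Lemma \ref{lem2} (the FSM Cantor--Schr\"oder--Bernstein theorem for finitely supported subsets of invariant sets, used with $X\subseteq Y$ and $\mathbb{N}\times X\subseteq\mathbb{N}\times Y$) to the two finitely supported injections $j$ and $x\mapsto(0,x)$. This produces a finitely supported bijection $g:\mathbb{N}\times X\to X$, as required. Routing the argument through Cantor--Schr\"oder--Bernstein in this way lets me avoid the delicate task of absorbing the leftover set $X\setminus\bigsqcup_{n}A_{n}$ by an explicit finitely supported bijection, which would otherwise require the equivariant cardinal identities $\mathbb{N}+\mathbb{N}\cong\mathbb{N}$ and $\{\ast\}+\mathbb{N}\cong\mathbb{N}$ to be combined by hand.
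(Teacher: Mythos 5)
Your proof is correct and follows essentially the same route as the paper's: your map $j(n,x)=\theta_{n}^{-1}(x)$ is exactly the paper's injection $f(n,x)=f_{2}^{n}(f_{1}(x))$ in disguise (with $f_{1}=v_{L}^{-1}$ and $f_{2}=v_{R}^{-1}$ the two ``halves'' of the inverse of your $v$), the uniform-support induction over $n$ is the same, and both arguments conclude by applying the FSM Cantor--Schr\"oder--Bernstein lemma (Lemma \ref{lem2}) to this injection together with $x\mapsto(0,x)$. The only cosmetic difference is in the easy reverse implication, where you compose explicit finitely supported bijections via the equivariant identification $\mathbb{N}\cong\{0,1\}\times\mathbb{N}$, while the paper passes through injections and one more appeal to Cantor--Schr\"oder--Bernstein.
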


\begin{proof}By hypothesis, there is a finitely supported bijection $\varphi:\{0,1\} \times X \to X$. Let us consider the mappings $f_{1},f_{2}: X \to X$ defined by $f_{1}(x)=\varphi(0,x)$ for all $x \in X$ and $f_{2}(x)=\varphi(1,x)$ for all $x \in X$, that are injective and supported by $supp(\varphi)$ according to Proposition \ref{2.18'}. Since $\varphi$ is injective we also have $Im(f_{1}) \cap Im(f_{2})=\emptyset$, and because $\varphi$ is surjective we get $Im(f_{1}) \cup Im(f_{2})=X$. We prove by induction that the $n$-times auto-composition of $f_{2}$, denoted by $f_{2}^{n}$, is supported by $supp(f_{2})$ for all $n\in \mathbb{N}$. For $n=1$ this is obvious. So assume that  $f_{2}^{n-1}$ is supported by $supp(f_{2})$. By Proposition \ref{2.18'} we must have $f_{2}^{n-1}(\sigma \cdot x)=\sigma \cdot f_{2}^{n-1}(x)$ for all $\sigma \in Fix(supp(f_{2}))$ and $x \in X$. Let us fix $\pi \in Fix(supp(f_{2}))$. According to Proposition \ref{2.18'}, we have $f_{2}^{n}(\pi \cdot x)=f_{2}(f_{2}^{n-1}(\pi \cdot x))=f_{2}(\pi\cdot f_{2}^{n-1}(x))=\pi \cdot f_{2}(f_{2}^{n-1}(x))=\pi \cdot f_{2}^{n}(x)$ for all $x \in X$, and so $f_{2}^{n}$ is finitely supported from Proposition \ref{2.18'}. Define $f:\mathbb{N} \times X \to X$ by $f((n,x))=f_{2}^{n}(f_{1}(x))$. Let $\pi \in Fix(supp(f_{1}) \cup supp(f_{2}))$. According to Proposition \ref{2.18'} and because $(\mathbb{N}, \diamond)$ is a trivial invariant set we get $f(\pi \otimes (n,x))=f((n, \pi \cdot x))=f_{2}^{n}(f_{1}(\pi \cdot x))=f_{2}^{n}(\pi \cdot f_{1}(x))=\pi \cdot f_{2}^{n}(f_{1}(x))=\pi \cdot f((n,x))$ for all $(n,x) \in \mathbb{N} \times X$,  which means $f$ is supported by $supp(f_{1}) \cup supp(f_{2})$. We prove the injectivity of $f$. Assume $f((n,x))=f((m,y))$ which means $f_{2}^{n}(f_{1}(x))=f_{2}^{m}(f_{1}(y))$. If $n>m$ this leads to $f_{2}^{n-m}(f_{1}(x))=f_{1}(y)$ (since $f_{2}$ is injective) which is in contradiction with the relation $Im(f_{1}) \cap Im(f_{2})=\emptyset$. Analogously we cannot have $n<m$. Thus, $n=m$ which leads to $f_{1}(x)=f_{1}(y)$, and so $x=y$ due to the injectivity of $f_{1}$. Therefore, $f$ is injective. Since we obviously have a finitely supported injection from $X$ into $\mathbb{N} \times X$ (e.g $x \mapsto (0,x)$ which is supported by $supp(X)$), in the view of Lemma \ref{lem2} we can find a finitely supported bijection between $X$ and $\mathbb{N} \times X$. 

The reverse implication is almost trivial. There is a finitely supported injection from $\{0,1\} \times X$ into $\mathbb{N} \times X$. If there is a finitely supported injection from  $\mathbb{N} \times X$ into $X$, then there is a finitely supported injection from $\{0,1\} \times X$ into $X$. The desired result follows from Lemma \ref{lem2}. 
\end{proof}

\section{Countability} \label{countable}

\begin{definition} Let $Y$ be a finitely supported subset of an invariant set $X$. Then $Y$ is \emph{countable in FSM (or FSM countable)} if there exists a finitely supported onto mapping $f: \mathbb{N} \to Y$. 
\end{definition}

\begin{proposition} Let $Y$ be a finitely supported countable subset of an invariant set $(X, \cdot)$. Then $Y$ is uniformly supported.
\end{proposition}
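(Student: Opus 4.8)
The plan is to extract a single finite set of atoms that simultaneously supports every element of $Y$, and the natural candidate is $supp(f)$, where $f:\mathbb{N}\to Y$ is the finitely supported surjection guaranteed by the definition of FSM countability. The whole argument rests on the fact that $\mathbb{N}$ is a trivial invariant set, i.e. the $S_{A}$-action $\diamond$ on $\mathbb{N}$ satisfies $\pi\diamond n=n$ for all $\pi\in S_{A}$ and $n\in\mathbb{N}$ (Example~\ref{2.7}(3)).

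First I would invoke Proposition~\ref{2.18'}(2): since $f:\mathbb{N}\to Y$ is finitely supported, it is supported by the finite set $supp(f)$, which means that for every $n\in\mathbb{N}$ and every $\pi\in Fix(supp(f))$ we have $f(\pi\diamond n)=\pi\cdot f(n)$. Because the action on $\mathbb{N}$ is trivial, $\pi\diamond n=n$, so this relation collapses to $f(n)=\pi\cdot f(n)$ for all $n\in\mathbb{N}$ and all $\pi\in Fix(supp(f))$. By Definition~\ref{2.4}(2), this precisely says that $supp(f)$ supports $f(n)$, and hence $supp(f(n))\subseteq supp(f)$ for every $n\in\mathbb{N}$.

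The second step is to transfer this uniform bound from the image points $f(n)$ to the whole of $Y$. Since $f$ is onto, every $y\in Y$ can be written as $y=f(n)$ for some $n\in\mathbb{N}$, and therefore $supp(y)\subseteq supp(f)$. Thus all the elements of $Y$ are supported by the same finite set $supp(f)$, which is exactly the condition for $Y$ to be uniformly supported in the sense of Definition~\ref{2.14}(1). (In particular, by Proposition~\ref{4.4-9}(2), $supp(Y)=\cup\{supp(y)\mid y\in Y\}\subseteq supp(f)$.)

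I do not expect any genuine obstacle here: the entire content of the statement is the observation that the trivial action on the index set $\mathbb{N}$ forces $Fix(supp(f))$ to fix every image $f(n)$, so the finite set $supp(f)$ serves as a common support. The only point that requires a moment's care is making explicit that the triviality of the $\mathbb{N}$-action is what converts the equivariance identity $f(\pi\diamond n)=\pi\cdot f(n)$ into the fixed-point identity $f(n)=\pi\cdot f(n)$; everything else is an immediate application of Proposition~\ref{2.18'} together with the surjectivity of $f$.
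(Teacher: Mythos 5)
Your proof is correct and follows essentially the same route as the paper's own argument: both use the finitely supported surjection $f:\mathbb{N}\to Y$, exploit the triviality of the $S_{A}$-action on $\mathbb{N}$ to turn the equivariance identity $f(\pi\diamond n)=\pi\cdot f(n)$ into $\pi\cdot f(n)=f(n)$ for all $\pi\in Fix(supp(f))$, and then use surjectivity to conclude that $supp(f)$ supports every element of $Y$. The only cosmetic difference is that you phrase the conclusion via $supp(f(n))\subseteq supp(f)$ and add the observation about $supp(Y)$, whereas the paper verifies the fixed-point identity directly for an arbitrary $y\in Y$; the substance is identical.
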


\begin{proof}There exists a finitely supported onto mapping $f: \mathbb{N} \to Y$. Thus, for each arbitrary $y\in Y$, there exists $n \in \mathbb{N}$ such that $f(n)=y$. According to Proposition \ref{2.18'}, for each $\pi \in Fix(supp(f))$ we have $\pi \cdot y=\pi \cdot f(n)=f(\pi \diamond n)=f(n)=y$, where $\diamond$ is the necessarily trivial action on $\mathbb{N}$. Thus, $Y$ is uniformly supported by $supp(f)$.
\end{proof}

\begin{proposition} Let $Y$ be a finitely supported subset of an invariant set $X$. Then $Y$ is countable in FSM if and only if there exists a finitely supported one-to-one mapping $g: Y \to  \mathbb{N}$.
\end{proposition}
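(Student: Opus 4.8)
The plan is to prove both implications by converting between a finitely supported surjection $f:\mathbb{N}\to Y$ and a finitely supported injection $g:Y\to\mathbb{N}$, exploiting two facts: $\mathbb{N}$ is a trivial invariant set (so $\pi\diamond n=n$ for all $\pi$ and all $n$, and every subset of $\mathbb{N}$ is equivariant), and $Y$ turns out to be \emph{uniformly supported} under either hypothesis. This uniform-support observation is the conceptual heart of the argument. If $g:Y\to\mathbb{N}$ is injective and supported by $supp(g)$, then for $\pi\in Fix(supp(g))$ Proposition~\ref{2.18'} gives $g(\pi\cdot y)=\pi\diamond g(y)=g(y)$, whence $\pi\cdot y=y$ by injectivity; thus every $y\in Y$ is fixed by every such $\pi$, so $supp(y)\subseteq supp(g)$ and $Y$ is uniformly supported by $supp(g)$. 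Under the surjective hypothesis the same conclusion is delivered directly by the preceding proposition (which shows an FSM countable set is uniformly supported by $supp(f)$).

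For the direction $(\Rightarrow)$ I would start from a finitely supported surjection $f:\mathbb{N}\to Y$ and define $g:Y\to\mathbb{N}$ by $g(y)=\min\{n\in\mathbb{N}\mid f(n)=y\}$. This is well defined because the preimage set is non-empty by surjectivity and its minimum exists since $\mathbb{N}$ is well-ordered (crucially, no choice principle is invoked), and it is injective since distinct elements of $Y$ have disjoint preimages under $f$. To verify that $g$ is finitely supported by $supp(f)$ I would check the condition of Proposition~\ref{2.18'}: since $Y$ is uniformly supported by $supp(f)$, each $y\in Y$ satisfies $\pi\cdot y=y$ for $\pi\in Fix(supp(f))$, and since $\mathbb{N}$ is trivial we have $\pi\diamond g(y)=g(y)$; hence $g(\pi\cdot y)=g(y)=\pi\diamond g(y)$, which is exactly the required support identity.

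For the direction $(\Leftarrow)$, assume a finitely supported injection $g:Y\to\mathbb{N}$, so $Y$ is uniformly supported by $supp(g)$ as noted above. The degenerate case $Y=\emptyset$ is handled by convention (no onto map $\mathbb{N}\to\emptyset$ exists); otherwise I fix $y_{0}\in Y$ and define $f:\mathbb{N}\to Y$ by $f(n)=g^{-1}(n)$ when $n\in Im(g)$ and $f(n)=y_{0}$ when $n\notin Im(g)$. Here $g^{-1}(n)$ is a single element by injectivity, and $f$ is onto because $y=f(g(y))$ for every $y\in Y$. For the support, observe that $Im(g)\subseteq\mathbb{N}$ is equivariant (every subset of a trivial invariant set is), and for $\pi\in Fix(supp(g))$ and $n\in\mathbb{N}$ one has $\pi\diamond n=n$, while $f(n)\in Y$ is fixed by $\pi$ by uniform support; thus $f(\pi\diamond n)=f(n)=\pi\cdot f(n)$, and Proposition~\ref{2.18'} yields that $f$ is supported by $supp(g)$.

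The main obstacle to watch is the finite-support verification rather than the underlying set theory: in plain ZF the passage from a surjection to an injection requires the axiom of choice, but here it is bypassed because $\mathbb{N}$ is well-ordered (licensing the canonical least-preimage construction) and because $Y$ is forced to be uniformly supported, which collapses the group action on $Y$ to the identity for the relevant permutations. Once these two points are established, both constructed maps are canonical and the equivariance conditions of Proposition~\ref{2.18'} reduce to trivial identities, so the remaining work is purely routine.
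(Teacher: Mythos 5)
Your proof is correct and takes essentially the same route as the paper: the least-preimage construction $g(y)=\min f^{-1}(\{y\})$ for the forward direction, and the inverse-with-a-fixed-default-element construction for the converse, with support in both cases verified through Proposition~\ref{2.18'} and the triviality of the action on $\mathbb{N}$. Your explicit observation that $Y$ is uniformly supported by $supp(f)$ (resp.\ by $supp(g)$, via injectivity) is a mild streamlining that yields slightly tighter support bounds than the paper's $supp(f)\cup supp(Y)$ and $supp(g)\cup supp(Y)\cup supp(t)$, but the underlying argument is the same.
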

\begin{proof} Suppose that $Y$ is countable in FSM. Then there exists a finitely supported onto mapping $f: \mathbb{N} \to Y$. We define $g: Y \to  \mathbb{N}$ by $g(y)=min[f^{-1}(\{y\})]$, for all $y \in Y$. According to Proposition \ref{2.18'}, $g$ is supported by $supp(f) \cup supp(Y)$. Obviously, $g$ is one-to-one. Conversely, if there exists a finitely supported one-to-one mapping $g: Y \to  \mathbb{N}$, then $g(Y)$ is supported is equivariant as a subset of the trivial invariant set $\mathbb{N}$. Thus,  there exists a finitely supported bijection $g: Y \to  g(Y)$, where $g(Y) \subseteq \mathbb{N}$. We define $f: \mathbb{N} \to Y$ by \[ f(n)=\left\{ \begin{array}{ll}

g^{-1}(n) & \text{if}\:   n \in g(Y) \\
\\
t & \text{if}\:  n \in \mathbb{N} \setminus g(Y) \end{array}\right., \] where $t$ is a fixed element of $Y$. According to Proposition \ref{2.18'}, we have that $f$ is supported by $supp(g) \cup supp(Y) \cup supp(t)$. Moreover, $f$ is onto. 
\end{proof}

\begin{proposition} \label{cou} Let $Y$ be an infinite, finitely supported, countable subset of an invariant set $X$. Then  there exists a finitely supported bijective mapping $g: Y \to  \mathbb{N}$.
\end{proposition}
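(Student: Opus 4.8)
The plan is to start from the characterization of FSM countability established just above: since $Y$ is finitely supported and countable in FSM, there exists a finitely supported one-to-one mapping $g \colon Y \to \mathbb{N}$. The idea is then to ``compress'' the image of $g$ back onto all of $\mathbb{N}$ by an order isomorphism, and to argue that this compression is automatically equivariant because it takes place entirely inside the trivial invariant set $\mathbb{N}$.

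First I would record that $M := g(Y)$ is an infinite subset of $\mathbb{N}$. Indeed, $g$ restricts to a bijection $Y \to M$, so if $M$ were finite then $Y$ would be in finitely supported bijection with a finite ordinal, contradicting the assumption that $Y$ is (FSM usual) infinite. Moreover, because $\mathbb{N}$ carries the trivial $S_{A}$-action (Example \ref{2.7}(3)), every subset of $\mathbb{N}$ is equivariant; in particular $M$ is equivariant. Since $M$ is an infinite subset of the well-ordered set $\mathbb{N}$, there is a unique strictly increasing bijection $h \colon M \to \mathbb{N}$, namely the one enumerating the elements of $M$ in increasing order (sending the $k$-th smallest element of $M$ to $k$). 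This $h$ is definable purely from the Peano well-ordering of $\mathbb{N}$ and does not require any form of the axiom of choice, exactly as already used in the proof of Theorem \ref{ti1}(6). Because both $M$ and $\mathbb{N}$ are trivial invariant sets, $h$ is equivariant: the defining condition $h(\pi \diamond n)=\pi \diamond h(n)$ of Proposition \ref{2.18'} reduces to the identity $h(n)=h(n)$.

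Finally I would set $g' = h \circ g \colon Y \to \mathbb{N}$. This is a bijection, being the composite of the bijection $g \colon Y \to M$ with the bijection $h \colon M \to \mathbb{N}$. By the composition argument for finitely supported maps used in the transitivity part of the proof of Theorem \ref{cardord}, $g'$ is finitely supported, with $supp(g') \subseteq supp(h) \cup supp(g) \cup supp(Y) = supp(g) \cup supp(Y)$. Thus $g'$ is the required finitely supported bijection from $Y$ onto $\mathbb{N}$, which renaming $g' $ back to $g$ completes the argument.

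The step I expect to be the only genuine subtlety is the equivariance-without-choice of the enumeration $h$: one must be certain that passing from an arbitrary infinite subset of $\mathbb{N}$ to its increasing enumeration is a legitimate finitely supported (indeed equivariant) construction in FSM. This is clean precisely because $\mathbb{N}$ is a trivial invariant set, so no permutation of atoms disturbs either $M$ or the order on $\mathbb{N}$, and the enumeration is uniquely determined by the order alone; no system of representatives and hence no choice principle is invoked.
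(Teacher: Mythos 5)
Your proposal is correct and takes essentially the same route as the paper's own proof: obtain a finitely supported injection $g:Y\to\mathbb{N}$ from the countability characterization, note that its image is an infinite, equivariant subset of the trivial invariant set $\mathbb{N}$, and compose $g$ with the choice-free increasing enumeration of that image, which is equivariant precisely because the action on $\mathbb{N}$ is trivial. The only cosmetic difference is that the paper formally passes through the FSM Cantor-Schr{\"o}der-Bernstein result (Lemma \ref{lem1}) to get the bijection between $g(Y)$ and $\mathbb{N}$, whereas you observe directly that the increasing enumeration is itself a bijection --- a shortcut the paper itself acknowledges parenthetically (``we can even prove that $f$ is bijective'').
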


\begin{proof}
First we prove that for any infinite subset $B$ of $\mathbb{N}$, there is an injection from $\mathbb{N}$ into $B$. Fix such a $B$. It follows that $B$ is well ordered. Define $f:\mathbb{N} \to B$ by: $f(1)=min(B)$, $f(2)=min(B \setminus f(1))$, and recursively
$f(m)= min(B \setminus \{f(1),f(2),...,f(m-1)\})$ for all $m \in \mathbb{N}$ (since $B$ is infinite). Since $\mathbb{N}$ is well ordered, choice is not involved. Obviously since both $B$ and $\mathbb{N}$ are trivial invariant sets, we have that $f$ is equivariant. Since $B$ is a subset of $\mathbb{N}$ we also have an equivariant injective mapping $h:B \to \mathbb{N}$. According to Lemma \ref{lem1}, there is an equivariant bijection between $B$ and $\mathbb{N}$ (we can even prove that $f$ is bijective). 

Since $Y$ is countable, there exists a finitely supported one-to-one 
mapping $u: Y \to \mathbb{N}$. Thus, the mapping $u: Y \to u(Y)$ is 
finitely supported and bijective. Since $u(Y) \subseteq N$, we have that 
there is an equivariant bijection $v$ between $u(Y)$ and~$\mathbb{N}$, and 
so there exists a finitely supported bijective mapping $g: Y \to 
\mathbb{N}$ defined by $g=v \circ u$. 
\end{proof}

From \cite{book} we know that the (in)consistency of the choice principle 
$\textbf{CC(fin)}$ in FSM is an open problem, meaning that we do not know 
whether this principle is consistent or not in respect of the FSM axioms. 
A relationship between countable union principles and countable choice 
principles is presented in ZF in \cite{herrlich}. Below we prove that such 
a relationship is preserved in FSM.

\begin{definition}
\begin{enumerate}
\item The Countable Choice Principle for finite sets in FSM \textbf{CC(fin)} has the form ``Given any invariant set $X$, and 
any countable family $\mathcal{F}=(X_{n})_{n}$ of finite subsets of $X$ such 
that the mapping $n\mapsto X_{n}$ is finitely supported, there exists a 
finitely supported choice function on $\mathcal{F}$."
\item The Countable Union Theorem for finite sets in FSM, \textbf{CUT(fin)}, has the form ``Given any invariant set $X$ and any  countable family $\mathcal{F}=(X_{n})_{n}$ of finite subsets of $X$  such 
that the mapping $n\mapsto X_{n}$ is finitely supported, then there exists a finitely supported onto mapping  $f: \mathbb{N} \to \underset{n}{\cup}X_{n}$"
\item The Countable Union Theorem for $k$-element sets in FSM, \textbf{CUT(k)}, has the form ``Given any invariant set $X$ and any  countable family $\mathcal{F}=(X_{n})_{n}$ of $k$-element subsets of $X$ such that the mapping $n\mapsto X_{n}$ is finitely supported, then there exists a finitely supported onto mapping  $f: \mathbb{N} \to \underset{n}{\cup}X_{n}$"
\item The Countable Choice Principle for sets of $k$-element sets in FSM, 
\textbf{CC(k)} has the form ``Given any invariant set~$X$ and any 
countable family $\mathcal{F}=(X_{n})_{n}$ of $k$-element subsets of $X$ 
in FSM such that the mapping $n\mapsto X_{n}$ is finitely supported, there 
exists a finitely supported choice function on $\mathcal{F}$."
\end{enumerate}
\end{definition} 

\begin{proposition} 
In FSM, the following equivalences hold. 
\begin{enumerate}
\item \textbf{CUT(fin)} $\Leftrightarrow$ \textbf{CC(fin)};
\item \textbf{CUT(2)} $\Leftrightarrow$ \textbf{CC(2)};
\item \textbf{CUT(n)} $\Leftrightarrow$ \textbf{CC(i)} for all $i \leq n$.
\end{enumerate}
\end{proposition}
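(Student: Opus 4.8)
The plan is to transport into FSM the classical ZF arguments of \cite{herrlich} relating countable union and countable choice, the only genuinely new ingredient being a systematic verification that every family, function and selector produced along the way is finitely supported, so that each appeal to a choice or union principle is legitimate and each output is again an FSM object. The implications from a union theorem to its matching choice principle are uniform and easy: if \textbf{CUT(fin)} (respectively \textbf{CUT(n)}) furnishes a finitely supported surjection $f:\mathbb{N}\to\underset{n}{\cup}X_{n}$, then $n\mapsto f(\min\{m\,|\,f(m)\in X_{n}\})$ is a choice function on the family, well defined since each nonempty $X_{n}$ lies in the range of $f$, and supported by $supp(f)\cup supp(n\mapsto X_{n})$; here I use crucially that $\mathbb{N}$ carries the trivial action, so every subset of $\mathbb{N}$ is equivariant and $\min$ introduces no choice and no extra support. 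The same remark shows that elements of $\mathbb{N}$ may be used freely as canonical labels, which is what compensates in FSM for the loss of arbitrary element-naming available in ZF.

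For \textbf{CC(fin)}$\,\Rightarrow\,$\textbf{CUT(fin)} I would, to each nonempty finite $X_{n}$, associate the set $L_{n}$ of injective tuples of $T_{fin}(X)$ that enumerate $X_{n}$; since $X_{n}$ is finite, Proposition \ref{4.4-9} gives $supp(t)\subseteq supp(X_{n})$ for each such tuple $t$, so $L_{n}$ is uniformly supported and $n\mapsto L_{n}$ is a finitely supported countable family of finite sets. Applying \textbf{CC(fin)} to $(L_{n})_{n}$ yields a finitely supported selection $n\mapsto e_{n}$ of an enumeration of each $X_{n}$, and composing with a fixed equivariant bijection $\mathbb{N}\cong\mathbb{N}\times\mathbb{N}$ gives the desired finitely supported surjection $(n,k)\mapsto e_{n}(k)$ onto $\underset{n}{\cup}X_{n}$. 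The implication ``\textbf{CC(i)} for every $i\le n$'' $\Rightarrow$ \textbf{CUT(n)} is the same idea executed by iterated peeling: \textbf{CC(n)} selects one element from each $n$-element $X_{m}$, the residual family of $(n-1)$-element sets is again finitely supported, \textbf{CC(n-1)} selects from it, and after $n$ rounds one has a finitely supported enumeration $(c_{1}(X_{m}),\dots,c_{n}(X_{m}))$ of each $X_{m}$, hence a finitely supported surjection via $\mathbb{N}\cong\mathbb{N}\times\{1,\dots,n\}$. Part 2 is just the case $n=2$ of Part 3, since \textbf{CC(1)} holds trivially.

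It remains to derive \textbf{CC(i)} from \textbf{CUT(n)} for every $i\le n$. When $i<n$ I would pad inside the invariant set $X+\mathbb{N}$, replacing each $i$-element $X_{m}$ by the $n$-element set $Z_{m}=\{(0,x)\,|\,x\in X_{m}\}\cup\{(1,0),\dots,(1,n-i-1)\}$; the padding block is equivariant, so $Z_{m}$ is supported by $supp(X_{m})$ and $m\mapsto Z_{m}$ is a finitely supported family of $n$-element sets. Feeding it to \textbf{CUT(n)} produces a finitely supported surjection $f:\mathbb{N}\to\underset{m}{\cup}Z_{m}$, and then $X_{m}\mapsto$ (the $X$-coordinate of $f(\min\{k\,|\,f(k)\in\{(0,x)\,|\,x\in X_{m}\}\})$) is a finitely supported choice function on $(X_{m})_{m}$, giving \textbf{CC(i)}; for $i=n$ no padding is needed and the same first-appearing-element recipe yields \textbf{CC(n)}. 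The main obstacle is not the combinatorics, which is classical, but the support accounting: at each step one must confirm that the auxiliary family handed to a principle is finitely supported, so the principle applies, and that the objects built from its output --- minima over $\mathbb{N}$, residual sets, padded sets and the final surjections --- stay finitely supported, which follows from Proposition \ref{2.18'}, Proposition \ref{4.4-9} and the $S$-finite support principle. The decisive structural point that makes the ZF proofs survive is that one never needs a canonical element of $\underset{m}{\cup}X_{m}$: all selection is routed through minima over the trivially-acted set $\mathbb{N}$, whose subsets are automatically equivariant, so finite support propagates from the principles' own (finitely supported) outputs rather than having to be re-established.
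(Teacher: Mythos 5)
Your proposal is correct, and its union-to-choice directions (selecting $f(\min f^{-1}(X_{n}))$, with the support bookkeeping resting on the trivial action on $\mathbb{N}$) are exactly the paper's. Where you diverge is in the choice-to-union directions. For \textbf{CC(fin)} $\Rightarrow$ \textbf{CUT(fin)} the paper first disjointifies the family into blocks $Y_{n}=X_{n}\setminus\underset{m<n}{\cup}X_{m}$, applies \textbf{CC(fin)} to the family $(T_{n})$ of finite, uniformly supported sets of \emph{total orders} on the blocks, concatenates the chosen orders into an $S$-supported well-order on the union, and then invokes Proposition \ref{cou} to obtain the counting; you instead apply \textbf{CC(fin)} to the sets $L_{n}$ of injective \emph{enumerating tuples} of the original $X_{n}$ and assemble the surjection directly through an equivariant pairing $\mathbb{N}\cong\mathbb{N}\times\mathbb{N}$. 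These rest on the same key idea (a total order on a finite set is the same datum as an enumeration of it), but your version avoids both the disjointification and the order-isomorphism machinery, so it is somewhat more economical; the only loose ends (members of the family that are empty, indices beyond the length of a tuple) are degenerate and are handled canonically without any choice. For parts 2 and 3 the paper says only ``as in the proof of the first item'', and there your explicit arguments are genuinely needed: \textbf{CUT(n)} applies only to families of $n$-element sets, so deriving \textbf{CC(i)} for $i<n$ requires precisely your padding construction inside $X+\mathbb{N}$; and conversely the paper's order-set trick cannot be reused verbatim, since a $k$-element set has $k!$ total orders, which is out of reach of \textbf{CC(i)} for $i\leq n$ --- your iterated peeling (choose with \textbf{CC(n)}, pass to the residual family of $(n-1)$-element sets, and so on, each residual family remaining finitely supported) is the standard repair, and part 2 then indeed falls out as the case $n=2$. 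So your route is a correct variant of the paper's argument which in fact supplies details that the paper's own proof glosses over.
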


\begin{proof}
1. Let us assume that  \textbf{CUT(fin)} is valid in FSM.   We consider the finitely supported countable family $\mathcal{F}=(X_{n})_{n}$  in FSM, where each $X_{n}$ is a non-empty finite  subset of an invariant set $X$ in FSM. 

From \textbf{CUT(fin)}, there exists a finitely supported onto mapping  $f: \mathbb{N} \to \underset{n}{\cup}X_{n}$. Since $f$ is onto and each $X_{n}$ is non-empty, we have that $f^{-1}(X_{n})$ is a non-empty subset of $\mathbb{N}$ for each $n \in \mathbb{N}$. Consider the function $g: \mathcal{F} \to \cup \mathcal{F}$, defined by $g(X_{n})=f(min[f^{-1}(X_{n})])$. We claim that $supp(f) \cup supp(n \mapsto X_{n})$ supports $g$. Let $\pi \in Fix(supp(f) \cup supp(n \mapsto X_{n}))$. According to Proposition \ref{2.18'}, and because $\mathbb{N}$ is a trivial invariant  set and each element $X_{n}$ is supported by $supp(n \mapsto X_{n})$, we have $\pi \cdot g(X_{n})=\pi \cdot f(min[f^{-1}(X_{n})])= f(\pi \diamond min[f^{-1}(X_{n})])=f(min[f^{-1}(X_{n})])=g(X_{n})=g(\pi \star X_{n})$, where by $\star$ we denoted the $S_{A}$-action on $\mathcal{F}$, by $\cdot$ we denoted the $S_{A}$-action on $\cup \mathcal{F}$ and by $\diamond$ we denoted the trivial action on $\mathbb{N}$. Therefore, $g$ is finitely supported. Moreover, $g(X_{n}) \in X_{n}$, and so $g$ is a choice function on $\mathcal{F}$. 

Conversely, let 
$\mathcal{F}=(X_{n})_{n}$ be a countable family of finite subsets of $X$ 
such that the mapping $n\mapsto X_{n}$ is finitely supported. Thus, each $X_{n}$ is supported by the same set $S=supp(n \mapsto X_{n})$. Since each $X_{n}$ is finite (and the support of a finite set coincides with the union of the supports of its elements), as in the proof of Lemma  \ref{lem4}, we have that $Y=\underset{n \in \mathbb{N}}\cup X_{n}$
is uniformly supported by $S$. Moreover, the countable sequence $(Y_{n})_{n \in \mathbb{N}}$ defined by $Y_{n}=X_{n} \setminus \underset{m<n}\cup X_{m}$ is a uniformly supported (by $S$) sequence of pairwise disjoint uniformly supported sets with $Y=\underset{n \in \mathbb{N}}\cup Y_{n}$. Consider the infinite family $M \subseteq \mathbb{N}$ such that all the terms of $(Y_{n})_{n \in M}$ are non-empty. 

For each $n \in M$, the set $T_{n}$ of total orders on $Y_{n}$ is finite, non-empty, and uniformly supported by $S$. Thus, by applying \textbf{CC(fin)} to $(T_{n})_{n \in M}$, there is a choice function $f$ on $(T_{n})_{n \in M}$ which is also supported by $S$. Furthermore, $f(T_{n})$ is supported by $supp(f) \cup supp(T_{n})=S$ for all $n \in M$. One can define a uniformly supported (by $S$) total order relation on $Y$ (which is also a well order relation on $Y$) as follows

$x \leq y$ if and only if $\left\{ \begin{array}{ll}

x \in Y_{n} \; \text{and}\; y \in Y_{m} \;\text{with}\; n<m  \\

\text{or}\\
x,y \in Y_{n}\; \text{and}\; xf(T_{n})y \end{array}\right.$.

Clearly, if $Y$ is infinite, then there is an $S$-supported order isomorphism between $(Y, \leq)$ and $M$ with the natural order, which means, in the view of Proposition \ref{cou}, that $Y$ is countable.

2. As in the above item $\textbf{CUT(2)} \Rightarrow \textbf{CC(2)}$.

For proving $\textbf{CC(2)} \Rightarrow \textbf{CUT(2)}$, let $\mathcal{F}=(X_{n})_{n}$ be a countable family of 2-element subsets of $X$ such that the mapping $n\mapsto X_{n}$ is finitely supported. According to \textbf{CC(2)} we have that there exists a finitely supported choice function $g$ on $(X_{n})_{n}$. Let $x_{n}=g(X_{n}) \in X_{n}$.  As in the above item, we have that  $supp(n \mapsto X_{n})$ supports $x_{n}$ for all $n \in \mathbb{N}$.  

For each $n$, let $y_{n}$ be the unique element of $X_{n}\setminus \{x_{n}\}$.  Since for any $n$ both $x_{n}$ and $X_{n}$ are supported by the same set $supp(n \mapsto X_{n})$, it follows that $y_{n}$ is also supported by $supp(n \mapsto X_{n})$ for all $n \in \mathbb{N}$. 

Define $f: \mathbb{N} \to \underset{n}{\cup}X_{n}$ by $ f(n)=\left\{ \begin{array}{ll}

x_{\frac{n}{2}} & \text{if}\:   n \;\text{is even}\\
\\
y_{\frac{n-1}{2}} & \text{if}\:  n \;\text {is odd} \end{array}\right.$. We can equivalently describe $f$ as being defined by $f(2k)=x_{k}$ and $f(2k+1)=y_{k}$. Clearly, $f$ is onto. Furthermore, because all $x_{n}$ and all $y_{n}$ are uniformly supported by $supp(n \mapsto X_{n})$, we have that $f(n)=\pi \cdot f(n)$, for all $\pi \in Fix(supp(n \mapsto X_{n}))$ and all $n \in \mathbb{N}$. Thus, according to Proposition \ref{2.18'}, we obtain that $f$ is also supported by $supp(n \mapsto X_{n})$, and so $\underset{n}{\cup}X_{n}$ is FSM countable. 

3. As in the proof of the first item.
\end{proof}

We can easily remark that under $\textbf{CC(fin)}$ a finitely supported subset $X$ of an invariant set is FSM Dedekind infinite if and only if $\wp_{fin}(X)$ is FSM Dedekind infinite. 

\begin{proposition}Let $Y$ be a finitely supported countable subset of an invariant set $X$. Then the set  $\underset{n \in \mathbb{N}}{\cup}Y^{n}$ is countable, where $Y^{n}$ is defined as the n-time Cartesian product of $Y$. 
\end{proposition}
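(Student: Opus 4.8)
The plan is to produce a single finitely supported injection from $W:=\underset{n\in\mathbb{N}}{\cup}Y^{n}$ into $\mathbb{N}$ and then invoke the characterization of FSM countability by finitely supported injections into $\mathbb{N}$ (the preceding proposition). First I would apply that same characterization to the countable set $Y$ itself to fix a finitely supported one-to-one map $g:Y\to\mathbb{N}$. I would then situate $W$ inside the invariant set $\mathcal{T}(X)=\underset{n\in\mathbb{N}}{\cup}X^{n}$ of all finite tuples of elements of $X$, equipped with the pointwise action $\pi\cdot(x_{1},\ldots,x_{n})=(\pi\cdot x_{1},\ldots,\pi\cdot x_{n})$; this is an invariant set since it is built from the invariant set $X$ by Cartesian products and a union over the trivial invariant set $\mathbb{N}$. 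One checks that $W$ is a finitely supported subset of $\mathcal{T}(X)$ supported by $supp(Y)$: for $\pi\in Fix(supp(Y))$ we have $\pi\star Y=Y$, so $\pi$ carries any tuple with entries in $Y$ to a tuple of the same length with entries in $Y$, whence $\pi\star W=W$.

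Next I would define the encoding $h:W\to\mathbb{N}$ by $h(y_{1},\ldots,y_{n})=\prod_{i=1}^{n}p_{i}^{\,g(y_{i})+1}$, where $p_{i}$ denotes the $i$-th prime, with the convention that the empty tuple (the unique element of $Y^{0}$) is sent to the empty product $1$. The key observation is that $h$ is injective: by unique factorization, two tuples of equal length $n$ with the same image agree coordinate-wise (using injectivity of $g$), while two tuples of different lengths have images whose largest prime divisor carries a different index, so they cannot coincide; moreover every nonempty tuple maps to a value $\geq 2$, so it is distinguished from the empty tuple. This is exactly the step where FSM differs from a naive ``countable union of countable sets'' argument: because one fixed finitely supported $g$ is reused in every coordinate and the prime sequence is an equivariant constant family, the encoding is explicit and no form of countable choice is needed to enumerate the pieces $Y^{n}$.

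Then I would verify that $h$ is finitely supported by $supp(g)\cup supp(Y)$ via Proposition \ref{2.18'}. For $\pi\in Fix(supp(g)\cup supp(Y))$ and $(y_{1},\ldots,y_{n})\in W$, the tuple $\pi\cdot(y_{1},\ldots,y_{n})$ again lies in $W$, and since $g$ is supported by $supp(g)$ and $\mathbb{N}$ carries the trivial action we have $g(\pi\cdot y_{i})=\pi\diamond g(y_{i})=g(y_{i})$ for every $i$; hence $h(\pi\cdot(y_{1},\ldots,y_{n}))=\prod_{i=1}^{n}p_{i}^{\,g(y_{i})+1}=h(y_{1},\ldots,y_{n})=\pi\diamond h(y_{1},\ldots,y_{n})$, the last equality because the action on $\mathbb{N}$ is trivial. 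Thus $h$ is a finitely supported injection $W\to\mathbb{N}$, and applying the countability characterization to $W$ shows that $W=\underset{n\in\mathbb{N}}{\cup}Y^{n}$ is FSM countable.

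The main obstacle is conceptual rather than computational: one must ensure the entire argument stays internal to FSM, so that every set and map invoked is finitely supported and that no instance of choice is smuggled in when passing from the countability of each individual $Y^{n}$ to that of their union. The uniform prime-power coding resolves precisely this, converting the potentially choice-dependent union into one explicit finitely supported injection. The only minor technical points to dispatch are the degenerate cases ($Y$ empty or a singleton, and the $n=0$ term), which the convention $h(\,)=1$ together with the bound $h\geq 2$ on nonempty tuples handles cleanly.
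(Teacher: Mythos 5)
Your proof is correct, but it takes a genuinely different route from the paper's. The paper fixes an enumeration $Y=\{x_{1},x_{2},\ldots\}$ coming from countability, observes that every element of every $Y^{n}$ is then supported by one common set $S$, endows each $Y^{n}$ with the lexicographic strict well-order induced by the enumeration, concatenates these orders by tuple length into an $S$-supported well-order $\prec$ on $\underset{n}{\cup}Y^{n}$, and concludes by claiming an $S$-supported order isomorphism with $(\mathbb{N},<)$. You instead reuse the single finitely supported injection $g:Y\to\mathbb{N}$ in all coordinates and code tuples by prime powers, which produces outright a finitely supported injection $h:\underset{n}{\cup}Y^{n}\to\mathbb{N}$, after which you invoke the injection characterization of FSM countability; your verification that $\underset{n}{\cup}Y^{n}$ is a finitely supported subset of the invariant set $\underset{n}{\cup}X^{n}$ and that $h$ is supported by $supp(g)\cup supp(Y)$ is exactly what Proposition \ref{2.18'} requires. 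Both arguments rest on the same FSM insight: one finitely supported enumeration/injection of $Y$ serves uniformly for all the $Y^{n}$, so no countable choice is needed and every constructed object is supported by the same finite set. What your coding buys is rigor at the final step: as literally written, the paper's order $\prec$ does not have order type $\omega$ when $Y$ is infinite (already in $Y^{2}$ the tuple $(x_{2},x_{1})$ has the infinitely many $\prec$-predecessors $(x_{1},x_{k})$, so the lexicographic order on $Y^{2}$ has type $\omega^{2}$ and the concatenated union has type well beyond $\omega$), hence the asserted order isomorphism with $(\mathbb{N},<)$ needs an additional, nontrivial (though choice-free) repair, whereas your G\"odel-style injection requires none. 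The remaining points are conventions you already handle: the empty tuple (if the union starts at $n=0$) and the nonemptiness of $Y$, which is automatic from the definition of FSM countability.
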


\begin{proof}Since $Y$ is countable, we can order it as a sequence $Y=\{x_{1}, \ldots, x_{n}, \ldots\}$. The other sets of form $Y^{k}$ are \emph{uniquely} represented in respect of the previous enumeration of the elements of $Y$. Since $Y$ is finitely supported and countable, all the elements of $Y$ are supported by the same set $S$ of atoms. Thus, in the view of Proposition \ref{p1}, for each $k \in \mathbb{N}$, all the elements of $Y^{k}$ are supported by $S$. Fix $n \in \mathbb{N}$. On $Y^{n}$ define the S-supported strict well order relation $\sqsubset$ by: $(x_{i_{1}}, x_{i_{2}}, \ldots, x_{i_{n}}) \sqsubset (x_{j_{1}}, x_{j_{2}}, \ldots, x_{j_{n}})$ if and only if $\left\{ \begin{array}{ll}

i_{1} < j_{1}  \\

\text{or}\\
i_{1}=j_{1}\; \text{and}\; i_{2}<j_{2}\\

 \text{or}\\
\ldots \\
\text{or}\\
i_{1}=j_{1}, \ldots, i_{n-1}=j_{n-1}\; \text{and}\; i_{n}<j_{n} \end{array}\right.$.

Now, define an $S$-supported strict well order relation $\prec$ on $\underset{n \in \mathbb{N}}{\cup}Y^{n}$ by

$u \prec v$ if and only if $\left\{ \begin{array}{ll}

u \in Y^{n} \; \text{and}\; v \in Y^{m} \;\text{with}\; n<m  \\

\text{or}\\
u,v \in Y_{n}\; \text{and}\; u \sqsubset v \end{array}\right.$.

Therefore, there exists an $S$-supported order isomorphism between $(\underset{n \in \mathbb{N}}{\cup}Y^{n}, \prec)$ and $(\mathbb{N}, <)$.
\end{proof}

\section{Conclusion}

It is known that, when an infinite family of elements having no internal 
structure is considered by weakening some axioms of the ZF set theory, the 
results in ZF may lose their validity. According to Theorem 5.4 in 
\cite{hal}, multiple choice principle and Kurepa's antichain principle are 
both equivalent to the axiom of choice in ZF. However, in Theorem~9.2 
of~\cite{jech} it is proved that multiple choice principle is valid in the 
Second Fraenkel Model, while the axiom of choice fails in this model. 
Furthermore, Kurepa's maximal antichain principle is valid in the Basic 
Fraenkel Model, while multiple choice principle fails in this model. 
This means that the following two statements (that are valid in ZF) 
\emph{`Kurepa's principle implies axiom of choice'} and \emph{`Multiple 
choice principle implies axiom of choice'} fail in Zermelo Fraenkel set 
theory with atoms.

FSM is related to set theory with atoms, however in our approach $A$ is 
considered as a ZF set (without being necessary to modify the axioms of 
foundation or of extensionality), and invariant sets are defined as sets 
with group actions. Additionally, FSM involves an axiom of finite support 
which states that only atomic finitely supported structures (under a 
canonical hierarchical set-theoretical construction) are allowed in the 
theory. Therefore, there is indeed a similarity between the development of 
permutation models of set theory with atoms and FSM, but this framework is 
developed over the standard ZF in the form `usual sets together with 
actions of permutation groups' without being necessary to consider an 
alternative set theory.  The goal of this paper is to answer to a natural 
question whether the theorems involving the usual/non-atomic ZF sets 
remain valid in the framework of atomic sets with finite supports modulo 
canonical permutation actions. It is already known that there exist 
results that are consistent with ZF, but the are invalid when replacing 
`non-atomic structure' with `atomic finitely supported structure'. The ZF 
results are not valid in FSM unless we are able to reformulate them with 
respect to the finite support requirement. The proofs of the~FSM results 
should not brake the principle that any structure has to be finitely 
supported, which means that the related proofs should be \emph{internally 
consistent in FSM} and not retrieved from ZF. The methodology for moving 
from ZF into~FSM is based on the formalization of FSM into higher order 
logic (and this is not a simple task due to some important limitations) or 
on the hierarchical construction of supports using the $S$-finite support 
reasoning that actually represents an hierarchical method for defining the 
support of a structure using the supports of the sub-structures of the 
related structure. Since any structure has to be finitely supported in 
FSM, specific results (that are not derived from ZF) can also be obtained.

In this paper we study infinite cardinalities of finitely supported 
structures. The preorder relation~$\leq$ on FSM cardinalities defined by 
involving finitely supported injective mappings is antisymmetric, but not 
total. The preorder relation~$\leq^{*}$ on FSM cardinalities defined by 
involving finitely supported surjective mappings is not antisymmetric, nor 
total. Thus, Cantor-Schr{\"o}der-Bernstein theorem (in which cardinalities 
are ordered by involving finitely supported injective mappings) is 
consistent with the finite support requirement of FSM. However, the dual 
of Cantor-Schr{\"o}der-Bernstein theorem (in which cardinalities are 
ordered by involving finitely supported surjective mappings) is not valid 
for finitely supported structures. Several other specific properties of 
cardinalities are presented in Theorem \ref{cardord1}.

The idea of presenting various approaches regarding `infinite' belongs to 
Tarski who formulates several definitions of infinite in \cite{tarski24}. 
The independence of these definitions was later proved in set theory with 
atoms in \cite{levy1}. Such independence results can be transferred into 
classical ZF set theory by employing Jech-Sochor's embedding theorem 
stating that permutation models of set theory with atoms can be embedded 
into symmetric models of ZF, and so a statement which holds in a given 
permutation model of set theory with atoms and whose validity depend only 
on a certain fragment of that model, also holds in some well-founded model 
of ZF. In this paper we reformulate the definitions of (in)finiteness from 
\cite{tarski24} internally into FSM, in terms of finitely supported 
structures. The related definitions for `FSM infinite' are introduced in 
Section \ref{chap9}.  We particularly mention FSM usual infinite, FSM 
Tarski (of three types) infinite, FSM Dedekind infinite, FSM Mostowski 
infinite, FSM Kuratowski infinite, or FSM ascending infinite. We were able 
to establish comparison results between them and to present relevant 
examples of FSM sets that satisfy certain specific infinity 
properties. These comparison results are proved internally in FSM, by 
employing only finitely supported constructions. Some of the results are 
obtained by using the classical translation technique from ZF into FSM 
involving the $S$-finite support principle, while many other properties 
(especially those revealing uniform supports) are specific to~FSM.  
We also provide connections with FSM (uniformly) amorphous sets. We 
particularly have focused on the notion of FSM Dedekind infinity, and 
we proved a full characterization of FSM Dedekind infinite sets. For 
example, we were able to prove that $T_{fin}(A)$, 
$\wp_{fin}(\wp_{fs}(A))$, $A^{A}_{fs}$, $\wp_{fin}(A^{A}_{fs})$, 
$(A^{n})^{A}_{fs}$ (for a fixed $n \in \mathbb{N}$), $T_{fin}(A)^{A}_{fs}$ 
and $\wp_{fs}(A)^{A}_{fs}$ are not FSM Dedekind infinite (nor FSM 
Mostowski infinite), while $\wp_{fs}(\wp_{fin}(A))$ and 
$T^{\delta}_{fin}(A)$ are FSM Dedekind infinite. The notion of 
`countability' is described in FSM in Section \ref{countable}, where we 
present connections between countable choice principles and countable 
union theorems within finitely supported sets.

In Figure \ref{fig:1} we point out some of the relationships between the 
FSM definitions of infinite. The `red arrows' symbolize \emph{strict} 
implications (of from $p$ implies $q$, but $q$ does not imply $p$), while 
`black arrows' symbolize implications for which we have not proved yet if 
they may be strict or not (analyze this in respect of Remark 
\ref{remarema}). Blue arrows represent equivalences.

\begin{figure}[h]

\centering
\begin{tikzpicture}
[node distance = 1.1cm, auto,font=\footnotesize,
every node/.style={node distance=2.1cm},
comment/.style={rectangle, inner sep= 4pt, text width=3cm, node distance=0.25cm, font=\scriptsize\sffamily},
force/.style={rectangle, draw, fill=black!10, inner sep=4pt, text width=3cm, text badly centered, minimum height=1.0cm, font=\bfseries\footnotesize\sffamily}] 

\node [force] (TI) {X is FSM Tarski I infinite};
\node [force, below of=TI] (TIII) {X is FSM Tarski III infinite};
\node [force, text width=3cm, left=1cm of TIII] (PTI) {The FSM powerset of X is FSM Tarski I infinite};
\node [force, below of=TIII] (D inf) {X is FSM Dedekind infinite};
\node [force, text width=3cm, left=1cm of D inf] (PTIII) {The FSM powerset of X is FSM Tarski III infinite};
\node [force, text width=3cm, right=1cm of TIII] (NTIII) {There is a finitely supported bijection between X and the Cartesian product of the set of positive integers and X};
\node [force, below of=D inf] (M inf) {X is FSM Mostowski infinite};
\node [force, text width=3cm, right=1cm of M inf] (PD inf) {The finite powerset of X is FSM Dedekind infinite};
\node [force, text width=3cm, right=1cm of PD inf] (PDD inf) {The finite powerset of X contains an infinite uniformly supported subset};
\node [force, text width=3cm, left=1cm of M inf] (A inf) {X is FSM ascending infinite};
\node [force, below of=M inf] (cont) {X contains an infinite uniformly supported subset};
\node [force, text width=3cm, left=1cm of cont] (TII) {X is FSM Tarski II infinite};
\node [force, text width=3cm, right=1cm of cont] (nua) {X is FSM non uniformly amorphous};
\node [force, below of=cont] (usual) {X is FSM usual infinite};
\node [force, text width=3cm, right=1cm of nua] (n-a) {X is FSM non amorphous};
\node [force, below of=cont] (usual) {X is FSM usual infinite};
\node [force, text width=3cm, right=1cm of usual] (ci) {X is FSM covering infinite};
\node [force, text width=3cm, left=1cm of usual] (PPI) {The FSM powerset of the finite powerset of X is FSM Dedekind infinite};
\node [force, text width=3cm, right=1cm of ci] (PAI) {The finite powerset of X is FSM ascending infinite};
\node [force, text width=3cm, right=1cm of D inf] (PPPP) {The powerset of X is FSM Dedekind infinite};
\node [force, text width=3cm, right=1cm of PPPP] (PPPP') {There exists a finitely supported surjection from X onto the set of positive integers};

\path[->, thick, red] 
(TI) edge (TIII)
(PTI) edge (PTIII)
(TIII) edge (D inf)
(D inf) edge (A inf)
(M inf) edge (TII)
(TII) edge (usual)
(cont) edge (usual)
(n-a) edge (usual)
(nua) edge (n-a)
(D inf) edge (PPPP)
(PPPP) edge (n-a)
(D inf) edge (PTIII)
(PD inf) edge (PPPP);

\path[->,thick] 
(TIII) edge (PTI)
(D inf) edge (M inf)
(D inf) edge (PD inf)
(PD inf) edge (PDD inf)
(A inf) edge (TII)
(M inf) edge (cont)
(PD inf) edge (nua)
(M inf) edge (nua)
(PTIII) edge (A inf);

\path[->, thick, blue] 
(TIII) edge (NTIII)
(NTIII) edge (TIII)
(PDD inf) edge (cont)
(cont) edge (PDD inf)
(ci) edge (usual)
(usual) edge (ci)
(PPPP) edge (A inf)
(A inf) edge (PPPP)
(usual) edge (PPI)
(PPI) edge (usual)
(PAI) edge (ci)
(ci) edge (PAI)
(PPPP) edge (PPPP')
(PPPP') edge (PPPP);

\end{tikzpicture} 
\caption{FSM relationship between various forms of infinity}
\label{fig:1}
\end{figure}
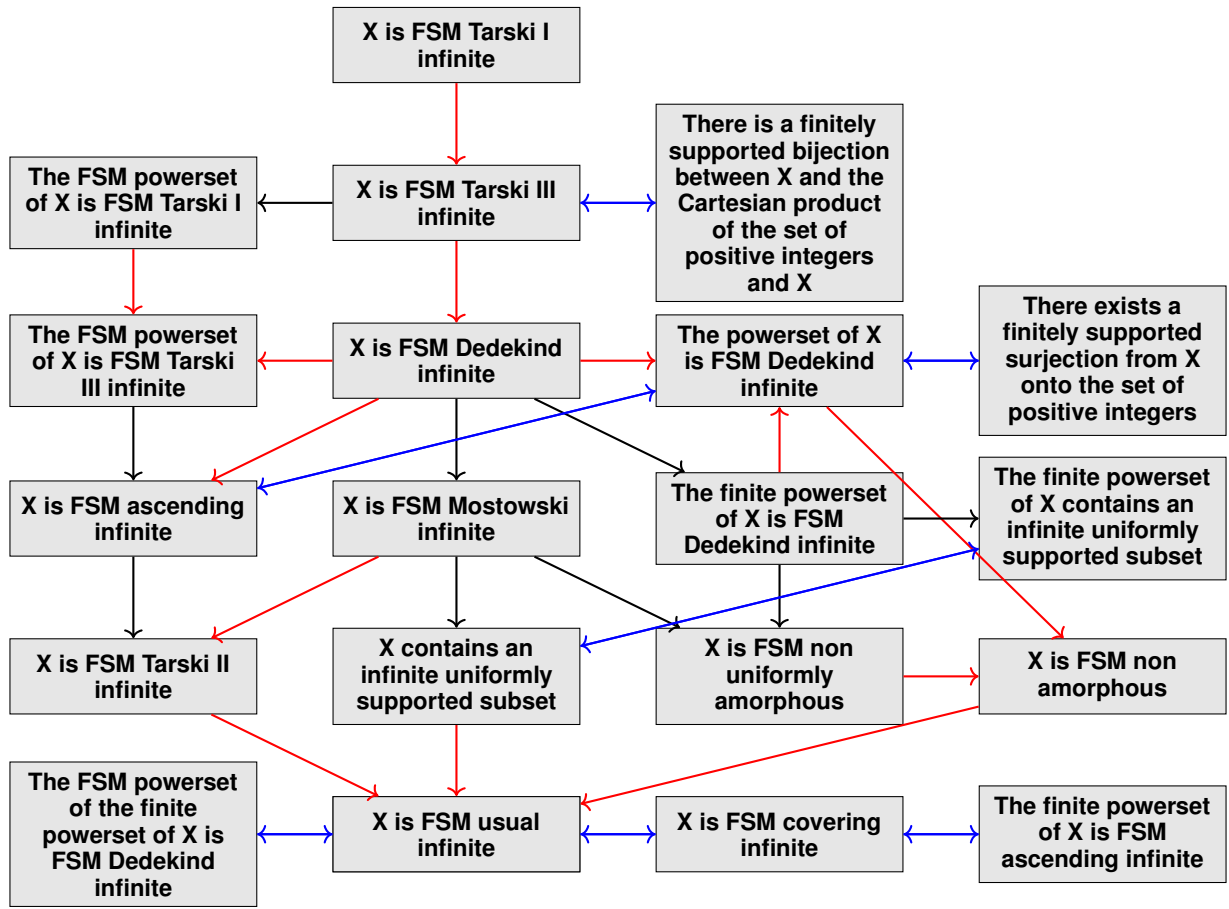

\begin{center}
 \begin{tabular}
{|l|l|l|l|l|l|l|l|p{0.5cm}|} \hline	Set & Tarski I inf & Tarski III inf & Ded. inf & Most. inf & Asc. inf & Tarski II inf & Non-amorph. \\ 
\hline	$A$ & No & No & No & No & No & No & No \\ 
\hline	$A+A$ & No & No & No & No & No & No & Yes \\ 
\hline	$A \times A$ & No & No & No & No & No & No & Yes \\ 
\hline	$\wp_{fin}(A)$ & No & No & No & No & Yes & Yes & Yes \\ 
\hline	$T_{fin}(A)$ & No & No & No & No & Yes & Yes & Yes \\ 
\hline	$\wp_{fs}(A)$ & No & No & No & No & Yes & Yes & Yes \\ 
\hline	$\wp_{fin}(\wp_{fs}(A))$ & No & No & No & No & Yes & Yes & Yes  \\ 
\hline	$A^{A}_{fs}$ & No & No & No & No & Yes & Yes & Yes  \\ 
\hline	$T_{fin}(A)^{A}_{fs}$ & No & No & No & No & Yes & Yes & Yes  \\ 
\hline	$\wp_{fs}(A)^{A}_{fs}$ & No & No & No & No & Yes & Yes & Yes  \\ 
\hline	$A \cup \mathbb{N}$ & No & No & Yes & Yes  & Yes & Yes & Yes  \\ 
\hline	$A \times \mathbb{N}$ & No & Yes & Yes & Yes  & Yes & Yes & Yes \\ 
\hline	$\wp_{fs}(A \cup \mathbb{N})$ & No & Yes & Yes & Yes  & Yes & Yes & Yes \\ 
\hline	$\wp_{fs}(\wp_{fs}(A))$  & ? & Yes & Yes & Yes  & Yes & Yes & Yes \\ 
\hline	$A^{\mathbb{N}}_{fs}$ & Yes & Yes & Yes & Yes  & Yes & Yes & Yes \\ 
\hline	$\mathbb{N}^{A}_{fs}$ & Yes & Yes & Yes & Yes  & Yes & Yes & Yes \\ 

\hline 
\end{tabular} 

\end{center}

\qquad In this final table we present the forms of infinity satisfied by the classical FSM sets.

\newpage

\end{document}